\documentclass[12pt,a4paper]{amsbook}
\usepackage{fullpage}
\usepackage{amssymb,amsfonts}
\usepackage{graphicx}

\parskip10pt
\topmargin12mm
\oddsidemargin 0.2in
\evensidemargin 0.2in
\textwidth 160 true mm
\textheight 234 true mm
\leftmargin 80mm

\newtheorem{theorem}{Theorem}[chapter]
\newtheorem{lemma}[theorem]{Lemma}
\newtheorem{corollary}[theorem]{Corollary}
\newtheorem{proposition}[theorem]{Proposition}

\theoremstyle{definition}
\newtheorem{definition}[theorem]{Definition}
\newtheorem{remark}[theorem]{Remark}
\newtheorem{example}[theorem]{Example}
\newtheorem{problem}[theorem]{Problem}
\numberwithin{equation}{chapter}

\DeclareMathOperator{\arsh}{arcsinh}
\DeclareMathOperator{\diam}{diam}
\DeclareMathOperator{\card}{card}

\newcommand{\be}{\begin{equation}}
\newcommand{\ee}{\end{equation}}

\newcommand{\abs}[1]{\lvert #1\rvert}
\newcommand{\Rt}{{\mathbb R}^2}
\newcommand{\Rn}{{\mathbb R}^n}
\newcommand{\Rtbar}{\overline{{\mathbb R}^2}}
\newcommand{\Rnbar}{\overline{{\mathbb R}^n}}
\newcommand{\R}{{\mathbb R}}

\newcommand{\C}{{\mathbb C}}
\newcommand{\Cbar}{\overline{\mathbb C}}

\newcommand{\ak}{\tilde{\alpha}}

\newcommand{\psubset}{\varsubsetneq}

\newcommand{\aeq}{\approx}
\newcommand{\ale}{\lesssim}
\newcommand{\age}{\gtrsim}
\newcommand{\mle}{\ll}
\newcommand{\mge}{\gg}
\newcommand{\notcomp}{\lessgtr}
\newcommand{\ip}[2]{<\!\!#1, #2\!\!>}
\newcommand{\MA}{\operatorname{MA}}
\newcommand{\HMA}{\operatorname{HMA}}
\newcommand{\HC}{\operatorname{HC}}
\newcommand{\Cc}{\overline{\mathbb{C}}} 

\newcommand{\A}{{\mathcal A}}

\newcommand{\es}{{\mathcal S}}
\newcommand{\LU}{{\mathcal{LU}}}

\newcommand{\IR}{{\mathbb R}}
\newcommand{\IN}{{\mathbb N}}
\newcommand{\K}{{\mathcal K}}

\newcommand{\D}{{\mathbb D}}

\newcommand{\real}{{\operatorname{Re}\,}}
\newcommand{\imaginary}{{\operatorname{Im}\,}}

\newcommand{\dist}{{\operatorname{dist}}}

\newcommand{\ds}{\displaystyle}
\newcommand{\Ra}{\Rightarrow}
\newcommand{\ra}{\rightarrow}
\newcommand{\Llra}{\Longleftrightarrow}
\newcommand{\hol}{{\mathcal H}}
\newcommand{\F}{{\mathcal F}}
\def\p{\partial}

\makeindex

\begin{document}
\thispagestyle{empty}

\vspace*{1cm}
\begin{center}
{\large\bf INEQUALITIES AND GEOMETRY OF HYPERBOLIC-TYPE METRICS,
RADIUS PROBLEMS AND NORM ESTIMATES}
\end{center}
\vspace*{1cm}
\begin{center}
{\it A THESIS}
\end{center}
\begin{center}
{\it submitted by}
\end{center}
\vspace*{.6cm}
\centerline{\large\bf SWADESH KUMAR SAHOO}
\vspace*{1.5cm}
\centerline{\it for the award of the degree}
\vspace*{.5cm}
\centerline{\it of}
\vspace*{.5cm}
\centerline{\large\bf DOCTOR OF PHILOSOPHY}
\vspace{2.5cm}
\centerline{\includegraphics{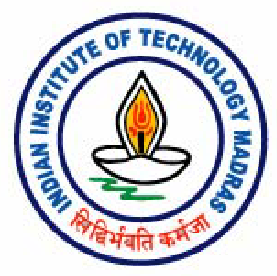}}
\begin{center}\large\bfseries
DEPARTMENT OF MATHEMATICS\\
INDIAN INSTITUTE OF TECHNOLOGY MADRAS\\
CHENNAI -- 600 036
\end{center}
\vspace*{.5cm}
\centerline{\bf DECEMBER 2007}

\bibliographystyle{plain}
\pagestyle{plain}
\pagenumbering{roman}
\newpage
\thispagestyle{empty}

\vspace*{2cm}

\begin{center}
{\large\bf THESIS CERTIFICATE}
\end{center}
\vskip 1cm
This is to certify that the thesis entitled 
{\bf INEQUALITIES AND GEOMETRY OF HYPERBOLIC-TYPE METRICS,
RADIUS PROBLEMS AND NORM ESTIMATES} submitted by 
{\bf Swadesh Kumar Sahoo} to the Indian Institute of Technology
Madras for the award of the degree of Doctor of Philosophy is a 
bonafide record of research work carried out by him under
my supervision. The contents of this thesis, in full or in parts,
have not been submitted to any other Institute or University for
the award of any degree or diploma.

\vskip 3cm\noindent
Madras 600 036
\hfill{Research Guide}\\[5mm]
Date
\hfill{(S. Ponnusamy)}

\newpage
\setcounter{page}{1}
\newpage
\chapter*{\bf ACKNOWLEDGEMENTS}
First and foremost I must thank my thesis supervisor
Professor S. Ponnusamy. His overly enthusiasm, integral view on
research, mission for providing interesting works have
made a deep impression on me. I owe him lots of gratitude for
having me shown this way of research.

I wish to express my warm and sincere thanks to
Professor Peter H\"ast\"o who introduced me to the field of the Apollonian and related
hyperbolic-type metrics; kept an eye on the progress of my work and was
always available when I needed his advices.

It is a great pleasure for me to express my gratitude to Professor R. Parvatham
for inspirations, encouragements and many helps.
I also thank Professors T. Sugawa, M. Vuorinen, and G.P. Youvaraj
for helpful suggestions on my work.
It is my pleasant duty to thank Dr. Zair Ibragimov and Professor D. Minda
for motivating and inspiring discussions on isometry problems and related works.
I must take an opportunity to thank
M. Huang and Professor X. Wang for nice conversations, specially while
preparing results of Chapter 3 of this thesis and related topics.

I would like to thank Professor S.A. Choudum, Head,
Department of Mathematics for
providing desired facilities in the research.
I also thank my Doctoral Committee Members
for their helpful suggestions.

I owe my most sincere gratitude to my university teachers
Professors S. Pattanayak and J. Patel
for guiding and shaping my career as a research fellow.
Special thanks to my university friends Prasant, Binod and Kamal for
their continuous support which helped me a lot in getting interest into
research.

My sincerest thanks to Professors A. Singh and P. Veeramani
for giving me advices whenever I needed.
I must thank Dr. Pravati Sahoo and Dr. Sunanda Naik, who have given their
valuable time, help and advice in the beginning of my career
as a research fellow; and in learning LaTeX as well.
I thank my friends  Bappaditya Bhowmik, Vasudevarao Allu and P. Vasundhra
for helpful discussions within the research group of complex functions theory.

I am thankful to my close friends Bhaskar, Jajati, Jitu, Manas, Manoj Raut, Moon,
and Vivek for their better understanding and co-operations.
During my residence life at IIT Madras
many of my friends helped me a lot making
the stay worthwhile; I thank them all.

Last but not the least, all my heartfelt gratitude and love to
my family members for their support, understanding
and cooperation through out my academic career.

\newpage
\chapter*{\bf ABSTRACT}
\noindent{\bf KEYWORDS:}
\parbox[t]{12.9cm}{The Apollonian, the Apollonian inner, the $j$,
the $\lambda$-Apollonian, the Barbilian, Ferrand's,
the inner, the K--P, and the quasihyperbolic metrics;
John, quasi-isotropic, and uniform domains; the comparison property; isometry;
analytic, convex, hypergeometric, starlike, strongly starlike, and univalent functions;
coefficient inequality; pre-Schwarzian norm; integral transforms; and subordinations.}

\vskip1cm
We consider certain inequalities among the Apollonian metric, the Apollonian inner
metric, the $j$ metric and the quasihyperbolic metric. We verify that whether
these inequalities can occur in simply connected planar domains and 
in proper subdomains of $\Rn~(n\ge 2)$. We have seen 
from our verification that most of the cases cannot occur. 
This means that there are many restrictions on domains in which these inequalities 
can occur.
We also consider two metrics $j$ and $d$, and investigate whether a plane
domain $D\varsubsetneq\C$, for which there exists a constant $c>0$ with
$j(z,w) \le c\, d(z,w)$ for all $z,w \in D$, is a uniform
domain. In particular, we study the case when $d$ is
the $\lambda$-Apollonian metric. We also investigate the question,
whether simply connected quasi-isotropic domains are John disks and conversely.
Isometries of the quasihyperbolic metric, the Ferrand metric and the K--P metric
are also obtained in several specific domains in the complex plane.

In addition to the above, some problems on univalent functions theory
are also solved. We denote by $\es$, the class of normalized univalent
analytic functions defined in the unit disk.
We consider some geometrically motivated 
subclasses, say $\F$, of $\es$. We obtain the largest disk
$|z|<r$ for which $\frac{1}{r}f(rz)\in \F$ whenever $f\in \es$.
We also obtain necessary and sufficient coefficient conditions
for $f$ to be in $\F$. Finally, we present the pre-Schwarzian
norm estimates of functions from $\F$ and that of certain convolution or
integral transforms of functions from $\F$.
{\bf \tableofcontents}
\listoftables
\listoffigures
\chapter*{\bf ABBREVIATIONS}
\vskip .5cm
\begin{tabular}{lllll}
$\HC$ &&&& Hyperbolic Center\\
$\HMA$ &&&& Hyperbolic Medial Axis\\
K--P &&&& Kulkarni--Pinkall\\
$\LU$ &&&& Locally Univalent\\
$\MA$ &&&& Medial Axis\\
\end{tabular}
\chapter*{\bf NOTATION}
\vskip .5cm
{\bf English Symbols}

\begin{tabular}{lllll}
$\card(A)$ &&&& cardinality of the set $A$\\
$\diam A$ &&&& diameter of the set $A$\\
$\A$ &&&& class of normalized analytic functions in $\D$\\
$A\subset B$ &&&& $A$ is a subset of $B$\\
$A\psubset B$ &&&& $A$ is a proper subset of $B$\\
$B^n(x, r)$ &&&& Euclidean ball of center $x$ with radius $r$\\
$B(x,r)$ &&&& disk with center $x$ and radius $r$\\
$B^n$ &&&& Euclidean unit ball\\
$B_z$ &&&& extremal disk in a domain $G$ centered at $z$\\
$C^n$ &&&& $n$-times continuously differentiable\\
$\C$ &&&& complex plane\\
$\D$ &&&& unit disk\\
$\tilde{d}(x,y)$ &&&& inner metric of the metric $d(x,y)$\\
$\{e_1, e_2, \ldots, e_n\}$ &&&& standard basis of $\Rn$\\
$\|f\|$ &&&& pre-Schwarzian norm of $f$\\
$f*g$ &&&& Hadamard (convolution) product of $f$ and $g$\\
$f\prec g$ &&&& $f$ is subordinate to $g$\\
$\partial G$ &&&& boundary of $G$\\
$\partial_rG$ &&&& set of rectifiably accessible points in $\partial G$\\
$\overline{G}$ &&&& closure of $G$\\
$G^c$ &&&& complement of $G$\\
$\hol$ &&&& class of analytic functions in $\D$\\
$\hol_a$ &&&& class of analytic functions in $\D$ which take
origin into $a\in \C$\\
\end{tabular}

\begin{tabular}{lllll}
$H^n$ &&&& upper half-space\\
$h_D$ &&&& hyperbolic metric defined on $D$\\
$i_z$ &&&& inversion in a circle centered at $z$ with radius $1$\\
$j_G$ &&&& $j$ metric defined on $G$ (it is used in two different meanings)\\
$\K$ &&&& class of convex functions\\
$\K(\alpha)$ &&&& class of convex functions of order
$\alpha,~0\le \alpha \le 1$\\
$k_G$ &&&& quasihyperbolic metric defined on $G$\\
$\hat{K}_z$ &&&& hyperbolic convex hull of the set $\partial B_z\cap \partial G$ in $B_z$\\
$\Rn$ &&&& Euclidean $n$-space\\
$\Rnbar$ &&&& $\Rn\cup\{\infty\}$, the M\"obius space\\
$R_{\zeta}$ &&&& reciprocal of the curvature of $\partial G$
at $\zeta\in \partial G$\\
$S^{n-1}(x, r)$ &&&& Euclidean sphere of center $x$ with radius $r$\\
$S^{n-1}$ &&&& Euclidean unit sphere\\
$\es$ &&&& class of univalent functions\\
$\es^*$ &&&& class of starlike functions\\
$\es^*(\alpha)$ &&&& class of starlike functions of order
$\alpha,~0\le \alpha \le 1$\\
$\es\es^*(\alpha)$ &&&& class of strongly starlike functions of
order $\alpha,~0<\alpha\le 1$.\\
$T_f$ &&&& pre-Schwarzian derivative of $f$\\
$xy$ &&&& line through $x$ and $y$\\
$[x,y]$ &&&& closed line segment between $x$ and $y$\\
$(x,y]$ or $[x,y)$ &&&& half-open (or half-closed) segment between $x$ and $y$\\
$\widehat{xyz}$ &&&& smallest angle between the vectors $x-y$ and $z-y$\\
$x_i$ &&&& $i^{\rm{th}}$ coordinate of $x\in\Rn$\\
$\real z$ &&&& real part of $z$\\
$\imaginary z$ &&&& imaginary part of $z$\\
\end{tabular}

{\bf Greek Symbols}

\begin{tabular}{lllll}
$\alpha_G$ &&&& Apollonian metric defined on the domain $G$\\
$\ak_G$ &&&& Apollonian inner metric defined on the domain $G$\\
$\alpha'_G$ &&&& $\lambda$-Apollonian metric defined on $G$\\
$\bar{\alpha}_G(x;\theta)$ &&&& directed density of the Apollonian metric
at $x$ in the direction $\theta$\\
\end{tabular}

\begin{tabular}{lllll}
$\delta(x)$ &&&& $d(x,\partial G)$, Euclidean distance of $x\in G$
to $\partial G$\\
$d(\gamma)$ &&&& $d$-length of $\gamma$\\
$\ell(\gamma)$ &&&& Euclidean length of $\gamma$\\
$\lambda_D(z_1,z_2)$ &&&&  $\lambda$-length between $z_1$ and $z_2$\\
$\mu_D$ &&&&  K--P metric defined on $D$\\
$\sigma_D$ &&&&  Ferrand's metric defined on $D$\\
\end{tabular}



%
%
%

\newpage
\pagenumbering{arabic}
\setcounter{page}{1}
\chapter{INTRODUCTION}\label{chap1}
The thesis consists of six chapters.
The purpose of this chapter is to give primitive motivations
and backgrounds for the remaining chapters.
In Section \ref{Apo-metric}, we review brief introduction
to the Apollonian metric which is a generalization of
Poincar\'e's model of the hyperbolic metric with some geometric
facts. In Section \ref{Ineq-geom}, we present some
inequalities among certain hyperbolic-type metrics
and their geometric characterizations in terms of domains.
In Section \ref{isometry-sec}, we present isometries of
certain metrics with an aim to investigate the same behavior
for other related metrics in specific domains.
In Section \ref{coeff-subsec}, we give some
motivations to study necessary and sufficient conditions
for functions to be in some subclass of univalent functions
in terms of Taylor's coefficient. In addition,
we introduce the definition of the radius problem
and collect some well-known results with a motivation to
study for some subclasses of univalent analytic functions.
Section \ref{norm-subsec} begins with the pre-Schwarzian norm
of functions from certain well-known classes of locally univalent functions
and ends with some related problems that we solved in last chapter. At last
in Section \ref{summary}, we summarize our investigations
with some conclusion.

The thesis is organized with solutions to a number of problems.
For example, we consider the following problems.

\begin{itemize}
\item Given some sets of inequalities among the Apollonian metric,
the Apollonian inner metric \cite{Ha5}, the $j$ metric and the quasihyperbolic
metric; we ask whether these can occur together in simply connected
planar domains or in general domains of the Euclidean space $\Rn$ ($n\ge 2$)!

\item Can uniform domains be characterized in terms of
inequalities between the $j$ metric and the $\lambda$-Apollonian metric?
What is the relationship between quasi-isotropic domains
and John disks?

\item What are the isometries of the quasihyperbolic metric, the
Ferrand metric and the K--P metric?

\item We identify some subclasses, say $\F$, of the class of
normalized analytic
univalent functions $\es$ and find largest disk
$|z|<r$ for which $\frac{1}{r}f(rz)\in \F$ whenever $f\in \es$.
In addition, we find necessary and sufficient coefficient conditions
for $f$ to be in $\F$.

\item Given some classes of univalent analytic functions, we obtain
the pre-Schwarzian norm estimates of functions from the
given classes as well as that of certain integral or convolution
operators of functions from those classes.
\end{itemize}



In the thesis, we say non-empty open connected sets as domains\index{domain}.

\vskip .5cm
\noindent{\bf HYPERBOLIC TYPE METRICS:}
\vskip .2cm
We begin with the definition of a metric as follows.
A metric space is a non-empty set $M$ together with a real valued function
$d:M\times M\to \R$ (called a metric\index{metric}, or
sometimes a distance function) such that for every $x,y,z\in M$
we have the following properties:
\begin{itemize}
\item $d(x,y)\ge 0$, with equality if and only if $x=y$.
\item $d(x,y)=d(y,x)$.
\item $d(x,y)\le d(x,z)+d(z,y)$ (triangle inequality).
\end{itemize}

The Schwarz lemma, named after Hermann Amandus Schwarz,
is a result in complex analysis about holomorphic functions defined on
the unit disk.
A variant of the Schwarz lemma can be stated that is invariant
under analytic automorphisms on the unit disk, i.e. bijective
holomorphic mappings of the unit disk to itself. This variant
is known as the Schwarz--Pick lemma (after George Pick).
The Schwarz-Pick lemma then essentially gives that a holomorphic
map of the unit disk into itself decreases the distance of points
in the Poincar\'e metric. In early $19^{th}$ century, Poincar\'e
used the unit ball and Lobachevsky used the half space as domains
for their models. By the Riemann mapping theorem we know that
any simply connected proper subdomain of the plane is conformally
equivalent to the unit disk. So it is possible to define the hyperbolic
metric in simply connected subdomains of the complex plane as well.

In contrast to the
situation in the complex plane, the well-known  hyperbolic
metric is defined only in balls and half-spaces in $\Rn$ when $n\ge
3$. Many researchers have proposed metrics that could take the
place of the hyperbolic metric in analysis in higher dimensions.
Probably the most used one is the quasihyperbolic metric
introduced by Gehring and Palka in \cite{GP}. This metric has the slight
disadvantage is that it is not equal to the hyperbolic metric in a
ball.
Several metrics have also been proposed that are generalizations
of the hyperbolic metric in the sense that they equal the
hyperbolic metric if the domain of definition is a ball or a
half-space. Some examples are the Apollonian metric \cite{Be},
the Ferrand metric \cite{Fe}, the K--P metric
\cite{KP} and Seittenranta's metric \cite{Se}.
Apart from the above
metrics we also consider the $j$ metric and the idea of inner metric in
this thesis. Note that inner metric of the Apollonian metric
is called the {\it Apollonian inner metric}, inner metric of the
$j$ metric is known as the quasihyperbolic metric and
that of Seittenranta's metric is the Ferrand metric.
The common fact for all the above metrics is that they are
defined in some proper subdomain of $\Rn~(n\ge 2)$ and are strongly
affected by the geometry of the boundary of the domain.
Because of this, we sometimes say these  metrics as
hyperbolic-type metrics.
Most of the metrics described here have an invariance
property in the sense of
\begin{equation}\label{isometry}
d_D(x,y)= d_{f(D)}(f(x), f(y)),
\end{equation}
for all $x,y\in D$ and for mappings $f$ belonging to
some fixed class, say the class of conformal maps,
the class of M\"obius maps and the class of similarities.
Here $D\psubset \Rn$ is a metric space with the metric $d$.

In Chapter \ref{chap4}, we characterize $f$ satisfying
the relation (\ref{isometry}) with respect to some of the
hyperbolic-type path (or conformal path) metrics of the form
\begin{equation}\label{conf-metric}
d_D(x,y)=\inf_{\gamma}\int_{\gamma}p(z)\,|dz|,
\end{equation}
where $p(z)$ is a density function defined on $D$,
$|dz|$ represents integration with respect to path-length, and
the infimum is taken over all rectifiable paths $\gamma$ joining
$x,y\in D$. The {\it hyperbolic metric}\index{hyperbolic metric}\index{metric!hyperbolic}
$h_{\D}$ of the unit disk
$\D$ has the density function $2/(1-|z|^2)$. In this case
the infimum $\gamma$ is attained for the non-euclidean segment
from $x$ to $y$, that is the arc of the circle through
$x$ and $y$ orthogonal to the unit circle. The {\it hyperbolic
metric} $h_D$ of a simply connected plane domain $D$ (other than
$\C$) is obtained by transferring $h_{\D}$ to $h_{D}$ by any
conformal map of $\D$ onto $D$. Indeed, if $f$ maps $\D$ onto
$D$, then the {\it hyperbolic metric}\index{hyperbolic metric}\index{metric!hyperbolic}
of $D$ is defined by
\begin{equation}\label{hyperbolic}
h_D(u,v)=h_{\D}(x,y)\quad \mbox{ for $u=f(x),~v=f(y)$ and $x,y\in \D$.}
\end{equation}
See \cite{BePom,M1,M2} and their references for basic properties
of hyperbolic density.

{\em Ferrand's metric} \index{Ferrand's metric} \index{metric!Ferrand's}
\cite{Fe} is defined by replacing the density
function $p(z)$ in (\ref{conf-metric}) with the function
\begin{equation}\label{Ferrand-density}
\sigma_D(z) = \sup_{a,b\in \partial D} \frac{|a-b|}{|a-z| \, |b-z|}.
\end{equation}

The {\em K--P metric} \index{K--P metric}\index{metric!K--P} \cite{KP}
is defined by the density
\begin{equation}\label{KP-metric}
\mu_D(x) = \inf \Big\{ \lambda_B(x)\colon x\in B\subset D,\,
B\ \text{is\ a\ disk\ or\ a\ half-plane}\Big\}.
\end{equation}
Here $\lambda_B$ is the density of the hyperbolic metric in $B$.
Recall that if $B=B(x_0,r)= \{x\in \Rt \colon |x-x_0|<r\}$, then
\[
\lambda_B(x)=\frac{2 r}{r^2-|x-x_0|^2}.
\]


\section{The Apollonian Metric}\label{Apo-metric}

The Apollonian metric was first introduced by Barbilian \cite{Ba}
in 1934--35 and then rediscovered by Beardon \cite{Be} in 1998.
This metric has also been considered in \cite{BI,GH,Ro,Se} and
in \cite{Ha2,Ha4,Ha3,Ha5,Ha1,HI,HI2,Ib1,Ib2,Ib3}.
It should also be noted that the same metric has been studied from
a different perspective under the name of the Barbilian metric for
instance in \cite{Ba, Ba2,BR,Bl,Bo,Ke}, cf.\ \cite{BS} for a
historical overview and more references. One interesting historical
point, made in \cite{BS,BS1}, is that  Barbilian himself proposed
the name ``Apollonian metric'' in 1959, which was later independently
coined by Beardon \cite{Be}.
More recently, the Apollonian metric has especially been studied
by H\"ast\"o and Ibragimov in a series of articles, see e.g.
\cite{Ha2}-\cite{Ha1}, \cite{HI,HI2} and \cite{Ib1}-\cite{Ib3}.
An interesting fact
is that the Apollonian metric is also studied with certain group structures \cite{No}.

We denote by $\Rnbar = \Rn \cup \{ \infty\}$ the one point
compactification of $\Rn$.
The {\it Apollonian metric}\index{Apollonian metric}\index{metric!Apollonian}
is defined for
$x,y\in G\psubset \Rnbar$ by
$$\alpha_G(x,y) := \sup_{a,b\in\partial G}
\log \frac{|a-y|\,|b-x|}{|a-x|\,|b-y|}
$$
(with the understanding that $|\infty-x|/|\infty-y|=1$).
It is in fact a metric if and only if the complement of $G$ is not
contained in a hyperplane and only a pseudometric otherwise, as was noted
in \cite[Theorem~1.1]{Be}.
Some of the main reasons for the interest in the metric are that
\begin{enumerate}
\item the formula has a very nice geometric interpretation
(see Subsection~\ref{ABA});
\item it is invariant under M\"obius map;
\item it equals the hyperbolic metric in balls and half-spaces;
\item it is monotone: $\alpha_{G_1}(x,y)\le \alpha_{G_2}(x,y)$
whenever $x,y\in G_2\subset G_1$; and
\item it is complete: $\alpha_G(x_n,y)\to \infty$ as
$x_n\to \partial G$ for each $y\in G$.
\end{enumerate}
We next define the Apollonian metric in a different approach called the
{\it Apollonian balls approach}. This is the reason we say the metric
as {\it Apollonian metric}. The name of the balls are called the Apollonian
balls, because they satisfy the definition of Apollonian circles
(see Apollonian circles theorem in \cite{Brannan-geom}).


\subsection{The Apollonian balls approach}\label{ABA}

In this subsection we present the Apollonian balls approach which
gives a geometric interpretation of the Apollonian metric.

A map $f:\,\Rnbar \to \Rnbar$ defined by
$$f(x)=a+\frac{r^2(x-a)}{|x-a|^2},~f(\infty)=a,~f(a)=\infty
$$
is called an {\it inversion}\index{inversion} in the sphere
$$S^{n-1}(a,r) := \{z\in\Rn:\,|z-a|=r\}
$$
for $x,a\in \Rn$ and $r>0$.
For $x,y\in G\psubset \Rnbar$ we define the following \cite{Se}
$$ q_x:= \sup_{a\in\partial G} \frac{|a-y|}{|a-x|},\
q_y:= \sup_{b\in\partial G}\frac{|b-x|}{|b-y|}.
$$
The numbers $q_x$ and $q_y$ are called the Apollonian parameters of
$x$ and $y$ (with respect to $G$) and
by definition $\alpha_G(x,y)=\log(q_x q_y)$.
This gives an equivalent form of the
Apollonian metric\index{Apollonian metric}\index{metric!Apollonian}.
The balls (in $\Rnbar$!)
$$ B_x:= \Big\{ z\in \Rnbar \colon \frac{|z-x|}{|z-y|}<\frac1{q_x} \Big\}
\quad\text{and}\quad
B_y:= \Big\{ w\in \Rnbar \colon \frac{|w-y|}{|w-x|}<\frac1{q_y} \Big\}, $$
are called the {\em Apollonian balls}\index{Apollonian ball}
about $x$ and $y$, respectively.
Note that these balls are nothing but the Euclidean
balls, see Item 4 below. 
We collect some immediate results regarding these balls; similar results
obviously hold with $x$ and $y$ interchanged.

\begin{enumerate}
\item We have $x\in B_x\subset G$ and $\overline{B_x}\cap \partial G\not= \emptyset$.
\item If $\infty\not\in G$, we have $q_x\ge 1$.
If, moreover, $\infty\not\in \overline{G}$, then $q_x>1$.
\item We have $B_x\cap B_y=\emptyset$. If $G$ is bounded then
$\partial B_x\cap\partial B_y=\emptyset$. In other words, if
$\partial B_x$ intersects $\partial B_y$ then $B_x\cup B_y=G$
(in fact, $\partial B_x=\partial B_y=\partial G$).
\item\label{formula}
If $q_x>1$, $x_0$ denotes the center of $B_x$ and $r_x$ its radius, then
$$x_0=x+\frac{x-y}{q_x^2-1} ~~\mbox{ and }~~ r_x=\frac{q_x|x-y|}{q_x^2-1};
$$
and hence
$$ |x-x_0| = \frac{|x-y|}{q_x^2-1} = \frac{r_x}{q_x}.
$$
\item If $i_x$ and $i_y$ denote the inversions in the spheres $\partial B_x$
and $\partial B_y$ respectively, then $y=i_x(x)=i_y(x)$.
\item\label{ABA_delta} We have $q_x-1\le \abs{x-y}/\delta(x) \le q_x+1$.
\end{enumerate}


\section{Inequalities and Geometry}\label{Ineq-geom}

As a motivation for the study of inequalities and geometry,
we mention that many inequalities among hyperbolic-type metrics
have been previously studied by well-known authors
and some have geometrical characterizations. For example,
quasidisks and uniform domains characterizations are well
established in terms of inequalities among
hyperbolic-type metrics.

First we bring out the inequalities between the Apollonian metric $\alpha_G$
and the hyperbolic metric $h_G$ in simply connected plane domain $G$.
In this case, the Apollonian metric $\alpha_G$ satisfies the inequality
$\alpha_G\le 2h_G$ (see \cite[Theorem 1.2]{Be}). Furthermore, it is shown in
\cite[Theorem 6.1]{Be} that for bounded convex plane domains the Apollonian metric
satisfies $h_G\le 4\sinh \,[\frac{1}{2}\alpha_G]$, and by considering the example
of the infinite strip $\{x+iy: \,|y|<1\}$, that the best possible constant
in this inequality is at least $\pi$. Later in 1997, Rhodes \cite{Ro}
improved the previous concept to general convex plane domain which says
that $h_G<3.627\sinh\,[\frac{1}{2}\alpha_G]$ and the best possible
constant is at least 3.164. Also due to Beardon \cite{Be},
any bounded plane domain satisfying $\alpha_G\le h_G$ is convex.
On the other hand, Gehring and Hag \cite{GH} established that
any domain $G\psubset \C$ of hyperbolic type is a disk if and only if
$\alpha_G\le h_G$, where disk means the image of unit disk under a
M\"obius map. There are characterization for quasidisks in terms of
the Apollonian metric and the hyperbolic metric as well.
A {\em quasidisk} \index{quasidisk} is the image of a disk under a quasiconformal self
map of $\Rtbar$.
Note that, it is always a non-trivial task to obtain a specific quasiconformal
map when a domain is quasidisk.  For some concrete examples of quasidisks
and corresponding quasiconformal mappings see \cite{GV} (see also \cite{Ge99}).
However, several characterizations \cite{Ge99} of quasidisks have been obtained from
which it became convenient to say whether a domain is quasidisk.
For example, it is well-known that simply connected uniform domains are
quasidisks and it is not always difficult to present a proof for a
simply connected domain to be uniform. There are also
characterizations of quasidisks in terms of hyperbolic-type metrics.
Due to Gehring and Hag \cite{GH}, a simply connected
domain $G\psubset \Rtbar$ is a quasidisk if and only if there is a
constant $c$ such that $h_G(z_1,z_2)\le c\,\alpha_G(z_1,z_2)$ for
$z_1,z_2\in G$. For several other interesting characterizations of quasidisks,
see \cite{Ge99}.

Let $\gamma\colon [0,1]\to G\subset \Rn$ be a path. If $d$ is a metric in $G$,
then the $d$-length
of $\gamma$ is defined by
$$ d(\gamma):= \sup \sum_{i=0}^{k-1} d(\gamma(t_i), \gamma(t_{i+1})), $$
where the supremum is taken over all $k~(<\infty)$ and all sequences $\{t_i\}$ satisfying
$0=t_0<t_1<\cdots<t_k=1$. All paths in the thesis are assumed to be rectifiable,
that is, to have finite Euclidean length. The {\it inner metric}\index{inner metric}\index{metric!inner} of $d$ is defined
by 
$$ \tilde{d}(x,y) := \inf_{\gamma} d(\gamma),$$
where the infimum is taken over all paths $\gamma$ connecting $x$ and $y$ in
$G$. 
By repeated use of the triangle inequality it follows that $d\le \tilde{d}$.
We denote the inner metric of the Apollonian metric by $\ak_G$
and call it the {\it Apollonian inner metric}\index{Apollonian inner metric}
\index{metric!Apollonian inner}.
Since $\alpha_G$ is a metric except
in a few domains, it is not reasonable to expect that $\ak_G$
is a metric in every domain. In fact,
$\ak_G$ is a metric if and only if the complement of $G$
is not contained in an $(n-2)$-dimensional hyperplane in $\Rn$
\cite[Theorem~1.2]{Ha5}. In the same paper H\"ast\"o established an
explicit integral formula for the Apollonian inner metric
\cite[Theorem 1.4]{Ha5}, and proved that for most domains there exists
a geodesic connecting two arbitrary points \cite[Theorem 1.5]{Ha5}.

We now define the other two metrics
that we consider in Chapter \ref{chap2}.
Let $G\psubset\Rn$ be a domain and $x,y\in G$.

The metric\index{$j$  metric}\index{metric!$j$} $j_G$
is defined by
\begin{equation}\label{Vu2-metric}
j_G(x,y) := \log\bigg( 1 + \frac{\abs{x-y}}
{\min\{ \dist (x,\partial G), \dist (y,\partial G)\}} \bigg),
\end{equation}
see \cite{Vu2}. In a slightly different form of this metric
was defined in \cite{GO}. Indeed, the $j$ metric from \cite{GO} is defined by
\begin{equation}\label{GO-metric}
j_{G}(x,y)=\log\left(1+\frac{|x-y|} {\dist (x,\partial G)}\right)\left(1+\frac{|x-y|}
{\dist (y,\partial G)}\right) \quad \mbox{ for all $x,y\in G$}
\end{equation}
(see also Chapter \ref{chap3} of the present thesis for this definition in plane domains).
Note that both the metrics defined by (\ref{Vu2-metric}) and (\ref{GO-metric})
are equivalent.
In further discussion of the current chapter and in Chapter \ref{chap2}
we use the notation $j$ defined by (\ref{Vu2-metric}); and
in Chapter \ref{chap3} we use the notation $j$ defined by (\ref{GO-metric}).
The metric $j_G$ is complete, monotone and invariant under
similarity maps (see e.g. \cite{Ge3}). It is not M\"obius invariant, but
a M\"obius quasi-invariant \cite[Theorem 4]{GO}.

The {\it quasihyperbolic metric}\index{quasihyperbolic metric}
\index{metric!quasihyperbolic} from \cite{GP} is defined by
$$ k_G(x,y):= \inf_\gamma \int_\gamma \frac{\abs{dz}}{\dist (z,\partial G)},
$$
where the infimum is taken over all paths $\gamma$ joining $x$ and
$y$  in $G$. We recall from \cite{GO} that the quasihyperbolic geodesic
segment exists between each pair of points $x,y\in G$, i.e. the length
minimizing curve $\gamma$ joining $x$ and $y$ for $k_G(x,y)$ exists.
The metric $k_G$ is complete, monotone and changes at most by
the factor 2 under a M\"obius map (i.e. it is
M\"obius quasi-invariant\index{M\"obius quasi-invariant},
see \cite{Ge3,GO,GP}).
Note that the quasihyperbolic metric is the inner
metric of the $j_G$ metric, see for instance \cite[Lemma 5.3]{Ha2}, and hence
we have the fundamental inequality
$$j_G(x,y)\le k_G(x,y) \quad \mbox{ for all $x,y\in G$}
$$
which is used several times in the present thesis.

In order to describe further discussion on  inequalities and geometry
we define some relations on the set of metrics.
\begin{definition}\label{relation-def} Let $d$ and $d'$ be metrics on $G$.
\begin{enumerate}
\item We write $d \ale d'$ if there exists a constant $K>0$ such that
$d \le K d'$. Similarly for the relation $d\age d'$.
\item We write $d \aeq d'$ if $d\ale d'$ and $d\age d'$.
\item We write $d \mle d'$ if $d\ale d'$ and $d\not\age d'$.
\item We write $d \notcomp d'$ if $d\not \ale d' $ and $d \not \age d'$.
\end{enumerate}
\end{definition}

Recall that $\alpha_G\le 2j_G$ in every domain $G\psubset \Rn$ by
\cite[Theorem~3.2]{Be}. Also, it was shown in  \cite[Theorem~4.2]{Se}
that if $G\psubset \Rn$ is convex, then $j_G\le \alpha_G$.
So $\alpha_G\aeq j_G$ in convex domains.
The condition $\alpha_G\aeq j_G$
is also connected to various interesting properties, see for example
\cite[Theorem~1.3]{Ha1}.
In the paper \cite{Ha1} the term
{\em comparison property}\index{comparison property} was introduced
for the relation $\alpha_G\aeq j_G$.
In \cite{Ha1}, H\"ast\"o has given a geometrical characterization, in
terms of an interior double ball condition, of those domains satisfying
the comparison property.
Additionally, the inequalities
$\ak_G\aeq k_G$, $\alpha_G\aeq \ak_G$ and $\alpha_G\aeq k_G$, which have
been called quasi-isotropy\index{quasi-isotropy},
Apollonian quasiconvexity\index{Apollonian quasiconvexity} and
$A$-uniformity\index{$A$-uniformity},
respectively, have some nice geometric interpretations and
have been considered in \cite{Ha2,Ha4,Ha3}.

We now recall the definition of uniform domains introduced by
Martio and Sarvas in
\cite[2.12]{MS} (see also \cite[(1.1)]{GO} and Definition \ref{unifDef} in
Chapter \ref{chap2} for equivalent formulations).
\begin{definition}\label{unifDef1}
A domain $G$ is called a {\it uniform domain}\index{uniform domain}
\index{domain!uniform} provided there
exists a constant $c$ with the property that each pair of points
$z_1$, $z_2\in G$ can be joined by a path $\gamma \subset
G$ satisfying
$$\ell(\gamma) \le c\, \abs{z_1-z_2} ~\mbox{ and }~
\min_{j=1,2}\ell (\gamma[z_j, z])\leq c\, \dist(z,\partial G)
~\mbox{ for all $z\in \gamma $}.
$$
Here $\gamma[z_j, z]$ denotes the part of $\gamma$ between $z_j$
and $z$.
\end{definition}
Here we remark that the first condition is called the
quasiconvexity condition and the second one is called the
double cone (John) condition. More precisely, if we remove
the first condition from Definition \ref{unifDef1}, then we call
the domain as {\em John domain}\index{John domain}\index{domain!John}.
Simply connected John domains are
called the {\em John disks} \cite{NV}\index{John disk}.
Similarly, if we omit the second
condition from the definition the property would be meant for
the quasiconvex domain. Thus we conclude that every uniform domain
is John domain as well as quasiconvex domain.

Since the quasihyperbolic metric $k_G$ is the inner metric of
the metric $j_G$, we have $j_G\le k_G$ for any domain $G\psubset \Rn$.
On the other hand, due to Gehring and Osgood (see \cite[Corollary 1]{GO}),
a domain $G\psubset \Rn$ is uniform\index{uniform domain}\index{domain!uniform}
if and only if $k_G\ale j_G$ holds.
Although the inequality in \cite{GO} was stated in the form $k_G\le c\,j_G+d$,
it has been proved later that these two forms are equivalent (see for instance
\cite{GH99,Ha2} and \cite[2.50(2)]{Vu2}).
This condition is also equivalent to
$\ak_G\ale j_G$, see \cite[Theorem 1.2]{HPWS}. Thus we have a geometric
characterization of domains satisfying these inequalities as well.

The above observations motivate us to study certain inequalities
among the Apollonian metric $\alpha_G$, the Apollonian inner metric $\ak_G$,
the $j_G$ metric and the quasihyperbolic metric $k_G$;
and their geometric meaning which helps us to form Chapter \ref{chap2}
in this thesis.

The brief idea of Chapter \ref{chap2} is as follows:
Let us first of all note that the following inequalities hold in every
domain $G\psubset \Rn$:
\be\label{genIneqs}
\alpha_G\ale j_G \ale k_G\ \text{ and }\ \alpha_G\ale \ak_G \ale k_G.\ee
The first two are from \cite[Theorem~3.2]{Be} and the
second two from \cite[Remark~5.2 and Corollary~5.4]{Ha2}.
We see that of the four metrics to be considered, the Apollonian
is the smallest and the quasihyperbolic is the largest.
We will undertake a systematic study of which of the inequalities in (\ref{genIneqs})
can hold
in the strong form with $\mle$ and which of the relations $j_G\mle \ak_G$,
$j_G\aeq \ak_G$ and $j_G\mge \ak_G$ can hold.


\section{Isometries of Hyperbolic-type Metrics}\label{isometry-sec}

If a metric is of interest, then so are its isometries.
By the {\em isometry problem} \index{isometry problem} for the
metric $d$ we mean
characterizing mappings $f:\, D\to \Rt$ which satisfy (\ref{isometry})
for all $x,y\in D$. Although not a part of our result but as a
motivation; before going to start on path metrics,
we give an overview on isometries of the Apollonian metric.
Isometries of the $j$ metric were studied by H\"ast\"o, Ibragimov
and Lind\'en \cite{Ha11,HIL}.

We recall that the Apollonian metric was introduced by
Beardon in 1998. However, it remained an open question
that what are all its isometries. Beardon first raised this
question and studied whether the Apollonian isometries are
only M\"obius maps. He proved that conformal mappings of plane domains,
whose boundary is a compact subset of the extended negative real axis
that contains at least three points, which are Apollonian isometries
are indeed M\"obius mappings, \cite[Theorem 1.3]{Be}.
In \cite{HI}, H\"ast\"o and Ibragimov
have established that this is true in the case of all open
sets with regular boundary. On the other hand, in \cite{HI2},
the same authors have found Apollonian isometries of plane domains
but without assumption on regularity of the boundary. They proved
that M\"obius mappings in plane domain are the isometries of
the Apollonian metric as long as the domain has at least three
boundary points. We note that the last two observations on isometries
of the Apollonian metric are very recent ones. However, a few years
ago, Gehring and Hag \cite{GH} have considered only the sense
preserving Apollonian isometries in disks and showed that they are
always the restriction of M\"obius maps. Independently, in 2003,
H\"ast\"o has generalized the above idea of Gehring and Hag
(see \cite{Ha2,Ha4}) to $\Rn$ as well.

Next we keep an eye on the study of isometries of hyperbolic-type
conformal metrics. There are three steps in characterizing isometries
of a conformal metric by showing that they are
$$
\text{(1)~ conformal; \quad (2)~ M\"obius; \quad  (3) similarities.}
$$
The step (1) has been carried out by Martin and Osgood
\cite[Theorem~2.6]{MO} for arbitrary domains assuming only that
the density is continuous, so there is no more work to do there.
Note that step (2) is trivial in dimensions $3$ and higher
(because all conformal maps are nothing but M\"obius maps), and that
step (3) is not relevant for M\"obius invariant metrics like the K--P
metric and Ferrand's metric.

Among the conformal metrics we mainly concentrate on the
quasihyperbolic metric, the K--P metric and the Ferrand metric.
The work of H\"ast\"o \cite{Ha10} on steps (2) and (3) is
very recent one. He proved that, except for the trivial case
of a half-plane, the quasihyperbolic isometries are
similarities with some assumptions of smoothness
on the domain boundary. Indeed, for example, he showed that
a quasihyperbolic isometry which is also a M\"obius transformation is a
similarity provided the domain is a $C^{1}$ domain which is
not a half-plane (see \cite[Proposition 2.2]{Ha10}).
Here {\em $C^{k}$ domain} \index{$C^{k}$ domain} \index{domain!$C^{k}$}
means its boundary is
locally the graph of a $C^{k}$ function.
From \cite[Theorem~2.8]{MO} we know that every quasihyperbolic
isometry is conformal. In dimensions three and higher all conformal
mappings are M\"obius. So certainly, H\"ast\"o could able to
generalize his above result to higher dimensions as well. He has also
generalized the results to $C^3$ domains. Regarding step (2), the reader
is referred to \cite[Section 4]{Ha10}. The work by Herron, Ibragimov and
Minda \cite{HIM} shows that all isometries of the K--P metric
are M\"obius mappings except in simply and doubly connected domains.

The above ideas of isometries encourage us to study isometries in other
specific domains and for the Ferrand metric as well. This leads to form a
survey article along with some new results in Chapter \ref{chap4}.

\vskip .5cm
\noindent{\bf UNIVALENT FUNCTIONS THEORY:}
\vskip .2cm

Univalent function theory is a classical area in the branch of complex analysis.
We know that a {\em function} \index{function} is a
rule of correspondence between two sets such that there is
a unique element in the second set assigned to each element in the first set.
A function on a domain is called
{\it univalent}\index{univalent function}\index{function!univalent} if it is one-to-one.
For example, any function $\phi_a(z)$
of the unit disk to itself, defined by
\begin{equation}\label{disk-to-disk}
\phi_a(z)=\frac{z+a}{1+\bar{a}z}
\end{equation}
is univalent, where $|a|<1$.
Various other terms are used for this concept,
e.g. {\it simple}, {\it schlicht} (the German word for simple).
We are mainly interested in univalent functions that are also
analytic in the given domain.
We refer the standard books by Duren \cite{Duren:univ}, Goodman \cite{Go}
and Pommerenke \cite{Pom:univ} for this theory.

The theory of univalent functions is so vast and complicated
so that certain simplifying assumptions are necessary.
If $g(z)$ is analytic in the unit disk $\D$, it has the Taylor series expansion
$$g(z)=b_0+b_1z+b_2z^2+\cdots =\sum_{n=0}^\infty b_nz^n.
$$
We observe that if $g(z)$ is univalent in $\D$ then the function
$f(z)=(g(z)-b_0)/b_1$ is also univalent in $\D$ and conversely.
Setting $b_n/b_1=a_n$ in the above expansion of $g$
we arrive at the normalized form 
\begin{equation}\label{series-exp}
f(z)=z+a_2z^2+\cdots =z+\sum_{n=2}^\infty a_nz^n.
\end{equation}
Here we note that, the above normalized form of the function $f$ satisfies
the relation $f(0)=0=f'(0)-1$.
The well-known example in this class is
{\it the Koebe function}\index{Koebe function}\index{function!Koebe},
$k(z)$, defined by
$$k(z)=\frac{z}{(1-z)^2}=z+\sum_{n=2}^\infty nz^n
$$
which is an extremal function for many subclasses of the class of
univalent functions.

It is natural to ask the following two questions about the representation (\ref{series-exp}).
\begin{enumerate}
\item[(1)] Given the sequence of coefficients $\{a_n\}$, how does
it influence some geometric properties of $f(z)$?
\item[(2)] Given some properties of $f(z)$, how does this property
affect the coefficients in (\ref{series-exp})?
\end{enumerate}

We denote by $\A$, the class of analytic functions $f$ in $\D$ of the form
(\ref{series-exp}) and $\es$ denotes the class of all functions $f\in \A$
that are univalent in $\D$.
Functions in the class $\es$ have a nice
geometric property that the range of the function contains a
disk of radius at most 1/4, because an extremal function
$k(z)$ maps the unit disk
onto the whole plane except a slit along the negative real axis
from $-1/4$ to $\infty$. This result is known as Koebe's one-quarter
theorem.

Many authors have studied a number of subclasses of univalent
functions as well. Among those, the class of convex and starlike functions are
the most popular and interesting because of their
simple geometric properties.

A domain $D\subset \C$ is said to be
{\it starlike}\index{starlike domain}\index{domain!starlike}
with respect to
a point $z_0\in D$ if the line segment joining $z_0$ to every other
point $z\in D$ lies entirely in $D$.
A function $f\in \es$ is said to be a
{\it starlike function}\index{starlike function}\index{function!starlike}
if $f(\D)$ is a domain starlike with respect to origin.
The class of all starlike functions is denoted by $\es^*$.
A typical example of a function in this class is the Koebe function
and as an extremal function, the range of every function
$f\in\es^*$ contains the disk $|w|<\frac{1}{4}$.

A domain $D\subset \C$ is said to be
{\it convex}\index{convex domain}\index{domain!convex} if it is starlike
with respect to each of its points; that is, if the line segment
joining any two points of $D$ lies completely in $D$.
Similar to starlike functions
a function $f\in \es$ is said to be convex\index{convex function}
\index{function!convex}
if $f(\D)$ is a convex domain. The class of all convex functions is
denoted by $\K$. The function
$$\ell(z)=\frac{z}{1-z}=\sum_{n=1}^\infty z^n
$$
belong to the class $\K$ and maps $\D$ onto the half-plane
$\real \{w\}>-\frac12$. This function plays a role of extremal function
for many problems in the class $\K$ as well. The range of every function
$f\in\K$ contains the disk $|w|<\frac{1}{2}$. We note that $z\ell'(z)=k(z)$.

An analytic description of starlike functions is that
$$f\in \es^* ~\mbox{ if and only if }~ \real \left(\frac{zf'(z)}{f(z)}\right)> 0
~~ \mbox{ for $z\in \D$}.
$$
Convex functions have also a similar description:
$$f\in\K ~\mbox{ if and only if }~ \real \left(1+\frac{zf''(z)}{f'(z)}\right)> 0
~~ \mbox{ for $z\in \D$}.
$$
The two preceding descriptions reveal an interesting close analytic
characterization between convex and starlike functions. This says that
$f(z)\in\K$ if and only if $zf'(z)\in \es^*$. This was first observed by
Alexander \cite{Alex} in 1915 and then onwards the result is
known as {\it Alexander's theorem}\index{Alexander's theorem}.
In view of this, the one-to-one correspondence between $\K$ and $\es^*$ is
given by the well-known {\em Alexander transform} \index{Alexander's transform}
\index{transform!Alexander's} defined by
\begin{equation}\label{Alex-trans}
J[f](z)=\int_0^z\frac{f(t)}{t}\,dt.
\end{equation}
That is, $J[f]$ is convex if and only if $f$ is starlike.
Here we remark that the Alexander transform (\ref{Alex-trans}),
in general, does not take an univalent function into another univalent
function (see \cite[Theorem 8.11]{Duren:univ}).
The above properties of Alexander's transform motivate to
study several other generalized transforms in
the theory of univalent functions.



\section{Coefficient Conditions and Radii Problems}\label{coeff-subsec}

We begin with the following conjecture \cite{Bie1,Bie2}:

{\bf Bieberbach's Conjecture.} If $f\in\es$, then for each $n\ge 2$ we have $|a_n|\le n$.

The conjecture was unsolved for about 70 years
although it had been proved in several special cases $n=2,3,4,5,6$
\cite[page 24]{Pom:univ} and many other subclasses of $\es$.
But finally, Louis de Branges \cite{de-Bra} settled it in the affirmative in 1985.
%
Several other type of coefficient estimates, such as
sufficient conditions for $f$ to be in several subclasses of $\es$, are also
well-established. For example (see Goodman \cite{Go57})
if $f(z)=z+\sum_{n=2}^\infty a_nz^n$ satisfies
$\sum_{n=2}^\infty n|a_n|\le 1$ then $f\in \es^*$.

The problem of estimating the radius of various classes of univalent
functions has attracted a certain number of mathematicians
involved in geometric function theory.
For a systematic survey of radius problems, we refer to
\cite[Chapter 13]{Go}.

Let $\mathcal F$ and $\mathcal G$ be two subclasses of $\mathcal A$.
If for every $f\in {\mathcal F}$, $r^{-1} f(rz)\in\mathcal{G}$ for
$r\leq r_0$, and $r_0$ is the largest number for which this holds,
then we say that $r_0$ is the $\mathcal{G}$ radius (or the radius of
the property\index{radius property} connected to $\mathcal G$)
in $\mathcal{F}$. This implies that
if $r>r_0$, there is at least one $f\in {\mathcal F}$ such that
$r^{-1} f(rz)\not\in\mathcal{G}$. Here our main aim
is to obtain $r_0$. There are many results of this type that have been studied
in the theory of univalent functions. For example, the radius of
convexity for the class $\es$ is known to be $2-\sqrt{3}$ and  that
of starlikeness for the same class is $\tanh \pi/4\approx 0.656$.

As a motivation of the discussion in Section \ref{coeff-subsec},
we form Chapter \ref{chap5}.

An outline of Chapter \ref{chap5} is as follows:
We define a subclass  $\es_p(\alpha)$,
$-1\le \alpha \le 1$, of starlike functions in the following way \cite{Ro-91}:
$${\es}_p(\alpha) = \left \{f\in {\mathcal S}:\,
\left |\frac{zf'(z)}{f(z)} -1\right |\leq {\rm Re}\,
\frac{zf'(z)}{f(z)}-\alpha, \quad z\in \D  \right \}.
$$
Geometrically, $f\in{\es}_p(\alpha)$ if and only if the domain
values of $zf'(z)/f(z)$, $z\in \D$, is the parabolic region
$({\rm Im}\,w)^2\leq (1-\alpha)[2{\rm Re}\,w -(1+\alpha)]$.
We determine necessary and sufficient
coefficient conditions for certain class of functions to be in $\es_p(\alpha)$.
Also, radius properties are considered for
$\es_p(\alpha)$-class in the class $\mathcal{S}$. We consider
another subclass of the class of univalent functions which has recent interest as follows.
A function $f\in \mathcal{A}$ is
said to be in $\mathcal{U(\lambda,\mu)}$ if
$$ \left| f'(z)\left(\frac{z}{f(z)} \right)^{\mu+1}-1\right| \le
\lambda \quad (|z|<1)
$$
for some $\lambda\ge 0$ and $\mu>-1$.
We find disks $|z|<r:=r(\lambda,\mu)$ for which
$\frac{1}{r}f(rz)\in \mathcal{U(\lambda,\mu)}$ whenever $f\in
\mathcal{S}$.
In addition to a number of new results,
we also present several new sufficient
conditions for $f$ to be in the class $\mathcal{U(\lambda,\mu)}$.


\section{Pre-Schwarzian Norm}\label{norm-subsec}

We recall that the class $\es$ is preserved under
disk automorphism\index{disk automorphism}
(also called the {\it Koebe transform})\index{Koebe transform}
\index{transform!Koebe}.
More precisely this means that if $f\in \es$ and
$$g(z)=\frac{f(\phi_a(z))-f(a)}
{(1-|a|^2)f'(a)}
=z+\left(\frac{1}{2}(1-|a|^2)\frac{f''(a)}{f'(a)}-\bar{a}\right)z^2+\cdots,
$$
then $g$ is also in the class $\es$, where $\phi_a(z)$ is defined by
(\ref{disk-to-disk}).
The derivative quantity $T_f:=f''/f'$ is called
the {\em pre-Schwarzian derivative} \index{pre-Schwarzian derivative} of $f$
or {\em logarithmic derivative} \index{logarithmic derivative} of $f'$.
By Bieberbach's theorem and an easy simplification,
we obtain
$$\left|(1-|a|^2)\frac{f''(a)}{f'(a)}-2\bar{a}\right|\le 4,
$$
where equality holds for a suitable rotation of the Koebe function.
Consequently, one has
$$\sup_{|z|<1}(1-|z|^2)\left|\frac{f''(z)}{f'(z)}\right|\le 6
$$
for $f\in \es$.
This can also be seen from the result of Martio and Sarvas
\cite[Item 4.6]{MS} and Osgood \cite[Lemma 1]{Osg82},
which says about an upper bound property of the
pre-Schwarzian derivative in terms of the quasihyperbolic density
in a proper subdomain of the complex plane.
The inequality is sharp, that is, we cannot
replace the constant $6$ by a smaller number and it can be
seen by considering the Koebe function.
This motivates us to study the quantity
\begin{equation}\label{norm}
\|f\|=\sup_{|z|<1}(1-|z|^2)\left|\frac{f''(z)}{f'(z)}\right|,
\end{equation}
in the theory of univalent functions. We usually say this
quantity as {\em pre-Schwarzian norm} \index{pre-Schwarzian norm} of the function $f$.
In general, the assumption on $f$
can be restricted to {\em locally univalent} \index{locally univalent}
\index{function!locally univalent} functions, namely,
$\LU:=\{f\in\A:\, f'(z)\ne0,~ z\in\D\}.$ We may regard $\LU$ as a
vector space over $\C$ not in the usual sense, but in the sense of
Hornich operations (see \cite{Hor69,KPS02,Yam75})
defined by
$$(f\oplus g) (z)=\int_0^z f'(w)g'(w)\,dw ~~\mbox{ and }~~
(\alpha\star f) (z)=\int_0^z \{f'(w)\}^\alpha\, dw
$$
for $f,g\in \LU$ and $\alpha\in \C$, where the branch of
$(f')^{\alpha}=\exp (\alpha \log f')$ is taken so that
$(f')^\alpha (0)=1$.

The pre-Schwarzian norm has significance in
the theory of Teichm\"uller spaces (see e.g. \cite{AG86}) as well.
We remark that the norm $\|f\|$ is nothing but the
Bloch semi-norm of the function
$\log f'$ (see, for example, \cite{Pom:univ}). We have before already
seen  that
$\|f\|\le 6$ if $f$ is univalent
in $\D$, and it is well-known that if $\|f\|\le 1$ then
$f$ is univalent in
$\D$, and these bounds are sharp (see \cite{Be72,BP84}).
Furthermore,
$\|f\|<\infty$ if and only if $f$ is uniformly locally univalent;
that is, there exists a constant $\rho=\rho(f),~0<\rho\le 1$, such
that $f$ is univalent in each disk of hyperbolic radius $\tanh^{-1}\rho$ in
$\D$, i.e. in each Apollonius (or Apollonian) disk
$$\left\{z\in\C:\,\left|\frac{z-a}{1-\bar az}\right|<\rho\right\},\quad
|a|<1
$$
(see \cite{Yam75,Yam76}). Note that the above disk is called
the Apollonian disk, because it has the same nature as in
the Apollonian balls defined in Subsection \ref{ABA} with $q_a=1/(\rho|a|)$.
Here we observe from Property 5 of Subsection \ref{ABA} that,
the well-known inversion relation,  $a$ and $1/\bar{a}$
are the inverse points with respect to the unit circle.
The set of all $f$ with
$\|f\|<\infty$ is a nonseparable Banach space (see \cite[Theorem
1]{Yam75}). For more geometric and analytic properties of $f$
relating the norm, see \cite{KS98}. Many authors have given norm
estimates for classical subclasses of univalent functions (see for example
\cite{CKPS04, KS2, Oku00, SugawaST, Yam97}).

For $f\in \es$, although its Alexander transform $J[f]$ is not
in $\es$, it is locally univalent and so it is reasonable to obtain
the norm estimates for the Alexander transform of certain
classes of analytic functions. For example, it has been
obtained in \cite{KPS02} that $\|J[f]\|\le 4$ for $f\in \es$ and
the inequality is sharp.

A simple generalization of  $\es^*$
is the so-called class of all {\it starlike
functions of order $\alpha$}\index{order of starlikeness},
$0\leq \alpha \le 1$, denoted by
${\es}^*(\alpha )$. Indeed, $f\in \es^*(\alpha)$ if and
only if $\real (zf'(z)/f(z))\ge \alpha$ in $\D$.
Here we remark that the later inequality is strict
except for $\alpha=1$.
We set ${\es}^*(0) ={\es}^*$.
Similarly, a function $f\in\es$ is said to be
{\it convex of order $\alpha$}\index{order  of convexity} if $\real (1+zf''(z)/f'(z))\ge \alpha$.
This class is denoted by $\K(\alpha)$. Like in the
starlikeness we set $\K(0)=\K$.

In 1999, Yamashita \cite{Yam97} proved that
if $f\in \es^*(\alpha)$ then $\|f\|\le 6-4\alpha$ and
$\|J[f]\|\le 4(1-\alpha)$ (or equivalently, $\|f\|\le 4(1-\alpha)$
for $f$ convex of order alpha) for $0\le \alpha <1$. Both the
inequalities are sharp (see also \cite[Theorem A]{CKPS04}).
There are many classes of functions $f$ for which the norm
$\|f\|$ is finite. We remark that if $f$ is bounded, it may happen that
$\|f\|=\infty$. For instance, the function
$$z\mapsto f(z)=\exp{\frac{z+1}{z-1}}
$$
in the unit disk shows that $T_f(z)=-2z/(1-z)^2$ and hence,
$\|f\|\to \infty$ as $z\to 1^{-}$.

Let us denote $\hol$ for the class of functions $f$ analytic in
the unit disk $\D$ and $\hol_a$ will denote the subclass $\{f\in\hol: \,
f(0)=a\}$, for $a\in\C.$
We say that a function $\varphi\in\hol$ is {\em subordinate}
\index{subordination} to $\psi\in\hol$
and write $\varphi\prec\psi$ or $\varphi(z)\prec\psi(z)$ if there is a
Schwarz function $\omega$ (i.e. a function $\omega\in\hol_0$ with
$|\omega|<1$ in $\D$) satisfying
$\varphi=\psi\circ\omega$ in $\D$. Note that the condition $\varphi\prec\psi$ is
equivalent to the conditions $\varphi(\D)\subset\psi(\D)$ and
$\varphi(0)=\psi(0)$ when $\psi$ is univalent.

If $f,g\in \hol$, with
$$f(z)=\sum_{n=0}^\infty a_nz^n ~~\mbox{ and }~~
g(z)=\sum_{n=0}^\infty b_nz^n,
$$
then the {\em Hadamard product} (or {\em convolution}) of $f$ and $g$
is defined by the function
$$(f*g)(z)=\sum_{n=0}^\infty a_nb_n z^n.
$$

As a motivation in Chapter 6, we consider the class
$$\K(A,B)=\left\{f\in\A:\,1+\frac{zf''(z)}{f'(z)}\prec\frac{1+Az}{1+Bz},
\quad z\in\D\right\},
$$
where $-1\le B<A\le 1$ and $\prec$ denotes the subordination.
For $0<b\le c$, define $B_{b,c}[f]$ by
$$B_{b,c}[f](z)=zF(1,b;c;z)*f(z),
$$
where
$F(a,b;c;z)$ is the Gauss {\em hypergeometric function}
\index{hypergeometric function}\index{function!hypergeometric} defined by
$$F(a,b;c;z)=\sum_{n=0}^\infty\frac{(a)_n(b)_n}{(c)_n(1)_n}z^n,\quad
z\in\D,
$$
where $(a)_n=a(a+1)\cdots(a+n-1)$ is the Pochhammer symbol (here
$(a)_0=1$) and $c$ is not a non-positive integer.
We have the well-known derivative formula
$$F'(a,b;c;z)=\frac{d}{dz}F(a,b;c;z)= \frac{ab}{c}F(a+1,b+1;c+1;z).
$$
As a special case of the Euler integral
representation for the hypergeometric function, one has
$$F(1,b;c;z)=\frac{\Gamma (c)}{\Gamma (b)\Gamma (c-b)}
\int_0^1 \frac{1}{1-tz} t^{b-1}(1-t)^{c-b-1}\,dt, \quad z\in\D, \quad
\mbox{Re$\,c>$Re$\,b>0$}.
$$
Using this representation we have,  for $f\in \mathcal{A}$, the convolution
transform
$$zF(1,b;c;z)*f(z) = z\left (F(1,b;c;z)*\frac{f(z)}{z}\right ).
$$
Therefore, we obtain the integral convolution which
defines the (hypergeometric) operator $B_{b,c}[f]$ in the following form
$$B_{b,c}[f](z):=zF(1,b;c;z)*f(z)=\frac{\Gamma(c)}{\Gamma(b)\Gamma(c-b)}
\int_0^1 t^{b-1}(1-t)^{c-b-1}\frac{f(tz)}{t}\,dt
$$
so that
$$(B_{b,c}[f])'(z)  =  F(1,b;c;z)*f'(z).
$$
We obtain sharp pre-Schwarzian norm estimates for
functions in ${\mathcal K}(A,B)$.
In addition, we also present sharp norm estimates
for $B_{b,c}[f](z)$ when $f$ ranges over the class
${\mathcal K}(A,B)$.

Some particular cases need special attention. For example, if
$c=b+1$ and  $b=\gamma +1$, then one has the well-known {\em Bernardi transform}
\index{Bernardi's transform}\index{transform!Bernardi's}
$B_\gamma [f]:=B_{\gamma +1,\gamma +2}[f]$ defined by
\begin{eqnarray}\label{bit}
B_\gamma [f](z)=\frac{\gamma +1}{z^\gamma
}\int_0^zt^{\gamma -1}f(t)\,dt=zF(1,\gamma +1;\gamma +2;z)*f(z),
\end{eqnarray}
for $\gamma >-1$.
We observe that $B_0 [f]=J[f]$ and $B_1 [f]=L[f]$, where
$J[f]$ and $L[f]$ are respectively the Alexander transform of $f$, and the
Libera transform of $f$.

Also, similar norm estimates have been established for the class
$${\mathcal F}_\beta=\left\{f\in\A:\,
{\rm Re}\,\left(1+\frac{zf''(z)}{f'(z)}\right)<\frac{3}{2}\beta,
\quad z\in\D\right\},
$$
where $\frac{2}{3}<\beta \le 1$.
The class ${\mathcal F}_\beta$ and its special case ${\mathcal F}_1={\mathcal F}$
have been studied, for example, in  \cite{PR95,PS96,PV07} but for different purposes.
In \cite[Eq. (16)]{PR95} it has been shown that if $f\in {\mathcal F}$, then one has
$$\left |\frac{zf'(z)}{f(z)}- \frac{2}{3} \right |<\frac{2}{3}, \quad z\in \D;
\quad \mbox{i.e.~}~\frac{zf'(z)}{f(z)}\prec \frac{2(1-z)}{2-z}, \quad z\in \D.
$$
Thus,  $\F_{\beta }\subset \F\subset\es^*$ for $\frac{2}{3}<\beta \le 1$.
Note that each $f\in\es^*$
has the well-known analytic characterization:
$$\frac{zf'(z)}{f(z)}\prec\frac{1+z}{1-z} , \quad z\in \D.
$$
In conclusion, we see that the image
domains of the unit disk $\D$ under the functions from ${\mathcal F}_\beta$
and the operators of such functions are quasidisks.
For example, if $f\in\F_1$ then the images $J[f](\D)$ and $L[f](\D)$
under the Alexander and Libera transforms respectively are quasidisks.

As a last result, we obtain 
an optimal
but not a sharp pre-Schwarzian norm estimates of functions
$f\in {\mathcal S}^*(\alpha,\beta)$, $0<\alpha\le 1$ and $0\le \beta< 1$,
of
$\A$, where
$$\es^*(\alpha,\beta)=\left\{f\in\A:\, \frac{zf'(z)}{f(z)}\prec
\left(\frac{1+(1-2\beta)z}{1-z}\right)^\alpha\right\}.
$$
Indeed,
$$\|f\|\le L(\alpha,\beta)+2\alpha,
$$
where
$$L(\alpha,\beta)=\frac{4(1-\beta)(k-\beta)(k^\alpha
-1)}{(k-1)(k+1-2\beta)}
$$
and $k$ is the unique solution of the following equation in $x\in (1,\infty)$:
\begin{eqnarray}
&&\nonumber(1-\alpha)x^{\alpha +2}+\beta(3\alpha-2)x^{\alpha+1}
+[(1-2\beta)(1+\alpha)+2\beta^2(1-\alpha)]x^\alpha \\
~&& \nonumber \hspace{3cm}- \alpha\beta(1-2\beta)x^{\alpha -1}- x^2+2\beta x
=(1-\beta)^2+\beta^2.
\end{eqnarray}

The sharp estimates for this problem remains an open problem.

\section{Summary and Conclusion}\label{summary}

The current chapter is dedicated for the introduction of some basic
concepts and results that we require to present our main
results in the sequel. More precisely, we provide some
history concerning inequalities and isometries of
hyperbolic-type metrics; and
the coefficient estimates, radius problems and
pre-Schwarzian norm estimates of functions from some
subclasses of the class of univalent functions.

In the next three chapters we have studied certain hyperbolic-type
metrics such as the Apollonian metric, its inner metric,
$j$ metric
and its inner metric namely the quasihyperbolic metric.
We look at inequalities among them and their geometric
interpretation in the sense of constructing or
characterizing domains where they hold together.
We also obtain isometries of some hyperbolic-type path metrics such as
the quasihyperbolic metric, the Ferrand metric and the K--P metric in certain
specific domains.

In the remaining two chapters we have considered
some geometrically motivated subclasses $\F$ of $\es$.
We obtain the largest disk
$|z|<r$ for which $\frac{1}{r}f(rz)\in \F$ whenever $f\in \es$.
We also obtain necessary and sufficient coefficient conditions
for $f$ to be in $\F$. In addition, we estimate the pre-Schwarzian
norm of functions from $\F$ and that of certain convolution or
integral transforms of functions from $\F$. Some open questions
concerning certain classes of univalent functions are studied.

We expect that some of the investigations would lead to new results
in different areas of research in function theory.

\chapter{INEQUALITIES AND GEOMETRY OF THE APOLLONIAN AND RELATED METRICS}
\label{chap2}
The structure of this chapter is as follows.
We start by reviewing the definitions, notation and terminology used. The
bulk of the chapter consists of five sections which are organized along
the different methods used to prove the inequalities in Table~\ref{table1}.
Specifically, in Section~\ref{jaSect} we consider the comparison property and
uniformity; and in Section~\ref{QISect} quasi-isotropy. The main problem
in Sections~\ref{qconvSect} and \ref{constructSect} is the inequality
$\alpha_G \age \ak_G$.
In Section~\ref{noncompSect}
we consider the case when the metrics $j_G$ and $\ak_G$ are not comparable.

Most of the results of this chapter have been published in:
{\bf P. H\"{a}st\"{o}, S. Ponnusamy  and S.K. Sahoo} (2006)
Inequalities and geometry of the Apollonian and related metrics.
{\em Rev. Roumaine Math. Pures  Appl.} \textbf{51}(4), 433--452.


\section{Introduction}
\label{intro}

In this chapter we consider the Apollonian metric which was first
introduced by Barbilian \cite{Ba} in 1934--35 and
then rediscovered by Beardon \cite{Be} in 1998.
We also consider the inner metric of the Apollonian metric,
the $j_G$ metric and its inner metric, the quasihyperbolic metric.
We are mainly interested in dealing with inequalities among these metrics
(see Table~\ref{table1}) and the geometric meaning of these inequalities.
The notation
used conforms largely to that of \cite{Be2} and \cite{Vu},
the reader can consult Subsection~\ref{spaceSubsect}, if necessary.

Recall that the Apollonian metric\index{Apollonian metric}\index{metric!Apollonian} is defined for
$x,y\in G\psubset \Rnbar$ by
\[ \alpha_G(x,y) := \sup_{a,b\in\partial G}
\log \frac{|a-y|\,|b-x|}{|a-x|\,|b-y|} \]
(with the understanding that $|\infty-x|/|\infty-y|=1$).
This metric was introduced
in \cite{Be} and has also been considered in
\cite{BI,GH,Ro,Se} and \cite{Ha2}--\cite{Ib3}.

For definitions and some of the properties of the Apollonian inner metric,
the $j_G$-metric and the quasihypebolic metric we refer to
Section \ref{Ineq-geom}.

\begin{table}
\center{
\begin{tabular}{|rlcc|rlcc|}
\hline
\# & Inequality & A & B &  \# & Inequality & A & B\\
\hline
1. & $\alpha\aeq j \aeq \ak \aeq k$ & + & + &
7. & $\alpha\aeq j \mle \ak \mle k$& -- & -- \\
2. & $\alpha\mle j \aeq \ak \aeq k$ & -- & -- &
8. & $\alpha\mle j \mle \ak \mle k$ & -- & -- \\
3. & $\alpha\aeq j \aeq \ak \mle k$ & -- & -- &
9. & $\alpha\aeq \ak \mle j \aeq k$ & -- & + \\
4. & $\alpha\mle j \aeq \ak \mle k$ & -- & -- &
10. & $\alpha\mle \ak \mle j \aeq k$ & -- & + \\
5. & $\alpha\aeq j \mle \ak \aeq k$ & + & + &
11. & $\alpha\aeq \ak \mle j \mle k$ & -- & -- \\
6. & $\alpha\mle j \mle \ak \aeq k$ & + & + &
12. & $\alpha\mle \ak \mle j \mle k$ & -- & -- \\
\hline
\end{tabular}}
\medskip
\caption{Inequalities between the metrics $\alpha_G$, $j_G$, $\ak_G$ and
$k_G$. The subscripts are omitted for clarity with the understanding that
every metric is defined in the same domain. The A-column refers to whether
the inequality can occur in simply connected planar domains, the
B-column to whether it can occur in proper subdomains of $ \Rn$.}\label{table1}
\end{table}

We will undertake a systematic study of which of the inequalities in (\ref{genIneqs})
can hold
in the strong form with $\mle$ and which of the relations $j_G\mle \ak_G$,
$j_G\aeq \ak_G$ and $j_G\mge \ak_G$ can hold. Thus we are led to twelve
inequalities, which are given along with the results in Table~\ref{table1},
where we have indicated in column A whether the inequality can hold in
simply connected planar domains and in column B whether it can hold in
an arbitrary proper subdomains of $\Rn$. From the table we see
that most of the cases cannot occur, which means
that there are many restrictions on which inequalities can occur together. For
instance, we deduce from items 1--4 that $j_G\aeq \ak_G$ implies that
$\alpha_G\aeq k_G$ and from items 9--12 that the inequality $\ak_G\mle j_G$
cannot occur in simply connected planar domains.

Since $\ale$ is not a linear order, it is also possible that two metrics are
not comparable. Therefore we consider separately the case
when $j\notcomp \ak$ in Section~\ref{noncompSect}.
Since the table does not list this case, one should be careful with
the interpretations; for instance, it is not true that the inequality
$\ak_G\mle k_G$ cannot occur in simply connected planar domains,
contrary to what might be thought
by considering entries~3, 4, 7, 8, 11 and 12.



\subsection{Notation}\label{spaceSubsect}


We denote by $\{e_1, e_2, \ldots, e_n\}$ the standard basis of
$\Rn$ and by $n$ the dimension of the Euclidean space under
consideration and assume that $n\ge 2$. For $x\in \Rn$ we denote
by $x_i$ its $i^{\rm{th}}$ coordinate. The following notation is
used for Euclidean balls and spheres:
$$ B^n(x, r):=\{y \in \Rn \colon \abs{x-y} <r \},\
S^{n-1}(x, r):=\{y \in \Rn \colon \abs{x-y} =r\},$$
$$ B^n:=B^n(0,1),\ S^{n-1}:=S^{n-1}(0,1).
$$
We denote by $[x,y]$ the closed segment between $x$ and $y$.

We use the notation $\Rnbar := \Rn \cup \{ \infty\}$ for the one point
compactification of $\Rn$, equipped with the chordal metric.
Thus an open ball of $\Rnbar$
as an open Euclidean ball, an open half-space or the complement of
a closed Euclidean ball.
We denote by $\partial G$, $G^c$ and $\overline{G}$
the boundary, complement and closure of $G$, all with respect to $\Rnbar$.

We also need some notation for quantities depending on the underlying
Euclidean metric.
For $x\in G\psubset\Rn$ we write $\delta(x):=d(x,\partial G):=
\min \{\abs{x-z}\colon z\in \partial G\}$.
For a path $\gamma$ in $\Rn$ we denote by $\ell(\gamma)$ its
Euclidean length.
For $x,y,z\in\Rn$ we denote by $\widehat{xyz}$ the smallest angle
between the vectors $x-y$ and $z-y$.



\section{Basic Inequalities}\label{jaSect}

In this section we define the comparison property and uniformity which are
the relations from the introduction that have been most thoroughly studied
in the past.


\subsection{The comparison property}\label{approxSubsect}

In \cite{Ha1} the term {\em comparison property}\index{comparison
property} was introduced for the relation $\alpha_G\aeq j_G$.
Also, an equivalence formulation of this property has been studied
by H\"ast\"o, see (Theorem 1.3 in \cite{Ha1}).

From the definition of the inner metric (see Section \ref{Ineq-geom})
it directly follows that if $d_1$ and $d_2$ are metrics
in the same domain, then $d_1\aeq d_2$ implies that $\tilde{d}_1\aeq \tilde{d}_2$.
Therefore Inequalities~3 (Table \ref{table1}), $\alpha_G\aeq j_G \aeq \ak_G \mle k_G$,
and 7, $\alpha_G\aeq j_G \mle \ak_G \mle k_G$, cannot occur, since
in both cases we have assumed the comparison property but not the equivalence
of the inner metrics, $\ak_G$ and $k_G$.

A well-known fact from \cite[Theorem~3.2]{Be} is that $\alpha_G\le 2j_G$
in every domain $G\psubset \Rn$. Also, it was shown in  \cite[Theorem~4.2]{Se}
that if $G\psubset \Rn$ is convex, then $j_G\le \alpha_G$.
So $\alpha_G\aeq j_G$ in convex domains.

\begin{lemma} Inequality 5, $\alpha_G\aeq j_G \mle \ak_G \aeq k_G$,
holds in the domain $G:=\{x\in \Rn\colon \abs{x_n}<1\}$.
\end{lemma}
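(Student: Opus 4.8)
The plan is to break the chain $\alpha_G\aeq j_G \mle \ak_G \aeq k_G$ into the three assertions $\alpha_G\aeq j_G$, $\ak_G\aeq k_G$ and $j_G\mle \ak_G$, and to observe that everything except the strictness hidden in the last relation is immediate from results already quoted. The two features of $G$ I would exploit are that it is \emph{convex} and that $\delta(z)=\dist(z,\partial G)\le 1$ for every $z\in G$, with equality exactly on the central hyperplane $\{x_n=0\}$.

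First I would establish $\alpha_G\aeq j_G$. Since $G$ is convex, \cite[Theorem~4.2]{Se} gives $j_G\le \alpha_G$, while $\alpha_G\le 2j_G$ holds in every domain by \cite[Theorem~3.2]{Be}; together these yield $\alpha_G\aeq j_G$. Next, because $\ak_G$ and $k_G$ are the inner metrics of $\alpha_G$ and $j_G$ respectively, the remark that $d_1\aeq d_2$ forces $\tilde d_1\aeq \tilde d_2$ upgrades $\alpha_G\aeq j_G$ at once to $\ak_G\aeq k_G$. For the non-strict half of $j_G\mle \ak_G$ I would simply chain $j_G\ale \alpha_G\le \ak_G$, using $\alpha_G\le\ak_G$ from (\ref{genIneqs}); equivalently, $j_G\ale k_G\aeq \ak_G$.

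The main obstacle, and the only genuinely new point, is the strict inequality $j_G\not\age \ak_G$, i.e.\ the absence of any constant $K$ with $\ak_G\le K j_G$. Having reduced this via $\ak_G\aeq k_G$ to showing $k_G\not\ale j_G$, I would produce an explicit family of point pairs on which $k_G/j_G$ blows up. Take $x=-t\,e_1$ and $y=t\,e_1$ with $t>0$; since $n\ge2$ these lie on the axis $\{x_n=0\}$, so $\delta(x)=\delta(y)=1$ and hence
\[
j_G(x,y)=\log\!\left(1+2t\right).
\]
On the other hand $\delta(z)\le 1$ throughout $G$ forces $\int_\gamma |dz|/\delta(z)\ge \ell(\gamma)\ge |x-y|=2t$ for every path $\gamma$, while the segment $[x,y]$ lies where $\delta\equiv1$ and realizes this bound, so $k_G(x,y)=2t$. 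Therefore $k_G(x,y)/j_G(x,y)=2t/\log(1+2t)\to\infty$ as $t\to\infty$, which rules out $k_G\ale j_G$ and thereby $j_G\age \ak_G$, completing the argument. I note in passing that this computation is exactly the statement that the slab fails to be a uniform domain, in accordance with the Gehring--Osgood characterization that $k_G\ale j_G$ holds if and only if $G$ is uniform.
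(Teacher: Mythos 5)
Your proposal is correct and follows essentially the same route as the paper: convexity gives the comparison property $\alpha_G\aeq j_G$ (hence $\ak_G\aeq k_G$ by passing to inner metrics), and the pair $\pm R e_1$ with $j_G=\log(1+2R)$ versus $k_G=2R$ yields $j_G\mle k_G$, from which $j_G\mle\ak_G$ follows. Your write-up merely makes explicit a few steps the paper leaves implicit (the verification that $k_G(\pm Re_1)=2R$ and the transfer of strictness from $k_G$ to $\ak_G$).
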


\begin{proof} The domain $G$ is clearly convex, hence it has the
comparison property by  \cite[Theorem~4.2]{Se}. From this
it follows that $\alpha_G\aeq j_G$ and $\ak_G\aeq k_G$.  Consider then the
points $R e_1$ and $-Re_1$, where $R>0$. We have
$j_G(R e_1, -R e_1)=\log(1+2R)$ and $k_G(R e_1, -R e_1)= 2R$, hence
$j_G\mle k_G$, which concludes the proof.
 \end{proof}


\subsection{Uniformity}\label{unifSubsect}

Uniform domains were introduced by O. Martio and J. Sarvas in \cite[2.12]{MS},
but the following definition is an equivalent form from \cite[(1.1)]{GO}. In
\cite{Ge2}, there is a survey of characterizations and implications of uniformity.
\begin{definition}\label{unifDef}
A domain $G\psubset \Rn$ is said to be {\em uniform}\index{uniform domain}
\index{domain!uniform} with constant $K$ if
for every $x,y\in G$ there exists a path $\gamma$,
parameterized by arc-length, connecting $x$ and $y$ in $G$, such that
$\ell(\gamma) \le K \abs{x-y}$; and
$K \delta(\gamma(t)) \ge \min \{t, \ell(\gamma) -t\}$.
\end{definition}
The relevance of uniformity to our
investigation comes from Corollary~1 of \cite{GO} which states that
a domain $G$ is uniform if and only if $k_G\aeq j_G$.
This condition is also equivalent to
$\ak_G\ale j_G$, see \cite[Theorem 1.2]{HPWS}.
Thus we have a geometric
characterization of domains satisfying this inequality as well.

\begin{example} The unit ball is uniform and has the comparison property.
Hence $\alpha_{B^n} \aeq j_{B^n} \aeq \ak_{B^n}\aeq k_{B^n}$ and so
Inequality~1 can occur.
\end{example}
In fact, Inequality~1 holds in every quasiball, by \cite[Corollary~6.9]{Ha2}.
\begin{lemma}\label{not910}
Inequalities~9 (Table \ref{table1}), $\alpha_G\aeq \ak_G\mle j_G\aeq k_G$,
and 10, $\alpha_G\mle \ak_G\mle j_G\aeq k_G$ cannot occur in simply
connected planar domains.
\end{lemma}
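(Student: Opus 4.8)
The plan is to prove the contrapositive by assuming the relations in Inequalities 9 and 10 hold and deriving a contradiction in the simply connected planar setting. Both inequalities share the key hypothesis $j_G \aeq k_G$, which by Corollary 1 of \cite{GO} means precisely that $G$ is a uniform domain. Since we are in the simply connected planar case, a uniform domain is a quasidisk, as recalled in Section \ref{Ineq-geom}. The second common hypothesis is $\alpha_G \aeq \ak_G$, which is the relation called Apollonian quasiconvexity in \cite{Ha2,Ha4,Ha3}. So the whole question reduces to: can a simply connected planar domain be simultaneously a quasidisk (uniform) and Apollonian quasiconvex, yet have $\ak_G \mle j_G$ in the strict sense demanded by entries 9 and 10?

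First I would assemble the consequences of uniformity. By \cite[Theorem 1.2]{HPWS}, uniformity ($k_G \aeq j_G$) is equivalent to $\ak_G \ale j_G$. Combined with the general inequality $\ak_G \ale k_G$ and the always-valid $j_G \le k_G$ (from the inner-metric relation), the hypothesis $j_G \aeq k_G$ forces $\ak_G \ale j_G \aeq k_G$, and since $j_G \le k_G$ always, we get $j_G \aeq \ak_G \aeq k_G$. In other words, uniformity already yields $j_G \aeq \ak_G$. This directly contradicts the stipulation $\ak_G \mle j_G$ appearing in both Inequalities 9 and 10, because $\mle$ means $\ak_G \ale j_G$ but $\ak_G \not\age j_G$, whereas we have just shown $\ak_G \age j_G$. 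That is the crux of the argument, and it does not even require simple connectivity or planarity for this particular clash.

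The subtlety, and the reason the statement is restricted to simply connected planar domains, is whether the equivalence $\ak_G \ale j_G \Leftrightarrow$ uniformity from \cite[Theorem 1.2]{HPWS} holds in full generality or needs the planar simply connected hypothesis; I would check the precise hypotheses there, since the whole collapse $j_G \aeq \ak_G$ hinges on being able to run the chain $\ak_G \ale j_G \aeq k_G$ together with $j_G \le k_G$. The expected main obstacle is therefore bookkeeping rather than a new idea: one must confirm that uniformity (equivalently $k_G \aeq j_G$) genuinely implies both $\ak_G \ale j_G$ and hence $\ak_G \aeq j_G$ under the stated restrictions, and that the strictness required by $\mle$ in entries 9 and 10 is incompatible with this. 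Once that chain is verified, the contradiction is immediate and the lemma follows for both inequalities simultaneously, with the $\alpha_G \aeq \ak_G$ versus $\alpha_G \mle \ak_G$ distinction between entries 9 and 10 playing no role in the contradiction.

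Alternatively, and perhaps more cleanly, I would note that entries 9 and 10 both assert $j_G \aeq k_G$ while simultaneously asserting $\ak_G \mle j_G$; but $k_G$ is by definition the inner metric of $j_G$, and $\ak_G$ is the inner metric of $\alpha_G$. The remark in Subsection \ref{approxSubsect} that $d_1 \aeq d_2$ implies $\tilde d_1 \aeq \tilde d_2$ suggests examining whether $j_G \aeq \ak_G$ is forced. I would present the first route as the main proof since it invokes the explicit characterization \cite[Theorem 1.2]{HPWS} directly, keeping the second observation in reserve as a sanity check.
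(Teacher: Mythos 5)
There is a genuine gap at the crux of your argument. From uniformity you correctly get, via \cite[Theorem 1.2]{HPWS}, the upper bound $\ak_G \ale j_G$, and from the hypotheses you have $j_G \aeq k_G$, $\ak_G \ale k_G$ and $j_G \le k_G$. But none of these yields the reverse bound $j_G \ale \ak_G$, so the asserted conclusion ``we get $j_G \aeq \ak_G \aeq k_G$'' is a non sequitur: every inequality at your disposal bounds $\ak_G$ from \emph{above}, which is exactly compatible with the strict relation $\ak_G \mle j_G$ you are trying to contradict. Worse, your remark that the clash ``does not even require simple connectivity or planarity'' proves too much: Table~\ref{table1} records a ``$+$'' in column B for entries 9 and 10, and Corollary~\ref{1-pt-remove-cor} together with the construction in Section~\ref{constructSect} exhibits uniform domains $G\psubset\Rn$ (for instance a ball with finitely many punctures) in which $j_G \aeq k_G$ holds and yet $\ak_G \mle j_G$. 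So uniformity cannot force $j_G \aeq \ak_G$; the restriction to simply connected planar domains is essential, and not because of any hypothesis in \cite[Theorem 1.2]{HPWS}, which is valid for arbitrary domains. Your hedge about ``bookkeeping'' is therefore aimed at the wrong step.

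The missing ingredient is the comparison property, and this is where the paper's proof differs from yours. The paper argues: $j_G \aeq k_G$ means $G$ is uniform; a simply connected planar uniform domain is a quasidisk by \cite[Theorem~2.24]{MS}; quasidisks have the comparison property $\alpha_G \aeq j_G$ by \cite[Corollary~6.3]{Ha1}. On the other hand, both entries 9 and 10 force $\alpha_G \mle j_G$: in each case $\alpha_G \ale \ak_G \ale j_G$, and since $\alpha_G \le \ak_G$ always holds, $\alpha_G \age j_G$ would give $\ak_G \age j_G$, contradicting $\ak_G \mle j_G$. This yields the contradiction. (Alternatively, once $\alpha_G \aeq j_G$ is in hand, passing to inner metrics as in Subsection~\ref{approxSubsect} gives $\ak_G \aeq k_G \aeq j_G$, again contradicting $\ak_G \mle j_G$.) Either way, the step you cannot avoid is quasidisk $\Rightarrow$ comparison property; your proposal never invokes it, and without it the contradiction is unreachable.
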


\begin{proof}
We note that in both these inequalities we have
$j_G\aeq k_G$ among the assumptions. But a simply connected planar
domain is uniform if and only if it is a quasidisk, by \cite[Theorem~2.24]{MS},
and we know that quasidisks have the comparison property, by
\cite[Corollary~6.3]{Ha1}. Therefore $j_G\aeq k_G$ implies that
$\alpha_G\aeq j_G$ which contradicts $\alpha_G\mle j_G$ in
both inequalities.
 \end{proof}


\section{Quasi-isotropy}\label{QISect}



We start by introducing some concepts which allow us to calculate the
Apollonian inner metric. The concept of quasi-isotropy was
introduced in \cite{Ha2} and was studied in depth in \cite{Ha3}.
A very similar notion
used by Zair Ibragimov is conformality, see \cite{Ib1,Ib2,Ib3}.

\begin{definition}\label{qi} We say that a metric space $(G,d)$ with $G\subset\Rn$
is {\em $K$--quasi-isotropic}\index{quasi-isotropy} if
$$ \limsup_{r\to 0} \frac{\sup \{d(x,z)\colon \abs{x-z}=r\}}{
\inf \{d(x,y)\colon \abs{x-y}=r\}} \le K
$$
for every $x\in G$. A $1$--quasi-isotropic metric space is called
{\em isotropic}\index{isotropic}.
\end{definition}

We say that a domain $G\psubset \Rn$ is quasi-isotropic
\index{quasi-isotropic domain}\index{domain!quasi-isotropic}
if $(G,\alpha_G)$ is $K$--quasi-isotropic for some constant $K$;
similarly for isotropic.
We define the function $qi$ on the set of proper subdomains of $\Rn$ so that
$qi(G)$ is the least constant for which $G$ is quasi-isotropic or
$qi(G)=\infty$ if $G$ is not quasi-isotropic for any $K$. The notion
of quasi-isotropy is extended to domains in $\Rnbar$ by M\"obius
invariance.

Note that the Apollonian metric is not isotropic. It is, nevertheless, possible
to define a directed density\index{directed density} as follows:
\[ \bar\alpha_G(x;r) = \lim_{t\to 0} \tfrac1t \alpha_G(x, x+ t\tfrac{r}{|r|}),
\]
where $r\in \Rn\setminus \{0\}$. Unless otherwise stated, in this chapter,
we will be using the notation $r$ not for a number, but for a vector
whenever we talk about the above notation for the directed density.
If $\bar\alpha_G(x;r)$ is
independent of the vector $r$ at every point of $G$, then the Apollonian
metric is isotropic and we may denote
$\bar\alpha_G(x):=\bar\alpha_G(x;e_1)$ and call this function the
density of $\alpha_G$ at $x$. With this concept we can give the
following alternative characterization of quasi-isotropy.

\begin{lemma}\cite[Lemma~3.5]{Ha3}\label{qiDef2}
For $G\psubset\Rn$ we have
$$ qi(G) = \sup_{x\in G} \frac{\sup_{r\in S^{n-1}} \bar\alpha_G(x;r)}
{\inf_{r\in S^{n-1}} \bar\alpha_G(x;r)}, $$
with the understanding that if $\bar\alpha_G(x;r)=0$ for some $x\in G$ and $r\in S^{n-1}$,
then $qi(G)=\infty$.
\end{lemma}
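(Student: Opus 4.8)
The plan is to pass from the finite-radius quantities in Definition~\ref{qi} to the infinitesimal directed density by letting the radius shrink to zero. First I would fix $x\in G$ and note that for small $t>0$ the set $\{z\colon \abs{x-z}=t\}$ consists exactly of the points $z=x+tu$ with $u\in S^{n-1}$, so that $\sup\{\alpha_G(x,z)\colon\abs{x-z}=t\}=\sup_{u\in S^{n-1}}\alpha_G(x,x+tu)$ and similarly for the infimum. Dividing numerator and denominator by $t$, the ratio in Definition~\ref{qi} becomes the ratio of $\sup_{u}t\inv\alpha_G(x,x+tu)$ to $\inf_{u}t\inv\alpha_G(x,x+tu)$. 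Since the directed density $\bar\alpha_G(x;u)=\lim_{t\to0}t\inv\alpha_G(x,x+tu)$ exists for each direction (as recalled above), the whole problem reduces to justifying that $\sup_u$ and $\inf_u$ may be interchanged with this limit.

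The crucial step, and the main obstacle, is to upgrade this pointwise convergence to convergence that is uniform in the direction $u$. I would obtain this from equicontinuity. For unit vectors $u,u'$ put $y=x+tu$ and $y'=x+tu'$; the triangle inequality for $\alpha_G$ together with the bound $\alpha_G\le 2j_G$ yields
\[
\abs{\alpha_G(x,y)-\alpha_G(x,y')}\le \alpha_G(y,y')\le 2j_G(y,y')
=2\log\Big(1+\tfrac{t\abs{u-u'}}{\min\{\delta(y),\delta(y')\}}\Big),
\]
using $\abs{y-y'}=t\abs{u-u'}$. For $t<\delta(x)/2$ one has $\min\{\delta(y),\delta(y')\}\ge\delta(x)/2$, and $\log(1+s)\le s$ then gives $t\inv\abs{\alpha_G(x,y)-\alpha_G(x,y')}\le 4\abs{u-u'}/\delta(x)$. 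Thus the family $u\mapsto t\inv\alpha_G(x,x+tu)$, $0<t<\delta(x)/2$, is equi-Lipschitz on $S^{n-1}$; it is also uniformly bounded, since $t\inv\alpha_G(x,x+tu)\le 2t\inv\log(1+t/\delta(x))\le 2/\delta(x)$. As $S^{n-1}$ is compact, pointwise convergence together with equicontinuity forces uniform convergence (an Arzel\`a--Ascoli argument), whence $\sup_{u}t\inv\alpha_G(x,x+tu)\to\sup_{u}\bar\alpha_G(x;u)$ and likewise for the infimum.

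Granting $\inf_{u}\bar\alpha_G(x;u)>0$, these two convergences combine to show that the limit, and in particular the limsup, of the ratio in Definition~\ref{qi} equals $\sup_{u}\bar\alpha_G(x;u)/\inf_{u}\bar\alpha_G(x;u)$. Since $qi(G)$ is by definition the least constant $K$ that bounds this limsup for every $x\in G$, taking the supremum over $x$ gives the claimed formula. Finally, in the degenerate case where $\bar\alpha_G(x;r)=0$ for some $x$ and some direction $r$, the infimum over $u$ vanishes while the supremum stays positive (the directed density of a genuine metric cannot vanish in all directions at a point), so the normalized infimum tends to zero and the ratio is unbounded as $t\to0$; hence $qi(G)=\infty$, in agreement with the stated convention.
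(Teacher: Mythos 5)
Your argument is essentially correct, but note that the thesis does not prove this statement at all: Lemma~\ref{qiDef2} is imported verbatim from H\"ast\"o's paper \cite[Lemma~3.5]{Ha3}, so there is no in-paper proof to compare against. Judged on its own merits, your route --- reduce the finite-radius ratio in Definition~\ref{qi} to a ratio of $\sup_u$ and $\inf_u$ of $t\inv\alpha_G(x,x+tu)$, prove the family is equi-Lipschitz in $u$ via the reverse triangle inequality and $\alpha_G\le 2j_G$, upgrade pointwise to uniform convergence on the compact sphere, and then interchange $\sup$, $\inf$ with the limit --- is sound, and the key estimate $t\inv\abs{\alpha_G(x,x+tu)-\alpha_G(x,x+tu')}\le 4\abs{u-u'}/\delta(x)$ for $t<\delta(x)/2$ checks out. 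Two small imprecisions are worth fixing. First, your uniform bound should read $t\inv\alpha_G(x,x+tu)\le 4/\delta(x)$ rather than $2/\delta(x)$, since $\min\{\delta(x),\delta(x+tu)\}$ is only bounded below by $\delta(x)-t\ge\delta(x)/2$; this is immaterial to the argument. Second, in the degenerate case your parenthetical reason for $\sup_u\bar\alpha_G(x;u)>0$ (``the directed density of a genuine metric cannot vanish in all directions'') is off target, because the case $\bar\alpha_G(x;r)=0$ is exactly the case where $\alpha_G$ may fail to be a genuine metric. The correct reason comes from Lemma~\ref{aDensLem}: $\bar\alpha_G(x;\theta)=0$ forces both Apollonian spheres at $x$ in direction $\theta$ to be half-spaces, hence $G^c\subset\{y\colon (y-x)\cdot\theta=0\}$; if this held for every $\theta\in S^{n-1}$ then $G^c\subset\{x\}=\emptyset$, contradicting $G\psubset\Rn$. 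That same spheres formula is the natural alternative route to the whole lemma --- it gives positivity and continuity of $\bar\alpha_G(x;\cdot)$ directly --- whereas your argument is more elementary in that it only uses the triangle inequality and Beardon's bound $\alpha_G\le 2j_G$.
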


When we do not need the exact value of the quasi-isotropy constant the
following lemma is often more convenient to use.
\begin{lemma}\cite[Corollary~5.11]{Ha2}\label{qiDef3}
Let $G\psubset\Rn$ be $L-$quasi-isotropic.
Then $\bar\alpha_G(x;r)\delta(x) \ge 1/L$ for every $x\in G$ and $r\in S^{n-1}$.
If conversely $1/L \le \bar\alpha_G(x;r)\delta(x)$ for every $x\in G$
and $r\in S^{n-1}$, then $G$ is $2L$--quasi-isotropic.
\end{lemma}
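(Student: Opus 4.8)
The plan is to reduce the whole statement to two \emph{universal} estimates on the directed density $\bar\alpha_G(x;r)$ — valid in every proper subdomain of $\Rn$, with no isotropy assumption — namely an upper bound $\bar\alpha_G(x;r)\,\delta(x)\le 2$ for all $r\in S^{n-1}$, and a lower bound on its maximum over directions, $\sup_{r\in S^{n-1}}\bar\alpha_G(x;r)\,\delta(x)\ge 1$. Once these are available, both halves follow mechanically from Lemma~\ref{qiDef2}, which identifies $qi(G)$ with $\sup_{x}\big(\sup_r\bar\alpha_G(x;r)\big)/\big(\inf_r\bar\alpha_G(x;r)\big)$. Indeed, if $G$ is $L$--quasi-isotropic then at each $x$ we have $\inf_r\bar\alpha_G(x;r)\ge L^{-1}\sup_r\bar\alpha_G(x;r)\ge 1/(L\delta(x))$, which is the first assertion; conversely, the hypothesis $\bar\alpha_G(x;r)\delta(x)\ge 1/L$ says $\inf_r\bar\alpha_G(x;r)\ge 1/(L\delta(x))$, and dividing the universal upper bound $\sup_r\bar\alpha_G(x;r)\le 2/\delta(x)$ by this shows the ratio in Lemma~\ref{qiDef2} is at most $2L$. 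The factor $2$ in the converse is precisely the ratio of the two universal bounds and is intrinsic to this argument.

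For the upper bound I would invoke the comparison $\alpha_G\le 2j_G$ from \cite[Theorem~3.2]{Be}, which holds in every domain. Fixing $x$ and a unit vector $r$, the definition of $j_G$ gives $j_G(x,x+tr)=\log\!\big(1+t/\min\{\delta(x),\delta(x+tr)\}\big)$ for small $t>0$; since $\delta$ is $1$--Lipschitz we have $\delta(x+tr)\to\delta(x)$, so $t^{-1}j_G(x,x+tr)\to 1/\delta(x)$. Passing to the limit in $\alpha_G(x,x+tr)\le 2j_G(x,x+tr)$ yields $\bar\alpha_G(x;r)\le 2/\delta(x)$ for every $r$, hence the bound on the supremum.

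For the lower bound I would not estimate a generic direction but exhibit one good one. Let $a_0\in\partial G$ realize $|x-a_0|=\delta(x)$ and set $r_0:=(x-a_0)/\delta(x)$, the unit vector pointing \emph{away} from the nearest boundary point. In the supremum defining $\alpha_G(x,x+tr_0)$ I retain only a single pair $(a,b)$. Taking $a=a_0$ produces
\[
\log\frac{|a_0-(x+tr_0)|}{|a_0-x|}=\log\Big(1+\tfrac{t}{\delta(x)}\Big),
\]
since $x+tr_0$ recedes radially from $a_0$. For $b$ there are two cases: if the open ray $\{x+sr_0:s>0\}$ meets $\partial G$ at a finite point $b^\ast$, then $b^\ast-x$ is a positive multiple of $r_0$ and the factor $\log\big(|b^\ast-x|/|b^\ast-(x+tr_0)|\big)$ is nonnegative for small $t$; otherwise this ray lies in $G$, so $G$ is unbounded, whence $\infty\in\partial G$ (as $G\subset\Rn$), and taking $b=\infty$ contributes $0$ by the convention $|\infty-x|/|\infty-y|=1$. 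Either way $\alpha_G(x,x+tr_0)\ge\log\big(1+t/\delta(x)\big)$, so $\bar\alpha_G(x;r_0)\ge 1/\delta(x)$ and the supremum over $r$ is at least $1/\delta(x)$.

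The two limit computations are routine; the one place demanding genuine care is the lower bound, where the choice of the direction $r_0$ and the correct treatment of the boundary point at infinity are essential. In particular it would be an error to overlook that an unbounded $G\subset\Rn$ has $\infty\in\partial G$, and this is exactly what keeps the estimate honest (without it one would wrongly get values below $1/\delta(x)$ in, say, the exterior of a ball). Beyond that I expect no conceptual obstacle: the substance is assembling these two crude bounds and recognizing that Lemma~\ref{qiDef2} converts them directly into both implications, the structural loss of the factor $2$ included.
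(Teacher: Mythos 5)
Your argument is correct. The thesis itself gives no proof of this lemma --- it is quoted from H\"ast\"o's paper --- so there is nothing to compare line by line, but your reduction is exactly the right one: both implications do collapse, via Lemma~\ref{qiDef2}, to the two universal estimates $\bar\alpha_G(x;r)\delta(x)\le 2$ and $\sup_{r}\bar\alpha_G(x;r)\delta(x)\ge 1$, and the factor $2$ in the converse is indeed just the quotient of these two bounds. Your derivations of the bounds are also sound: the upper bound from $\alpha_G\le 2j_G$ together with the Lipschitz continuity of $\delta$, and the lower bound from the single pair $(a_0,b)$ with $b$ either the first exit point of the ray $\{x+sr_0:s>0\}$ or $\infty$ (correctly using that $\partial G$ is taken in $\Rnbar$, so unbounded $G\subset\Rn$ has $\infty\in\partial G$). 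The one alternative worth noting is that both universal bounds drop out even faster from Lemma~\ref{aDensLem}: since every ball $B^n(x\pm s\theta,s)$ with $s\le\delta(x)/2$ lies in $B^n(x,\delta(x))\subset G$, one always has $r_\pm\ge\delta(x)/2$ (or $1/(2r_\pm)\le 0$ in the degenerate cases), giving $\bar\alpha_G(x;\theta)=\tfrac1{2r_+}+\tfrac1{2r_-}\le 2/\delta(x)$; and for $\theta$ pointing away from a nearest boundary point $a_0$ the ball $B^n(x-s\theta,s)$ must avoid $a_0$, forcing $r_-\le\delta(x)/2$ and hence $\bar\alpha_G(x;\theta)\ge 1/\delta(x)$ (the term $1/(2r_+)$ being nonnegative because $\infty\notin G$). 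That route avoids invoking Beardon's inequality $\alpha_G\le 2j_G$ and is presumably how the cited source argues, but your version is equally valid.
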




In order to present an integral formula for the Apollonian inner metric we need
to relate the density of the Apollonian metric with the limiting concept of
the Apollonian balls, which we call the Apollonian spheres.

\begin{definition}\label{aSpheres}
Let $G\psubset\Rnbar$, $x\in G$ and $\theta \in S^{n-1}$.
\begin{itemize}
\item If $B^n(x + s \theta, s)\subset G$ for every $s>0$ and $\infty\not\in G$,
then let $r_+=\infty$.
\item If $B^n(x + s \theta, s)\subset G$ for every $s>0$ and $\infty\in G$,
then let $r_+$ be the largest negative real number such that
$G\subset B^n(x + r_+ \theta, \abs{r_+})$.
\item Otherwise let $r_+>0$ be the largest real number such that
$B^n(x + r_+ \theta, r_+)\subset G$.
\end{itemize}
Define $r_-$ in the same way but using the vector $-\theta$ instead
of $\theta$. We define the {\em Apollonian spheres\index{Apollonian sphere}
through $x$ in
direction $\theta$} by $S_+:= S^{n-1}(x + r_+  \theta, r_+)$ and
$S_-:= S^{n-1}(x - r_- \theta, r_-)$ for finite radii and by the limiting half-space for
infinite radii.
\end{definition}

Using these spheres we can present a useful result from \cite{Ha2}.

\begin{lemma}\cite[Lemma~5.8]{Ha2}\label{aDensLem}
Let $G\psubset\Rnbar$ be open, $x\in G\setminus\{\infty\}$ and
$\theta\in S^{n-1}$. Let $r_\pm$ be the radii of the Apollonian spheres $S_\pm$
at $x$ in direction $\theta$. Then
\[ {{\bar{\alpha}}}_G(x;\theta) = \frac{1}{2r_+} + \frac{1}{2r_-},\]
where we understand $1/\infty=0$.
\end{lemma}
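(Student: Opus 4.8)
The plan is to compute $\alpha_G(x,x+t\theta)$ to first order in $t$ and divide by $t$. Writing $y=x+t\theta$ with $t>0$ small, I would use the Apollonian-parameter form from Subsection~\ref{ABA}, namely $\alpha_G(x,y)=\log q_x+\log q_y$ with
\[ q_x=\sup_{a\in\partial G}\frac{|a-y|}{|a-x|},\qquad q_y=\sup_{b\in\partial G}\frac{|b-x|}{|b-y|}, \]
and reduce the lemma to the two one-directional limits $\tfrac1t\log q_x\to\tfrac{1}{2r_-}$ and $\tfrac1t\log q_y\to\tfrac{1}{2r_+}$ as $t\to 0^+$, since their sum is $\bar{\alpha}_G(x;\theta)$.

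For $q_x$ I would expand, using $|a-y|^2=|a-x|^2-2t\langle a-x,\theta\rangle+t^2$, to get
\[ \frac{|a-y|^2}{|a-x|^2}=1+2t\,c(a)+\frac{t^2}{|a-x|^2},\qquad c(a):=\frac{-\langle a-x,\theta\rangle}{|a-x|^2}. \]
The key geometric observation is that the locus $\{c=1/(2s)\}$ is exactly the sphere $S^{n-1}(x-s\theta,s)$ through $x$ tangent to the hyperplane $\langle z-x,\theta\rangle=0$, and that these spheres are nested in $s$. Since $B^n(x-r_-\theta,r_-)\subset G$ contains no boundary point, every $a\in\partial G$ must satisfy $c(a)\le 1/(2r_-)$, with equality at the point $a_0$ where $S_-$ meets $\partial G$. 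Feeding this uniform bound, together with $|a-x|\ge\delta(x)>0$, into the expansion would sandwich $q_x^2$ between $1+t/r_-$ and $1+t/r_-+t^2/\delta(x)^2$, and taking logarithms and dividing by $2t$ yields $\tfrac1t\log q_x\to\tfrac{1}{2r_-}$.

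For $q_y$ the computation is symmetric, except that the extremal ball now sits at $y$ rather than at $x$. I would obtain the lower bound directly from the touching point $b_0\in S_+\cap\partial G$: since $\langle b_0-x,\theta\rangle=|b_0-x|^2/(2r_+)$, a short computation gives $|b_0-x|^2/|b_0-y|^2=(1-t/r_+ +t^2/|b_0-x|^2)^{-1}$, so $\tfrac1t\log q_y\ge\tfrac{1}{2r_+}+O(t)$. For the matching upper bound I would establish the semicontinuity $\liminf_{t\to0^+}r_+(y)\ge r_+$, where $r_+(y)$ is the extremal radius in direction $\theta$ based at $y$; this is immediate because $B^n(y+(r_+-t)\theta,\,r_+-t)=B^n(x+r_+\theta,\,r_+-t)\subset B^n(x+r_+\theta,r_+)\subset G$ is admissible for $y$, so $r_+(y)\ge r_+-t$. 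Then the same expansion, now with $\sup_b c'(b)=1/(2r_+(y))\le 1/(2(r_+-t))$ and $|b-y|\ge\delta(y)\ge\delta(x)-t$, gives $\tfrac1t\log q_y\le\tfrac{1}{2(r_+-t)}+O(t)\to\tfrac{1}{2r_+}$, and adding the two limits proves the formula.

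The main obstacle is exactly the interchange of the supremum over $\partial G$ with the limit in $t$: both inequalities demand error terms uniform in the boundary point, which is why the bound $|a-x|\ge\delta(x)$ and the semicontinuity estimate $r_+(y)\ge r_+-t$ are indispensable. The remaining effort is bookkeeping for the degenerate configurations of Definition~\ref{aSpheres}: when a radius is infinite the Apollonian sphere degenerates to the tangent half-space, the corresponding supremum of $c$ (resp.\ $c'$) is nonpositive, and one checks its contribution is $o(t)$, matching the convention $1/\infty=0$; when $\infty\in G$ a radius may be negative, and the same expansions remain valid with the signed radius, the sum $\tfrac{1}{2r_+}+\tfrac{1}{2r_-}$ staying nonnegative as a metric density must.
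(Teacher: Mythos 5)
Your proposal is correct, and there is in fact nothing in the thesis to compare it against: the thesis states this lemma as a quotation of \cite[Lemma~5.8]{Ha2} without proof, adding only the remark that the extension from $G\psubset\Rn$ to open $G\psubset\Rnbar$ ``is proved in exactly the same manner.'' Your argument --- expanding $q_x$ and $q_y$ to first order in $t$, identifying the level sets of $c$ and $c'$ with the spheres tangent at $x$ (resp.\ $y$) to the hyperplane orthogonal to $\theta$ so that the two suprema are controlled exactly by the extremal radii, getting uniformity in the boundary point from $|a-x|\ge\delta(x)$, and using the identity $B^n(y+(r_+-t)\theta,\,r_+-t)=B^n(x+r_+\theta,\,r_+-t)\subset G$ to compare the extremal radius at $y$ with $r_+$ --- is precisely the standard computation behind H\"ast\"o's lemma, and your closing discussion of infinite and signed (negative) radii supplies exactly the bookkeeping that the thesis's remark about the $\Rnbar$ case alludes to. The only step you leave implicit is that for finite $r_\pm$ the extremal sphere actually meets $\partial G$ (so that the touching points $a_0$ and $b_0$ exist); this follows from the maximality of the extremal ball by a routine compactness argument and is worth one line in a written-out proof.
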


\begin{remark} The previous lemma was proved in \cite{Ha2} only for the case
$G\psubset\Rn$. The general case is proved in exactly the same manner.
\end{remark}

The following result shows that we can find the Apollonian inner metric by integrating
over the directed density, as should be expected.
Piecewise continuously differentiable means
continuously differentiable except in a finite number of points.

\begin{lemma}\cite[Theorem~1.4]{Ha5}\label{innerThm}
If $x,y\in G\psubset \Rn$, then
$$ \ak_G(x,y) =
\inf_\gamma \int \bar\alpha_G(\gamma(t); \gamma'(t) ) \abs{\gamma'(t)} dt, $$
where the infimum is taken over all paths connecting $x$ and $y$ in $G$
that are piecewise continuously differentiable (with the understanding
that $\bar\alpha_G(z;0)0 =0$ for all $z\in G$, even though
$\bar\alpha_G(z;0)$ is not defined).
\end{lemma}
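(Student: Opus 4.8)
The plan is to reduce the statement to a pointwise \emph{length formula} and then to reconcile the two competing classes of paths. Writing $\alpha_G(\gamma)$ for the $\alpha_G$-length of $\gamma$ (the supremum of $\sum_i \alpha_G(\gamma(t_i),\gamma(t_{i+1}))$ over all partitions, as defined in Section~\ref{Ineq-geom}), the Apollonian inner metric is by definition $\ak_G(x,y)=\inf_\gamma \alpha_G(\gamma)$, the infimum running over all rectifiable $\gamma$ joining $x$ and $y$. Thus it suffices to prove (i) that for every piecewise continuously differentiable $\gamma$ one has
\[
\alpha_G(\gamma)=\int \bar\alpha_G(\gamma(t);\gamma'(t))\,\abs{\gamma'(t)}\,dt,
\]
and (ii) that restricting the infimum from all rectifiable paths to piecewise $C^1$ paths does not change its value.

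For (i), I would first note that since $\alpha_G$ obeys the triangle inequality, refining a partition only increases the sum $\sum_i \alpha_G(\gamma(t_i),\gamma(t_{i+1}))$; hence $\alpha_G(\gamma)$ is the limit of these sums as the mesh tends to $0$. The heart of the matter is the infinitesimal estimate
\[
\alpha_G(x,x+v)=\bar\alpha_G(x;v/\abs{v})\,\abs{v}+o(\abs{v})\qquad(v\to 0),
\]
which I would extract from the representation $\alpha_G(x,y)=\log(q_xq_y)$ together with the explicit density formula $\bar\alpha_G(x;\theta)=\frac{1}{2r_+}+\frac{1}{2r_-}$ of Lemma~\ref{aDensLem} and the Apollonian-ball relations of Subsection~\ref{ABA} (in particular $q_x-1\le\abs{x-y}/\delta(x)\le q_x+1$ from item~\ref{ABA_delta}). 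Applying this on each subinterval, with $\gamma(t_{i+1})-\gamma(t_i)=\gamma'(t_i)\,\Delta t_i+o(\Delta t_i)$ supplied by the $C^1$ hypothesis, turns $\sum_i\alpha_G(\gamma(t_i),\gamma(t_{i+1}))$ into a Riemann sum for the asserted integral.

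The delicate point — and the step I expect to be the main obstacle — is that the $o(\abs{v})$ above must be made \emph{uniform} in both the base point $x$ and the direction $v/\abs{v}$ as the pair $(\gamma(t),\gamma'(t))$ ranges over the compact set swept out along the curve, and the error must be genuinely higher order (controlled by a quantity of order $\abs{v}^2/\delta(x)$) so that the accumulated errors vanish in the limit. This uniformity does not follow from the bare definition of $\bar\alpha_G$ as a limit; it requires the precise geometry of the Apollonian spheres of Definition~\ref{aSpheres} together with the continuity of $\delta$, of $\gamma'$, and of the density in $(x,\theta)$ along the curve. Granting the length formula, part (ii) is routine: every piecewise $C^1$ path is rectifiable, so $\ak_G(x,y)\le\inf_\gamma\int\bar\alpha_G(\gamma;\gamma')\,\abs{\gamma'}\,dt$; for the reverse, given a near-optimal rectifiable path I would replace it by an inscribed polygon and smooth its corners, the increase in $\alpha_G$-length being controlled by the upper bound $\alpha_G\le 2j_G$ combined with the length formula on the short segments, which keeps the $\alpha_G$-length within a prescribed $\varepsilon$ of $\ak_G(x,y)$.
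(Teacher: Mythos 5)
The thesis contains no proof of this lemma---it is quoted verbatim from \cite[Theorem~1.4]{Ha5}---so there is no in-paper argument to set your sketch against; I can only judge it on its own terms. Your architecture is the natural one: identify $\ak_G(x,y)$ with $\inf_\gamma\alpha_G(\gamma)$, prove a length formula $\alpha_G(\gamma)=\int\bar\alpha_G(\gamma;\gamma')\abs{\gamma'}\,dt$ for piecewise $C^1$ paths, and then show that passing to polygonal competitors does not raise the infimum. But the proposal has a hole exactly where you predict one, and the theorem lives entirely in that hole. What you need is quantitative: for $x$ in a compact subset of $G$ and, say, $\abs{v}\le\delta(x)/4$,
\[
\Bigl|\,\alpha_G(x,x+v)-\bar\alpha_G\bigl(x;v/\abs{v}\bigr)\abs{v}\,\Bigr|\le C\,\frac{\abs{v}^2}{\delta(x)^2},
\]
and you only assert that this ``requires the precise geometry of the Apollonian spheres.'' It is in fact provable from the tools you name: write $q_x=\sup_{a\in\partial G}\abs{a-y}/\abs{a-x}$, expand $\abs{a-x-v}/\abs{a-x}$ to second order uniformly in $a$ using $\abs{a-x}\ge\delta(x)$, and use the elementary identity expressing $\sup_{a\in\partial G}\ip{x-a}{\theta}/\abs{x-a}^2$ as the reciprocal of twice an Apollonian sphere radius, which recovers Lemma~\ref{aDensLem}. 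A proof must display this computation rather than point at it. The same representation of the density as a supremum of a family of functions whose gradients in $(x,\theta)$ are uniformly $O(1/\delta(x)^2)$ is what yields local Lipschitz continuity of $(x,\theta)\mapsto\bar\alpha_G(x;\theta)$; you invoke this continuity (for the Riemann-sum limit and implicitly for the integrability of the integrand) but never establish it, and it is not among the results you are allowed to quote.

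The second concrete defect is in your step (ii). You propose to control the increase of $\alpha_G$-length under polygonal replacement by ``$\alpha_G\le 2j_G$ combined with the length formula on the short segments.'' That inequality only gives $\alpha_G([a,b])-\alpha_G(a,b)=O(\abs{a-b}/\delta)$ per segment, and these errors sum to $O(\ell(\gamma)/\delta)$, which does not tend to zero as the mesh is refined; the bound shows the excess is bounded, not small. What actually closes the argument is the same second-order estimate as above together with the Lipschitz continuity of the density, which give $\alpha_G([a,b])-\alpha_G(a,b)=O(\abs{a-b}^2/\delta^2)$ per segment, and this does sum to $o(1)$ because $\sum_i\abs{\Delta_i}^2\le\max_i\abs{\Delta_i}\cdot\ell(\gamma)$. (Smoothing corners is unnecessary: the lemma's class of ``piecewise continuously differentiable'' paths already admits polygons, which moreover lie in $G$ once the mesh is small compared with $\min_{t}\delta(\gamma(t))$.) So the plan is sound and the difficulty correctly located, but the two estimates carrying all the weight are missing, and the one quantitative tool you do cite for the approximation step is too weak to do its job.
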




The importance of quasi-isotropy to the study of inequalities is a consequence
of the following lemma.

\begin{lemma}\cite[Corollary~5.4]{Ha5}\label{qiEquivs}
For $G\psubset\Rn$ the following conditions are equivalent:
\begin{enumerate}
\item\label{item1} $G$ is quasi-isotropic;
\item\label{item2} $\ak_G\aeq k_G$; and
\item\label{item3} $j_G\ale \ak_G$.
\end{enumerate}
\end{lemma}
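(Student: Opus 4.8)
The plan is to prove the cycle of implications $(1)\Rightarrow(2)\Rightarrow(3)\Rightarrow(1)$, using throughout the principle that $k_G$ is the path metric with density $1/\delta$ while, by Lemma~\ref{innerThm}, $\ak_G$ is the path metric with directed density $\bar\alpha_G(\cdot\,;\cdot)$. Thus every comparison of $\ak_G$ with $k_G$ should reduce to a pointwise comparison of $\bar\alpha_G(x;\theta)$ with $1/\delta(x)$, and Lemma~\ref{qiDef3} is precisely the bridge between quasi-isotropy and such pointwise density bounds.

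For $(1)\Rightarrow(2)$, suppose $G$ is $L$-quasi-isotropic. The forward half of Lemma~\ref{qiDef3} gives $\bar\alpha_G(x;\theta)\ge 1/(L\,\delta(x))$ for all $x\in G$ and $\theta\in S^{n-1}$; since the directed density depends only on the direction, this holds for every nonzero argument. Substituting this lower bound into the integral formula of Lemma~\ref{innerThm} and comparing integrands term by term, I obtain
\[ \ak_G(x,y)\ \ge\ \frac1L\inf_\gamma\int\frac{\abs{\gamma'(t)}}{\delta(\gamma(t))}\,dt\ =\ \frac1L\,k_G(x,y), \]
that is $k_G\ale\ak_G$. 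Combined with the universal inequality $\ak_G\ale k_G$ from (\ref{genIneqs}), this gives $\ak_G\aeq k_G$, which is $(2)$. The implication $(2)\Rightarrow(3)$ is then immediate: the fundamental inequality $j_G\le k_G$ together with $k_G\ale\ak_G$ yields $j_G\ale\ak_G$.

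For $(3)\Rightarrow(1)$, the idea is to recover the pointwise density bound by differentiating the hypothesis $j_G\le C\,\ak_G$ along infinitesimal segments. Fix $x\in G$, $\theta\in S^{n-1}$, and test the inequality on the pair $x,\,x+t\theta$. On the left, $\delta(x+t\theta)\to\delta(x)$ and $\log(1+u)\sim u$ give $\lim_{t\to0^+}t^{-1}j_G(x,x+t\theta)=1/\delta(x)$. On the right, using the straight segment as a competitor in Lemma~\ref{innerThm} gives $\ak_G(x,x+t\theta)\le\int_0^t\bar\alpha_G(x+s\theta;\theta)\,ds$, whence $\limsup_{t\to0^+}t^{-1}\ak_G(x,x+t\theta)\le\bar\alpha_G(x;\theta)$. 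Dividing the hypothesis by $t$ and letting $t\to0^+$ therefore yields $\bar\alpha_G(x;\theta)\,\delta(x)\ge 1/C$ for all $x$ and $\theta$, and the converse half of Lemma~\ref{qiDef3} then shows that $G$ is $2C$-quasi-isotropic, establishing $(1)$.

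I expect the main obstacle to lie in this last step, specifically in justifying $\limsup_{t\to0^+}t^{-1}\ak_G(x,x+t\theta)\le\bar\alpha_G(x;\theta)$: passing the average $t^{-1}\int_0^t\bar\alpha_G(x+s\theta;\theta)\,ds$ to the value $\bar\alpha_G(x;\theta)$ requires some regularity of $s\mapsto\bar\alpha_G(x+s\theta;\theta)$ at $s=0$. I would control this through the geometric description $\bar\alpha_G=\tfrac1{2r_+}+\tfrac1{2r_-}$ of Lemma~\ref{aDensLem}, checking that the Apollonian sphere radii vary semicontinuously in the base point so that the density is at worst upper semicontinuous along the segment. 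The inscribed ball $B^n(x,\delta(x))\subset G$ forces $r_\pm\ge\delta(x)/2$, hence the uniform local bound $\bar\alpha_G(x;\theta)\le 2/\delta(x)$, which keeps the integrand controlled and makes the averaging argument routine.
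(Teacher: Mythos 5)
The thesis does not actually prove this lemma---it is imported verbatim from \cite[Corollary~5.4]{Ha5}---so there is no in-paper proof to compare against, and your reconstruction must be judged on its own. It is correct and follows the natural route through the quoted auxiliary results: Lemma~\ref{qiDef3} converts quasi-isotropy into the pointwise bound $\bar\alpha_G(x;\theta)\,\delta(x)\ge 1/L$, Lemma~\ref{innerThm} integrates this against the quasihyperbolic density to give $k_G\ale\ak_G$ (and hence $(1)\Rightarrow(2)\Rightarrow(3)$ via $j_G\le k_G$), and differentiating $j_G\le C\,\ak_G$ along short segments recovers the density bound needed for the converse half of Lemma~\ref{qiDef3}; the one delicate point you flag, namely $\limsup_{t\to0^+}t^{-1}\int_0^t\bar\alpha_G(x+s\theta;\theta)\,ds\le\bar\alpha_G(x;\theta)$, is indeed settled by the lower semicontinuity of the Apollonian sphere radii $r_\pm$ (via Lemma~\ref{aDensLem}) together with the local bound $\bar\alpha_G\le 2/\delta$, exactly as you indicate. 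This is essentially the argument of the cited source.
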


\begin{corollary} Inequalities~4 (Table \ref{table1}),
$\alpha_G\mle j_G\aeq \ak_G\mle k_G$,
and 8, $\alpha_G\mle j_G\mle \ak_G\mle k_G$, cannot occur.
\end{corollary}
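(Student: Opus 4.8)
The plan is to extract from both inequalities the single common assumption $j_G \ale \ak_G$ and then play it off against Lemma~\ref{qiEquivs}. In Inequality~4 the middle relation is $j_G \aeq \ak_G$, which by Definition~\ref{relation-def} unpacks to $j_G \ale \ak_G$ (together with $j_G \age \ak_G$); in Inequality~8 the middle relation is $j_G \mle \ak_G$, which again unpacks to $j_G \ale \ak_G$ (together with $j_G \not\age \ak_G$). So in either case the hypothesis contains $j_G \ale \ak_G$, that is, condition~\itemref{item3} of Lemma~\ref{qiEquivs}.

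Next I would invoke the equivalence of \itemref{item3} and \itemref{item2} in Lemma~\ref{qiEquivs}: the relation $j_G \ale \ak_G$ forces $G$ to be quasi-isotropic and hence $\ak_G \aeq k_G$. In particular this yields $\ak_G \age k_G$, which is exactly the one-sided inequality I will compare against.

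This is already the contradiction. Both Inequality~4 and Inequality~8 assert $\ak_G \mle k_G$ as their rightmost relation, and by Definition~\ref{relation-def} the symbol $\mle$ means precisely $\ak_G \ale k_G$ together with $\ak_G \not\age k_G$. Since the previous step produced $\ak_G \age k_G$, the two cannot hold simultaneously, so neither inequality can occur. I do not expect any genuine obstacle here: the whole argument is a bookkeeping reduction of both cases to the characterization of quasi-isotropy already recorded in Lemma~\ref{qiEquivs}. The only point requiring care is the correct parsing of the compound relations $\aeq$ and $\mle$ into their defining one-sided pieces, so that $j_G \ale \ak_G$ is read off from the middle link of each chain and $\ak_G \not\age k_G$ from the final link.
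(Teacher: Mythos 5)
Your argument is correct and is essentially the paper's own proof: both rest entirely on the equivalence of conditions \itemref{item2} and \itemref{item3} in Lemma~\ref{qiEquivs}. The paper merely runs the implication in the contrapositive direction (from $\ak_G\mle k_G$ it deduces $j_G\not\ale\ak_G$, contradicting the middle link), whereas you go forward from $j_G\ale\ak_G$ to $\ak_G\aeq k_G$ and contradict the final link; the two are the same argument.
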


\begin{proof} In both cases the assumption $\ak_G\mle k_G$ implies
that $j_G\not\ale \ak_G$, by the previous lemma. This
contradicts $j_G\aeq \ak_G$ (in 4) and $j_G\mle \ak_G$ (in 8).
\end{proof}



In \cite{Ha2} an exterior ball condition of $G$ was defined as follows:
for every $z\in \partial G$ there exists a ball of radius
$r$ in the set $G^c \cap B^n(z,Lr)$, where $L>1$. This condition was shown to
be sufficient for the comparison property. The following
theorem features a local version of this property.

\begin{theorem}\label{qi-bc}
Let $G\psubset\Rn$ be arbitrary and $L>1$. For every $x\in G$, let
$z\in\partial G$ be such that $|x-z|=\delta(x)$ and suppose there
exists a ball $B$ with radius $r_0=\delta(x)/\sqrt{L^2-1}$ such that
\begin{enumerate}
\item $d:=d(z,\partial B)\le r_0(L-1)$; and
\item for any $y\in B$ the line segment $[x,y]$ connecting $x$ and $y$
intersects $\partial G$.
\end{enumerate}
Then the inequality $\ak_G\aeq k_G$ holds.
\end{theorem}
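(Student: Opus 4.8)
The plan is to reduce $\ak_G \aeq k_G$ to quasi-isotropy and then to a single pointwise estimate on the directed density. By Lemma~\ref{qiEquivs}, the inequality $\ak_G \aeq k_G$ holds exactly when $G$ is quasi-isotropic, and by Lemma~\ref{qiDef3} it is enough to produce a constant $c=c(L)>0$ with $\bar\alpha_G(x;\theta)\,\delta(x)\ge c$ for every $x\in G$ and every $\theta\in S^{n-1}$. Using Lemma~\ref{aDensLem} to write $\bar\alpha_G(x;\theta)=\tfrac{1}{2r_+}+\tfrac{1}{2r_-}\ge \tfrac{1}{2\min\{r_+,r_-\}}$, where $r_\pm$ are the radii of the Apollonian spheres at $x$ in the directions $\pm\theta$, the whole problem collapses to one geometric estimate: I must show $\min\{r_+,r_-\}\le C(L)\,\delta(x)$ for every $\theta$.

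The key observation I would establish is that the blocking ball $B$ is disjoint from \emph{every} Apollonian ball at $x$. Fix a direction $\phi$ and let $A:=B^n(x+r\phi,r)\subset G$ be the corresponding Apollonian ball. Since $A$ is convex, $x\in\partial A$, and $A$ is open, the half-open segment $(x,y]$ lies in $A\subset G$ for every $y\in A$; together with $x\in G$ this gives $[x,y]\subset G$, so $[x,y]$ meets no boundary point. If some $y$ lay in $A\cap B$, this would contradict hypothesis~(2). Hence $A\cap B=\emptyset$, and for two disjoint open balls the centres satisfy $|x+r\phi-c|\ge r+r_0$, where $c$ is the centre of $B$. (Applying hypothesis~(2) to $y=x$ also forces $x\notin B$, so $|x-c|\ge r_0$.)

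Expanding the disjointness inequality yields $|c-x|^2-r_0^2\ge 2r\,(r_0+\langle\phi,c-x\rangle)$. Given $\theta$, at least one of $\theta,-\theta$ — call it $\phi$ — satisfies $\langle\phi,c-x\rangle\ge 0$; applying the estimate in that direction and using $r_0+\langle\phi,c-x\rangle\ge r_0>0$ gives
\[
\min\{r_+,r_-\}\le \frac{|c-x|^2-r_0^2}{2r_0}.
\]
(The degenerate case of an infinite Apollonian radius is consistent: letting the tangent balls grow forces $\langle\phi,c-x\rangle\le -r_0$, so an infinite-radius direction can never be the chosen one.) It remains only to control $|c-x|$. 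Hypothesis~(1), $d(z,\partial B)\le r_0(L-1)$, gives $|z-c|\le r_0 L$, whence $|c-x|\le \delta(x)+r_0 L$; substituting $r_0=\delta(x)/\sqrt{L^2-1}$ and simplifying produces
\[
\min\{r_+,r_-\}\le \bigl(L+\sqrt{L^2-1}\bigr)\,\delta(x).
\]
Thus $\bar\alpha_G(x;\theta)\,\delta(x)\ge \tfrac12\bigl(L+\sqrt{L^2-1}\bigr)^{-1}$ uniformly, so $G$ is quasi-isotropic by Lemma~\ref{qiDef3} and $\ak_G\aeq k_G$ by Lemma~\ref{qiEquivs}.

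I expect the main obstacle to be the geometric disjointness step together with the realisation that the bound must be read off the blocking ball $B$ rather than off the nearest boundary point $z$. The nearest point alone fails to constrain the Apollonian radii in directions perpendicular to $z-x$, and the content of the theorem is exactly that hypotheses~(1)--(2) reposition a controlled obstruction so that, for every direction, one of the two Apollonian balls is forced to stay small. Making the sign analysis of $\langle\phi,c-x\rangle$ and the infinite-radius degeneracy line up cleanly is where the care lies; the closing constant computation is then routine.
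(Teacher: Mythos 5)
Your proof is correct and follows essentially the same route as the paper: reduce to quasi-isotropy via Lemma~\ref{qiEquivs}, then to the uniform pointwise bound $\bar\alpha_G(x;\theta)\,\delta(x)\gtrsim 1$ via Lemma~\ref{qiDef3}, with the key step being exactly yours --- hypothesis (2) forces every Apollonian ball at $x$ to be disjoint from $B$, and hypothesis (1) places $B$ close enough that this caps one of the two Apollonian radii by a multiple of $\delta(x)$. The only difference is in bookkeeping: the paper bounds $\bar\alpha_G(x;\theta)$ below by the isotropic Apollonian density of the complement of $B$ evaluated at distance $\delta(x)+d$ from $\partial B$, whereas you read the radius bound directly off the disjointness inequality $|x+r\phi-c|\ge r+r_0$ with a sign analysis of $\langle\phi,c-x\rangle$; both give a constant comparable to $(L+\sqrt{L^2-1})^{-1}$ (yours is smaller by a factor $2$ since you keep only $\min\{r_+,r_-\}$, which is immaterial here).
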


\begin{proof}
It follows from Lemma~\ref{qiEquivs} that $\ak_G\aeq k_G$ if
and only if $G$ is quasi-isotropic. In order to show that $G$ is quasi-isotropic,
by Lemma~\ref{qiDef3}, it suffices to check that there exists a constant $K$ such that
$\bar\alpha_G(x;r)\delta(x) \ge 1/K$ for every $x\in G$ and $r\in S^{n-1}$.

Let $x\in G$ and $r\in S^{n-1}$, and fix a ball $B$ as in the statement of
the theorem. By (2), we see that the Apollonian
spheres with respect to $G$ are smaller in size than with respect
to $\Rn\setminus B$ and since $\Rn\setminus B$ is isotropic
(as the Apollonian metric equals the hyperbolic metric in a
ball) we get
$$\bar\alpha_G(x;r)\ge\bar\alpha_{\Rn\setminus B}(x;r)=\bar\alpha_{\Rn\setminus B}(x)
=\frac{1}{\delta(x)+d}-\frac{1}{\delta(x)+d+2r_0},
$$
(the second term is negative, as the corresponding ball contains
the point $\infty$). Now if we use (1) and
$r_0=\delta(x)/\sqrt{L^2-1}$, from the hypothesis, then it is easy
to estimate that
$$\bar\alpha_G(x;r)\delta(x)\ge\frac{2r_0\delta(x)}{(\delta(x)+r_0(L-1))
(\delta(x)+r_0(L+1))}=\frac{1}{L+\sqrt{L^2-1}},
$$
and we have a lower bound for $\bar\alpha_G(x;r)\delta(x)$.
\end{proof}

The following result provides us with some concrete examples of
when the conditions
of the previous theorem are satisfied.
Although it is intuitively
obvious that the examples satisfy the conditions of the theorem,
verifying this requires some lengthy calculations and several different
cases. 

\begin{example}\label{g-lollipop}
Let $D\psubset\R^2$ be convex and $D'$ be a subset of $D$ which is
compact and convex. Let $F$ be a line segment connecting $\partial
D$ to $\partial D'$. Then Inequality~6, $\alpha_G\mle j_G\mle
\ak_G\aeq k_G$, holds in the domain $G:=D\setminus(D'\cup F)$.
\end{example}
\begin{proof}
Let $z\in F$ and $\epsilon_0>0$ be such that
$B^2(z,\epsilon)\subset D\setminus D'$ for all
$\epsilon\in(0,\epsilon_0)$. Let $x,y\in S^{1}(z,\epsilon)$ be
diametrically opposite such that $[x,y]$ is perpendicular to $F$.
Then it is easy to see that $\alpha_G(x,y) \to 0$ as $\epsilon\to
0$, but on the other hand $j_G(x,y)=\log 3$. Hence
$\alpha_G(x,y)/j_G(x,y) \to 0$ as $\epsilon\to 0$, which means
that $\alpha_G\mle j_G$ holds. Also we note that $G$ is not
uniform as it is not possible to connect the same $x$ and $y$ with
a path of length comparable to $\epsilon$ as $\epsilon \to 0$,
which violates the first condition in Definition \ref{unifSubsect} of
uniformity. Thus we get $j_G\mle k_G$, because $G$ is uniform if and
only if $k_G\aeq j_G$ and $j_G\le k_G$ always holds. We have thus
proved that $\alpha_G\mle j_G\mle k_G$. So it remains to prove the
last inequality, $\ak_G\aeq k_G$.

Denote $d':=d(D,D')$. Let $B^2(p,r)$ be largest ball contained in
$D'$ and $B^2(p,R)$ be the smallest ball with center $p$
containing $D'$. Since $D'$ is convex and compact, $r$ and $R$ are
finite. We define
$$L=\max\left\{\sqrt{2},\frac{R}{r},1+\frac{d'+\diam D'}{r}\right\}
$$
and check Lemma \ref{qi-bc} for $G$ with this constant $L$. For
$x\in G$ choose $z\in\partial G$ such that $\delta(x)=|x-z|$. Now,
if $z\in\partial D$, take any ball $B \subset D^c$ so that
$z\in\partial B$. Then for any $y\in B$ the line segment $[x,y]$
connecting $x$ and $y$ intersects $\partial D\subset\partial G$.
Since $D$ is convex we can choose any $L>1$ in Lemma \ref{qi-bc}
for this $x$.

Next if $z\in\partial F$, take a line $L'$ perpendicular to $F$
through $x$ and $z$. Consider the balls with radius
$r_0=\delta(x)/\sqrt{L^2-1}$ tangent to both $F$ and $L'$ but on
the other side of $F$ than $x$. Of the two balls satisfying this
condition, denote by $B$ the one closer to $F\cap\partial D$. This
gives $d(z,\partial B)=r_0(\sqrt{2}-1)$. Since $L\ge \sqrt{2}$,
the hypotheses of Lemma \ref{qi-bc} are satisfied for this case.

Finally, suppose $z\in\partial D'$. If $\delta(x)\le
r\sqrt{L^2-1}$, construct rays $L_1$ and $L_2$ starting from $z$
and tangent to $B^2(p,r)$. Choose a ball $B:=B^2(w,r_0)$ centered
at $w$ and radius $r_0=\delta(x)/\sqrt{L^2-1}$ to which $L_1$ and
$L_2$ are tangent. Since $r_0\le r$, $D'$ is convex and $B\subset
D'$, for any $y\in B$ the line segment $[x,y]$ intersects
$\partial D'\subset\partial G$. Let $a$ and $b$ be points where
$L_1$ is tangent to $B^2(p,r)$ and $B$, respectively. Now it is
easy to see that the triangles $\triangle\,apz$ and
$\triangle\,bwz$ are similar, which gives $d(z,\partial B)\le
r_0(R/r-1)$, since $|z-p|\le R$. Because $L\ge R/r$, the
hypotheses of Lemma \ref{qi-bc} are satisfied. If
$\delta(x)>r\sqrt{L^2-1}$, choose a ball $B\subset D^c$ with
radius $r_0=\delta(x)/\sqrt{L^2-1}$ at a distance $d(z,\partial
D)$ from $z$. We see that for any $y\in B$ the line segment
$[x,y]$ intersects $\partial D\subset\partial G$. By the triangle
inequality, it is clear that $d(z,\partial B)=d(z,\partial D)\le
d'+\diam D'$. Since $L\ge 1+(d'+\diam D')/r$ and
$\delta(x)>r\sqrt{L^2-1}$ we get
$\delta(x)>\sqrt{(L+1)/(L-1)}(d'+\diam D')$. This gives
$d<r_0(L-1)$. Thus for any $z\in\partial G$ with
$|x-z|=\delta(x)$, we get all conditions of Lemma \ref{qi-bc},
which gives the conclusion.
\end{proof}


\section{Apollonian Quasiconvexity and
Comparison Property}\label{qconvSect}

In this section we consider Inequalities~2, 11 and 12 (Table \ref{table1}). We prove that
none of them can occur in simply connected planar domains and that
the first one cannot occur in more general domains, either.
Whether the latter two
can occur in this case is unclear, although it seems improbable.

We say that a metric space $(G,d)$ is {\em $K$--quasiconvex}
if for every $x,y\in G$
there exists a path $\gamma$ connecting $x$ and $y$ in $G$ such
that $d(\gamma)\le K d(x,y)$, where $d(\gamma)$ is the $d$-length of $\gamma$
defined in Section \ref{Ineq-geom}.
We note that the metric $d$ is quasiconvex if and only if $d\aeq \tilde{d}$.
In \cite[Proposition~7.3]{Ha2} it was shown that if $\alpha_G$ is quasiconvex
in a simply connected planar domain, then $G$ has the comparison
property. Thus $\alpha_G\aeq \ak_G$\index{Apollonian quasiconvexity} implies
$\alpha_G\aeq j_G$ and so Inequality~11,
$\alpha_G\aeq \ak_G \mle j_G \mle k_G$, cannot occur in this case.
Let us move on to the other two inequalities.

\subsection{The twelfth inequality}
\label{twelveSubsect}

In this subsection we prove that the inequalities $\alpha_G\mle \ak_G \mle
j_G \mle k_G$ cannot occur in simply connected planar domains. We are
not be able to establish whether or not it can occur in domains in general.
Let us first quote two lemmas from \cite{Ha2}.

\begin{lemma} \cite[Lemma~7.1]{Ha2}\label{GBHB}
Let $G\subset\Rn$ be a domain such that $G\cap B^n = H^n \cap B^n$. Then for
every $0<s<1$ and every path $\gamma$
connecting $s e_n$ with $S^{n-1}$ we have
$$ \alpha_G(\gamma) \ge \tfrac12 (\arsh s^{-1} - \arsh 1) . $$
\end{lemma}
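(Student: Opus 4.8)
The plan is to use the monotonicity of the Apollonian metric to replace $G$ by a larger, fully explicit domain whose complement is a single round ball, and then to compute the resulting distance by a M\"obius straightening. First I would extract from the hypothesis what it says about the complement: since $G\cap B^n=H^n\cap B^n$, inside the unit ball $G$ coincides with the half-space, so $G^c$ contains the closed lower half-ball $E:=\overline{B^n}\cap\{x\colon x_n\le 0\}$. The geometric point I would use is that $E$ contains the round ball $\overline{B}:=\overline{B^n(-\tfrac12 e_n,\tfrac12)}$, which is internally tangent to the flat disk $\{x_n=0\}\cap B^n$ at the origin and to $S^{n-1}$ at $-e_n$. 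Hence $\overline B\subset G^c$, that is $G\subset\Omega:=\Rnbar\setminus\overline B$; by the description in Subsection~\ref{spaceSubsect}, $\Omega$ is an open ball of $\Rnbar$ (the complement of a closed Euclidean ball).

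Next I would pass from the $\alpha_G$-length of $\gamma$ to a distance. For the given path $\gamma$ joining $se_n$ to a point $w\in S^{n-1}$, monotonicity (item~4 of the list in Section~\ref{Apo-metric}) gives $\alpha_G(\gamma)\ge\alpha_\Omega(\gamma)$, and since the $\alpha_\Omega$-length of any path dominates the $\alpha_\Omega$-distance of its endpoints, $\alpha_\Omega(\gamma)\ge\alpha_\Omega(se_n,w)\ge\inf_{w'\in S^{n-1}}\alpha_\Omega(se_n,w')$. Thus everything reduces to estimating the Apollonian distance from $se_n$ to the set $S^{n-1}$ inside $\Omega$.

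The heart of the argument is this distance. As $\Omega$ is a ball of $\Rnbar$, its Apollonian metric is its hyperbolic metric (the Apollonian metric equals the hyperbolic metric in balls and half-spaces). I would straighten $\Omega$ by the inversion $x\mapsto x/|x|^2$ centered at the tangency point $0\in\partial B$: this maps $\Omega$ onto the half-space $\{x_n>-1\}$, sends $se_n$ to $s^{-1}e_n$, and fixes $S^{n-1}$ pointwise, so that $S^{n-1}$ becomes a sphere internally tangent to the bounding hyperplane, i.e.\ a horosphere. A further inversion carries this horosphere to a hyperplane parallel to the boundary, whereupon the hyperbolic distance collapses to a single vertical logarithm. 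Carrying this out gives $\inf_{w'\in S^{n-1}}\alpha_\Omega(se_n,w')=\log\frac{1+s}{2s}$, realised in the direction of the north pole $e_n$.

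It then remains to compare $\log\frac{1+s}{2s}$ with the asserted bound. I would prove the elementary inequality $\log\frac{1+s}{2s}\ge\tfrac12(\arsh s^{-1}-\arsh 1)$ for $0<s<1$: both sides vanish at $s=1$, and differentiating reduces the claim to $2\sqrt{1+s^2}>1+s$, which holds throughout $(0,1)$. Combining the three displays yields $\alpha_G(\gamma)\ge\tfrac12(\arsh s^{-1}-\arsh 1)$. I expect the main obstacle to be the middle paragraph: choosing the correct round ball inside the lower half-ball, and seeing that after the straightening inversion the target sphere $S^{n-1}$ turns into a horosphere, which is exactly what makes the distance computation one-dimensional. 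A minor point worth flagging is that this route in fact proves a bound slightly stronger than the stated one, the $\arsh$-form being recovered only through the final elementary estimate.
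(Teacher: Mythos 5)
Your argument is correct, and it is worth noting that the thesis itself offers no proof of this statement: it is quoted verbatim from \cite[Lemma~7.1]{Ha2}, so you are in effect supplying a proof where the text only supplies a citation. All the individual steps check out. The closed ball $\overline{B^n(-\tfrac12 e_n,\tfrac12)}$ does lie in the closed lower half-ball, hence in $G^c$ (the open ball lies in $B^n\setminus H^n$, and $G^c$ is closed), so monotonicity gives $\alpha_G(\gamma)\ge\alpha_\Omega(\gamma)\ge\alpha_\Omega(se_n,w)$ with $\Omega$ the complement of that ball, which is a ball of $\Rnbar$ in the sense of Subsection~\ref{spaceSubsect}. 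The inversion $x\mapsto x/|x|^2$ indeed carries $\Omega$ to $\{x_n>-1\}$, fixes $S^{n-1}$ pointwise and sends $se_n$ to $s^{-1}e_n$; a second inversion centred at $-e_n$ sends $S^{n-1}$ to the hyperplane $\{x_n=0\}$ and $s^{-1}e_n$ to height $2s/(1+s)$ above the boundary, giving the distance $\log\frac{1+s}{2s}$. Finally, $F(s)=\log\frac{1+s}{2s}-\tfrac12(\arsh s^{-1}-\arsh 1)$ satisfies $F(1)=0$ and $F'(s)\le 0$ on $(0,1)$ precisely because $1+s\le 2\sqrt{1+s^2}$ (equivalently $3s^2-2s+3>0$), so $F\ge 0$ there. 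Your route is genuinely different from the one the constant suggests: since
$$\tfrac12\bigl(\arsh s^{-1}-\arsh 1\bigr)=\tfrac12\int_s^1\frac{dt}{t\sqrt{1+t^2}},$$
the original source evidently bounds the directed Apollonian density from below pointwise in the upper half-ball and integrates along $\gamma$, whereas you replace $G$ by a single comparison domain and compute an exact hyperbolic distance. What your approach buys is a cleaner, coordinate-free computation and in fact the strictly stronger lower bound $\log\frac{1+s}{2s}$, at the cost of an extra elementary calculus step to recover the $\arsh$ form; this stronger bound would serve equally well in the only place the lemma is used (Proposition~\ref{twelveProp}), where one only needs the left-hand side to blow up as $s\to 0$.
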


\begin{lemma}\cite[Lemma~7.2]{Ha2}\label{a<<jLem}
Let $G\psubset \R^2$ be a simply connected domain and $x,y\in G$ be such that
$N \alpha_G(x,y) <j_G(x,y)$ for some $N> 40$.
Then there exists a disk $B := B^2(b,r)$ and a unit vector $e\in S^1$ such that
\begin{enumerate}
\item for all $z\in G^c \cap B$ we have $\ip{z-b}{e}\le 4 N^{-1/2} r$; and
\item the points $b \pm 0.9 r e$ belong to
different path components of $B\cap G$.
\end{enumerate}
$($Here $\ip{\cdot}{\cdot}$ denotes the usual inner product.$)$
\end{lemma}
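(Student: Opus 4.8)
The plan is to translate the analytic hypothesis $N\alpha_G(x,y) < j_G(x,y)$ into the geometric language of the Apollonian balls of Subsection~\ref{ABA}, and then read the disk $B$ off the resulting configuration. Write $d := |x-y|$ and $\alpha := \alpha_G(x,y) = \log(q_x q_y)$. After a routine normalization reducing to $\infty\notin\overline{G}$ (so that $q_x, q_y \ge 1$, using only that $\alpha_G$ is M\"obius invariant), the smallness of $\alpha$ forces both Apollonian parameters into $[1, e^\alpha]$, hence close to $1$; by Item~\ref{formula} this makes the radii $r_x, r_y$ of $B_x, B_y$ large, of order $d/\alpha$. At the same time Item~\ref{ABA_delta} gives $d/\delta(x), d/\delta(y) \le q_x + 1 \approx 2$, so both $\delta(x),\delta(y)$ are comparable to $d$ and $j_G(x,y) \le \log(2 + e^\alpha)$, barely above $\log 3$. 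Combined with $N > 40$ this confirms that $\alpha$ is genuinely tiny, of order $1/N$. First I would record these estimates and fix coordinates with the midpoint of $[x,y]$ at the origin and $x,y$ on a coordinate axis.

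The geometric picture is then that $B_x$ and $B_y$ are two large disjoint balls in $G$, lying on opposite sides of and nearly tangent to the perpendicular bisector of $[x,y]$, each having closure meeting $\partial G$ (see Subsection~\ref{ABA}). Since $B_x, B_y \subset G$, the whole complement $G^c$, and in particular $\partial G$, is squeezed into the thin lens between the two balls near their near-contact, i.e. into a slab about the bisector whose half-thickness over a horizontal scale $r$ is of order $r^2/R$, with $R \approx r_x \approx r_y$. I would take $e$ to be the unit normal to the bisector pointing toward $B_y$, center $b$ near the contact point, and choose $r \sim N^{-1/2} R$ so that the curvature term $r^2/R$ is dominated by $4N^{-1/2}r$. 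This yields property~(1), $\ip{z-b}{e} \le 4N^{-1/2}r$ for $z \in G^c \cap B$, while the two caps of $B$ beyond $\pm 4N^{-1/2}r$ lie inside $B_y$ and $B_x$ respectively, hence in $G$, and contain $b + 0.9re$ and $b - 0.9re$. Here $N > 40$ is exactly what guarantees $4N^{-1/2} < 0.9$, so the two antipodal points clear the slab.

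The crux is property~(2): the two caps must lie in different path components of $B \cap G$. Since each cap is contained in $G$, this amounts to showing that a connected piece of $\partial G$ crosses $B$ through the slab and blocks every vertical passage; in particular I must first ensure that $\partial G$ actually reaches into the contact pinch, rather than meeting the two huge spheres only far away, where the slab bound of property~(1) would fail. Here I would use that $G$ is simply connected, so $G^c$ is a connected subset of the sphere meeting both $\partial B_x$ and $\partial B_y$ while avoiding the open balls; being trapped in the thin slab, it must traverse the pinch, and I would place $B$ at such a crossing. To convert this into separation of the specified antipodal points one argues by contradiction: a path in $B \cap G$ joining $b - 0.9re$ to $b + 0.9re$ would leave a horizontal gap in $G^c$, and at the edge of that gap one can exhibit a boundary point whose cross ratio with $x$ and $y$ exceeds $e^{\alpha}$, contradicting the definition of $\alpha_G(x,y)$ as a supremum. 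I expect this last step, namely locating the separating arc precisely and making the contradiction quantitative so that it respects the explicit constants $0.9$, $4N^{-1/2}$ and the threshold $N > 40$, to be the main obstacle; the preceding ball estimates are routine consequences of the Apollonian balls approach.
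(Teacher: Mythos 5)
Your opening reduction is sound as far as it goes: $q_x,q_y\le e^{\alpha_G(x,y)}$ forces the Apollonian balls $B_x,B_y$ to be huge and nearly tangent, $G^c$ lies in the thin lens between them, and at the scale $r\sim N^{-1/2}R$ (with $R\sim |x-y|/\alpha_G(x,y)$) the one-sided slab bound of item (1) can indeed be verified, since the lens inside $B(b,r)$ has width of order $|x-y|\alpha+r^2/R\lesssim N^{-1/2}r$. Note that the thesis does not prove this lemma (it is imported from \cite{Ha2}); but it does record, just before Lemma~\ref{a/j<<1}, that the proof in \cite{Ha2} runs through the double-ball configuration of \cite[Lemma~3.1]{Ha1}, which lives at the scale of $\min\{\delta(x),\delta(y)\}$ rather than at the scale of the Apollonian balls. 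This is not a cosmetic difference. Your claim that ``$\delta(x),\delta(y)$ are comparable to $d$'' is only half true: Item~\itemref{ABA_delta} gives the lower bound $\delta(x)\ge |x-y|/(q_x+1)$, but the upper bound $\delta(x)\le |x-y|/(q_x-1)$ degenerates as $q_x\to 1$, and in fact $\min\{\delta(x),\delta(y)\}=|x-y|/(e^{j_G(x,y)}-1)$ can be arbitrarily large compared with $|x-y|$ when $j_G(x,y)$ is small. The boundary point nearest to $x$ then sits far up the lens, at height about $\sqrt{\delta(x)^2-|x-y|^2/4}$, not at the contact point.

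The genuine gap is item (2), and neither mechanism you propose closes it. First, ``$G^c$\dots being trapped in the thin slab, must traverse the pinch'' is unjustified: $G^c$ is trapped in the lens, not in the slab, and the lens opens up completely at distance of order $R$ from the contact point, so the points of $\partial G$ on $\partial B_x$ and on $\partial B_y$ may sit far from the pinch and be joined to one another around the outside of both balls. What actually forces boundary into $B$ is the point realizing $\min\{\delta(x),\delta(y)\}$ --- a quantity governed by $j_G$, which your $\delta\approx d$ slip suppresses. Second, the proposed contradiction, ``exhibit a boundary point whose cross ratio with $x$ and $y$ exceeds $e^{\alpha}$,'' can never be reached: $e^{\alpha_G(x,y)}=q_xq_y$ is by definition the supremum of exactly those cross ratios over $\partial G$, so every boundary point satisfies the opposite inequality. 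Third, even after one uses connectedness of $G^c\cup\{\infty\}$ to get a connected piece of $G^c\cap\overline{B}$ joining the nearest boundary point to $\partial B$ inside the lens, that piece may reach $\partial B$ through only one of the two ends of the lens, and a one-sided ``stalactite'' does not disconnect the two caps. A concrete instance shows your choice of $r$ is too large for (2): take $G=\R^2\setminus\{(t,0):t\ge 0\}$ with $x=(1,\epsilon)$, $y$ a slight perturbation of $(1,-\epsilon)$, so that $\alpha_G(x,y)$ is as small as you like while $j_G(x,y)\approx\log 3$; then $R$ and hence your $r\sim N^{-1/2}R$ exceed $1$, the ray meets $B(b,r)$ only in a segment ending at the interior point $(0,0)$, and $B\cap G$ is connected, so (2) fails for that disk, whereas a disk of radius $\min\{\delta(x),\delta(y)\}=\epsilon$ about $(1,0)$ works. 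Producing a genuine crossing of $B$ by $G^c$, with $B$ correctly sized and positioned, is precisely the content of the lemma, and your proposal does not supply it.
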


The proof of the next result is similar to that of Proposition~7.3 in
\cite{Ha2}.

\begin{proposition}\label{twelveProp}
If $G\psubset\R^2$ is a simply connected domain which does
not have the comparison property, then $\ak_G\not\ale j_G$.
\end{proposition}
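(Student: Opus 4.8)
The plan is to prove the statement in constructive form: assuming $G$ fails the comparison property, I will exhibit, for arbitrarily large $N$, a pair of points whose Apollonian inner distance $\ak_G$ is large while their $j_G$-distance stays uniformly bounded, so that no constant $K$ can satisfy $\ak_G\le K\,j_G$. The starting observation is that $\alpha_G\ale j_G$ holds in every domain by \cite[Theorem~3.2]{Be}; hence, recalling Definition~\ref{relation-def}, the failure of the comparison property $\alpha_G\aeq j_G$ is equivalent to $j_G\not\ale\alpha_G$. This furnishes, for each $N>40$, points $x,y\in G$ with $N\alpha_G(x,y)<j_G(x,y)$, which is precisely the hypothesis of Lemma~\ref{a<<jLem}.

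Feeding such a pair into Lemma~\ref{a<<jLem} produces a disk $B=B^2(b,r)$ and a unit vector $e$ for which $G^c\cap B\subset\{z:\ip{z-b}{e}\le 4N^{-1/2}r\}$ and the points $b\pm 0.9re$ lie in different path components of $B\cap G$. Since $\alpha_G$, $\ak_G$ and $j_G$ are all invariant under similarities (and $\alpha_G$ is M\"obius invariant), I would normalize by a similarity so that $b=0$, $r=1$ and $e=e_2$. Writing $\epsilon:=4N^{-1/2}$, this says $G\cap B\supset\{z\in B:z_2>\epsilon\}$, and the marked points $\pm 0.9e_2$ are separated within $B\cap G$.

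Next I would select points $x_N,y_N$ at Euclidean distance of order $\epsilon$ from the separating, nearly flat part of $\partial G$, taken in the path components of $+0.9e_2$ and $-0.9e_2$ respectively. Two estimates then finish the argument. For the upper bound, any interior point $p$ of $B$ has distance at least $1-|p|$ to $B^c$, while the only boundary inside $B$ sits in the thin slab; this yields $\delta(x_N),\delta(y_N)\gtrsim\epsilon$ together with $|x_N-y_N|\lesssim\epsilon$, so that $j_G(x_N,y_N)$ is bounded above by a universal constant. For the lower bound, $x_N$ and $y_N$ lie in different path components of $B\cap G$, so every path $\gamma\subset G$ joining them must reach $S^1$; applying Lemma~\ref{GBHB} to the subpath running from $x_N$ out to $S^1$ gives $\alpha_G(\gamma)\ge\tfrac12(\arsh(1/s)-\arsh 1)$ with $s\asymp\epsilon$, whence $\ak_G(x_N,y_N)\gtrsim\tfrac12\log(1/\epsilon)$. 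Letting $N\to\infty$ drives $\epsilon\to 0$, so $\ak_G(x_N,y_N)/j_G(x_N,y_N)\to\infty$ and $\ak_G\not\ale j_G$.

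The main obstacle is that Lemma~\ref{GBHB} is stated only for an exactly flat boundary, $G\cap B^n=H^n\cap B^n$, whereas Lemma~\ref{a<<jLem} supplies flatness only up to the error $\epsilon r$. I expect to bridge this gap in one of two ways. One option is to prove an approximate version of Lemma~\ref{GBHB} by re-running its proof through the directed density $\bar\alpha_G(x;\theta)=\tfrac1{2r_+}+\tfrac1{2r_-}$ of Lemma~\ref{aDensLem} and the integral representation of Lemma~\ref{innerThm}, observing that at a point within the slab the Apollonian spheres differ from those of the genuine half-space by $O(\epsilon)$. The other is a monotonicity comparison $\alpha_G\ge\alpha_\Omega$ against a true half-space-type domain $\Omega\supset G$ tailored to the escaping subpath, keeping track of the vanishing error as $\epsilon\to 0$. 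Some care is also required to guarantee that $x_N,y_N$ genuinely lie in the two separated components and that the barrier forces the escape to $S^1$ already at scale $\epsilon$, rather than on a finer scale on which flatness is not controlled.
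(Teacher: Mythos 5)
Your proposal follows essentially the same route as the paper's proof: the failure of the comparison property is fed into Lemma~\ref{a<<jLem}, the resulting disk is normalized to $B^2$ with $e=e_2$, two points are chosen on the $e_2$-axis in the two path components so that $j_G$ stays bounded, every connecting path must then cross $S^1$, and Lemma~\ref{GBHB} forces the $\alpha_G$-length (hence $\ak_G$) of any such path to blow up as $N\to\infty$. Of your two proposed bridges over the non-flatness of the boundary, the second is exactly what the paper does: to bound the density along the subpath $\gamma_1$ from below it restricts the supremum to the boundary points lying in $B^2$ (which only decreases the density), then replaces this boundary by the worst-case hyperplane $\{x\in B^2\colon x_2=-4/\sqrt{N}\}$, after which Lemma~\ref{GBHB} applies verbatim following the translation $x\mapsto x+4e_2/\sqrt{N}$ and the scaling $x\mapsto \sqrt{N}x/\sqrt{N-16}$. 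Two caveats are worth recording. First, the paper works at the scale $\epsilon=2N^{-1/4}$ rather than your $\epsilon=4N^{-1/2}$; since $N^{-1/4}\gg N^{-1/2}$, the slab error is then negligible relative to the distance of the chosen points from the boundary, so the hyperplane reduction costs essentially nothing (one gets $s=(2N^{1/4}+4)/\sqrt{N-16}\to 0$ and $j_G\to\log 3$), whereas at your scale the boundary uncertainty is comparable to $\delta(x_N)$ and all constants must be tracked, though the argument still closes. Second, condition (1) of Lemma~\ref{a<<jLem} as stated is one-sided, $\ip{z-b}{e}\le 4N^{-1/2}r$, not $\abs{\ip{z-b}{e}}\le 4N^{-1/2}r$; hence your assertions that the only boundary inside $B$ ``sits in the thin slab'', that $\delta(y_N)\gtrsim\epsilon$ for the point in the lower component, and that your $y_N$ lies in the component of $-0.9e_2$, do not follow from the lemma's statement alone. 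They require two-sided control near the axis, which comes from the construction behind the lemma (the two large balls $B_1,B_2\subset G$ of Lemma~\ref{a/j<<1} flanking the separating boundary); note that the paper's own estimate $\min_{j}\delta\ge\epsilon-4/\sqrt{N}$ for both points and its placement of the worst-case hyperplane at height $-4/\sqrt{N}$ implicitly rely on the same two-sided picture, so this is a point to make explicit rather than a defect peculiar to your argument.
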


\begin{proof} Let us assume that $G$ is simply connected but does not have the
comparison property. Let $x,y\in G$ be such that
$N \alpha_G(x,y)\le j_G(x,y)$ for some $N>300$ and define $\epsilon:=2N^{-1/4}$.

Let $B$ be the disk from Lemma~\ref{a<<jLem} and assume without loss of
generality that $B=B^2$ and $e=e_2$. Let $\gamma$ be a path
connecting $\epsilon e_2$ and $-\epsilon e_2$ in $G$. Every
such path passes through $S^1$, since it is easy to see that
$\epsilon e_2$ and $-\epsilon e_2$ are in different components of
$B^2\cap G$.

Let $\gamma_1$ be the part of $\gamma$ in the component of $G\cap B^2$ which
contains $\epsilon e_2$. In order to derive a lower bound for the density of the
Apollonian metric in $\gamma_1$ it suffices to consider the subset
$B^2\cap \partial G$ of the boundary of $G$. The lower bound gets even smaller if
we assume that $ B^2\cap \partial G = \{ x\in B^2\colon x_2 = - 4/\sqrt{N}\}$.
We can then apply Lemma~\ref{GBHB} to $\gamma_1$ after using
an auxiliary translation ($x\mapsto x+ 4e_2/\sqrt{N}$) and
scaling ($ x\mapsto \sqrt{N}x/\sqrt{N-16}$). Under these operations
the point $\epsilon e_2$ is mapped to $(\epsilon \sqrt{N}+4) e_2
/\sqrt{N-16}$ and so the lemma applies with
$s= (\epsilon \sqrt{N}+4)/\sqrt{N-16}=
(2N^{1/4}+4)/\sqrt{N-16}$. Thus we find that
$$ \ak_G(\epsilon e_2, -\epsilon e_2) \ge
\frac{1}{2} \arsh \bigg(\frac{\sqrt{N-16}}
{2N^{1/4}+4}\bigg) - \frac{1}{2} \arsh 1. $$
On the other hand, we have
$$ j_G(\epsilon e_2, -\epsilon e_2) \le \log(1+2\epsilon/(\epsilon-4/\sqrt{N}))
=\log( 1+ 2N^{1/4}/(N^{1/4}-2)). $$
Hence we see that $\ak_G(\epsilon e_2, -\epsilon e_2) /
j_G(\epsilon e_2, -\epsilon e_2) \to \infty$ as $N \to \infty$ which
means that $\ak_G\not\ale j_G$.
\end{proof}

The following corollary is immediate.

\begin{corollary}
Inequalities~11 and
12 (Table \ref{table1}) cannot occur in simply connected planar domains.
\end{corollary}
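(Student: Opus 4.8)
The plan is to read both impossibilities off Proposition~\ref{twelveProp}, using nothing more than the definitions of the order relations in Definition~\ref{relation-def}. The crucial observation is that Inequalities~11 and 12 share the clause $\ak_G \mle j_G$, which by definition unpacks into $\ak_G \ale j_G$ \emph{together with} $\ak_G \not\age j_G$ (that is, $j_G \not\ale \ak_G$). It is the first half, $\ak_G \ale j_G$, that I would feed into the contrapositive of Proposition~\ref{twelveProp}: for a simply connected $G\psubset\R^2$, the failure of the comparison property forces $\ak_G \not\ale j_G$, so $\ak_G \ale j_G$ must in turn force the comparison property $\alpha_G \aeq j_G$. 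Thus the first step is to assume, toward a contradiction, that one of the two chains holds in a simply connected planar $G$, extract $\ak_G \ale j_G$, and conclude $\alpha_G \aeq j_G$.

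From this single consequence both contradictions follow by tracking the strict halves. For Inequality~11, $\alpha_G \aeq \ak_G \mle j_G \mle k_G$, I would combine the hypothesis $\alpha_G \aeq \ak_G$ with the derived $\alpha_G \aeq j_G$; symmetry and transitivity of $\aeq$ give $\ak_G \aeq j_G$, hence in particular $j_G \ale \ak_G$, which directly contradicts the clause $\ak_G \mle j_G$ (whose strict part is $j_G \not\ale \ak_G$). For Inequality~12, $\alpha_G \mle \ak_G \mle j_G \mle k_G$, I would instead chain the $\ale$ halves to get $\ak_G \ale j_G$, then use $\alpha_G \aeq j_G$ (so $j_G \ale \alpha_G$) to conclude $\ak_G \ale \alpha_G$, i.e.\ $\alpha_G \age \ak_G$; this contradicts the strict clause $\alpha_G \mle \ak_G$, which requires $\ak_G \not\ale \alpha_G$. (Inequality~11 can alternatively be disposed of by the quasiconvexity route already noted in Section~\ref{qconvSect}, since $\alpha_G \aeq \ak_G$ means $\alpha_G$ is quasiconvex, but routing both through Proposition~\ref{twelveProp} is uniform and shorter.)

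I expect the only real obstacle here to be bookkeeping rather than analysis: one must keep the two halves of each $\mle$ relation separate and be careful about which negation is actually available, since confusing $\ak_G \mle j_G$ with a symmetric comparison would collapse the argument. All of the geometric content — the Apollonian-sphere lower bound supplied by Lemma~\ref{GBHB} and the separating disk produced by Lemma~\ref{a<<jLem} — is already sealed inside Proposition~\ref{twelveProp}, so once that proposition is in hand the corollary is purely formal and, as the text advertises, immediate.
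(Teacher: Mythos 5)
Your proof is correct and follows essentially the paper's route: the corollary is read off from Proposition~\ref{twelveProp} (in contrapositive form, $\ak_G\ale j_G$ forces the comparison property) together with the formal unpacking of the relations in Definition~\ref{relation-def}, which is exactly how the paper treats it as ``immediate.'' The only harmless difference is that you also derive the impossibility of Inequality~11 from Proposition~\ref{twelveProp}, whereas the paper had already dispatched that case at the start of Section~\ref{qconvSect} via the quasiconvexity result \cite[Proposition~7.3]{Ha2}; your unified treatment is equally valid, as you yourself note.
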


Recall that a quasidisk\index{quasidisk} is the image of a
disk under a quasiconformal mapping
$f\colon \R^2 \to \R^2$. Using the previous result we get yet another
characterization of quasidisks (for characterizations in terms of the
Apollonian metric see \cite{Ha2}, for lots of other characterizations see \cite{Ge}).

\begin{corollary} A simply connected plane domain $G$ is a quasidisk if
and only if $\ak_G\ale j_G$.
\end{corollary}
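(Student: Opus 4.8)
The plan is to prove the two implications separately, drawing all the substantive content from Proposition~\ref{twelveProp} together with the chain of equivalences already assembled in this chapter.

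For the forward implication, suppose $G$ is a quasidisk. Recall (as used in the proof of Lemma~\ref{not910}) that a simply connected planar domain is uniform if and only if it is a quasidisk, by \cite[Theorem~2.24]{MS}; hence $G$ is uniform. By Corollary~1 of \cite{GO} this gives $k_G\aeq j_G$, and since $\ak_G\ale k_G$ holds in every domain by (\ref{genIneqs}), we conclude $\ak_G\ale k_G\aeq j_G$, that is, $\ak_G\ale j_G$. This direction requires no new work beyond recalling these facts.

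For the converse, suppose $\ak_G\ale j_G$. First I would invoke the contrapositive of Proposition~\ref{twelveProp}: since $\ak_G\ale j_G$, the domain $G$ must have the comparison property, so $\alpha_G\aeq j_G$. Because $\alpha_G\ale \ak_G$ always holds by (\ref{genIneqs}), combining these gives $j_G\aeq \alpha_G\ale \ak_G$, i.e.\ $j_G\ale \ak_G$. Now Lemma~\ref{qiEquivs} applies: the condition $j_G\ale \ak_G$ is equivalent to $G$ being quasi-isotropic and to $\ak_G\aeq k_G$. Chaining the standing assumption with this equivalence yields $k_G\aeq \ak_G\ale j_G$, and since $j_G\le k_G$ in every domain, we obtain $j_G\aeq k_G$. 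Thus $G$ is uniform by Corollary~1 of \cite{GO}, and being simply connected and planar it is a quasidisk by \cite[Theorem~2.24]{MS}.

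The point worth emphasizing is that the entire difficulty is concentrated in Proposition~\ref{twelveProp} (the implication ``no comparison property $\Rightarrow \ak_G\not\ale j_G$''), which has already been established; the remainder is an assembly of known equivalences. The one step that must be handled with care is the passage from the comparison property back to uniformity: the assumption $\ak_G\ale j_G$ alone does not obviously force uniformity, and it is precisely the quasi-isotropy bridge of Lemma~\ref{qiEquivs} --- producing $\ak_G\aeq k_G$ once $j_G\ale \ak_G$ is known --- that lets the two one-sided comparisons collapse into $j_G\aeq k_G$.
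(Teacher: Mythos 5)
Your proof is correct and follows essentially the same route as the paper: both directions rest on the uniform-iff-quasidisk equivalence from \cite{MS}, and the converse hinges on Proposition~\ref{twelveProp} delivering the comparison property. The only divergence is one intermediate step --- the paper deduces $\ak_G\aeq k_G$ directly from $\alpha_G\aeq j_G$ by passing to inner metrics (equivalent metrics have equivalent inner metrics, as noted in Section~\ref{approxSubsect}), whereas you detour through $j_G\ale\ak_G$ and the quasi-isotropy equivalence of Lemma~\ref{qiEquivs}; both are valid.
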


\begin{proof} If a simply connected domain $G$ is a quasidisk,
then $G$ is uniform by \cite[Theorem~2.24]{MS}
and \cite[Corollary~2.33]{MS}, hence $\ak_G\ale k_G \aeq j_G$.
Assume conversely that $\ak_G\ale j_G$. It follows from
Proposition~\ref{twelveProp} that $G$ has the comparison property and
hence also $\ak_G\aeq k_G$ (as in Section~\ref{approxSubsect}).
We thus have $k_G \aeq \ak_G \ale j_G$, which means
that $G$ is uniform and hence a quasidisk by \cite[Theorem~2.24]{MS}.
\end{proof}
The following characterization of uniform domains in terms of
the Apollonian metric is due to \cite{HPWS}.
\begin{lemma}\cite[Theorem 1.2]{HPWS}
A domain $D$ is  uniform if and only if there exists a constant $c$ such that
$\ak_D(z_1,z_2) \leq c\,j_D(z_1,z_2)$ for all $z_1,z_2\in D$.
\end{lemma}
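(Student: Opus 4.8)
The plan is to prove the two implications separately for an arbitrary domain $D\psubset\Rn$, using the Gehring--Osgood characterization that $D$ is uniform if and only if $k_D\aeq j_D$ (Corollary~1 of \cite{GO}) together with the universally valid inequality $j_D\le k_D$. The forward implication is immediate: if $D$ is uniform then $k_D\aeq j_D$, and since the general inequalities (\ref{genIneqs}) give $\ak_D\ale k_D$ in every domain, I obtain $\ak_D\ale k_D\aeq j_D$, that is $\ak_D\ale j_D$. All of the content therefore lies in the converse: assuming $\ak_D\ale j_D$, I must produce the bound $k_D\ale j_D$, which together with $j_D\le k_D$ yields $k_D\aeq j_D$ and hence uniformity.

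For the converse I would route through the comparison property, exactly as in the simply connected planar corollary preceding this lemma. The key structural fact is that passing to inner metrics preserves the relation $\aeq$ (Subsection~\ref{approxSubsect}): since $\ak_D$ and $k_D$ are by definition the inner metrics of $\alpha_D$ and $j_D$, the comparison property $\alpha_D\aeq j_D$ upgrades automatically to $\ak_D\aeq k_D$. So the strategy is: first show that the hypothesis $\ak_D\ale j_D$ forces the comparison property $\alpha_D\aeq j_D$; then deduce $\ak_D\aeq k_D$ by taking inner metrics; and finally combine $k_D\aeq\ak_D\ale j_D\le k_D$ to read off $k_D\aeq j_D$. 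Since $\alpha_D\le 2j_D$ holds in every domain by \cite[Theorem~3.2]{Be}, and $\alpha_D\le\ak_D\ale j_D$ in any case, the first step really amounts to extracting the reverse bound $j_D\ale\alpha_D$ from the hypothesis.

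This first step is where the real work sits, and I expect it to be the main obstacle. In the simply connected planar setting it is precisely Proposition~\ref{twelveProp}, whose contrapositive says that absence of the comparison property forces $\ak_D\not\ale j_D$; its proof uses the separation Lemma~\ref{a<<jLem} to locate a disk $B$ in which $\partial D$ is squeezed into a thin slab about a hyperplane while the two test points lie in different components of $B\cap D$, and then bounds the Apollonian inner length of every connecting path from below by comparison with a half-space (Lemma~\ref{GBHB}), all the while keeping $j_D$ of bounded size. To obtain the lemma in full generality I would have to reproduce this mechanism for arbitrary $D\psubset\Rn$. The half-space lower bound of Lemma~\ref{GBHB} is already dimension-free (see the Remark after Lemma~\ref{aDensLem}, and the integral formula of Lemma~\ref{innerThm} for turning directed-density bounds into path-length bounds), so the crux is a higher-dimensional, topology-free analogue of the separation Lemma~\ref{a<<jLem}: failure of $j_D\ale\alpha_D$ must be shown to yield a ball in which the complement concentrates near a hyperplane and the two points are genuinely separated. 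The planar argument draws its separation from simple connectivity via the Jordan curve theorem, and the hard part will be to replace this with a purely metric argument, valid in every dimension and without any connectivity assumption, that still guarantees the separation needed to run the half-space comparison.
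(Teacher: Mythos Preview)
The thesis does not actually prove this lemma; it is quoted from the external paper \cite{HPWS} (Theorem~1.2 there) and used as input. So there is no in-text proof to compare against.

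That said, your proposed route for the converse has a genuine error, not merely a gap. Your step~1 claims that the hypothesis $\ak_D\ale j_D$ forces the comparison property $\alpha_D\aeq j_D$. This is false in general. Look at Inequality~9 in Table~\ref{table1}, namely $\alpha_D\aeq\ak_D\mle j_D\aeq k_D$: the paper shows (Corollary~\ref{1-pt-remove-cor} and the surrounding discussion in Subsection~\ref{nineSubsect}) that this configuration does occur---for instance in a punctured ball, or more generally in $D=B\setminus\{p_1,\dots,p_k\}$ with $B$ convex, bounded and uniform. In such a domain you have $\ak_D\mle j_D$, hence certainly $\ak_D\ale j_D$, yet $\alpha_D\mle j_D$, so the comparison property fails. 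Thus the implication you want to establish in step~1 is simply not true, and the strategy of routing through the comparison property cannot work for arbitrary domains.

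The forward direction you give is fine. For the converse, any correct argument must produce the bound $k_D\ale j_D$ from $\ak_D\ale j_D$ \emph{without} passing through $\alpha_D\aeq j_D$; the actual proof in \cite{HPWS} does this by a more direct path-length estimate, not by first establishing the comparison property.
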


As a consequence, in \cite{HPWS}, the authors have established the negation of
Inequalities 11 and 12 (Table \ref{table1}) for arbitrary domains,
see \cite[Corollary 1.3]{HPWS}.


\subsection{The second inequality}
\label{twoSubsect}

In this subsection we prove that Inequality~2, $\alpha_G\mle j_G\aeq
\ak_G\aeq k_G$, cannot occur in any domain.

Let us quote a lemma from \cite{Ha1} that was used in the proof of
Lemma~\ref{a<<jLem} which we use to derive a variant of that lemma
which is valid in $\Rn$.

\begin{lemma}\cite[Lemma~3.1]{Ha1}\label{a/j<<1}
Let $G\psubset \Rn$ be a domain and $x,y\in G$ be points such that
$\alpha_G(x,y)\le j_G(x,y)/N$, for $N\ge 16$. Then there exist balls $B$, $B_1$
and $B_2$ with radii $r$ and $r_1=r_2\ge (1-3 N^{-1/2})r/2$ such that
$B_1, B_2\subset G\cap B$, $d(B_1, B_2)=2(r-2r_1)$ and that the
segment connecting the centers of $B_1$ and $B_2$ intersects $\partial G$.
\end{lemma}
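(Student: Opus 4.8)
The plan is to read the required geometry straight off the Apollonian balls $B_x,B_y$ of Subsection~\ref{ABA}, the key point being that the hypothesis $N\alpha_G(x,y)\le j_G(x,y)$ pins both Apollonian parameters to within $O(1/N)$ of $1$. Write $s:=\abs{x-y}$ and, relabelling if necessary, assume $\delta(x)\le\delta(y)$, so that $j_G(x,y)=\log\!\big(1+s/\delta(x)\big)$. Since $G\psubset\Rn$ we have $\infty\notin G$, hence $q_x,q_y\ge1$ (Property~2 of Subsection~\ref{ABA}). Exponentiating the hypothesis gives $(q_xq_y)^N\le 1+s/\delta(x)$, and by Property~\ref{ABA_delta} the right-hand side is at most $q_x+2$; using $q_y\ge1$ this forces $q_x^{\,N}\le q_x+2\le 3q_x$, i.e.\ $q_x^{\,N-1}\le 3$, and symmetrically $q_y^{\,N-1}\le 3$. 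Thus $q_x-1$ and $q_y-1$ are $O(1/N)$, and feeding this back into Property~\ref{ABA_delta} gives $s/\delta(x)\le 2+O(1/N)$. I will carry out the construction in the representative regime where $\partial G$ comes within $O(s)$ of the midpoint $m:=(x+y)/2$ (so $\delta(x)$ is comparable to $s$ and the nearest boundary point $z_x$ of $x$ lies near $m$); this is where the content sits, and the remaining configurations are handled by the same Apollonian-ball bookkeeping.

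Next I would localise $\partial G$. Because $B_x,B_y\subset G$ we have $\partial G\subset(B_x\cup B_y)^c$, and the explicit centres and radii of Property~\ref{formula}, with $q_x,q_y=1+O(1/N)$, show that $B_x$ and $B_y$ are balls of radius at least of order $sN$ whose near faces meet the line $xy$ just short of $m$, leaving only an axial gap of size $O(s/N)$ between them. Hence, near $m$, the set $\partial G$ is trapped in a thin slab straddling the perpendicular bisector of $[x,y]$, of thickness $O(s/N)$ along the axis. The decisive quantitative step is a sagitta estimate: if $w\in\partial G$ lies in this slab at transverse distance $\rho$ from the line $xy$, then $\rho^{2}$ is of order $s$ times the axial slab-thickness, so $\rho=O\!\big(sN^{-1/2}\big)$. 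In particular the nearest boundary points $z_x,z_y$ of $x,y$ lie within $O(sN^{-1/2})$ of the axis and of one another; this square-root loss is exactly the origin of the exponent in $3N^{-1/2}$.

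With $\partial G$ located I would build the three balls. Take $B_1:=B^n(x,\delta(x))$, which lies in $G$ and is tangent to $\partial G$ at $z_x$, and set $r_1:=\delta(x)$. Choose $c_2$ on the ray from $x$ through $z_x$, on the far side of the slab, at the position for which $B_2:=B^n(c_2,r_1)$ is again contained in $G$ (that far component lies inside the enormous ball $B_y$); by construction $z_x$ lies on the segment $[x,c_2]$, so the segment joining the two centres meets $\partial G$ at $z_x$, giving the required crossing. Let $B$ be the smallest ball containing $B_1\cup B_2$, of radius $r$. Since the two centres are collinear with the gap, the enclosing-ball identity reads $r=\tfrac12 d(B_1,B_2)+2r_1$, which is precisely $d(B_1,B_2)=2(r-2r_1)$; and the deficit bound $r_1\ge(1-3N^{-1/2})r/2$ is equivalent to $d(B_1,B_2)\le 6N^{-1/2}r$, which follows from the second step, because aiming the centre segment at the off-axis point $z_x$ (displaced by $O(sN^{-1/2})$) is what separates the two balls, while $r$ is comparable to $s$.

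The heart of the argument is the second paragraph. Converting ``$q_x,q_y=1+O(1/N)$'' into the slab picture with honest constants, and in particular comparing the $O(s/N)$ axial thickness against the radii of curvature of $\partial B_x,\partial B_y$ (of order $sN$) to obtain the $N^{-1/2}$ transverse bound, is where the genuine estimation lies; everything else is bookkeeping on top of it. A secondary delicate point is the simultaneous demand that $B_2$ have radius exactly $r_1$, be contained in $G$, and have its centre placed so that the contact point $z_x$ falls on the centre segment: reconciling the crossing guarantee with the equal-radius normalisation is what fixes the constant $3$. Finally, the reductions to the case $\infty\notin\overline{G}$ and to general $n\ge2$ are routine, using the similarity and M\"obius invariance of $\alpha_G$ recorded in Subsection~\ref{ABA}.
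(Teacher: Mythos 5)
The thesis quotes this lemma from \cite[Lemma~3.1]{Ha1} without proof, so there is no in-paper argument to compare against; judged on its own terms, your proposal breaks at its central step. The ``sagitta estimate'' is not a correct statement: the region that traps $\partial G$, namely the complement of $B_x\cup B_y$, is thin only in the direction normal to the two Apollonian spheres (essentially the direction of the line $xy$), and it is unbounded transversally --- every point $v$ of the perpendicular bisector hyperplane of $[x,y]$ has $\abs{v-x}=\abs{v-y}$ and hence lies outside both balls whenever $q_x,q_y\ge 1$. So ``$w\in\partial G$ lies in the slab'' imposes no bound at all on the transverse distance $\rho$, and in particular the nearest boundary point $z_x$ need not lie within $O(sN^{-1/2})$ of the line $xy$. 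What the two balls actually give is $\delta(x)/q_y\le\abs{z_x-y}\le q_x\,\delta(x)$, i.e.\ $z_x$ lies in a spherical shell of relative thickness $O(1/N)$ about $y$, anywhere on which it may sit; its transverse distance from $xy$ can be as large as $\delta(x)$, and the hypothesis gives $s\le(q_x+1)\delta(x)$ but no lower bound on $s/\delta(x)$. Concretely, for $G=\R^2\setminus\{0,100e_1\}$, $x=e_2$, $y=e_2+\epsilon e_1$ one computes $q_x\approx 1+\epsilon^2/2$, $q_y\approx 1+\epsilon/100$, so $\alpha_G(x,y)\approx\epsilon/100$ while $j_G(x,y)\approx\epsilon$ and the hypothesis holds for every $N\le 100$; yet $z_x=0$ sits at transverse distance $1\gg s=\epsilon$ from the line $xy$.

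The same example shows that the configurations you set aside are not ``the same bookkeeping''. Your declared representative regime ($z_x$ near $m$, giving $\rho=O(sN^{-1/2})$) is in fact only the borderline case $\delta(x)=s/2+O(s/N)$: already for $\delta(x)=s$ the shell constraint forces $\langle z_x-x,(y-x)/s\rangle=s/2+O(s/N)$ and hence puts $z_x$ at transverse distance about $(\sqrt3/2)s$ from the axis. Outside that borderline case your mechanism for $B_2\subset G$ fails outright: the point $2z_x-x$ on the ray from $x$ through $z_x$ need not lie in $B_y$ (in the example it is $-e_2$, which lies outside $B_y\approx B^2(50e_1+e_2,50)$), so $B_2\subset B_y\subset G$ cannot be concluded, and nothing in your argument prevents $\partial G$ from accumulating on that ray just beyond $z_x$. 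The estimate the proof actually needs, valid in every configuration, is that $z_x$ is squeezed between the two balls in the \emph{normal} direction: one checks directly from Property~\itemref{formula} and $\abs{z_x-y}\ge\delta(x)/q_y$ that $d(z_x,\overline{B_x})$ and $d(z_x,\overline{B_y})$ are both at most $6\delta(x)/N$ while $r_x,r_y\ge N\delta(x)/6$, so $B_1,B_2$ must be aimed from $z_x$ towards the centres $x_0$ and $y_0$ (two nearly opposite, nearly axis-parallel directions). Your construction aims them along $z_x-x$ and its opposite, which in the example above is nearly orthogonal to the correct direction; this is the missing idea, not a routine case distinction.
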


The following corollary is proved from this lemma by considering a sufficiently
small ball centered at a boundary point on the segment connecting the
centers of $B_1$ and $B_2$.

\begin{corollary}\label{j<<aCor}
If $G\psubset\Rn$ does not have the approximation
property, then for every $\epsilon>0$ there exists a point $z\in \partial G$, a real
number $r>0$ and
a unit vector $\theta\in S^{n-1}$ such that for every $w\in G^c\cap B^n(z,r)$
we have $\ip{w}{\theta}\le \epsilon r$.
\end{corollary}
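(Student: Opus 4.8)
The plan is to turn the analytic failure of the comparison property into the rigid ``double-ball'' configuration of Lemma~\ref{a/j<<1}, and then to extract near-flatness of the boundary by inspecting, through a small ball, a boundary point caught between the two balls.

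First I would reformulate the hypothesis. Since $\alpha_G\ale j_G$ holds in every domain by \cite[Theorem~3.2]{Be}, the failure of the comparison property $\alpha_G\aeq j_G$ (the ``approximation property'' of the statement) is exactly the failure of $\alpha_G\age j_G$; that is, no constant $K$ satisfies $j_G\le K\alpha_G$. Hence for \emph{every} $N\ge 16$ there are points $x,y\in G$ with $\alpha_G(x,y)\le j_G(x,y)/N$, so Lemma~\ref{a/j<<1} applies. Given $\epsilon>0$ I would fix $N$ (its size chosen at the very end) and obtain a ball $B$ of radius $R$ together with balls $B_1,B_2\subset G\cap B$ of common radius $R_1\ge(1-3N^{-1/2})R/2$ with $d(B_1,B_2)=2(R-2R_1)$, the segment of whose centers meets $\partial G$.

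Next I would exploit the rigidity of this configuration. The separation forces $|c_1-c_2|=2(R-R_1)$, while $B_i\subset B$ forces $|c_i|\le R-R_1$; by the triangle inequality these can hold simultaneously only if the centers are antipodal, $c_1=-c_2=(R-R_1)\theta$ for a unit vector $\theta$, with each $B_i$ internally tangent to $B$ at $\pm R\theta$. Thus $B\setminus(B_1\cup B_2)$ is a thin equatorial region whose axial half-thickness on the axis equals $\delta_0:=R-2R_1\le 3N^{-1/2}R$. Let $z\in\partial G$ be a point where the segment of centers crosses the boundary; it lies on the axis inside the gap, so, in coordinates centered at the center of $B$, one has $z=\tau\theta$ with $|\tau|\le\delta_0$, i.e.\ $|\ip{z}{\theta}|\le\delta_0$. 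I would then take the ball $B^n(z,r)$ of the statement with $r:=\epsilon R/16$, which for $\epsilon\le 1$ and $N\ge 16$ satisfies $r\le R_1$.

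Finally I would establish the confinement. For $w\in G^c\cap B^n(z,r)$ the inclusions $B_1,B_2\subset G$ give $w\notin B_1\cup B_2$; writing $\sigma\le r$ for the distance of $w$ from the axis and noting that $w$ lies near the equatorial hyperplane, avoiding both polar caps yields $|\ip{w}{\theta}|\le (R-R_1)-\sqrt{R_1^2-\sigma^2}\le\delta_0+\sigma^2/R_1\le\delta_0+8r^2/R$ (using $R_1\ge R/8$). Together with $|\ip{z}{\theta}|\le\delta_0$ this gives $\ip{w-z}{\theta}\le 2\delta_0+8r^2/R\le 6N^{-1/2}R+8r^2/R$; with $r=\epsilon R/16$ the curvature term equals $\epsilon r/2$, and the gap term is at most $\epsilon r/2$ as soon as $N$ exceeds a fixed multiple of $\epsilon^{-4}$. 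Fixing such an $N$ yields $\ip{w-z}{\theta}\le\epsilon r$ (indeed in the stronger two-sided form), and translating so that $z$ is the origin turns this into the asserted $\ip{w}{\theta}\le\epsilon r$. I expect the genuine obstacle to be this scale selection rather than any single estimate: the gap half-thickness $\delta_0\sim N^{-1/2}R$ is an absolute length fixed by $N$, whereas the curvature error $\sigma^2/R_1\sim r^2/R$ grows with the inspection radius $r$, so $r$ must be neither too small (lest $\delta_0$ dominate $\epsilon r$) nor too large (lest the caps bend away from the equatorial hyperplane); it is precisely the freedom to take $N$ arbitrarily large, granted by the failure of the comparison property, that makes the admissible window for $r$ nonempty and pushes both errors below $\epsilon r$.
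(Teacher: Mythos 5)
Your argument is correct and follows exactly the route the paper intends: the paper's own ``proof'' consists of the single remark that one should consider a sufficiently small ball centered at a boundary point on the segment joining the centers of $B_1$ and $B_2$ from Lemma~\ref{a/j<<1}, and your rigidity analysis (the two balls are forced to be antipodal and internally tangent to $B$, with gap half-thickness $R-2R_1\le 3N^{-1/2}R$ and curvature correction $\sigma^2/R_1$), together with the choices $r=\epsilon R/16$ and $N\gtrsim\epsilon^{-4}$, supplies precisely the details the paper omits. The scale-selection issue you flag at the end is indeed the only delicate point, and your balancing of the gap term against the curvature term resolves it correctly.
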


It follows directly from the next theorem that Inequality~2 cannot occur.

\begin{theorem}\label{qiU=>apx}
If $G\psubset \Rn$ is quasi-isotropic and uniform, then
$G$ has the comparison property.
\end{theorem}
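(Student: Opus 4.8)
The plan is to argue by contradiction: assume $G\psubset\Rn$ is quasi-isotropic and uniform but lacks the comparison property, and then manufacture a pair of points whose inner Apollonian distance is forced to blow up while their $j_G$-distance stays bounded. This will contradict uniformity in its equivalent form $\ak_G\ale j_G$ (\cite[Theorem~1.2]{HPWS}), which together with quasi-isotropy (Lemma~\ref{qiEquivs}) gives $\ak_G\aeq k_G\aeq j_G$. The first thing I would do is localise the failure to a bounded scale. Since $\alpha_G\le 2j_G$ always holds, the absence of the comparison property means $j_G\not\ale\alpha_G$, so there are points $x_k,y_k$ with $\alpha_G(x_k,y_k)/j_G(x_k,y_k)\to 0$. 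Using the estimate $q_x-1\le\abs{x-y}/\delta(x)$ from Subsection~\ref{ABA}, one gets $\alpha_G(x,y)=\log(q_xq_y)\ge\log q_x\ge j_G(x,y)-\log 2$ whenever $j_G(x,y)\ge\log 4$ (and $q_y\ge 1$); hence the ratio can tend to $0$ only if $j_G(x_k,y_k)$ stays bounded. Thus the failure occurs at a bounded $j$-scale, which is exactly the regime where slit-type geometry lives.

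Next I would extract the geometric configuration. Applying Lemma~\ref{a/j<<1} with $N=N_k\to\infty$ yields balls $B,B_1,B_2$ with $B_1,B_2\subset G\cap B$ of radius comparable to half that of $B$, nearly tangent, and with the segment joining their centres meeting $\partial G$; Corollary~\ref{j<<aCor} then provides a boundary point $z$ near which $G^c$ is flat. The picture is of $B_1$ and $B_2$ lying on opposite sides of a thin piece of complement threading the waist between them.

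The crux, and the step where quasi-isotropy (rather than mere uniformity) is indispensable, is to upgrade this one-directional flatness into genuine transverse separation of $B_1$ from $B_2$. Here I would test quasi-isotropy at points of the waist: if a point $w$ at transverse distance $\rho$ from $z$ lay in $G$ with $\delta(w)\approx\rho$, then the Apollonian spheres at $w$ in the normal direction $\theta$ would run freely upward into $B_1$ and downward into $B_2$, so Lemma~\ref{aDensLem} gives $\bar\alpha_G(w;\theta)$ comparable to the reciprocal of the radius of $B$. But Lemma~\ref{qiDef3} forces $\bar\alpha_G(w;\theta)\delta(w)\ge 1/L$, which is possible only if $\rho$ is comparable to the radius of $B$. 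Consequently, for all smaller $\rho$ the complement must approach every waist point to within $\ll\rho$, so the slit in fact threads the entire waist out to a transverse scale comparable to the radius of $B$, separating $B_1$ from $B_2$ inside a ball $B^n(z,cr)$.

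Finally I would exploit the separation. Choose $x\in B_1$ and $y\in B_2$ near $z$ at normal height $\pm h$ with $h\to 0$, so that $\delta(x)\approx\delta(y)\approx h$, $\abs{x-y}\approx 2h$ and $j_G(x,y)\le\log 3+o(1)$. By the separation just established, every path in $G$ joining $x$ to $y$ must leave $B^n(z,cr)$. Reducing to the half-space model by the usual translation and scaling (as in Proposition~\ref{twelveProp}) and applying Lemma~\ref{GBHB} to the part of such a path running from $x$ out to the bounding sphere, together with the representation $\ak_G(x,y)=\inf_\gamma\int\bar\alpha_G(\gamma(t);\gamma'(t))\abs{\gamma'(t)}\,dt$ of Lemma~\ref{innerThm}, yields $\ak_G(x,y)\ge\tfrac12(\arsh(ch^{-1})-\arsh 1)\to\infty$. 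Since $j_G(x,y)$ remains bounded, this gives $\ak_G\not\ale j_G$, contradicting uniformity and completing the proof. The main obstacle is the third step — converting flatness in a single direction into transverse separation — where the Apollonian-sphere and density bookkeeping must be made uniform over all waist points and directions; the reduction in the last step is then routine, mirroring the planar argument of Proposition~\ref{twelveProp}.
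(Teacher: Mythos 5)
Your first three steps track the paper's own proof closely: the extraction of a flat boundary configuration via Corollary~\ref{j<<aCor}, and the use of quasi-isotropy (through the Apollonian sphere radii, Lemma~\ref{aDensLem} and Lemma~\ref{qiDef2}) to force every point $w$ of the transverse ``waist'' lying in $G$ to satisfy $\delta(w)\le 2L\epsilon$, are exactly the computation in the paper. The gap is in the sentence claiming that the slit therefore ``separates $B_1$ from $B_2$ inside a ball $B^n(z,cr)$.'' What you have actually proved is only that $G^c$ comes within $2L\epsilon$ of every waist point; that is a density statement, not a separation statement. The theorem is for arbitrary proper subdomains of $\Rn$, and for $n\ge 3$ a set that approaches every point of an $(n-1)$-dimensional disk to within $2L\epsilon$ need not disconnect the two sides at all---a path can thread between the pieces of the complement. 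Even in the plane the deduction is not automatic: in Proposition~\ref{twelveProp} the separation is supplied by item (2) of Lemma~\ref{a<<jLem}, which is specifically a simply connected planar statement and is not available here. Since ``every path must leave $B^n(z,cr)$'' is unjustified, the final appeal to Lemma~\ref{GBHB} collapses; note also that this lemma requires $G\cap B^n=H^n\cap B^n$ exactly, and the reduction to that model which you borrow from Proposition~\ref{twelveProp} is itself entangled with the planar separation argument.

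The paper sidesteps separation entirely. Having shown $\delta(w)\le 2L\epsilon$ for every waist point $w\in G$, it tests an arbitrary path $\gamma$ from $x=\sqrt{\epsilon}\,e_1$ to $y=-\sqrt{\epsilon}\,e_1$ against the two conditions in the definition of uniformity: if $\gamma(t_0)=w$ lies in the waist, then $\min\{t_0,\ell(\gamma)-t_0\}\ge\sqrt{\epsilon}$ while $\delta(w)\le 2L\epsilon$, so the double-cone constant is at least $1/(2L\sqrt{\epsilon})$; if $\gamma$ avoids the waist it must leave $B^n(0,\sqrt[4]{\epsilon})$, so $\ell(\gamma)/\abs{x-y}\ge\epsilon^{-1/4}$. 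Letting $\epsilon\to 0$ contradicts uniformity directly, with no lower bound on $\ak_G$ needed. If you insist on your framework of contradicting $\ak_G\ale j_G$, it can be salvaged without separation: for any path, either it meets the waist at some $w$ with $\delta(w)\le 2L\epsilon$, in which case its $\alpha_G$-length is at least $\alpha_G(x,w)\ge\log\bigl((\abs{x-w}-\delta(w))/\delta(w)\bigr)\ge\log\bigl((\sqrt{\epsilon}-2L\epsilon)/(2L\epsilon)\bigr)$, or it exits $B^n(0,\sqrt[4]{\epsilon})$ at some $v$ with $\alpha_G(x,v)$ likewise of order $\log(1/\epsilon)$, while $j_G(x,y)$ stays bounded. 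But that is a different argument from the one you wrote, and it is essentially a repackaging of the paper's.
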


\begin{proof} Assume that $G$ is $L$--quasi-isotropic but does not have
the comparison property. We will show that this implies that $G$
is not uniform.

Let $0<\epsilon<1/(256 L^4)$ and choose $u\in \partial G$, $r>0$ and
$e\in S^{n-1}$ such that $\ip{v}{e}\le \epsilon r$ for all
$v\in G^c\cap B^n(u,r)$ (possible by Corollary~\ref{j<<aCor}).
We assume without
loss of generality that $u=0$, $r=1$ and $e=e_1$. Consider the points
$x:=\sqrt{\epsilon} e_1$ and $y:=-\sqrt{\epsilon} e_1$ and paths
connecting them in $G$. Let us denote
$D:=\{z\in B^n(0,\sqrt[4]{\epsilon}) \colon z_1=0\}$
and define $A$ to be the set of paths joining $x$ and $y$ in $G$ which
intersect $D$, and $B$ to be the set of paths joining $x$ and $y$ in $G$ which
do not intersect $D$.

\begin{figure}
\begin{center}
\includegraphics[width=6cm]{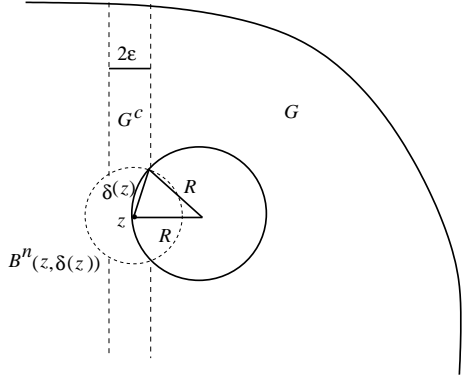}
\end{center}
\caption{The Apollonian sphere at $z$. }\label{qi2Fig}
\end{figure}

Let us consider first a path $\gamma \in A$ parameterized by arc length.
Let $z\in \gamma\cap D$,
$w\in S^{n-1}(z,\delta(z)) \cap \partial G$ and denote $\theta:=(w-z)/\abs{z-w}$.
Then $\bar\alpha_G(z; \theta) \ge 1/\abs{z-w} = 1/\delta(z)$.
Since $B^n(z,\delta(z))\subset G$ and $G^c \cap B^n$ is contained
within $\epsilon$ distance from the plane $P:=\{\xi\in\Rn\colon \xi_1=0\}$, we find
that the Apollonian spheres through $z$ in direction $e_1$ have radii
at least $\min\{\delta(z)^2/(2\epsilon), 1/4\}$, see Figure~\ref{qi2Fig}. Here the first
term comes from spheres limited by the boundary component near the plane $P$
(the case shown in the figure) and the second one comes from spheres
limited by $S^{n-1}$. It follows from this estimate that
$$ \frac{\bar\alpha_G(z;e_1)}{\bar\alpha_G(z; \theta)} \le
\max \{2\epsilon/\delta(z), 4\delta(z) \}. $$
Since $G$ is $L$--quasi-isotropic, this implies by Lemma~\ref{qiDef2} that
$\delta(z)\le 2L \epsilon$ or $4\delta(z) \ge 1/L$.
Since $z\in D$ and $0\in \partial G$, we have
$\delta(z) < \sqrt[4]{\epsilon} < 1/(4L)$  and so we see
that the second condition does not hold. This means that $\delta(z)\le 2L \epsilon$.
If $t_0$ is such that $z:=\gamma(t_0)\in D$, then it is clear that
$\min\{ t_0, \ell(\gamma)- t_0\} \ge d(x,D) = \sqrt{\epsilon}$. Therefore the
inequality $K\delta(\gamma(t_0))\ge \min\{ t_0, \ell(\gamma)- t_0\}$, which is the
second condition from the definition of uniformity, implies that
$K \ge \sqrt{\epsilon} / (2L \epsilon) = 1/(2L \sqrt{\epsilon})$.

On the other hand, for $\gamma \in B$ we have
$\ell(\gamma)/\abs{x-y} \ge 1/\sqrt[4]{\epsilon}$. Recall that
$\ell(\gamma)\le K \abs{x-y}$ is the first condition in the definition of uniformity. Thus
we see that as $\epsilon\to 0$ a path $\gamma$ will violate either the first
(if $\gamma\in B$) or the second condition (if $\gamma\in A$) of uniformity,
which means that $G$ is not uniform, as was to be shown.
 \end{proof}


Using Theorem~\ref{qiU=>apx} we can prove the following
improvement of Proposition~6.6 from \cite{Ha2} which assumed the
comparison property instead of quasi-isotropy in item (2).

\begin{theorem}\label{AuEquivs}
Let $G\psubset\Rn$ be a domain. The following conditions are equivalent:
\begin{enumerate}
\item\label{Au1} $G$ is A-uniform (i.e.\ $k_G\ale \alpha_G$);
\item\label{Au2} $G$ is uniform and quasi-isotropic; and
\item\label{Au3} $G$ is Apollonian quasiconvex and quasi-isotropic.
\end{enumerate}
\end{theorem}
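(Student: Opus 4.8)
The plan is to prove the three-way equivalence by establishing a cycle of implications, namely \eqref{Au1} $\Rightarrow$ \eqref{Au2} $\Rightarrow$ \eqref{Au3} $\Rightarrow$ \eqref{Au1}, leaning heavily on the characterizations already assembled in this chapter. The central bridge is Theorem~\ref{qiU=>apx}, which tells us that quasi-isotropy together with uniformity forces the comparison property $\alpha_G\aeq j_G$; this is exactly what lets us upgrade partial comparisons into the full $A$-uniformity $k_G\aeq \alpha_G$.

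First I would prove \eqref{Au1} $\Rightarrow$ \eqref{Au2}. Assume $k_G\ale \alpha_G$. Since the general inequalities $\alpha_G\ale \ak_G\ale k_G$ always hold by \eqref{genIneqs}, combining these with the hypothesis yields $\alpha_G\aeq \ak_G\aeq k_G$. From $\ak_G\aeq k_G$ and Lemma~\ref{qiEquivs} we immediately get that $G$ is quasi-isotropic. For uniformity, recall $\alpha_G\ale j_G \ale k_G$ always; together with $k_G\ale \alpha_G$ this forces $k_G\aeq j_G$, so $G$ is uniform by Corollary~1 of \cite{GO}. Thus both halves of \eqref{Au2} follow almost mechanically from the chain of standing inequalities.

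Next I would prove \eqref{Au2} $\Rightarrow$ \eqref{Au3}. Quasi-isotropy carries over verbatim, so only Apollonian quasiconvexity ($\alpha_G\aeq \ak_G$) needs attention. Here I would invoke Theorem~\ref{qiU=>apx}: uniformity plus quasi-isotropy gives the comparison property $\alpha_G\aeq j_G$. Since $G$ is quasi-isotropic, Lemma~\ref{qiEquivs} gives $j_G\ale \ak_G$, and combining $\alpha_G\aeq j_G \ale \ak_G$ with the universal bound $\alpha_G\ale\ak_G$ and its reverse obtained this way yields $\alpha_G\aeq \ak_G$, which is precisely Apollonian quasiconvexity.

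Finally, \eqref{Au3} $\Rightarrow$ \eqref{Au1}. Quasi-isotropy again gives $\ak_G\aeq k_G$ via Lemma~\ref{qiEquivs}, while Apollonian quasiconvexity is $\alpha_G\aeq \ak_G$ by definition. Chaining these, $k_G\aeq \ak_G\aeq \alpha_G$, so in particular $k_G\ale \alpha_G$, which is $A$-uniformity. I expect the main obstacle to be organizing the bookkeeping of the standing inequalities so that each direction genuinely uses the correct characterization rather than circular reasoning; the one genuinely substantive input is Theorem~\ref{qiU=>apx} in the step \eqref{Au2} $\Rightarrow$ \eqref{Au3}, and everything else reduces to careful application of Lemma~\ref{qiEquivs} and the fixed chain \eqref{genIneqs}. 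The improvement over Proposition~6.6 of \cite{Ha2} lies exactly in using quasi-isotropy (rather than the stronger comparison property) as the second hypothesis in item~\eqref{Au2}, made possible precisely because Theorem~\ref{qiU=>apx} recovers the comparison property for free.
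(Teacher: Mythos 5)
Your proof is correct and follows essentially the same route as the paper: both arguments reduce everything to the standing chain (\ref{genIneqs}) together with Lemma~\ref{qiEquivs}, the Gehring--Osgood characterization of uniformity, and the single substantive input Theorem~\ref{qiU=>apx} applied to condition (2). The only difference is cosmetic --- you arrange the implications as a cycle $(1)\Rightarrow(2)\Rightarrow(3)\Rightarrow(1)$ while the paper shows $(1)$ implies both $(2)$ and $(3)$ and then closes the loop from each of $(2)$ and $(3)$ back to $(1)$.
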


\begin{proof}
The three conditions can be written as
(1) $\alpha_G \aeq k_G$, (2) $j_G\aeq k_G$ and $k_G\aeq \ak_G$, and (3)
$\ak_G\aeq \alpha_G$ and $k_G\aeq \ak_G$, respectively.
If (1) holds, then $\alpha_G \aeq j_G \aeq \ak_G \aeq k_G$ and it is
clear that (2) and (3) hold. Assuming (3) and combining the two inequalities
we again get $\alpha_G \aeq k_G$, i.e.\ (1).
Finally, if (2) holds, then $G$ has the comparison property by
Theorem~\ref{qiU=>apx}, which means that $\alpha_G\aeq j_G$ and so
$\alpha_G\aeq k_G$ and all the metrics are again equivalent.
 \end{proof}


\section{Apollonian Quasiconvexity, other Constructions}
\label{constructSect}

In this section we show that Inequalities~9 (Table \ref{table1}),
$\alpha_G\aeq \ak_G\mle j_G\aeq k_G$, and 10, $\alpha_G\mle \ak_G\mle
j_G\aeq k_G$, can occur in general domains. Recall that we saw in
Lemma~\ref{not910} that these inequalities cannot occur in simply
connected planar domains.


\subsection{The ninth inequality}
\label{nineSubsect}

In this subsection we are especially concerned with the relation
$\alpha_G\aeq \ak_G$\index{Apollonian quasiconvexity} (i.e.\ the question whether or not the
Apollonian metric is quasiconvex) to give brief description on
Inequality 9.
It was shown in \cite[Theorem~4.2]{Se} $j_G\le \alpha_G$ for convex $G$;
hence $\alpha_G\aeq j_G$ in convex domains. Recall also the
well-known fact that $j_G\aeq k_G$ if and only if $G$ is uniform.
Thus we conclude that $\alpha_G\aeq \ak_G$ holds for
all convex uniform $G$.
In \cite[Corollary~1.4]{HPWS} it was shown that
$\alpha_G\aeq \ak_G$ implies that $G$ is uniform.
On the other hand, there are also domains in which
$\alpha_G\mle \ak_G$; for example, convex domains that are not uniform.

We will now prove the inequality $\alpha_G\aeq \ak_G$ in
some set of domains. Unfortunately, we do not have a simple geometric
interpretation of this inequality, which means that the proof
is somewhat long. However, the structure is simple:
first we deal with the ``trivial''
cases, where the extra boundary point $p$ has no bearing
on the claim. In the other cases we construct a near-geodesic
path and estimate its length.

%

The general idea with the following theorem and its corollary
is that the inequality $\alpha_G\aeq \ak_G$ is not disturbed
by the addition of some boundary components of co-dimension
at least two, but does not hold for the addition of lower
co-dimension boundary components.

\begin{theorem}\label{1-pt-remove}
Let $D\psubset\Rn$ be a bounded domain. Suppose $p$ is a point in
$D$ and define $G:=D\setminus\{p\}$. If
$\alpha_D\aeq\ak_D$, then $\alpha_G\aeq \ak_G$ as well.
\end{theorem}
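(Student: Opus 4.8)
The plan is to prove only the nontrivial direction. Since $\alpha_G\le\ak_G$ holds in every domain (repeated triangle inequality), it suffices to produce a constant $c$ with $\ak_G\le c\,\alpha_G$ on $G$. I would exploit that $G$ differs from $D$ only by the isolated boundary point $p$, so that the Apollonian spheres of $G$ are exactly those of $D$ truncated by the obstacle $\{p\}$. Concretely, fix $x\in G$ and $\theta\in S^{n-1}$, and let $r_\pm^D,r_\pm^G$ be the radii of the Apollonian spheres at $x$ in directions $\pm\theta$ for $D$ and for $G$. A ball lies in $G$ iff it lies in $D$ and misses $p$, and the growing ball $B^n(x+s\theta,s)$ first swallows $p$ at $s=\rho_+:=|p-x|^2/\bigl(2\langle p-x,\theta\rangle\bigr)$ when $\langle p-x,\theta\rangle>0$ (and never, $\rho_+=\infty$, otherwise). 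Hence $r_\pm^G=\min\{r_\pm^D,\rho_\pm\}$, and Lemma~\ref{aDensLem} together with $1/\min\{a,b\}\le 1/a+1/b$ gives the pointwise decomposition
\[\bar\alpha_G(x;\theta)\le \bar\alpha_D(x;\theta)+\bar\alpha_p(x;\theta),\qquad \bar\alpha_p(x;\theta):=\frac{|\langle p-x,\theta\rangle|}{|p-x|^{2}}\le\frac{1}{|p-x|}.\]
The term $\bar\alpha_p$ is the directed density generated by the single point $p$; its crucial features are that it equals $1/|p-x|$ radially and vanishes in directions tangent to the sphere about $p$.

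Next I would establish the matching lower bound $\alpha_G(x,y)\ge\max\{\alpha_D(x,y),\,|\log(|x-p|/|y-p|)|\}$. Monotonicity of the Apollonian metric gives $\alpha_G\ge\alpha_D$ immediately, since $G$ has more boundary points. For the logarithmic term, testing the Apollonian parameter $q_x=\sup_{a\in\partial G}|a-y|/|a-x|$ at $a=p$ yields $q_x\ge|p-y|/|p-x|$, and using $q_y\ge1$ (valid because $\infty\notin G$, as in the Apollonian balls list of Subsection~\ref{ABA}) we get $\alpha_G(x,y)=\log(q_xq_y)\ge\log(|p-y|/|p-x|)$; the symmetric choice gives the reciprocal, hence the bound.

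Then, by Lemma~\ref{innerThm} and the density decomposition, for any piecewise $C^1$ path $\gamma$ joining $x,y$ in $G$ the $\ak_G$-length of $\gamma$ is at most its $\bar\alpha_D$-length plus its $\bar\alpha_p$-length, so it remains to build one path making both small. The $\bar\alpha_p$-length of $\gamma$ is the total variation of $\log|\gamma-p|$; it equals $|\log(|x-p|/|y-p|)|\le\alpha_G(x,y)$ whenever $|\gamma-p|$ is kept monotone, and any purely tangential motion on a sphere about $p$ is free. In the trivial case, where the $\ak_D$-near-geodesic supplied by the hypothesis $\alpha_D\aeq\ak_D$ already stays away from $p$, that path works directly: its $\bar\alpha_D$-length is $\le c_D\,\alpha_D(x,y)\le c_D\,\alpha_G(x,y)$ and its $\bar\alpha_p$-length is controlled because it avoids a neighborhood of $p$. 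In the remaining case, where $x$ or $y$ lies near $p$, I would construct an explicit three-leg path: a radial segment equalizing the distances to $p$ (paying $|\log(|x-p|/|y-p|)|$ in $\bar\alpha_p$ and, being radial and short, a controlled amount in $\bar\alpha_D$), a tangential arc on a fixed sphere about $p$ (free in $\bar\alpha_p$) realizing the $D$-quasiconvex connection, and a final radial segment, gluing through intermediate points chosen on a sphere centered at $p$.

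The main obstacle I anticipate is this last construction: arranging a single path whose $\bar\alpha_p$-length stays $\lesssim|\log(|x-p|/|y-p|)|$ (by radial monotonicity) while its $\bar\alpha_D$-length stays $\lesssim\alpha_D(x,y)$ (by quasiconvexity of $\alpha_D$), all while remaining inside $G$ near the puncture. Bounding the $\bar\alpha_D$-length of the auxiliary radial and tangential legs — which are not themselves $\ak_D$-geodesics — against $\alpha_G(x,y)$, and splitting correctly into the trivial versus near-the-puncture regimes so that the upper estimates meet the lower bound, is the delicate part; boundedness of $D$ enters to keep $|p-x|$ comparable across the relevant region.
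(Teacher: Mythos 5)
Your plan follows the paper's own proof quite closely: reduce to $\ak_G\ale\alpha_G$; bound the $G$-density by the $D$-density plus a puncture term of size at most $1/|w-p|$ (this is the paper's inequality (\ref{density-inq})); use the lower bound $\alpha_G(x,y)\ge\max\{\alpha_D(x,y),\,|\log(|x-p|/|y-p|)|\}$; and join points by radial/circular legs near $p$ glued to $\ak_D$-near-geodesics elsewhere. One item in your write-up is genuinely sharper than the paper's: the identity that the puncture term integrates along a path to the total variation of $\log|\gamma-p|$, so that arcs on spheres about $p$ are free and radial legs cost exactly the log-ratio. With this bookkeeping the paper's second case (both points in $B^n(p,\tfrac34\delta(p))$) becomes almost mechanical, bypassing its integration of the punctured-ball density and the monotonicity trick with $u\mapsto u^3(\delta(p)-u)$ used there to absorb the extra boundary term.

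Nevertheless the proposal stops short of a proof at exactly the points where the paper does its real work, and these are genuine gaps. First, your two cases are not exhaustive: when $x$ and $y$ are both far from $p$ but every $\ak_D$-near-geodesic enters $B:=B^n(p,\delta(p)/2)$ (it may even pass through $p$ itself, which is legal for paths in $D$ but not in $G$), you are in neither your ``trivial'' case nor your ``$x$ or $y$ near $p$'' case. The paper handles this by rerouting the offending sub-arcs along $\partial B$ and comparing costs via the two-sided density bounds $2/\diam D\le\bar\alpha_D(\cdot\,;\cdot)\le 2/\delta(\cdot)$; it is here (and in your ``trivial'' case, to control the puncture term by the length of the near-geodesic) that boundedness of $D$ actually enters, not merely to keep $|p-x|$ comparable. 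Second, and more seriously, in the mixed case ($y$ near $p$, $x$ far) your radial--tangential--radial path need not stay inside $D$ once it leaves $B^n(p,\delta(p))$: spheres about $p$ of radius comparable to $|x-p|$, and radial rays toward $x$, can exit the domain. So the far leg must be an honest near-geodesic from a point $w$ on an intermediate sphere to $x$, and the crux becomes the estimate $\ak_G(w,x)\ale\alpha_G(x,y)$. The paper proves this via $\ak_G(w,x)\ale\alpha_D(w,x)\le 2j_D(w,x)$ followed by a chain of elementary inequalities split into sub-cases (according to whether $\delta(w)\le\delta(x)$ and whether $|x-y|\ge 3\delta(x)$), the key point being that in this configuration $\log(|x-p|/|y-p|)\ge\log(3/2)$, so $\alpha_G(x,y)$ is bounded below and constant-size costs can be absorbed. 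You flag precisely this step as ``the delicate part'' and leave it open; without it, and without the missing detour case, the argument remains a plan rather than a proof.
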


\begin{proof}
In this proof we denote by $\delta$ the distance to the
boundary of $D$, not of $G$.
We prove $\alpha_G\age\ak_G$ since $\alpha_G\le\ak_G$ always
holds. Let $x,y\in G$ and denote $B:=B^n(p,\delta(p)/2)$. Let
$\gamma_{xy}$ be a path connecting $x$ and $y$ such that
$\alpha_G(\gamma_{xy})=\ak_G(x,y)$. The existence of $\gamma_{xy}$
is due to \cite[Theorem~1.5]{Ha5}.

First consider the case $x,y\in D\setminus B$. If
$\gamma_{xy}\cap B=\emptyset$, we proceed as follows. Let
$z\in\partial G$ be such that $\delta(p)=|p-z|$. Denote
$R_D:=\diam D/\delta(p)$. For $w\in D\setminus B$ and $r\in S^1$,
we have
$$\bar\alpha_D(w;r)= \frac1{2r_-} + \frac1{2r_+} \ge
\frac{2}{\diam D}\ge\frac{1}{|w-p|R_D},
$$
where the last inequality holds since $|w-p|\ge\delta(p)/2$.
We also see that if the Apollonian spheres are affected by the boundary
point $p$, then
$$\bar\alpha_G(w;r)\le \frac{1}{|w-p|}+\frac{1}{2r_+}\le
\frac{1}{|w-p|}+\bar\alpha_D(w;r)
$$
holds, where $r_+$ denotes the radius of the Apollonian sphere
which touches $\partial D$. Otherwise, we have
$\bar\alpha_G(w;r)=\bar\alpha_D(w;r)$. So for all $w\in D\setminus
B$, the inequalities
\be\label{density-inq}\bar\alpha_G(w;r)\le\bar\alpha_D(w;r)+\frac{1}{|w-p|}\le (1+R_D)\bar\alpha_D(w;r) \ee hold.
By Lemma~\ref{innerThm} we get $\alpha_G(\gamma_{xy})\le
C\alpha_D(\gamma_{xy})$, for some constant $C$, which gives
\be\label{subd-eqv}
\ak_G(x,y)\ale\ak_D(x,y)\aeq\alpha_D(x,y)\le\alpha_G(x,y),
\ee
where the second inequality holds by assumption and the third
holds trivially, as $G$ is a subdomain of $D$.

If $\gamma_{xy}$
intersects $B$, let $\gamma$ be an intersecting part of
$\gamma_{xy}$ from $x_1$ to $x_2$ (if there are more intersecting
parts, we proceed similarly). Let $\gamma'$ be the shortest
circular arc on $\partial B$ from $x_1$ to $x_2$ as shown in the
Figure~\ref{XX}.
\begin{figure}
\begin{center}
\includegraphics[width=6cm]{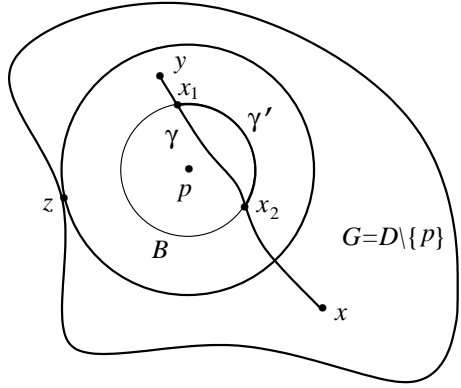}
\end{center}
\caption{The geodesic path $\gamma_{xy}$ of $\ak_G$ connecting $x$ and $y$
intersects $B$.}\label{XX}
\end{figure}
Using the density bounds $2/\diam D\le \bar\alpha_D(u;r)\le 2/\delta(u)$,
we see that $\alpha_D(\gamma)\ge
2 \ell(\gamma)/\diam D$ and $\alpha_D(\gamma')\le
4\ell(\gamma')/\delta(p)$ hold. But since $\ell(\gamma)\ge
|x_1-x_2|$ and $\ell(\gamma')\le \frac{\pi}{2}|x_1-x_2|$, we have
$\ell(\gamma')\ale\ell(\gamma)$. This shows that
$\alpha_D(\gamma_{xy}')\ale\alpha_D(\gamma_{xy})$ holds. Since
$\gamma_{xy}'\subset G\setminus B$, (\ref{density-inq}) implies that
$\alpha_G(\gamma_{xy}')\ale \alpha_D(\gamma_{xy}')$. So we get
$$\alpha_G(x,y)\ge\alpha_D(x,y)\aeq\ak_D(x,y)=\alpha_D(\gamma_{xy})
\age\alpha_D(\gamma_{xy}')\age\alpha_G(\gamma_{xy}')\ge\ak_G(x,y).
$$
Thus we have shown that $\alpha_G(x,y)\age\ak_G(x,y)$ holds for
all $x,y\in D\setminus B$.

We now consider the case $x,y\in B^n(p,\frac{3}{4}\delta(p))$.
Without loss of generality we assume that $|y-p|\le|x-p|$. Since
$\partial G=\partial D\cup\{p\}$, it is clear that
$$\alpha_G(x,y)\ge
\max \Big\{ \log\frac{|x-p|}{|y-p|},\alpha_D(x,y)\Big\}.
$$
Let $\gamma:=\gamma_1\cup\gamma_2$, where $\gamma_1$ is the path
which is circular about the point $p$ from $y$ to
$|y-p|\frac{x-p}{|x-p|} + p$ and
$\gamma_2$ is the radial part from
$|y-p|\frac{x-p}{|x-p|} + p$ to $x$, as shown
in the Figure~\ref{XXX}.
\begin{figure}
\begin{center}
\includegraphics[width=5cm]{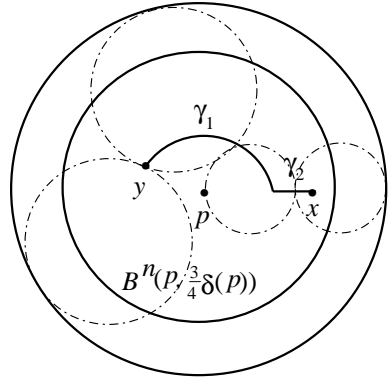}
\end{center}
\caption{A short path connecting $x$ and $y$ in
$B^n(p,\frac{3}{4}\delta(p))$.}\label{XXX}
\end{figure}
Since the Apollonian spheres are not
affected by the boundary point $p$ in the circular part,
 we have
\[\begin{split}
\bar\alpha_G(\gamma_1(t);\gamma_1'(t))
\le\bar\alpha_{B^n(p,\delta(p))}(\gamma_1(t);\gamma_1'(t))
&= \frac{1}{\delta(p)-|y-p|}+\frac{1}{\delta(p)+|y-p|} \\
&=\frac{2\delta(p)}{\delta(p)^2-|y-p|^2},
\end{split}\]
where the first equality holds since the Apollonian metric is
isotropic in balls (since it equals the hyperbolic metric).
For $\gamma_2(t)$, by monotony in the
domain of definition, we see that
\[\begin{split}
\bar\alpha_G(\gamma_2(t);\gamma_2'(t))
& \le \bar\alpha_{B^n(p,\delta(p))\setminus\{p\}}(\gamma_2(t);\gamma_2'(t))\\
& = \frac{1}{|p-\gamma_2(t)|}+\frac{1}{\delta(p)-|p-\gamma_2(t)|}.
\end{split}\]
Hence we have
\begin{align*}
\ak_G(x,y)\le\alpha_G(\gamma)&\le\int_{\gamma_1}\frac{2\delta(p)}
{\delta(p)^2-|y-p|^2}\,|dy|
+\int_{|y-p|}^{|x-p|}\frac{1}{t}+\frac{1}{\delta(p)-t}\,dt\\
&=\frac{2\delta(p)\ell(\gamma_1)}{\delta(p)^2-|y-p|^2}+\log\bigg(\frac{|x-p|}{|y-p|}
\frac{\delta(p)-|y-p|}{\delta(p)-|x-p|}\bigg)\\
&\le\frac{32}{7}\frac{\ell(\gamma_1)}{\delta(p)}+\log\bigg(\frac{|x-p|}{|y-p|}
\frac{\delta(p)-|y-p|}{\delta(p)-|x-p|}\bigg).
\end{align*}
Since $u\mapsto u^3(\delta(p)-u)$ is increasing for
$0<u<3\delta(p)/4$ and we have $|y-p|\le |x-p|\le 3\delta(p)/4$,
the inequality $|x-p|^3(\delta(p)-|x-p|)\ge
|y-p|^3(\delta(p)-|y-p|)$ holds. This inequality is equivalent to
$$\log\bigg(\frac{|x-p|}{|y-p|}\frac{\delta(p)-|y-p|}{\delta(p)-|x-p|}\bigg)\le
4\log\frac{|x-p|}{|y-p|}.
$$
Using $\alpha_D\aeq\ak_D$ we easily get
$\alpha_D(x,y)\age \ell(\gamma_1)/\delta(p)$.
We have thus shown that
$$ \ak_G(x,y)\le K\alpha_D(x,y)+4\log\{|x-p|/|y-p|\}\le
(K+4)\alpha_G(x,y), $$
for some constant $K$.

It remains to consider the case $x\not\in B^n(p,3\delta(p)/4)$ and
$y\in B$. Let $w\in
S^{n-1}(p,\frac{3}{4}\delta(p))$ be such that
$|y-w|=d(y,S^{n-1}(p,\frac{3}{4}\delta(p)))$. Let
$\gamma:=\gamma_1\cup\gamma_2$, where $\gamma_1=[y,w]$ and
$\gamma_2$ is a path connecting $w$ and $x$ such that
$\alpha_G(\gamma_2)=\ak_G(x,w)$. As we discussed in the previous
case, we have
\[
\alpha_G(\gamma_1)\le 4\log \frac{3\delta(p)}{4|y-p|}
\le 4\log\frac{|x-p|}{|y-p|}\le 4\alpha_G(x,y).\]
Since $x,w\not\in B$, it follows by the previous cases
that
$$\alpha_G(\gamma_2)=\ak_G(w,x)\ale \alpha_D(w,x)\le 2j_D(w,x).
$$
If $\delta(w)\le \delta(x)$, using the first inequality of
(\ref{subd-eqv}) and the triangle inequality we see that the
inequalities
$$\alpha_G(\gamma_2)=\ak_G(w,x)\ale \ak_D(w,x)\aeq\alpha_D(w,x)\le
2j_D(w,x)\le 2\log\left(4+\frac{4|x-p|}{\delta(p)}\right)
$$
hold. But for $s\ge 3/2$, we have $\log(4+2s)\le 5\log s$. Since
$|y-p|\le\delta(p)/2$, we obtain
\[\label{gamma2}\alpha_G(\gamma_2)\ale
\log\left(4+\frac{4|x-p|}{\delta(p)}\right)\le
5\log\frac{|x-p|}{|y-p|} \le 5\alpha_G(x,y).\]
We then move on to the case $\delta(w)\ge \delta(x)$.
If $|x-y|\ge 3\delta(x)$, we see (by the triangle inequality) that
$$\alpha_G(x,y)\ge \sup_{b\in\partial D}\log
\frac{|x-y|-|b-x|}{|b-x|}=\log\left(\frac{|x-y|}{\delta(x)}-1\right)
$$
holds. Using this and the fact that $\frac{|x-p|}{|y-p|}\ge \log (3/2)$
we get
$$\alpha_G(x,y)\ge\left\{ \begin{array}{ll}
\log\left(\displaystyle\frac{|x-y|}{\delta(x)}-1\right) &
\mbox{for $\displaystyle\frac{|x-y|}{\delta(x)}\ge 3$},\\
\log\displaystyle\frac32 &
\mbox{otherwise}.\end{array}\right.
$$
Since $|x-y|\ge \delta(p)/4$, we get the following upper bound
for the length of the curve (the first inequality follows as before):
$$\alpha_G(\gamma_2)\ale j_D(x,w) \ale
\log\left(1+\frac{|x-y|+\delta(p)}{\delta(x)}\right)
\le \log\left(1+\frac{5|x-y|}{\delta(x)}\right).
$$
The function $f(s)=(s-1)^4-(1+5s)$ is increasing for $s\ge 3$, so
$f(s)\ge f(3)=0$. Thus for $|x-y|/\delta(x)\ge 3$, we get
\[\label{gamma2'}\alpha_G(\gamma_2)\ale
\log\left(1+\frac{5|x-y|}{\delta(x)}\right)\le 4\log
\left(\frac{|x-y|}{\delta(x)}-1\right)\le 4\alpha_G(x,y).\]
On the other hand, if $|x-y|/\delta(x)< 3$, then $\alpha_G(\gamma_2)$
is bounded above by $4\log 2$ and $\alpha_G(x,y)$ is bounded below
by $\log(3/2)$, so the inequality $\alpha_G(\gamma_2)\ale \alpha_G(x,y)$ is
clear.
We have now verified the inequality in all the possible cases, so the
proof is complete.
\end{proof}

\begin{corollary}\label{1-pt-remove-cor}
Let $D\psubset\Rn$ be a bounded domain. Suppose $(p_i)_{i=1}^k$ is a finite
non-empty sequence of points in $D$ and define $G:=D\setminus
\{p_1,p_2,\ldots,p_k\}$. Assume that $\alpha_D\aeq\ak_D$ and
$j_D\aeq k_D$. Then Inequality 9, $\alpha_G\aeq\ak_G\mle j_G\aeq
k_G$, holds.
\end{corollary}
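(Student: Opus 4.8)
The plan is to reduce everything to the single-point result, Theorem~\ref{1-pt-remove}, together with the already established characterizations of uniformity and quasi-isotropy. The assertion to be proved, Inequality~9, splits into three pieces: $\alpha_G\aeq\ak_G$, then $j_G\aeq k_G$, and finally the strict relation $\ak_G\mle j_G$. The first piece I would obtain by induction on $k$. Set $D_0:=D$ and $D_i:=D_{i-1}\setminus\{p_i\}=D\setminus\{p_1,\dots,p_i\}$. Since $n\ge 2$, each $D_i$ is again a bounded domain (deleting finitely many points of codimension at least two from an open connected set leaves it open and connected), and $p_i\in D_{i-1}$ as the $p_i$ may be taken distinct. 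The hypothesis supplies $\alpha_{D_0}\aeq\ak_{D_0}$, and Theorem~\ref{1-pt-remove}, applied to $D_{i-1}$ with removed point $p_i$, propagates the relation from $D_{i-1}$ to $D_i$. After $k$ steps this gives $\alpha_G\aeq\ak_G$.

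For the second piece I would argue that $G$ is uniform. From $\alpha_G\aeq\ak_G$ we have in particular $\ak_G\ale\alpha_G$, and since $\alpha_G\ale j_G$ holds in every domain (Beardon, $\alpha_G\le 2j_G$, \cite[Theorem~3.2]{Be}), transitivity yields $\ak_G\ale j_G$. By \cite[Theorem~1.2]{HPWS} this is precisely the statement that $G$ is uniform, and uniformity is equivalent to $k_G\aeq j_G$ by Corollary~1 of \cite{GO}. Combined with the always-valid $j_G\le k_G$, this gives $j_G\aeq k_G$.

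The substantive step is to upgrade $\ak_G\ale j_G$ to the strict $\ak_G\mle j_G$, i.e.\ to show $j_G\not\ale\ak_G$. By Lemma~\ref{qiEquivs} the inequality $j_G\ale\ak_G$ holds exactly when $G$ is quasi-isotropic, so it suffices to prove $G$ is \emph{not} quasi-isotropic. Here I would invoke the contrapositive of Theorem~\ref{qiU=>apx}: $G$ is already known to be uniform, so it cannot be quasi-isotropic unless it has the comparison property $\alpha_G\aeq j_G$. Thus I only need to exhibit a failure of the comparison property near a puncture. Fix the removed point $p_1$, and for $\epsilon$ small take $x,y$ diametrically opposite on $S^{n-1}(p_1,\epsilon)$, so $\delta(x)=\delta(y)=\epsilon$ and $|x-y|=2\epsilon$, whence $j_G(x,y)=\log 3$. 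On the other hand $q_x=\sup_{a\in\partial G}|a-y|/|a-x|$ and the symmetric $q_y$ both tend to $1$: the puncture contributes the factor $|p_1-y|/|p_1-x|=1$, every other $p_i$ and every $a\in\partial D$ lies at distance bounded below by $\min\{\dist(p_1,\partial D),\min_{i\neq1}|p_1-p_i|\}>0$, so each such ratio is at most $1+2\epsilon/(\,\cdot\,-\epsilon)\to 1$ uniformly. Hence $\alpha_G(x,y)=\log(q_xq_y)\to 0$ while $j_G(x,y)=\log 3$, so $\alpha_G(x,y)/j_G(x,y)\to 0$ and the comparison property fails. By Theorem~\ref{qiU=>apx}, $G$ is not quasi-isotropic, so $j_G\not\ale\ak_G$; together with $\ak_G\ale j_G$ this gives $\ak_G\mle j_G$.

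Combining the three pieces produces $\alpha_G\aeq\ak_G\mle j_G\aeq k_G$, which is Inequality~9. The only genuinely new input beyond Theorem~\ref{1-pt-remove} and the cited equivalences is the near-puncture estimate of the last paragraph, and I expect this — rather than the routine induction — to be the step needing care. In particular one must check that the nearest boundary point to $x$ is $p_1$ and not $\partial D$ or another $p_i$, which holds once $\epsilon$ is smaller than half the minimum of $\dist(p_1,\partial D)$ and the gaps $|p_1-p_i|$.
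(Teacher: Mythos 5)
Your proof is correct and its backbone --- the induction on $k$ via Theorem~\ref{1-pt-remove} and the estimate $\alpha_G(x,y)\to 0$ versus $j_G(x,y)=\log 3$ for diametrically opposite points on $S^{n-1}(p_1,\epsilon)$ --- is exactly the paper's. You differ in two places. First, the paper obtains $j_G\aeq k_G$ by noting that $D$ is uniform (from $j_D\aeq k_D$) and that deleting finitely many points preserves uniformity, ``as can be seen from the definition''; you instead derive $\ak_G\ale j_G$ from $\ak_G\aeq\alpha_G\ale j_G$ and invoke \cite[Theorem~1.2]{HPWS}. Your route is self-contained and in fact reveals that the hypothesis $j_D\aeq k_D$ is redundant (it already follows from $\alpha_D\aeq\ak_D$ via \cite[Corollary~1.4]{HPWS}), whereas the paper's route rests on the small but true claim that puncturing preserves uniformity. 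Second, for the strict relation you detour through quasi-isotropy: failure of the comparison property, plus uniformity, plus the contrapositive of Theorem~\ref{qiU=>apx} gives non-quasi-isotropy, and Lemma~\ref{qiEquivs} then yields $j_G\not\ale\ak_G$. This is logically sound (and non-circular, since Theorem~\ref{qiU=>apx} is proved independently), but it is more machinery than needed: your puncture computation already shows $j_G\not\ale\alpha_G$ directly, and $\alpha_G\aeq\ak_G$ transfers this to $j_G\not\ale\ak_G$, which is precisely how the paper concludes. Your explicit verification that $q_x,q_y\to 1$, together with the check that $p_1$ is the nearest boundary point once $\epsilon$ is small, is a welcome filling-in of a step the paper leaves to the reader.
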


\begin{proof}
Since $j_D\aeq k_D$, $D$ is uniform and thus so $G$ is, as can be seen
from the definition, which implies that $j_G\aeq k_G$ holds. Let
$\epsilon_0>0$ be such that the sphere
$S^{n-1}(p_1,\epsilon)\subset D$ for all
$\epsilon\in(0,\epsilon_0)$. Let $x,y\in S^{n-1}(p_1,\epsilon)$ be
diametrically opposite. Then we see that $\alpha_G(x,y)\to 0$ as
$\epsilon\to 0$, but on the other hand $j_G(x,y)=\log 3$. Hence
$\alpha_G(x,y)/j_G(x,y)\to 0$ as $\epsilon\to 0$, which implies
that $\alpha_G\mle j_G$. We have thus proved that $\alpha_G\mle
j_G\aeq k_G$. So, it remains to prove $\alpha_G\age\ak_G$, since
$\alpha_G\le\ak_G$ always holds. For $1\le i\le k$, define
$G_i=G_{i-1}\setminus \{p_i\}$, where $G_0=D$. Since $D$ is
bounded and $\alpha_D\aeq\ak_D$, we
conclude by Theorem~\ref{1-pt-remove} that $\alpha_{G_1}\aeq\ak_{G_1}$. Inductively, we get
$\alpha_{G_i}\aeq\ak_{G_i}$ for all $i$, $1\le i\le k$. Since
$G_k=D\setminus \{p_1,\ldots,p_k\}=G$, we have shown that $\alpha_G\aeq\ak_G$.
\end{proof}

One ingredient in the proofs of some of the inequalities
in Theorem \ref{1-pt-remove} was the following reformulated
result, which shows that
removing a point from the domain (i.e.\ adding a boundary point)
does not affect the inequality $\alpha_G\approx \tilde\alpha_G$:

\begin{theorem}
Let $D\psubset\Rn$ be a domain with an exterior point.
Let $p \in D$ and $G:=D\setminus\{p\}$. If
$\alpha_D\aeq\ak_D$, then $\alpha_G\aeq \ak_G$ as well.
\end{theorem}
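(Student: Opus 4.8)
The plan is to reduce the statement to the bounded case already settled in Theorem~\ref{1-pt-remove} by applying a suitable M\"obius transformation. The three facts I would rely on are: (i) the Apollonian metric is M\"obius invariant (\cite{Be}; see the list of properties in Section~\ref{Apo-metric}); (ii) consequently its inner metric $\ak_G$ is M\"obius invariant as well; and (iii) an inversion centred at an exterior point of $D$ maps $D$ onto a \emph{bounded} domain. Granting these, the M\"obius-invariant hypothesis $\alpha_D\aeq\ak_D$ transfers to a bounded domain, where Theorem~\ref{1-pt-remove} applies verbatim, and the conclusion is pulled back.

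Concretely, I would fix an exterior point $q$ of $D$, so that $d_0:=\dist(q,D)>0$, and take the inversion $i_q(x)=q+(x-q)/|x-q|^2$, which is a M\"obius self-map of $\Rnbar$ with $i_q\inv=i_q$. For $x\in D$ we have $|i_q(x)-q|=1/|x-q|\le 1/d_0$, so $D':=i_q(D)$ lies in the ball $B^n(q,1/d_0)$; as the image of a domain under the homeomorphism $i_q$ of $\Rn\setminus\{q\}$, it is a bounded domain. Put $p':=i_q(p)\in D'$ and $G':=D'\setminus\{p'\}$; since $i_q$ is injective on $D$ we have $G'=i_q(D)\setminus\{i_q(p)\}=i_q(D\setminus\{p\})=i_q(G)$. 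By the invariance of $\alpha$ and $\ak$ the hypothesis $\alpha_D\aeq\ak_D$ becomes $\alpha_{D'}\aeq\ak_{D'}$ with the same constants, and since $D'$ is bounded, Theorem~\ref{1-pt-remove} gives $\alpha_{G'}\aeq\ak_{G'}$. Transporting this back via $i_q$ yields $\alpha_G\aeq\ak_G$, which is the desired conclusion.

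The step that requires genuine care, and which I see as the main obstacle, is (ii), the M\"obius invariance of $\ak_G$. I would obtain it from the integral representation in Lemma~\ref{innerThm}, namely $\ak_G(x,y)=\inf_\gamma\int\bar\alpha_G(\gamma(t);\gamma'(t))\,|\gamma'(t)|\,dt$ over piecewise continuously differentiable paths. Because $q\notin\overline D$, the map $i_q$ restricts to a $C^\infty$ diffeomorphism of $G$ onto $G'$ and so gives a bijection between the admissible paths in $G$ and in $G'$; the M\"obius invariance of $\alpha_G$ then forces the direction-dependent integrand $\bar\alpha_G(\gamma;\gamma')\,|\gamma'|$ to agree along corresponding paths, so the two infima coincide and $\ak_G(x,y)=\ak_{G'}(i_q(x),i_q(y))$. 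Granting this, the M\"obius invariance of the \emph{condition} $\alpha\aeq\ak$ is immediate: from $\alpha_{D'}(i_q(x),i_q(y))=\alpha_D(x,y)\le\ak_D(x,y)\le K\,\alpha_D(x,y)=K\,\alpha_{D'}(i_q(x),i_q(y))$ the same constant $K$ serves after transport. The remaining points---boundedness of $D'$ and the identity $G'=i_q(G)$---are elementary, so once invariance is secured the proof is short.
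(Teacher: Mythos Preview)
Your approach is correct and is exactly what the paper does: it states the theorem as a reformulation of Theorem~\ref{1-pt-remove} and remarks that ``by M\"obius invariance, one may assume that the exterior point is in fact $\infty$, in which case the domain is bounded.'' Your argument via inversion at an exterior point is precisely this reduction, and your careful justification of the M\"obius invariance of $\ak_G$ through Lemma~\ref{innerThm} (noting that $i_q$ is a diffeomorphism between $G$ and $G'$ since $q\notin\overline D$, so piecewise $C^1$ paths correspond bijectively) fills in the one step the paper leaves implicit.
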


Note that by M\"obius invariance, one may assume that
the exterior point is in fact $\infty$, in which case the
domain is bounded, as was the assumption in the original source.
This assumption was of a
technical nature, and in the following result we show that
indeed it can be replaced by a
much weaker assumption that the complement of $D$ is not
contained in a hyperplane. This assumption is sharp, for
if $\partial G\subset \R^{n-1}$, then $\alpha_G(x,\tilde x) = 0$,
where $\tilde x$ is the reflection of $x$ in $\R^{n-1}$, but
$\tilde \alpha_G(x,\tilde x) > 0$, so that
$\alpha_G \not\approx \tilde \alpha_G$ and the theorem becomes
vacuous.

\begin{theorem}\label{hps2-main}
Let $D\psubset\Rn$ be a domain whose boundary is not contained in
a hyperplane. Let $p \in D$ and $G:=D\setminus\{p\}$. If
$\alpha_D\aeq\ak_D$, then $\alpha_G\aeq \ak_G$ as well.
\end{theorem}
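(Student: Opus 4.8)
The plan is to reduce the assertion to the bounded case already established in Theorem~\ref{1-pt-remove} by exploiting M\"obius invariance. Both the Apollonian metric $\alpha_G$ and its inner metric $\ak_G$ are preserved by every M\"obius transformation $\phi$ of $\Rnbar$ (the former by property~2 of the Apollonian metric, the latter because the inner metric is intrinsic to $\alpha$ and M\"obius maps are $\alpha$-isometries, hence preserve $d$-length); so for any such $\phi$ one has $\alpha_{\phi(D)}\aeq\ak_{\phi(D)}$ with the same constant as $\alpha_D\aeq\ak_D$, while $\phi(G)=\phi(D)\setminus\{\phi(p)\}$. It therefore suffices to find a M\"obius map carrying $D$ onto a bounded domain and then invoke Theorem~\ref{1-pt-remove}, transporting the resulting estimate back along $\phi^{-1}$. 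A domain is bounded exactly when its closure in $\Rnbar$ omits some point, so the whole task reduces to producing a point $q\in\Rnbar\setminus\overline D$ to send to $\infty$; equivalently, to showing $\overline D\ne\Rnbar$. If $D$ already possesses such an exterior point we are immediately done, and the only remaining case is $\overline D=\Rnbar$, which the hypotheses must exclude.

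The hypotheses are designed to furnish this exterior point. The requirement that $\partial D$ not lie in a hyperplane is precisely what makes $\alpha_D$ a genuine metric rather than a pseudometric, and it is in fact forced by $\alpha_D\aeq\ak_D$ itself: if $\partial D$ lay in a hyperplane $P$, reflection in $P$ would be an Apollonian isometry fixing $\partial D$, whence $\alpha_D(x,\tilde x)=0$ for a point $x$ and its reflection $\tilde x$, while $\ak_D(x,\tilde x)>0$, contradicting $\ak_D\ale\alpha_D$; this is exactly the sharpness phenomenon recorded before the statement. More substantially, $\alpha_D\aeq\ak_D$ forces $D$ to be uniform by \cite[Corollary~1.4]{HPWS}, and I would then invoke the geometric fact that a uniform proper subdomain of $\Rnbar$ has complement of nonempty interior. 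The intuition is that a uniform domain is quasiconvex, whereas a boundary that is nowhere dense and locally separating would produce, at arbitrarily small scales near a boundary point, pairs of points joinable in $D$ only by detours long compared with their mutual distance, contradicting quasiconvexity. Granting this, $D$ has an exterior point and the reduction of the first paragraph applies.

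The main obstacle is exactly this last step---upgrading ``$D$ is uniform'' to ``$\overline D\ne\Rnbar$''---since one must rule out wild nowhere-dense boundaries (for instance totally disconnected ones) by a quantitative cone/quasiconvexity estimate rather than by soft topology, and the weaker condition that $\partial D$ avoid a hyperplane does not by itself supply an exterior point. I would also note why a purely local, self-contained adaptation of the three-case argument of Theorem~\ref{1-pt-remove} is less attractive: the cases with $x,y$ near $p$ are genuinely local and carry over verbatim via Lemma~\ref{innerThm} and the isotropy of $\bar\alpha$ in balls (Lemma~\ref{aDensLem}), but the global case used boundedness only through the lower bound $\bar\alpha_D(w;r)\age 1/\diam D$, and replacing it for unbounded $D$ would require a density estimate of quasi-isotropy type, $\bar\alpha_D(w;r)\,\delta(w)\age 1$, which does not follow from the standing hypotheses alone. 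For this reason the M\"obius route resting on Theorem~\ref{1-pt-remove} is the cleaner one, and it concludes the proof once the exterior point is secured.
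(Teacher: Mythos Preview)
Your M\"obius-reduction strategy has a real gap. The ``geometric fact'' you invoke---that a uniform proper subdomain of $\Rnbar$ has complement with nonempty interior---is false: for $n\ge 2$ the domain $\Rn\setminus F$ with $F$ any finite set is uniform, yet its complement $F\cup\{\infty\}$ in $\Rnbar$ has empty interior. Your heuristic about quasiconvexity failing near a ``nowhere dense and locally separating'' boundary does not bite here, because isolated points do not locally separate in dimension $n\ge 2$. So neither the hyperplane hypothesis (which, as you note, does not by itself give an exterior point) nor uniformity via \cite[Corollary~1.4]{HPWS} supplies the exterior point you need, and your reduction to Theorem~\ref{1-pt-remove} cannot be closed. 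What you have in fact reproved is exactly the exterior-point version stated immediately before Theorem~\ref{hps2-main}; the whole content of the latter is that this assumption can be dropped.

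The paper takes precisely the route you set aside as ``less attractive'': a direct adaptation of the three-case density argument of Theorem~\ref{1-pt-remove}. You are right that the lower bound $\bar\alpha_D(w;r)\ge 2/\diam D$ is unavailable, but the substitute is not a quasi-isotropy estimate. Instead, the hypothesis that $\partial D$ does not lie in a hyperplane yields $n+1$ affinely independent boundary points forming a simplex $S$; setting $\Omega=\Rn\setminus S$ and using $S\subset\partial D$ gives $\bar\alpha_D(w;r)\ge\bar\alpha_\Omega(w;r)$ by monotonicity. The density $\bar\alpha_\Omega$ is then estimated geometrically: it is bounded below by a positive constant on any fixed ball and decays like $(\cos\theta)/|w| + c/|w|^2$ at infinity---exactly the rate at which the single puncture $p$ contributes to $\bar\alpha_G$---so one still obtains $\bar\alpha_G(w;r)\ale\bar\alpha_D(w;r)$ for $w$ outside $B^n(p,\delta(p)/2)$. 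The remaining cases (geodesic entering the ball, and one or both points near $p$) carry over from Theorem~\ref{1-pt-remove} verbatim, as you correctly observed.
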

\begin{proof}
In this proof we denote by $\delta$ the distance to the
boundary of $D$, but not of $G$. It is enough to prove the inequality
$\alpha_G\age\ak_G$, because
other way inequality always holds. Let $x,y\in G$ and denote
$B:=B^n(p,\delta(p)/2)$. Let $\gamma_{xy}$ be a path connecting $x$ and
$y$ such that $\alpha_G(\gamma_{xy})=\ak_G(x,y)$, see the definition of the
inner metric.

Let us first consider the case when $x,y\in D\setminus B$ and
$\gamma_{xy}\cap B=\emptyset$.

Let $z\in\partial D$ be such that $\delta(p)=|p-z|$. Let $S$ be the
collection of $n+1$ boundary points of $D$ where they form the
vertices of an $n$-simplex. Denote by $B_t:=B^n(c,t)$ the largest ball
with radius $t$ and centered at $c$ such that $B_t$ is inside the
$n$-simplex $[S]$. Define $l=t/2$. Denote by $B_l:=B^n(c,l)$ the ball
with radius $l$ and centered at $c$. Define $\Omega=\Rn\setminus S$. Let
$B_T\subset \Omega$ be the largest ball with radius $T$ and tangent to
$B_l$, see Figure \ref{hps2-fig1}.

\begin{figure}
\begin{center}
\includegraphics[width=6cm]{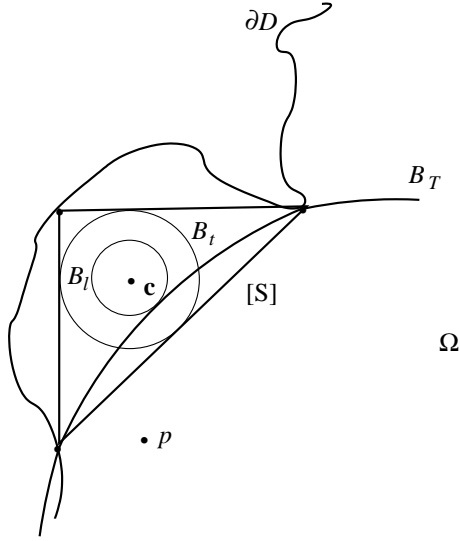}
\end{center}
\caption{The largest ball $B_T$ tangent to $B_l$ and contained in
$\Omega=\Rn\setminus S$.}\label{hps2-fig1}
\end{figure}

Choose $L=5\max\{|p-c|,T\}$. Consider the ball $B^n(c,L)$
centered at $c$ with radius $L$ and denote it by $B_L$. Then we see that
$S\cup\{\infty\}\subset \partial D$. Since
$$\partial \Omega = \partial (\Rn\setminus S)=S\cup\{\infty\}\subset \partial D,
$$ we see that
\begin{equation}\label{1-pt-remove-unbdd-eq1}
\bar\alpha_D(w;r)\ge \bar\alpha_\Omega(w;r)
\end{equation}
for $r\in S^{n-1}$.

For the moment we assume $w\in \Rn\setminus B_L$. We now estimate the
density of the Apollonian spheres in $\Omega$ passing through $w$ and
in the direction $r$. We denote $F$ the ray from $w$ along the direction of $r$.
Consider a
sphere $S_1$ with radius $R_1$ and centered at $x\in F$ such that $S_1$
is tangent to $B_l$. Denote $\theta:=\widehat{xwc}$. Construction of
$B_T$ gives that, for $|\theta|<\pi/2$ we see that the Apollonian
spheres passing through $w$ and in the direction $r$ are smaller in
size than the sphere $S_1$.
This gives
\begin{equation}\label{1-pt-remove-unbdd-eq2}
\bar\alpha_\Omega(w;r) \ge \frac{1}{2R_1} =
\frac{l+(d+l)\cos\theta}{(d+l)^2-l^2},
\end{equation}
where $d:=d(w,B_l)$ and $R_1$ is obtained using the cosine formula for
the triangle $\Delta \,xwc$.

Now the ball with radius $R_2$ and center at $q$ passing through $w$
and $p$ gives
$$\frac{|w-p|}{2}=R_2\cos(\theta-\psi),
$$
where $\psi=\widehat{cwp}$ and $q\in F$. If the Apollonian spheres are
affected by the boundary point $p$ then
\begin{eqnarray*}
\bar\alpha_G(w;r) & = & \frac{1}{R_2}+\frac{1}{r_+}\\
& \le &\frac {1}{R_2} + \bar\alpha_D(w;r)\\
& = & \frac{2\cos (\theta-\psi)}{|w-p|}+\bar\alpha_D(w;r)
\end{eqnarray*}
hold, where $r_+$ denotes the radius of the smaller Apollonian sphere
which
touches $\partial D$. Denote $\phi:=\widehat{wpc}$. Since $p\in B_L$,
using the sine formula in the triangle $\Delta\,wpc$ we get
$$\sin \psi=\frac{|p-c|}{|w-p|}\sin \phi\le \frac{|p-c|}{|w-p|}.
$$
Then we see that
$$\cos (\theta-\psi)\le \cos\theta + \sin\psi\le \cos\theta + \frac{|p-c|}{|w-p|}.
$$
 Thus we get
\begin{eqnarray*}
\bar\alpha_G(w;r) & \le & \frac{2\left(\cos\theta
+ |p-c|/|w-p|\right)}{|w-p|}+ \bar\alpha_D(w;r)\\
& = & \frac{2\cos\theta}{|w-p|}+ \frac{2|p-c|}{|w-p|^2}+
\bar\alpha_D(w;r).
\end{eqnarray*}
Since $w\not\in B_L$, triangle inequality gives $|w-p|\approx d$.
We then obtain
\begin{eqnarray}\label{1-pt-remove-unbdd-eq5}
\nonumber\bar\alpha_G(w;r) & \ale & \frac{\cos\theta}{d}+\frac{l}{d^2}
+ \bar\alpha_D(w;r)\\
& \approx & \frac{l+(d+l)\cos\theta}{(d+l)^2-l^2}
+\bar\alpha_D(w;r).
\end{eqnarray}

Now we see that if $\bar\alpha_\Omega(w;r)=0$ then $\partial \Omega$ is
contained in a hyperplane, which contradicts our assumption. Thus if
$w\in \overline{B_L}$ then $\bar\alpha_\Omega(w;r)>0$ and since the
density function is continuous it has a greatest lower bound, i.e.
there exists a constant $k>0$ such that for $r\in S^1$ we have
\begin{equation}\label{1-pt-remove-unbdd-eq3}
\bar\alpha_\Omega(w;r)\ge k.
\end{equation}
Therefore, (\ref{1-pt-remove-unbdd-eq2}) and
(\ref{1-pt-remove-unbdd-eq3})
together give
\begin{equation}\label{1-pt-remove-unbdd-eq4}
\bar\alpha_\Omega(w;r)\ge
\min\left\{\frac{l+(d+l)\cos\theta}{(d+l)^2-l^2},k\right\}.
\end{equation}

In $B_L$, we also see that if the Apollonian spheres are affected by the
boundary point $p$ then
\begin{eqnarray}\label{1-pt-remove-unbdd-eq6}
\nonumber\bar\alpha_G(w;r) & \le &
\frac{1}{|w-p|}+\frac{1}{2r_+}\\
\nonumber & \le & \frac{1}{|w-p|}+\bar\alpha_D(w;r)\\
\nonumber & \le & \frac{2}{\delta(p)}+\bar\alpha_D(w;r)\\
& \approx & k +\bar\alpha_D(w;r),
\end{eqnarray}
where $r_+$ denotes the radius of the smaller Apollonian sphere which
touches
$\partial D$. Thus (\ref{1-pt-remove-unbdd-eq1}),
(\ref{1-pt-remove-unbdd-eq5}), (\ref{1-pt-remove-unbdd-eq4}) and
(\ref{1-pt-remove-unbdd-eq6}) give
\begin{eqnarray}\label{density-inq2}
\nonumber\bar\alpha_G(w;r) & \ale &
\min\left\{\frac{l+(d+l)\cos\theta}{(d+l)^2-l^2},k\right\}
+ \bar\alpha_D(w;r)\\
& \ale & \bar\alpha_D(w;r).
\end{eqnarray}
Hence we get the relation
$\alpha_G(\gamma_{xy})\le K\alpha_D(\gamma_{xy})$
for some constant $K$. This gives
\begin{equation}\label{1-pt-remove-unbdd-eq8}
\ak_G(x,y)\ale\ak_D(x,y)\aeq\alpha_D(x,y)\le\alpha_G(x,y),
\end{equation}
where the second inequality holds by assumption and the third
holds trivially, as $G$ is a subdomain of $D$.

Let us now consider the case when $x,y\in D\setminus B$ and
$\gamma_{xy}$ intersects $B$.

Let $\gamma$ be an intersecting
part of $\gamma_{xy}$ from $x_1$ to $x_2$ (if there are more
intersecting parts, we proceed similarly). Let $\gamma'$ be the shortest
circular arc on $\partial B$ from $x_1$ to $x_2$,
as shown in the Figure~\ref{HPS2-fig2}.

\begin{figure}
\begin{center}
\includegraphics[width=6cm]{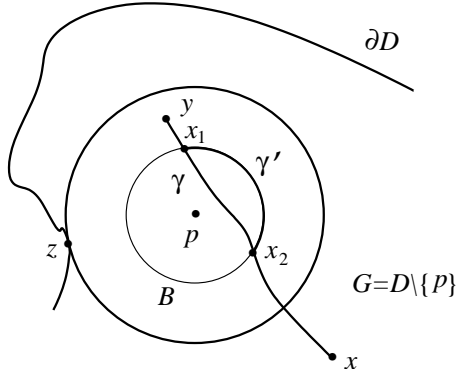}
\vspace{-.5cm}
\end{center}
\caption{The geodesic path $\gamma_{xy}$ of $\ak_G$ connecting $x$ and $y$
intersects $B$.}\label{HPS2-fig2}
\end{figure}

Using the density bounds (\ref{1-pt-remove-unbdd-eq1}) and
(\ref{1-pt-remove-unbdd-eq3}) we get
$k \le \bar\alpha_D(u;r)\le 2/\delta(u).
$
Then we see that
$$\alpha_D(\gamma)\ge \ell(\gamma)/k ~\mbox{ and }~ \alpha_D(\gamma')\le
4\ell(\gamma')/\delta(p)
$$
hold. But since $\ell(\gamma)\ge
|x_1-x_2|$ and $\ell(\gamma')\le \frac{\pi}{2}|x_1-x_2|$, we have
$\ell(\gamma')\ale\ell(\gamma)$. This shows that
$\alpha_D(\gamma_{xy}')\ale\alpha_D(\gamma_{xy})$ holds. Since
$\gamma_{xy}'\subset G\setminus B$, (\ref{density-inq2}) implies that
$\alpha_G(\gamma_{xy}')\ale \alpha_D(\gamma_{xy}')$. So we get
$$\alpha_G(x,y)\ge\alpha_D(x,y)\aeq\ak_D(x,y)=\alpha_D(\gamma_{xy})
\age\alpha_D(\gamma_{xy}')\age\alpha_G(\gamma_{xy}')\ge\ak_G(x,y).
$$
Thus we have shown that $\alpha_G(x,y)\age\ak_G(x,y)$ holds for
all $x,y\in D\setminus B$.

Proof of rest of the cases are same as in that of Theorem \ref{1-pt-remove}.
\end{proof}

Of course, we can iterate Theorem \ref{hps2-main}, to remove any finite set
of points from our domain. Exactly like in Corollary \ref{1-pt-remove-cor},
we get

\begin{corollary}
Let $D\psubset\Rn$ be a domain whose boundary does
not lie in a hyperplane. Suppose $(p_i)_{i=1}^k$ is a finite
non-empty sequence of points in $D$ and define $G:=D\setminus
\{p_1,p_2,\ldots,p_k\}$. Assume that $\alpha_D\aeq\ak_D$ and
$j_D\aeq k_D$. Then Inequality 9, $\alpha_G\aeq\ak_G\mle j_G\aeq
k_G$, holds.
\end{corollary}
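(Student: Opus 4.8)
The plan is to deduce this corollary from Theorem~\ref{hps2-main} by exactly the same iteration used to obtain Corollary~\ref{1-pt-remove-cor} from Theorem~\ref{1-pt-remove}, the only change being that the boundedness hypothesis on $D$ is now replaced by the hypothesis that $\partial D$ is not contained in a hyperplane. The assertion $\alpha_G\aeq\ak_G\mle j_G\aeq k_G$ splits into three separate claims, namely $j_G\aeq k_G$, $\alpha_G\mle j_G$, and $\alpha_G\aeq\ak_G$, and I would establish each in turn.

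For the first claim, since $j_D\aeq k_D$ the domain $D$ is uniform by \cite{GO}, and removing a finite set of points from a uniform domain again yields a uniform domain, as is immediate from Definition~\ref{unifDef}; hence $G$ is uniform and $j_G\aeq k_G$ holds. For the second claim, I would fix $\epsilon_0>0$ so that $S^{n-1}(p_1,\epsilon)\subset D$ for all $\epsilon\in(0,\epsilon_0)$ and take $x,y\in S^{n-1}(p_1,\epsilon)$ diametrically opposite. Then $\alpha_G(x,y)\to 0$ as $\epsilon\to 0$ while $j_G(x,y)=\log 3$, so that $\alpha_G(x,y)/j_G(x,y)\to 0$; since $\alpha_G\ale j_G$ always holds, this gives $\alpha_G\mle j_G$. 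Combining the two, $\alpha_G\mle j_G\aeq k_G$.

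The remaining claim $\alpha_G\aeq\ak_G$ is the substance, and since $\alpha_G\le\ak_G$ holds in every domain it suffices to prove $\alpha_G\age\ak_G$. I would argue by induction on the punctures: set $G_0:=D$ and $G_i:=G_{i-1}\setminus\{p_i\}$ for $1\le i\le k$, so that $G_k=G$. The one point that genuinely needs checking is that the hypothesis of Theorem~\ref{hps2-main} persists along the induction, and this is where I would focus. Here it is easy: $\partial G_i=\partial D\cup\{p_1,\ldots,p_i\}$ contains $\partial D$, and since $\partial D$ is not contained in a hyperplane, neither is $\partial G_i$, so Theorem~\ref{hps2-main} applies to $G_{i-1}$ at every stage. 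Starting from $\alpha_{G_0}\aeq\ak_{G_0}$ (the hypothesis $\alpha_D\aeq\ak_D$), Theorem~\ref{hps2-main} yields $\alpha_{G_1}\aeq\ak_{G_1}$, and inductively $\alpha_{G_i}\aeq\ak_{G_i}$ for all $i$; taking $i=k$ gives $\alpha_G\aeq\ak_G$. Thus all the real difficulty has already been absorbed into Theorem~\ref{hps2-main}, and what remains is a routine iteration; the only vigilance required is confirming that the non-hyperplane condition, unlike boundedness, is inherited by each intermediate punctured domain.
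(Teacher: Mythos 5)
Your proposal is correct and follows essentially the same route as the paper, which simply says the result is obtained ``exactly like'' Corollary~\ref{1-pt-remove-cor} by iterating Theorem~\ref{hps2-main}: the uniformity of $G$ from the definition, the diametrically opposite points on $S^{n-1}(p_1,\epsilon)$ for $\alpha_G\mle j_G$, and the induction over punctures for $\alpha_G\aeq\ak_G$. Your explicit verification that $\partial G_i\supset\partial D$ is never contained in a hyperplane is precisely the one detail the paper leaves implicit, and it is handled correctly.
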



\subsection{The tenth inequality}
\label{tenSubsect}

In this subsection we construct a domain in $\R^3$ which is topologically
equivalent (in $\overline{\R^3}$) to a ball in which the inequalities
$\alpha_G\mle \ak_G \mle j_G \aeq k_G$ hold.

\begin{proposition} Define $R_1:=  \{e_1 + t e_3\colon t\in [0,\infty)\}$
and $R_2:= \{ -e_1 + t e_3\colon t\in [0,\infty)\}$. In the domain
$$ G:= \R^3\setminus \left(R_1\cup R_2 \cup \overline{B^3}(e_1-e_3,1)\right) $$
Inequality~10, $\alpha_G\mle \ak_G \mle j_G \aeq k_G$, holds.
\end{proposition}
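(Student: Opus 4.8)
The plan is to establish the three relations in $\alpha_G\mle\ak_G\mle j_G\aeq k_G$ separately, exploiting that the two rays $R_1,R_2$ are removed sets of codimension two while the ball $\overline{B^3}(e_1-e_3,1)$ is a genuine codimension-one obstacle whose top point touches the foot $e_1$ of $R_1$. Observe first that the complement of $G$ is not contained in a hyperplane (it contains a ball), so both $\alpha_G$ and $\ak_G$ are honest metrics. I would begin with $j_G\aeq k_G$, which by Gehring--Osgood (\cite[Corollary~1]{GO}) is equivalent to $G$ being uniform. The exterior of the ball is uniform, and removing the two rays does not destroy this: any two points can still be joined by a path of comparable length that goes \emph{around} a ray (codimension two leaves room), and the John/cone condition of Definition~\ref{unifDef} is met by letting the path bulge out to a distance comparable to the distance travelled, which is possible since each ray has an unobstructed side (the half-space $x_1>1$ for $R_1$, resp.\ $x_1<-1$ for $R_2$). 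Verifying the definition in the few relevant local configurations (near a ray, near the ball, near the junction $e_1$) is laborious but geometrically clear. Once $G$ is uniform, \cite[Theorem~1.2]{HPWS} yields the upper half of the second relation, $\ak_G\ale j_G$.

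For the two strict inequalities I would produce explicit families of points and estimate densities through Lemma~\ref{aDensLem}, $\bar\alpha_G(x;\theta)=\tfrac{1}{2r_+}+\tfrac{1}{2r_-}$. To obtain $j_G\not\ale\ak_G$ (i.e.\ $\ak_G\not\age j_G$) it suffices, by Lemma~\ref{qiEquivs}, to show $G$ is \emph{not} quasi-isotropic. I would take $x_T:=(1+\epsilon,0,T)$ with $\epsilon$ fixed and $T\to\infty$: the Apollonian sphere toward $R_1$ has radius $\epsilon/2$, so the radial density is $\ge 1/\epsilon$, whereas in the out-of-plane direction $e_2$ both spheres miss $R_1$ and $R_2$ and are limited only by the distant ball, giving $\bar\alpha_G(x_T;e_2)\approx 2/T^2\to 0$. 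Hence the ratio in Lemma~\ref{qiDef2} tends to infinity and $qi(G)=\infty$.

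Finally, for $\alpha_G\mle\ak_G$ (the nontrivial half being $\ak_G\not\ale\alpha_G$) I would use points straddling $R_1$ out of the plane of the rays, $x_T:=(1,\epsilon,T)$ and $y_T:=(1,-\epsilon,T)$ with $\epsilon$ fixed. Since both rays lie in the bisector plane $\{x_2=0\}$ of $[x_T,y_T]$, they contribute the trivial ratio $1$ to the Apollonian supremum and only the distant ball contributes; a direct computation gives $|a-y_T|^2-|a-x_T|^2=4\epsilon a_2$ for $a$ on the sphere, whence $\alpha_G(x_T,y_T)=O(\epsilon/T^2)\to 0$. For the matching lower bound on the inner metric I would use monotonicity, $\ak_G\ge\ak_{G'}$ with $G':=\R^3\setminus(R_1\cup R_2)$, together with the observation that near height $T$ the configuration converges as $T\to\infty$ to the complement of two parallel lines; any path separating $x_T$ from $y_T$ must detour around $R_1$ and thereby accumulate a positive amount of Apollonian length bounded below uniformly in $T$, so $\ak_G(x_T,y_T)\ge c(\epsilon)>0$. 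Then $\ak_G(x_T,y_T)/\alpha_G(x_T,y_T)\to\infty$, giving $\ak_G\not\ale\alpha_G$.

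The main obstacle I anticipate is precisely this uniform lower bound $\ak_G(x_T,y_T)\ge c(\epsilon)>0$: one must show the inner Apollonian metric does not collapse even though $\alpha_G$ itself tends to $0$ for these symmetric points (indeed $\alpha$ degenerates to a pseudometric in the two-parallel-line limit, where these points are mirror images across the plane containing the whole boundary). Making this rigorous requires controlling the density along \emph{all} competing detours around $R_1$ — those staying at height $\approx T$ as well as those dropping toward the ray's endpoint — and checking that none is cheap; the limiting two-line model, in which the degeneracy of $\alpha$ against the positivity of $\ak$ is transparent, is the cleanest route. The uniformity check in the first step is the other place where care, rather than new ideas, is required.
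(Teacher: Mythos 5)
Your overall architecture is sound and two of your three steps coincide with the paper's: uniformity of $G$ gives $j_G\aeq k_G$, and non-quasi-isotropy (which you verify more explicitly than the paper, via the points $(1+\epsilon,0,T)$ and Lemma~\ref{qiDef2}) gives $j_G\not\ale\ak_G$, hence $\ak_G\mle j_G$ once combined with $\ak_G\ale k_G\aeq j_G$. Where you diverge is the step $\alpha_G\mle\ak_G$, and that is where the genuine gap sits. You straddle $R_1$ with $x_T=(1,\epsilon,T)$, $y_T=(1,-\epsilon,T)$ and need $\ak_G(x_T,y_T)\ge c(\epsilon)>0$ uniformly in $T$; you flag this as the main obstacle but do not prove it, and the heuristic ``any path must detour around $R_1$ and thereby accumulate a positive amount of Apollonian length'' is not yet an argument. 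Two concrete dangers: first, a ray does not separate $\R^3$, so competing paths include those that slip around the free end below the foot of $R_1$ (these turn out to cost at least $\log(T/\epsilon)$, but that has to be checked); second, the directed density $\bar\alpha_G(u;\tau)$ in the direction tangent to a small circle around $R_1$ is \emph{not} comparable to $1/d(u,R_1)$ --- at the points of that circle lying in the plane $x_2=0$ it degenerates to order $1/T^2$, and away from them it is only bounded below by a quantity of order $\sin\phi$ coming from the $R_2$ (or ball) constraint. So the semicircular detour costs about $\int_0^\pi \tfrac12\sin\phi\cdot\epsilon\,d\phi\approx\epsilon$, which does beat $\alpha_G(x_T,y_T)=O(\epsilon/T^2)$, but extracting a lower bound valid for \emph{all} paths from this angular estimate is precisely the work your proposal leaves undone.

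The paper avoids this entirely by a different choice of points: $x=te_3+2e_2$ and $y=te_3-2e_2$, at fixed mutual distance $4$, placed symmetrically on opposite sides of the plane containing both rays, with $t\to\infty$. Symmetry again kills the rays' contribution to $\alpha_G$ and the distant ball gives $\alpha_G(x,y)\to 0$, but now the lower bound for $\ak_G$ is a one-line density estimate: for every $u\in B^3(x,1)$ and every direction $\theta$, any Apollonian sphere at $u$ large enough to meet $\overline{B^3(y,1)}$ would have to meet $R_1\cup R_2$, so $\bar\alpha_G(u;\theta)\ge\bar\alpha_{\R^3\setminus\overline{B^3(y,1)}}(u)=\tfrac1d-\tfrac1{d+2}\ge\tfrac1{12}$ with $d=d(u,\partial B^3(y,1))\le 4$. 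Hence every path from $x$ to $y$ already pays $\ge\tfrac1{12}$ in crossing $B^3(x,1)$, by Lemma~\ref{innerThm}. This sidesteps all tangential-direction and end-of-ray considerations, because the lower bound is isotropic on a whole unit ball rather than direction-dependent along a thin circle. I would recommend either adopting this device or, if you keep your configuration, actually carrying out the angular integral estimate together with the exclusion of the around-the-foot detours; as written, the key inequality of the proposition is asserted rather than proved.
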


\begin{proof}
It is easy to see that $G$ is uniform, if we handle the cases when $\abs{x-y}$ is
small and when it is large separately. In the former case, both $x$ and $y$ are
near a single boundary component of $G$ and hence we need only consider the
boundary components
separately. If $\abs{x-y}$ is large, then we may choose the path to curve away
from all boundary components.
Since $G$ contains a boundary component which is a ray, it is clear that
$G$ is not quasi-isotropic, hence $\ak_G\mle k_G$ by Lemma~\ref{qiEquivs} and
it remains only to prove that $\alpha_G\mle \ak_G$.

Set $x:= te_3+2e_2$ and $y:=t e_3 - 2e_2$ for $t\in [3/2,\infty)$.
It is clear that $\alpha_G(x,y)\to 0$ as $t\to \infty$, since the
rays $R_1$ and $R_2$ do not affect this distance. We next derive
a lower bound for $\ak_G(x,y)$ which is independent of $t$.
In so doing we may forget about the boundary points in the sphere
$S^2(e_1-e_3,1)$ since this only makes the bound smaller.
Denote $B^x:=B^3(x,1)$ and $B^y:=B^3(y,1)$.
Let $u\in B^x,~\theta\in S^1$ and denote $d:=d(u,\partial
B^y)$.  We see that any ball
which intersects $B^x$ and $B^y$ will also intersect $R_1$ or
$R_2$. Thus the Apollonian spheres through $u$ in direction $\theta$
with respect to $G$ are smaller in
size than with respect to $\R^3\setminus B^y$. Since $\R^3\setminus
B^y$ is isotropic, we get
$$\bar\alpha_G(u;\theta)\ge\bar\alpha_{\R^3\setminus B^y}(u;\theta)
=\bar\alpha_{\R^3\setminus B^y}(u)=\frac{1}{d}-\frac{1}{d+2}\ge\frac{1}{12},
$$
where we used $d\le 4$ for the last inequality (the minus sign occurs because the
corresponding Apollonian sphere contains $\infty$).
Let $\gamma$ be a path connecting $x$ and $y$. Then it certainly
connects $x$ to $\partial B^x$; denote this part by $\gamma'$.
By Theorem~\ref{innerThm} we get $\ak_G(x,y)\ge
\frac{1}{12}\inf_{\gamma'}\ell(\gamma') = \frac{1}{12}$.
Since $\alpha_G(x,y)\to 0$ as $t\to 0$, we see
that $\alpha_G\mle\ak_G$.
\end{proof}


\section{Noncomparability}\label{noncompSect}
In this short section we sort out the possible inequalities when $j_G$ and $\ak_G$
are not comparable. It turns out that there is just one possibility in this case.
For if $j_G\notcomp \ak_G$, then it follows without any geometrical
considerations that none of the inequalities $\alpha_G\aeq j_G$,
$\alpha_G\aeq \ak_G$, $j_G\aeq k_G$ or $\ak_G\aeq k_G$ can hold, since
if for instance $\alpha_G\aeq j_G$, then
$j_G\aeq \alpha_G \ale \ak_G$, contrary to assumption. Hence only the
possibility $\alpha_G\mle j_G\mle k_G$ and $\alpha_G\mle \ak_G\mle k_G$
remains, which is the case of least possible comparability among the
metrics. Unfortunately, this occurs in quite many domains.

\begin{proposition} Let $G$ be a simply connected planar domain which is
not quasi-isotropic. Then
$\alpha_G\mle j_G\mle k_G$, $\alpha_G\mle \ak_G\mle k_G$ and
$j_G \notcomp \ak_G$.
\end{proposition}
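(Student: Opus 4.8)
The plan is to notice that every $\ale$-relation occurring in the statement is already supplied by the universal inequalities $\alpha_G\ale j_G \ale k_G$ and $\alpha_G\ale \ak_G \ale k_G$ of (\ref{genIneqs}); thus the whole content lies in the four negative assertions hidden in the notation. Unwinding the definitions, $\alpha_G\mle j_G$ means $j_G\not\ale\alpha_G$ (failure of the comparison property); $j_G\mle k_G$ means $k_G\not\ale j_G$ (failure of uniformity); $\alpha_G\mle\ak_G$ means $\ak_G\not\ale\alpha_G$; $\ak_G\mle k_G$ means $k_G\not\ale\ak_G$; and $j_G\notcomp\ak_G$ means both $j_G\not\ale\ak_G$ and $\ak_G\not\ale j_G$. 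The one geometric lever I would use repeatedly is that a simply connected planar domain which is uniform is a quasidisk by \cite[Theorem~2.24]{MS}, and a quasidisk satisfies Inequality~1 by \cite[Corollary~6.9]{Ha2}, in particular $\ak_G\aeq k_G$, hence is quasi-isotropic by Lemma~\ref{qiEquivs}. Therefore the hypothesis that $G$ is not quasi-isotropic forces $G$ to be non-uniform, and this single consequence drives most of the argument.

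First I would read off two of the negations directly from Lemma~\ref{qiEquivs}: since $G$ is not quasi-isotropic, its equivalent condition (\itemref{item3}) fails, giving $j_G\not\ale\ak_G$, and condition (\itemref{item2}) fails, giving $\ak_G\not\aeq k_G$; because $\ak_G\ale k_G$ always holds, the latter is precisely $k_G\not\ale\ak_G$. This settles $\ak_G\mle k_G$ and one half of $j_G\notcomp\ak_G$.

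Next I would handle $\alpha_G\mle j_G$ by an inner-metric argument: if $j_G\ale\alpha_G$ held, then together with $\alpha_G\ale j_G$ we would have the comparison property $\alpha_G\aeq j_G$, and by the monotonicity of the inner-metric construction (if $d_1\aeq d_2$ then $\tilde d_1\aeq\tilde d_2$) this would yield $\ak_G\aeq k_G$, i.e.\ quasi-isotropy by Lemma~\ref{qiEquivs}, a contradiction. The three remaining negations all reduce to the non-uniformity of $G$. By \cite[Theorem~1.2]{HPWS} uniformity is equivalent to $\ak_G\ale j_G$, and by Corollary~1 of \cite{GO} it is equivalent to $k_G\ale j_G$; since the lever above shows $G$ is not uniform, both $\ak_G\not\ale j_G$ and $k_G\not\ale j_G$ follow, which give the second half of $j_G\notcomp\ak_G$ and the relation $j_G\mle k_G$. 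Finally, were $\ak_G\ale\alpha_G$ to hold, then with $\alpha_G\ale j_G$ we would again obtain $\ak_G\ale j_G$, forcing uniformity and hence the same contradiction; thus $\ak_G\not\ale\alpha_G$, that is $\alpha_G\mle\ak_G$.

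The proof is therefore a purely formal deduction once the quoted characterizations are assembled, with essentially no computation. The only step carrying genuine mathematical weight is the implication ``simply connected, planar and uniform $\Rightarrow$ quasi-isotropic,'' which is exactly where both the planarity and the simple connectivity of $G$ enter, through quasidisk theory. The main thing to be careful about is bookkeeping: translating each $\mle$ and each $\notcomp$ into the correct pair of $\ale$/$\not\ale$ assertions, and checking that every negation is genuinely forced by the hypothesis rather than merely left undecided.
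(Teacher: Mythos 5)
Your proof is correct, and all four negative assertions are genuinely established; the bookkeeping translating each $\mle$ and the $\notcomp$ into $\ale$/$\not\ale$ statements is accurate. The logical skeleton also matches the paper's: both proofs reduce everything to the two halves of $j_G\notcomp\ak_G$ plus formal consequences, and both obtain $j_G\not\ale\ak_G$ identically from Lemma~\ref{qiEquivs}. Where you diverge is in the second, harder half, $\ak_G\not\ale j_G$. The paper first observes (implicitly, as you make explicit) that failure of quasi-isotropy forces failure of the comparison property, and then invokes Proposition~\ref{twelveProp} directly: a simply connected planar domain without the comparison property satisfies $\ak_G\not\ale j_G$. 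You instead route through uniformity: simply connected planar uniform domains are quasidisks, quasidisks are quasi-isotropic, so $G$ is not uniform, and then the characterization $\ak_D\ale j_D\iff D$ uniform from \cite[Theorem~1.2]{HPWS} yields $\ak_G\not\ale j_G$. Both levers are available in the text, but note that the paper's own corollary of Proposition~\ref{twelveProp} (``a simply connected plane domain is a quasidisk iff $\ak_G\ale j_G$'') proves the very implication you need \emph{using} Proposition~\ref{twelveProp}; your argument stays independent of that proposition only because you appeal to the external, general-domain result of \cite{HPWS} rather than the paper's internal machinery. The trade-off: the paper's route is self-contained within the chapter, while yours imports a stronger quoted theorem and in exchange makes the role of simple connectivity and planarity (entering solely through quasidisk theory) more transparent. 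Finally, the paper dispatches the remaining $\mle$ relations in one stroke from noncomparability (if, say, $\alpha_G\aeq j_G$ held, then $j_G\aeq\alpha_G\ale\ak_G$, contradicting $j_G\notcomp\ak_G$), which is slightly slicker than your item-by-item derivation, though your separate arguments are each valid.
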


\begin{proof} Since $G$ is not quasi-isotropic, we have $j_G \not\ale \ak_G$
by Corollary~\ref{qiEquivs}. Since $G$ is simply connected and does
not have the comparison property, the inequality $j_G \not\age \ak_G$
follows from Proposition~\ref{twelveProp}. Hence $j_G \notcomp \ak_G$ which
implies the inequalities $\alpha_G\mle j_G\mle k_G$ and
$\alpha_G\mle \ak_G\mle k_G$, as was shown above.
 \end{proof}

\begin{example} The domain $H^2\setminus [0,e_2]$ satisfies
the assumptions of the previous lemma.
\end{example}


\chapter{UNIFORM, JOHN AND QUASI-ISOTROPIC DOMAINS}\label{chap3}
This chapter concerns characterization of
uniform domains in terms of the lower bound of the $\lambda$-Apollonian metric.
In addition, we consider relationship
between quasi-isotropic domains and quasidisks. In Section
\ref{chap3-sec1}, we recall a number of results which
motivate us to investigate the main contexts of this chapter.
This section also includes basic definitions, notational
descriptions, and some elementary results for proving our main results.
In Section \ref{chap3-sec2}, we state and prove a few technical lemmas and
as a consequence, we establish the proof of Theorem
\ref{thm1.4} and solutions to  a number of related questions in terms of
examples.
Finally, Section \ref{chap3-sec4} is devoted to discussions on
relationship between simply connected quasi-isotropic domains
and John disks.

\vskip 1cm
Results of this chapter are from the published article:
{\bf M. Huang, X. Wang, S. Ponnusamy and S.K. Sahoo} (2008)
Uniform domains, John domains and quasi-isotropic domains.
{\em J. Math. Anal. Appl.} {\bf 343}, 110--126.

\section{Introduction and Preliminaries}\label{chap3-sec1}
Throughout the chapter, we always assume that $D$ is a proper
subdomain of the complex plane $\mathbb{C}$ possessing at least two
finite boundary points, and that constants such as $b$ and $c$ are
positive.

As in \cite{NV}, a simply connected domain $D$ is called a $b$-John
disk if for any two points $z_{1},z_{2}\in D$, there is a
rectifiable arc $\gamma\subset D$ joining them with
$$\min_{j=1,2}\ell (\gamma [z_j, z])\leq b\; \dist (z, \partial D)
$$
for all $z\in \gamma$, where $b$ is a constant. Sometimes we simply call
$D$ a {\it John disk}\index{John disk}\index{domain!John}
if it is a $b$-John disks for some positive
constant $b$.

The class of so-called John domains and uniform domains enjoy an
important role in many areas of modern mathematical analysis, see
\cite{MS,NV,Vai}. Martio and Sarvas \cite{MS} were the first who
introduced uniform domains and since then its importance along with
John domains throughout the function theory is well documented, see
\cite{Ge2,Vai}. It is well-known that a simply connected planar
domain $D$ is a quasidisk if and only if $D$ is a uniform domain
(see \cite[Lemma 6.4]{Ha2}); a Jordan domain $D$ is a quasidisk if
and only if both $D$ and $D^{*}:=\mathbb{C} \backslash \overline{D}$
are John disks, and every quasidisk is a John disk (see \cite{Kil}).
Hence John disks can be thought of as ``one-sided quasidisks".

For any $z_1,$ $z_2\in D$, the {\it $\lambda$-length} \index{$\lambda$-length}
\cite{Bro1,Bro} between them is defined by
$$\lambda_D(z_1,z_2)=\inf \{\ell(\gamma):\; \gamma\subset D\;
\mbox{is a rectifiable arc joining}\; z_1\; \mbox{and}\; z_2 \}.
$$

A point $w$ in the boundary $\partial D$ of $D$ is said to be {\it
rectifiably accessible}\index{rectifiably accessible}
if there is a half open rectifiable arc
$\gamma$ in $D$ ending at $w$. Let $\partial_{r}D$ denote the subset
of $\partial D$ which consists of all the rectifiably accessible
points, that is
$$\partial_{r}D=\{w \in \partial D:w\; \mbox{ is
rectifiably accessible}\}.
$$
The {\it $\lambda$-Apollonian metric}
\index{$\lambda$-Apollonian metric}\index{metric!$\lambda$-Apollonian}
$\alpha'_{D}$ (see \cite{Bro,Wap}) is defined by
$$\alpha'_D(z_1,z_2)=\sup_{w_1,w_2\in \partial_r D}
\log |z_1,z_2,w_1,w_2|_{\lambda_D}.
$$
Here
$$|z_1,z_2,w_1,w_2|_{\lambda_D}=
\frac{\lambda_D(z_1,w_1)\lambda_D(z_2, w_2)}
{\lambda_D(z_1, w_2)\lambda_D(z_2, w_1)}
$$
for all $z_1,$ $z_2\in D$.

\begin{lemma}\label{lem2.1}
For all $z_1$, $z_2\in D$ we have
$$\alpha_D(z_1,z_2)\leq \alpha'_D(z_1,z_2).
$$
\end{lemma}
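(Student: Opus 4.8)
The plan is to factor each of the two metrics into a product of two ``one-sided'' suprema and then compare the factors separately. Writing, as in the Apollonian balls approach of Subsection~\ref{ABA},
\[
q_{z_1}=\sup_{a\in\partial D}\frac{|a-z_2|}{|a-z_1|},\qquad
q_{z_2}=\sup_{b\in\partial D}\frac{|b-z_1|}{|b-z_2|},
\]
we have $\alpha_D(z_1,z_2)=\log(q_{z_1}q_{z_2})$, since the integrand in the Apollonian supremum splits as $\frac{|a-z_2|}{|a-z_1|}\cdot\frac{|b-z_1|}{|b-z_2|}$. The same trick applies to $\alpha'_D$: the $\lambda$-cross-ratio factors as $\frac{\lambda_D(z_1,w_1)}{\lambda_D(z_2,w_1)}\cdot\frac{\lambda_D(z_2,w_2)}{\lambda_D(z_1,w_2)}$, so that $\alpha'_D(z_1,z_2)=\log(Q_1Q_2)$ with $Q_1=\sup_{w_1\in\partial_rD}\frac{\lambda_D(z_1,w_1)}{\lambda_D(z_2,w_1)}$ and $Q_2=\sup_{w_2\in\partial_rD}\frac{\lambda_D(z_2,w_2)}{\lambda_D(z_1,w_2)}$. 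Thus the lemma reduces to the two inequalities $q_{z_2}\le Q_1$ and $q_{z_1}\le Q_2$, which are interchanged by swapping $z_1$ and $z_2$, so it suffices to prove one of them.

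To prove $q_{z_2}\le Q_1$ I would exhibit a single rectifiably accessible boundary point at which the right-hand supremum already dominates. Take the Apollonian ball $B_{z_2}$ about $z_2$: by Subsection~\ref{ABA} it is a Euclidean ball (or half-plane) with $z_2\in B_{z_2}\subset D$ and $\overline{B_{z_2}}\cap\partial D\neq\emptyset$, and on $\partial B_{z_2}$ the ratio $|z-z_1|/|z-z_2|$ equals exactly $q_{z_2}$. Choose $b_0\in\partial B_{z_2}\cap\partial D$; then $|b_0-z_1|/|b_0-z_2|=q_{z_2}$, so $b_0$ realizes the Apollonian supremum. The crucial point is that, because $B_{z_2}$ is convex and contains $z_2$, the half-open segment $[z_2,b_0)$ lies in $B_{z_2}\subset D$; hence $b_0\in\partial_rD$ and, this segment being a competitor of length $|z_2-b_0|$ together with the trivial bound $\lambda_D\ge|\cdot|$, one gets $\lambda_D(z_2,b_0)=|z_2-b_0|$. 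Combining this with $\lambda_D(z_1,b_0)\ge|z_1-b_0|$ yields
\[
Q_1\ \ge\ \frac{\lambda_D(z_1,b_0)}{\lambda_D(z_2,b_0)}\ \ge\ \frac{|z_1-b_0|}{|z_2-b_0|}\ =\ q_{z_2}.
\]
The symmetric argument, using $B_{z_1}$ and its contact point $a_0\in\partial_rD$, gives $q_{z_1}\le Q_2$, and adding the two logarithms completes the proof.

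The only genuine difficulty is the treatment of the \emph{denominators} in the cross-ratio. The elementary inequality $\lambda_D(z,w)\ge|z-w|$ controls the numerators immediately, but an upper bound $\lambda_D(z_2,w)\le|z_2-w|$ is false for general $w$ (indeed $\lambda_D$ can be much larger than the Euclidean distance in a non-convex domain). The whole point of choosing $w=b_0$ on the Apollonian sphere is that there the inner and Euclidean distances from $z_2$ coincide, since the straight segment stays inside the ball $B_{z_2}\subset D$; this is what forces the denominator down to $|z_2-b_0|$ and makes the estimate go through. I would also note the harmless edge cases (that $q_{z_2}<\infty$ because $\delta(z_2)>0$, and that the contact point and convexity argument work verbatim when $B_{z_2}$ degenerates to a half-plane), but these require no real work once the geometric picture above is in place.
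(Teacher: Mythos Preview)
Your proof is correct and follows essentially the same approach as the paper: both factor the two metrics into one-sided suprema and show that at the boundary point realizing $q_{z_2}$ one has $\lambda_D(z_2,b_0)=|z_2-b_0|$, then combine with $\lambda_D(z_1,b_0)\ge|z_1-b_0|$. The only difference is in justifying that key equality: the paper argues by contradiction (if the segment $[z_2,w)$ left $D$, the first exit point $w'$ would give a strictly larger ratio, contradicting optimality), whereas you invoke the Apollonian ball $B_{z_2}\subset D$ directly to see that the segment stays in $D$---a cleaner route that also sidesteps the need to assume the supremum is attained.
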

\begin{proof}
From the properties of the Apollonian balls approach
(see Subsection \ref{ABA}) we have
\begin{equation}\label{qxge1}
\sup_{w_1\in\partial D}\frac{|z_1-w_1|}{|z_2-w_1|}\ge 1,
\end{equation}
for $z_1,z_2\in D$. Suppose $w\in\partial D$ is a point such that
\begin{equation}\label{qxge1-eq1}
\sup_{w_1\in\partial D}\frac{|z_1-w_1|}{|z_2-w_1|}=
\frac{|z_1-w|}{|z_2-w|}.
\end{equation}
Then we must have
$$\lambda_D(z_2,w)=|z_2-w|.
$$
For a proof, we assume a contradiction.
Then there exists a point $w'$ (different from $w$) in $\partial D$ such that
\begin{equation}\label{qxge1-eq2}
|z_2-w|=|z_2-w'|+|w'-w|
\end{equation}
with $\lambda_D(z_2,w')=|z_2-w'|$.
Now, by the triangle inequality, we see that
$$|z_1-w'|\ge |z_1-w|-|w-w'|=|z_1-w|-|z_2-w|+|z_2-w'|.
$$
This gives that
\begin{eqnarray*}
\frac{|z_1-w'|}{|z_2-w'|}
& \ge & 1+\frac{|z_1-w|-|z_2-w|}{|z_2-w'|}\\
& > & 1+\frac{|z_1-w|-|z_2-w|}{|z_2-w|}\\
& = & \frac{|z_1-w|}{|z_2-w|},
\end{eqnarray*}
where the strict inequality holds by (\ref{qxge1}) and (\ref{qxge1-eq2}),
which is a contradiction due to (\ref{qxge1-eq1}). Thus, we conclude that
\begin{equation}\label{qxge1-eq3}
\sup_{w_1\in\partial D}\frac{|z_1-w_1|}{|z_2-w_1|}\le
\sup_{w_1\in\partial_r D}\frac{\lambda_D(z_1,w_1)}{\lambda_D(z_2,w_1)},
\end{equation}
because $|z_1-w_1|\le \lambda_D(z_1,w_1)$ always holds. Similarly, we obtain
\begin{equation}\label{qxge1-eq4}
\sup_{w_2\in\partial D}\frac{|z_2-w_2|}{|z_1-w_2|}\le
\sup_{w_2\in\partial_r D}\frac{\lambda_D(z_2,w_2)}{\lambda_D(z_1,w_2)}.
\end{equation}
Relations (\ref{qxge1-eq3}) and (\ref{qxge1-eq4}) together give
the required conclusion.
\end{proof}

Recall that the symbol $\delta(z)$ stands for $\dist (z, \partial D)$,
the Euclidean distance from $z$ to the boundary $\partial D$ of $D$.
Also as in \cite{GO}, for $z_1,$ $z_2\in D$ the
metric $j_{D}(z_{1},z_{2})$ is defined by
$$j_{D}(z_{1},z_{2})=\log\left(1+\frac{|z_{1}-z_{2}|}
{\delta(z_{1})}\right)\left(1+\frac{|z_{1}-z_{2}|}{\delta(z_{2})}\right).
$$
The metric $j'_{D}$ is obtained by replacing the Euclidean distance
in the definition of $j_{D}$ metric by the $\lambda$-length (c.f.
\cite{Bro}).

The following Bernoulli inequalities are crucial to prove certain
inequalities in our context:

\begin{lemma}\label{lem2.2}
For $x\ge 0$, we have
$$\log(1+cx)\leq c\; \log(1+x)\;\;\mbox{if}\;\; c\geq 1
$$
and
$$\log(1+cx)\geq c\; \log(1+x)\;\;\mbox{if}\; \; 0\leq c\leq 1.
$$
\end{lemma}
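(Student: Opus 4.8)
The plan is to handle both inequalities at once by a single monotonicity argument. First I would fix $c\ge 0$ and introduce the auxiliary function $f(x):=c\log(1+x)-\log(1+cx)$ on $[0,\infty)$. Since $f(0)=0$, proving each of the two claimed inequalities reduces to determining the sign of $f$ on $(0,\infty)$: the first inequality (for $c\ge 1$) is exactly $f(x)\ge 0$, while the second (for $0\le c\le 1$) is exactly $f(x)\le 0$.

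Next I would differentiate and simplify. A direct computation gives
\[
f'(x)=\frac{c}{1+x}-\frac{c}{1+cx}
=\frac{c(c-1)x}{(1+x)(1+cx)}.
\]
For $x\ge 0$ both factors $1+x$ and $1+cx$ in the denominator are positive, and $cx\ge 0$ in the numerator, so the sign of $f'(x)$ is precisely the sign of $c-1$. Consequently, when $c\ge 1$ the function $f$ is nondecreasing on $[0,\infty)$, whence $f(x)\ge f(0)=0$, that is $\log(1+cx)\le c\log(1+x)$; and when $0\le c\le 1$ the function $f$ is nonincreasing, whence $f(x)\le f(0)=0$, that is $\log(1+cx)\ge c\log(1+x)$. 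This establishes both statements of the lemma.

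As an alternative route, perhaps more transparent, I would note that $c\log(1+x)=\log\bigl((1+x)^c\bigr)$, so that by monotonicity of the logarithm both inequalities are equivalent to the classical Bernoulli inequality $(1+x)^c\ge 1+cx$ for $c\ge 1$ together with its reverse $(1+x)^c\le 1+cx$ for $0\le c\le 1$, each valid for $x\ge 0$. Either way, there is no genuine obstacle to the argument; the only point demanding care is the bookkeeping of the direction of the inequalities, which is governed entirely by the sign of the factor $c-1$ appearing in $f'$.
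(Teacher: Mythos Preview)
Your argument is correct. The paper's justification is also a one-line monotonicity argument, but it fixes $x>0$ and observes that $c\mapsto \log(1+cx)/c$ is decreasing on $(0,\infty)$; comparing the value at a general $c$ with the value at $c=1$ then yields both inequalities simultaneously. You instead fix $c$ and differentiate in $x$. The two routes are equally elementary and equally short; the paper's version has the minor aesthetic advantage that a single monotone function in $c$ encodes both directions without splitting into the cases $c\ge 1$ and $c\le 1$, whereas your version makes the dependence on the sign of $c-1$ completely explicit via the factored derivative. Your alternative remark via the classical Bernoulli inequality $(1+x)^c\gtrless 1+cx$ is also perfectly valid and is in fact how the paper frames the lemma (it calls these ``Bernoulli inequalities'').
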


These two inequalities follow, for example,  from the fact that
$c\mapsto\log (1+cx)/c$ is a decreasing function of $c$ in $(0,\infty)$,
for each fixed $x>0$.

We denote by $xy$ the line through $x$ and $y$,  and by
$(x,y]$ the semi closed (or semi open) segment between $x$ and $y$.

We begin our discussion with the following simple characteristic
property of quasidisks due to Gehring.

\noindent{\bf Theorem A \cite[Theorem 11]{Ge99}}. {\it A simply
connected domain $D$ is a quasidisk if and only if there is a
constant $c$ such that
$$h_{D}(z_{1},z_{2})\leq c\;j_{D}(z_{1},z_{2})
$$
for all $z_{1},z_{2}\in D$.
}

Later in 2000, Gehring and Hag obtained the following interesting
characterization. 

\noindent {\bf Theorem B \cite[Theorem 3.1]{GH}}.
{\it A simply connected domain $D$ is a
quasidisk if and only if there is a constant $c$ such that
$$h_{D}(z_{1},z_{2})\leq c\;\alpha_{D}(z_{1},z_{2})
$$
for all $z_{1},z_{2}\in D$.
}

Next, we recall the following simple and useful characteristic properties
of John disks due to N\"akki and V\"ais\"al\"a \cite{NV}.

\noindent{\bf Theorem C \cite{NV}}.
{\it  Let $D$ be a simply connected proper subdomain in $\C$. Then
the following conditions are equivalent:
\begin{enumerate}
\item[{\rm (1)}] $D$ is a $b$-John disk;
\item[{\rm (2)}] For each $z\in \IR^{2}$ and $r> 0$, any two points in
$D\setminus \overline{\D}(z,r)$ can be joined by an arc in
$D\setminus \overline{\D}(z,\frac{r}{c})$, where the
constants $b$ and $c$ depend only on each other and
$\D(z,r)$ denotes the open disk of radius $r$  centered at
the point $z$;
\item[{\rm (3)}] For every straight crosscut $\gamma$ of $D$ dividing
$D$ into subdomains $D_{1}$ and $D_{2}$, we have $\min
\limits_{j=1,2} \diam (D_{j})\leq c\; \diam (\gamma)$, where the
constants $b$ and $c$ depend only on each other and $\diam(\gamma)$
means the diameter of $\gamma$.
\end{enumerate}
}

\medskip
By using $h_D$ and $j'_D$, Kim and Langmeyer obtained the following
necessary and sufficient conditions for $b$-John disks.

\noindent{\bf Theorem D \cite[Theorem 4.1]{Kil}}.
{\it A simply connected domain $D$ is a $b$-John disk if and only if there
exists a constant $c\geq 1$ such that
$$h_D(z_1,z_2)\leq c\; j'_D(z_1,z_2)
$$
for all $z_{1},z_{2}\in D$.
Here the constants $b$ and $c$ depend only on each other. }

On the other hand, Broch in his Ph.D thesis characterized John disks in
terms of a bound for hyperbolic distance $h_D$ with an additive constant.

\noindent
{\bf Theorem E \cite[Theorem 6.2.9]{Bro}}. {\it A simply
connected (Jordan) domain $D$ is a $b$-John disk if and only if
there are constants $c$ and $d$ such that
$$h_D(z_1, z_2)\leq c \; \alpha'_D(z_1, z_2)+d
$$
for all pairs of $z_1, z_2\in D$,
where $c$ and $d$ depend only on $b$, and $b$ depends only on $c$ and $d$. }

In view of a comparison with Theorem B, Broch
raised the following.

\noindent{\bf Conjecture F \cite[Conjecture 6.2.12]{Bro}}.
{\it A simply connected (Jordan) domain $D$ in $\C$ is a $b$-John disk if
and only if there is a constant $c$ such that
$$h_D(z_1, z_2)\leq c\; \alpha'_D(z_1, z_2)
$$
for all $z_1$, $z_2\in D$. Here the constants $b$ and $c$ depend
only on each other.}

Recently, in \cite{Wap}, Wang, Huang, Ponnusamy and Chu proved that the
sufficiency part in Conjecture~F is actually true.

\noindent{\bf Theorem G \cite[Corollary 2.3]{Wap}}.
{\it Suppose that $D$ is simply connected
and that there is a constant $c$ such that
$$h_D(z_1,z_2)\leq c\; \alpha'_D(z_1,z_2)
$$
for all $z_1$, $z_2\in D$. Then $D$ is a $b$-John disk with
$b=b(c)$.}

In addition to the above result, the authors in the same article \cite{Wap}
constructed two examples, one for a bounded John disk and the other
for an unbounded John disk, and showed that the necessity in
Conjecture~F fails to hold. In view of this development and the
importance of these domains in function theory, the following
question is natural.

\begin{problem}\label{question-1.1}
Is it true that $D$ is a uniform domain if and only if there is a
constant $c$ such that
$$j_D(z_1, z_2)\leq c\; \alpha'_D(z_1, z_2)
$$
for all $z_1$, $z_2\in D$.
\end{problem}


We also collect a number of relevant results for completeness.

\noindent{\bf Theorem H}.
{\it Let $D$ be a simply connected domain. Then for
all $z_1$, $z_2\in D$ we have
$$\frac{1}{2}\;k_D(z_1,z_2)\leq h_D(z_1,z_2)\leq 2\;k_D(z_1,z_2),
$$
$$k_{D}(z_{1},z_{2})\geq
\log\left(1+\frac{|z_{1}-z_{2}|}{
\dist (z_j, \partial D)}\right)\quad (j=1,2)
$$
and \be\label{eq1.3} j_D(z_1,z_2)\leq 2 k_D(z_1,z_2).
\ee
}

Note that the first two inequalities in Theorem H are due to Gehring
and Osgood \cite{GO} while the inequality (\ref{eq1.3}) may be obtained,
for instance, from \cite[Lemma 2.1]{GP} and \cite[Exercise 2.40 ]{Vu}
(see also \cite{Vu2} and  \cite[Section 5]{Ha2}).
In fact, the inequality (\ref{eq1.3}) holds
for all proper subdomains $D$ of $\IR^n$. On the other hand,
concerning $K$-quasi-isotropic domains (see Chapter \ref{chap2}
for the definition), H\"ast\"o proved the following analogous result.

\noindent{\bf Theorem I \cite[Corollary 5.4]{Ha2}}.
{\it If a domain $D\psubset \IR^n$ is $K$-quasi-isotropic, then
$$k_D(z_1,z_2)/K\leq \widetilde{a}_D(z_1,z_2)\leq 2k_D(z_1,z_2)
$$
for all  $z_1$, $z_2\in D$, where the second inequality
always holds.}

\noindent{\bf Theorem J \cite[Corollary 5.4]{Ha5}}.
{\it For $D\psubset \IR^n$ the following are equivalent:
\begin{enumerate}
\item $D$ is quasi-isotropic;
\item $\widetilde{a}_D\approx k_D$; and
\item $j_D\lesssim \widetilde{a}_D$.
\end{enumerate}}

Here is a simple result which illustrates
the usefulness of our investigation and the proof of it is a consequence of
Theorems D, H and I.

\begin{theorem}\label{th1}
Let $D$ be a simply connected domain. If $D$ is a
$K$-quasi-isotropic domain and there exists a constant $c$ such that
$$\widetilde{a}_D(z_1, z_2)\leq c\,j'_D(z_1,z_2)
$$
for all $z_1$ and $z_2$ in $D$,
then $D$ is a $b$-John disk, where $b$ depends only on $c$
and $K$.
\end{theorem}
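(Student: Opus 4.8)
The plan is to reduce the statement to Theorem~D by producing, from the two hypotheses, an inequality of the form $h_D \le c'\, j'_D$ with a constant $c'$ depending only on $c$ and $K$. Theorem~D asserts precisely that such an inequality (with comparison constant at least $1$) characterizes $b$-John disks, the constants $b$ and $c'$ depending only on each other; so once this inequality is in hand the conclusion is immediate, and the only remaining task is to track the dependence of the constants.

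To obtain the inequality I would simply chain the three available estimates. First, the right-hand inequality of Theorem~H gives $h_D(z_1,z_2) \le 2\,k_D(z_1,z_2)$ for all $z_1,z_2\in D$. Next, since $D$ is assumed $K$-quasi-isotropic, the first inequality of Theorem~I, namely $k_D/K \le \widetilde{a}_D$, rearranges to $k_D(z_1,z_2) \le K\,\widetilde{a}_D(z_1,z_2)$. Finally, the standing hypothesis supplies $\widetilde{a}_D(z_1,z_2) \le c\, j'_D(z_1,z_2)$. Composing these,
$$
h_D(z_1,z_2) \le 2\,k_D(z_1,z_2) \le 2K\,\widetilde{a}_D(z_1,z_2) \le 2Kc\, j'_D(z_1,z_2)
$$
for all $z_1,z_2\in D$.

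Setting $c' := \max\{2Kc,\,1\}$ — the only minor bookkeeping point, needed because Theorem~D requires the comparison constant to be at least $1$ while \emph{a priori} $2Kc$ might be smaller (in practice $c,K\ge 1$, so $2Kc\ge 1$ already) — we obtain $h_D \le c'\, j'_D$ throughout $D$ with $c'\ge 1$. Theorem~D then yields that $D$ is a $b$-John disk with $b=b(c')$; since $c'=\max\{2Kc,1\}$ depends only on $c$ and $K$, so does $b$, which is exactly the claim. There is essentially no hard step here: the entire content is carried by Theorems~D, H and I, and the argument is a direct composition of inequalities. The only things to watch are that Theorems~H and~D both require simple connectivity, which is assumed, and that the quasi-isotropy constant $K$ feeding Theorem~I is the same $K$ appearing in the final bound, so that the dependence $b=b(c,K)$ is genuinely as stated.
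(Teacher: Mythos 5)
Your argument is correct and is precisely the paper's own proof: the text states only that Theorem \ref{th1} "is a consequence of Theorems D, H and I," and your chain $h_D \le 2k_D \le 2K\widetilde{a}_D \le 2Kc\,j'_D$ followed by an appeal to Theorem D is exactly that composition, with the constant-tracking made explicit.
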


Theorem \ref{th1} stimulates us to discuss the relation between
$K$-quasi-isotropic domains and John domains. Then we ask the
following.

\begin{problem}\label{question-1.3}
Suppose that $D$ is a simply connected domain. Is it true that $D$ is a
$K$-quasi-isotropic domain if and only if $D$ is a $b$-John
disk, where constants $K$ and $b$ depend only on each other?
\end{problem}

The main aim of this chapter is to discuss Problems \ref{question-1.1}
and \ref{question-1.3} which will be presented in the following sections.



\section{Uniformity and $\lambda$-Apollonian Metric}\label{chap3-sec2}

In this section we present complete solution to Problem \ref{question-1.1}.
We show that the necessary part is true in simply connected domains.
More precisely, we give a number of examples in which the necessary as well as
the sufficiency parts are not true in general.

There exist a number of alternative characterizations of uniform domains.
However, it is a non-trivial task to verify whether a given domain is
uniform. In \cite[Examples 2.4(1)]{NV} N\"akki and V\"ais\"al\"a stated that
the exterior of a ball is a John domain. Although it seems from the definition that
the exterior of a ball is uniform, because of independent interest, we present a
proof below. 

\begin{lemma}\label{uni-Bbar}
The domain
$D=\mathbb{C}\backslash \overline{\D}$ is uniform.
\end{lemma}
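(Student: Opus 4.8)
The plan is to sidestep the two-part definition of uniformity and instead invoke the Gehring--Osgood characterization quoted in Chapter~\ref{chap2} (Corollary~1 of \cite{GO}): a domain $G$ is uniform if and only if $k_G \aeq j_G$. Since $j_G \le k_G$ holds in every domain, it suffices to prove the single inequality $k_D(z_1,z_2) \ale j_D(z_1,z_2)$. Throughout I would write $z_j = r_j e^{i\theta_j}$ with $r_j>1$, so that $\dist(z_j,\partial D) = r_j-1 =: \delta_j$, set $s := |z_1-z_2|$, assume without loss of generality $\delta_1 \le \delta_2$, and pick the arguments so that the angular gap $\Delta := |\theta_1-\theta_2|$ satisfies $\Delta \le \pi$. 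I would then split into two regimes according to whether the points are close together relative to $\delta_1$.

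In the near regime $s \le \delta_1/2$ I would take $\gamma = [z_1,z_2]$. The triangle inequality gives $|z|-1 \ge \delta_1 - s \ge \delta_1/2$ for every $z\in\gamma$, so $\gamma\subset D$ and $k_D(z_1,z_2) \le \int_\gamma |dz|/\dist(z,\partial D) \le 2s/\delta_1$. Combined with the elementary bound $\log(1+x)\ge x/2$ on $[0,1]$, this yields $k_D \le 4\,j_D$ in this regime.

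In the far regime $s > \delta_1/2$ I would use a radial--arc--radial path: go radially outward from $z_1$ to $w_1 := (1+\rho)e^{i\theta_1}$, then along the arc $|z|=1+\rho$ from $w_1$ to $w_2 := (1+\rho)e^{i\theta_2}$, then radially inward to $z_2$, where $\rho := \max(\delta_2,s)$. The two radial pieces contribute quasihyperbolic length $\log(\rho/\delta_1) + \log(\rho/\delta_2)$; using $\delta_2 \le \delta_1 + s$ (since $|r_1-r_2|\le s$) and $\rho\ge s$, each logarithm is at most $\log(1+s/\delta_1) = j_D$. The arc contributes $(1+\rho)\Delta/\rho = \Delta + \Delta/\rho$, and here the key elementary inequality $s \ge \tfrac{2}{\pi}\Delta$ (which follows from $s^2 \ge 2r_1r_2(1-\cos\Delta) \ge 4\sin^2(\Delta/2)$ together with $r_1r_2\ge 1$) combined with $\rho\ge s$ gives $\Delta/\rho \le \pi/2$, so the arc costs at most $3\pi/2$. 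Since in this regime $j_D > \log(3/2) > 0$, this bounded contribution is $\ale j_D$, and altogether $k_D \ale j_D$.

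The main obstacle, and the reason for the two regimes, is the arc term: lifting to radius $1+\rho$ is forced in order to make the angular traversal cheap in the quasihyperbolic metric, yet its $O(1)$ cost can only be absorbed into $j_D$ when $j_D$ is bounded below, which is exactly the far regime. The choice $\rho=\max(\delta_2,s)$ is what simultaneously keeps the radial logarithms $\le j_D$ and forces $\Delta/\rho = O(1)$; reconciling these two competing requirements, via $\delta_2\le\delta_1+s$ and $s\ge\tfrac{2}{\pi}\Delta$, is the technical heart of the argument. (The same radial--arc--radial path could instead be fed into Definition~\ref{unifDef1} directly, but then the Euclidean arc length must be controlled by splitting on $s\le 1$ versus $s>1$, which is messier; routing through $k_D\ale j_D$ avoids this.)
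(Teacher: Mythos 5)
Your proof is correct, but it takes a genuinely different route from the one in the thesis. The thesis verifies Definition~\ref{unifDef1} directly: it builds explicit arc-plus-radial-segment paths and checks both the quasiconvexity condition and the double cone condition by hand, with a case analysis on $\delta(z_1)\ge\tfrac14$ versus $\delta(z_1)<\tfrac14$ and on the angular separation $\angle z_1Oz_2$, producing explicit uniformity constants such as $5\pi$ and $3\pi+1$. You instead route the whole argument through the Gehring--Osgood characterization ($D$ uniform $\iff k_D\ale j_D$), which is indeed quoted in Chapter~\ref{chap2} and in the introduction, and you only need the one inequality $k_D\ale j_D$ since $j_D\le k_D$ always holds. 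Your estimates all check out: in the near regime the segment $[z_1,z_2]$ stays in $D$ because $|z|-1\ge\delta_1-s\ge\delta_1/2$, giving $k_D\le 2s/\delta_1\le 4j_D$; in the far regime the radial integrals evaluate exactly to $\log(\rho/\delta_1)+\log(\rho/\delta_2)$, each bounded by $\log(1+s/\delta_1)$ via $\delta_2\le\delta_1+s$, the chord estimate $s\ge 2\sin(\Delta/2)\ge\tfrac{2}{\pi}\Delta$ caps the arc cost at $3\pi/2$, and the lower bound $j_D>\log(3/2)$ absorbs that additive constant. What each approach buys: yours is shorter, avoids the heavy subcase analysis, and the quasihyperbolic radial integrals make the logarithmic bookkeeping clean; the thesis's direct verification is self-contained (it does not lean on \cite{GO}) and yields explicit constants in the definition of uniformity --- which matters here, since the thesis explicitly frames this lemma as a from-the-definition proof presented ``because of independent interest'' despite the result seeming obvious. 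One presentational caveat: since the lemma sits in Chapter~\ref{chap3}, where $j_D$ denotes the Gehring--Osgood form (\ref{GO-metric}), you should note (as you implicitly do) that the two forms of $j$ are comparable, so the choice is immaterial for a $\ale$ statement.
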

\begin{proof}
Let $z_{1}$, $z_{2}\in D$, and recall the notation
$\delta(z)=\dist (z,\partial D)$. Without loss of
generality we assume that $\delta(z_{1})=\ds\min_{j=1,2}\delta(z_{j})$.

If $\delta(z_{1})\geq \frac{1}{4}$, then we pick
$z'_{2}\in [0,z_{2}]$ such that $\delta(z'_{2})=\delta(z_{1})$.
Therefore, $z_1$ and $z_2'$ divide the
circle $S(0,|z_1|)=\{z\in \mathbb{C}:\; |z|=|z_1|\}$ into two
parts: $\gamma_1$ and $\gamma_1'$ with $\ell(\gamma_1)\leq
\ell(\gamma_1')$. Define $\gamma=\gamma_{1}\cup [z'_{2},z_{2}]$. Then we
have
$$\ell(\gamma)\leq \frac{\pi}{2}|z_{1}-z'_{2}|+|z_{2}-z'_{2}|
\leq \frac{\pi+2}{2}|z_{1}-z_{2}|.
$$
Given a $z\in \gamma$, if $z\in \gamma_1$, then we have
$$\ell(\gamma_1[z_1,z])\leq \pi(\delta(z)+1)\leq 5\pi \delta(z).
$$
If $z\in [z_2',z_2]$, then
$$\ell(\gamma_1)+\ell([z_2',z])\leq 5\pi \delta(z_1)+\delta(z)
-\delta(z_2')\leq 5\pi \delta(z).
$$
Consequently, for each $z\in \gamma$, we have
$$\min_{j=1,2}\ell (\gamma [z_j, z])\leq 5\pi \delta(z).
$$
Now we assume that $\delta(z_{1})< \frac{1}{4}$. We
need to examine two cases.

\medskip
{\bf Case I:} Let $\delta(z_{2})\geq \frac{1}{2}$.
\smallskip

Consider the half line $L$ starting from the origin $O$ and passing through
$z_{1}$, and let $z'_{1}\in L$ with $\delta(z'_{1})=\delta(z_{2})$. Then $z_1'$ and
$z_2$ divide the circle $S(0, \delta(z_2))$ into two parts: $\gamma_2$ and
$\gamma_2'$ with $\ell(\gamma_2)\leq \ell(\gamma_2')$.

Now, we let $\gamma=\gamma_{2}\cup [z_{1}, z'_{1}]$. First notice that
\begin{eqnarray*}
\ell(\gamma) & \leq & |z_{1}-z'_{1}|+\ell(\gamma_{2})\\
& \leq & \delta(z_{2})+\pi(1+\delta(z_{2}))\\
& \leq & 2(3\pi+1)|z_{1}-z_{2}|,
\end{eqnarray*}
because $|z_{1}-z_{2}|\geq |z_{2}|-|z_{1}|\geq \delta(z_{2})-\frac{1}{4}
\geq \frac{1}{2}\delta(z_{2})$.
For $z\in \gamma$, if $z\in[z_{1},z'_{1}]$, we find that
$$\ell([z_{1},z])\leq \delta(z).
$$
If $z\in \gamma_{2}$, we see that
\begin{eqnarray*}
\ell(\gamma_{2}(z_{2},z)) & \leq & \ell(\gamma_2)\leq
\frac{\pi}{2}|z'_1-z_2|\\
& \leq & \frac{\pi}{2}(|z_{2}|+|z'_{1}|)= \pi(\delta(z_{2})+1)\\
& \leq  & 3\pi \delta(z)
\end{eqnarray*}
and so, for each $z\in\gamma$,
$$\min_{j=1,2}\ell (\gamma [z_j, z])\leq 3\pi \delta(z).
$$

\medskip
{\bf Case II:} Let $\delta(z_{2})< \frac{1}{2}$.

\smallskip
{\bf Subcase I:} First we consider the range $\frac{\pi}{18}\leq \angle z_{1}Oz_{2}\leq
\pi$.

\smallskip

Let $L_{1}$ be the half line starting from $O$ and passing through
$z_{1}$, and let $L_{2}$ be the half line starting from $O$ and
passing through $z_{2}$. Choose $z'_{1}\in L_{1}$ and $z'_{2}\in
L_{2}$ with $\delta(z'_{1})=\delta(z'_{2})=\frac{1}{2}$.
Then $z_1'$ and $z_2'$ divide
the circle $S(0,\frac{3}{2})$ into two parts: $\gamma_3$ and
$\gamma_3'$ with $\ell(\gamma_3)\leq \ell(\gamma_3')$. Let
$\gamma=[z_{1},z'_{1}]\cup\gamma_{3}\cup [z'_{2},z_{2}]$.
As in the previous case, this yields that
\begin{eqnarray*}
\ell(\gamma) & \leq &
\frac{1}{2}+\frac{1}{2}+\ell(\gamma_{3})\leq
1+\frac{3\pi}{2}\\
& \leq & 2\pi \frac{\sin\frac{17\pi}{36}}
{\sin\frac{\pi}{18}}|z_{2}-z_{1}|.
\end{eqnarray*}
Now, for $z\in \gamma$, if $z\in [z_j,z_j']$ ($j=1,2$), we then have
$$\ell([z_{j},z])\leq \delta(z).
$$
On the other hand, for $z\in \gamma_{3}$, we have
\begin{eqnarray*}
\ell(\gamma(z_{1},z)) & \leq &
|z'_{1}|-|z_{1}|+\ell(\gamma_{3})\leq \frac{1}{2}+\pi\left(1+\frac{1}{2}\right)\\
& \leq & (3\pi+1)\delta(z),
\end{eqnarray*}
since $\delta(z)=\frac{1}{2}$. The above observations imply that
$$\min_{j=1,2}\ell (\gamma [z, z_j])\leq (3\pi+1) \delta(z)
$$
for each $z\in\gamma$.

\smallskip
{\bf Subcase II:} Consider the case $\angle z_{1}Oz_{2}< \frac{\pi}{18}$.
\smallskip

\begin{figure}
\begin{center}
\includegraphics[width=8cm]{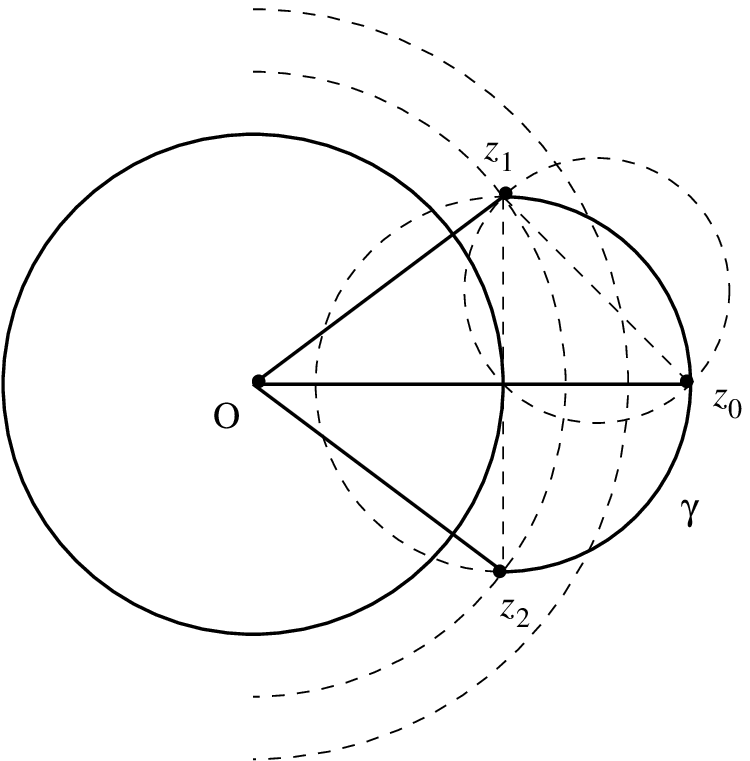}
\end{center}
\caption{The domain $\protect D=\C\backslash \overline{\D}.$}\label{w_fig3}
\end{figure}

Let $\gamma$ be the half circle of
$S\left(\frac{z_{1}+z_{2}}{2},\frac{|z_{1}-z_{2}|}{2}\right)$
divided by $z_1$ and $z_2$, which satisfies the condition $\angle
Oz_1z_0>\frac{\pi}{2}$, where $z_0\in \gamma$ with
$\ell(\gamma[z_{1},z_{0}])=\ell(\gamma[z_{2},z_{0}])$, see Figure \ref{w_fig3}.

Clearly,
$$\ell(\gamma)\leq \frac{\pi}{2}|z_{1}-z_{2}|.
$$
Now, we claim that
$$\min_{j=1,2}\ell (\gamma [z_j, z])\leq \pi \delta(z)
$$
for $z\in\gamma$. To establish the claim, we first observe that for any $z\in
\gamma[z_{1},z_{0}]$, $\angle zz_{1}O\geq \frac{13\pi}{18}$. Hence
for such $z$, we obtain that
\begin{eqnarray*}
|z| & = &
\sqrt{|z_{1}|^{2}+|z_{1}-z|^{2}-2|z_{1}||z_{1}-z|
\cos\angle Oz_{1}z}\\
& \geq & \sqrt{1+|z_{1}-z|^{2}+|z_{1}-z|}.
\end{eqnarray*}
Since
$$\delta(z)=|z|-1\geq \sqrt{1+|z_{1}-z|^{2}+|z_{1}-z|}-1
$$
and
$$|z_{1}-z|\leq 2\left(\sqrt{1+|z_{1}-z|^{2}+|z_{1}-z|}-1\right) \leq 2\delta(z),
$$
we deduce that
$$\ell(\gamma[z_{1},z])\leq \frac{\pi}{2}|z_{1}-z|\leq \pi \delta(z).
$$
On the other hand, if $z\in \gamma [z_0,z_2]$,  we can find $z'\in \gamma[z_1,z_0]$
with $\ell(\gamma[z_1,z'])=\ell(\gamma[z,z_2])$ which shows that
$$\ell(\gamma[z,z_2])= \ell(\gamma[z_1,z'])\leq \pi \delta(z')\leq \pi \delta(z).
$$
The last two inequalities complete the proof of our claim.
\end{proof}

We also need the following result from \cite{MS} which says that
quasiconformal images of uniform domains are uniform.

\begin{lemma}\label{qc-uni}
Let $f:\, \IR^{n}\to \IR^{n}$ be a quasiconformal
mapping and $D\subset \IR^{n}$ be a uniform domain.
Then $f(D)$ is a uniform domain.
\end{lemma}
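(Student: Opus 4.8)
The plan is to reduce the claim to the metric characterization of uniformity already recorded in the excerpt, namely that a domain $G\psubset\Rn$ is uniform if and only if $k_G\aeq j_G$ (Corollary~1 of \cite{GO}). Since $j_G\le k_G$ holds in every domain, it suffices to produce a constant $c'$ with $k_{D'}\ale j_{D'}$ on $D':=f(D)$. The key structural fact I would exploit is that $f$ is a quasiconformal homeomorphism of the \emph{whole} space $\Rn$ onto itself; by the Gehring--V\"ais\"al\"a theorem such a global $K$-quasiconformal map is $\eta$-quasisymmetric with $\eta$ depending only on $K$ and $n$. This is the reason the hypothesis is stated for $f\colon\Rn\to\Rn$ rather than merely for $f$ on $D$, and quasisymmetry is precisely the tool that transfers ratio-type (hence quasihyperbolic and $j$-type) estimates from $D$ to $D'$.

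First I would record the two elementary quasisymmetry estimates for the boundary distance. Writing $\delta'$ for the distance to $\partial D'=f(\partial D)$, I would pick for $z\in D$ a nearest boundary point $b\in\partial D$ (so $|z-b|=\delta(z)$) and a point $a\in\partial D$ realizing $\delta'(f(z))=|f(z)-f(a)|$; applying the $\eta$-inequality to the triples $(f(z),f(b),\cdot)$ and $(f(z),f(a),\cdot)$ together with $|f(z)-f(a)|=\delta'(f(z))\le|f(z)-f(b)|$ yields a two-sided control of $|f(x)-f(y)|/\delta'(f(x))$ by the corresponding ratio $|x-y|/\delta(x)$ through the function $\eta$. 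This is exactly what is needed to compare $j_{D'}(f(x),f(y))$ with $j_D(x,y)$ up to $\eta$-distortion.

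With these in hand, the substantive step is to bound $k_{D'}$. Here I would invoke the Gehring--Osgood distortion inequality for the quasihyperbolic metric, $k_{D'}(f(x),f(y))\ale\max\{k_D(x,y),k_D(x,y)^{\alpha}\}$ with $\alpha=\alpha(K,n)$, and combine it with the uniformity of $D$ in the form $k_D\ale j_D$. For points with $k_D(x,y)$ bounded the exponent $\alpha$ is harmless, while for $k_D(x,y)$ large the linear term dominates; in both regimes the previous paragraph converts the resulting upper bound into a multiple of $j_{D'}(f(x),f(y))$. Assembling the chain $k_{D'}(f(x),f(y))\ale k_D(x,y)\ale j_D(x,y)\ale j_{D'}(f(x),f(y))$, with the outer inequalities provided by quasisymmetry and the inner ones by the distortion estimate and uniformity of $D$, gives $k_{D'}\ale j_{D'}$, hence $D'$ is uniform.

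The main obstacle I anticipate is the bookkeeping in this step: the quasisymmetric transfer is governed by the nonlinear modulus $\eta$ rather than by a single multiplicative constant, so care is needed to see that the ``small scale'' and ``large scale'' regimes splice together into one genuine comparability $k_{D'}\aeq j_{D'}$. A clean way to organize this, which I would adopt as a fallback if the metric route becomes cumbersome, is the purely geometric argument: transport a uniform (cigar) arc $\gamma$ joining $x,y$ in $D$ to $\gamma':=f(\gamma)$ and verify the cigar conditions for $\gamma'$ in their \emph{diameter} formulation, which is manifestly quasisymmetry-stable, then pass back to the length formulation using that uniform domains are quasiconvex. Either way the crux is the same, namely controlling $\dist(f(z),\partial D')$ under $f$, which the quasisymmetry of the global map supplies.
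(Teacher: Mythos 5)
The first thing to say is that the thesis offers no proof of this lemma to compare against: it is quoted directly from Martio and Sarvas \cite{MS} (``quasiconformal images of uniform domains are uniform''), so you are competing with a citation, not an argument. Judged on its own, your plan follows the standard route to this invariance result and is essentially viable: a $K$-quasiconformal homeomorphism of all of $\Rn$ is indeed $\eta$-quasisymmetric with $\eta=\eta_{n,K}$ of \emph{power type}, and combining this with the Gehring--Osgood characterization $k_G\aeq j_G$ of uniformity and their distortion estimate for $k$ can be made to work. The hypothesis that $f$ is defined on all of $\Rn$ is, as you say, exactly what makes the quasisymmetry available.

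Two steps need more care than your sketch gives them. First, the chain $k_{D'}(f(x),f(y))\ale k_D(x,y)\ale j_D(x,y)\ale j_{D'}(f(x),f(y))$ is false as a purely multiplicative statement at small scales: the Gehring--Osgood bound is $k_{D'}(f(x),f(y))\le c\max\{k_D(x,y),k_D(x,y)^{\alpha}\}$ with $\alpha=K^{1/(1-n)}<1$, so the first link degrades as $k_D\to 0$, and the quasisymmetric transfer only yields $|x-y|/\delta(x)\le \eta'\bigl(|f(x)-f(y)|/\delta'(f(x))\bigr)$ (apply the modulus of $f^{-1}$ to the triple $x,y,b$ with $b$ a nearest boundary point of $x$), which gives $j_D\ale j_{D'}$ only up to an additive constant, not multiplicatively near zero. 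The repair is the one the thesis itself records: it suffices to prove the additive form $k_{D'}\le c\,j_{D'}+d$. The regime $|f(x)-f(y)|\le\tfrac12\delta'(f(x))$ is disposed of by the universal estimate $k_G(x,y)\le 2j_G(x,y)$, valid in any domain for such pairs, and in the complementary regime one has $j_{D'}\ge\log(3/2)$, so the additive constants coming from $\max\{t,t^{\alpha}\}\le 1+t$ and from $\log\bigl(1+\eta'(e^{j_{D'}}-1)\bigr)\le A+\beta j_{D'}$ (this is where the power-type form of $\eta'$ is genuinely needed; an arbitrary quasisymmetry modulus would not give linear growth here) are harmless. Second, in your geometric fallback the hidden ingredient is not quasiconvexity but the quantitative equivalence of the diameter and length formulations of the two cigar conditions (Martio--Sarvas, V\"ais\"al\"a \cite{Vai}); quasiconvexity of $f(D)$ is part of the conclusion and cannot be assumed. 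With these two repairs either of your routes closes, and the second is close in spirit to how the cited result is actually proved.
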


\begin{theorem}\label{thm1.4}
Suppose $D$ is a simply connected domain. If $D$ is uniform, then
there exists a constant $c$ such that
$$j_{D}(z_{1},z_{2})\leq c\;\alpha'_{D}(z_{1},z_{2})
$$
for all $z_1,$ $z_2\in D$.
\end{theorem}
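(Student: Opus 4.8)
The plan is to reduce the statement to a short chain of already-established comparisons, the key preliminary observation being that a simply connected uniform domain is a quasidisk, which unlocks Theorem~B. Once that is in place, the metrics line up as
\[
j_D \;\lesssim\; k_D \;\lesssim\; h_D \;\lesssim\; \alpha_D \;\le\; \alpha'_D,
\]
and the theorem follows by composing the constants.

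First I would invoke the fact, recorded in Chapter~\ref{chap2} (via \cite[Lemma~6.4]{Ha2}), that a simply connected planar domain is uniform if and only if it is a quasidisk. Thus the hypotheses of Theorem~\ref{thm1.4} guarantee that $D$ is a quasidisk, which is precisely the condition under which Theorem~B applies and yields a constant $c_1$ with $h_D(z_1,z_2)\le c_1\,\alpha_D(z_1,z_2)$ for all $z_1,z_2\in D$. Next I would assemble the remaining ingredients: from Theorem~H we have $j_D\le 2k_D$ and $\tfrac12 k_D\le h_D$, whence $j_D\le 2k_D\le 4h_D$; and from Lemma~\ref{lem2.1} we have $\alpha_D\le\alpha'_D$. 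Chaining these together gives
$$ j_D(z_1,z_2)\le 4h_D(z_1,z_2)\le 4c_1\,\alpha_D(z_1,z_2)\le 4c_1\,\alpha'_D(z_1,z_2) $$
for all $z_1,z_2\in D$, which is the assertion with $c=4c_1$.

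Since the proof is really bookkeeping of known inequalities, there is no hard analytic core; the only genuine content is the reduction to a quasidisk so that Theorem~B becomes available. I would flag this as the indispensable step: simple connectivity is exactly what lets uniformity be upgraded to the bound $h_D\le c_1\,\alpha_D$, and the analogous implication fails once that hypothesis is dropped (indeed, the examples later in the section show the statement breaks down in general domains). The one technical point to verify is that the metric $j_D$ here is the two-factor version of \eqref{GO-metric} used throughout Chapter~\ref{chap3}, so that the estimate $j_D\le 2k_D$ from Theorem~H applies verbatim; if one instead used the single-factor form of \eqref{Vu2-metric}, only the harmless constant $4$ would change.
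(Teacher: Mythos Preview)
Your proof is correct and follows essentially the same route as the paper: reduce to a quasidisk via simple connectivity plus uniformity, then chain $j_D\le 4h_D\le 4c'\alpha_D\le 4c'\alpha'_D$ using Theorem~H, Theorem~B, and Lemma~\ref{lem2.1}. The paper's argument is identical in structure and even in the constant $4$.
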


\begin{proof}
Recall that a simply connected subdomain of the plane is uniform if
and only if it is a quasidisk, and in general a uniform domain is a
quasicircle domain, cf. \cite{GeHa2}. Since $D$ is given to be
uniform, it is a quasidisk. Further, Theorems B, H and Lemma
\ref{lem2.1} imply that
$$j_{D}(z_{1},z_{2})\leq 4 h_{D}(z_{1},z_{2})\leq
4c'\;\alpha_{D}(z_{1},z_{2})\leq c\;\alpha'_{D}(z_{1},z_{2})
$$
for all $z_1,$ $z_2\in D$, where $c=4c'$ is a constant.
\end{proof}

It is natural to ask whether Theorem \ref{thm1.4} holds for multiply
connected domains. The following example shows that the answer is
negative.

\begin{example}\label{thm1.5}
Let $D=\mathbb{C}\backslash D_1$ where $D_{1}=\{x+iy:\, |x|\leq
\frac{1}{2},|y|\leq \frac{1}{2}\}$. Then $D$ is a uniform domain,
but there does not exist any constant $c$ such that
$$j_{D}(z_{1},z_{2})\leq c\;\alpha'_{D}(z_{1},z_{2})
$$
for  all $z_1,$ $z_2\in D$.
\end{example}

\noindent{\em Solution.}
At first, we prove that $D=\C\backslash D_1$ is uniform, where
$D_{1}=\{x+iy:\, |x|\leq \frac{1}{2},|y|\leq \frac{1}{2}\}$. Since
$D_{1}$ is a quasidisk, there exists a $K$-quasiconformal mapping
$f:\, \C\to \C$ such that $D_{1}=f(\D)$. By Lemmas \ref{uni-Bbar}
and \ref{qc-uni}, $D=\C\backslash D_1$  is a uniform domain.

Next we prove that there does not exist any constant $c$ such that
$$j_{D}(z_{1},z_{2})\leq c\;\alpha'_{D}(z_{1},z_{2})
$$
for  all $z_1,$ $z_2\in D$.

Now $\partial D=L_1\cup L_2\cup L_3 \cup L_4$, where
\begin{eqnarray*}
L_{1}&=&\{x+iy:\, x=-\frac{1}{2},\, |y|\leq \frac{1}{2}\},\\
L_{2}&=&\{x+iy:\, y=-\frac{1}{2},\, |x|\leq \frac{1}{2}\},\\
L_{3}& = &\{x+iy:\, x=\frac{1}{2},\,|y|\leq \frac{1}{2}\},\\
L_{4}& =& \{x+iy:\, y=\frac{1}{2},\, |x|\leq \frac{1}{2}\},
\end{eqnarray*}
see Figure \ref{thm1.6-fig1}. Then $\partial D=\cup^{4}_{j=1}L_{j}$.
For $x\le -\frac{3}{2}$, consider the two points
$z_{1}=x+\frac{1}{2}i$ and $z_{2}=x-\frac{1}{2}i$ in $D$. Clearly
$\delta(z_2)\ge 1$.

\begin{figure}
\begin{center}
\includegraphics[width=11cm]
{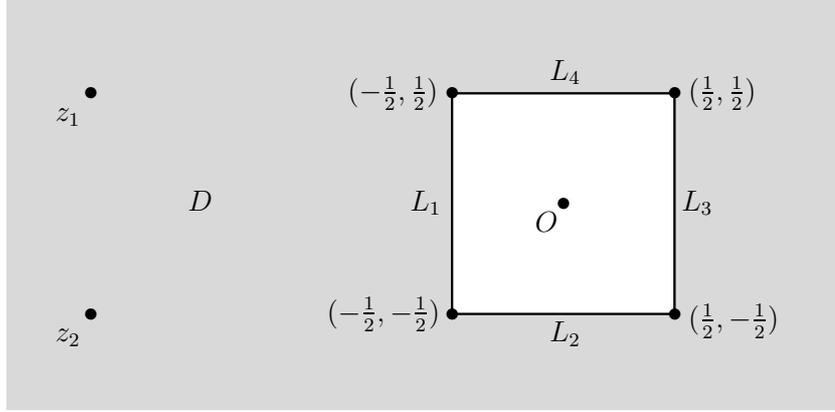}
\end{center}
\caption{$D$, the exterior of the region bounded by
$L_1, L_2, L_3, L_4$.}\label{thm1.6-fig1}
\end{figure}

We will prove the inequalities
\be\label{thm1.6-eq1}
\frac{\lambda_{D}(z_{1},a)}{\lambda_{D}(z_{2},a)}\leq
\sqrt{1+\frac{1}{\delta(z_{2})^{2}}}
\ee
and
\be\label{thm1.6-eq2}
\frac{\lambda_{D}(z_{2},b)}{\lambda_{D}(z_{1},b)}\leq
\sqrt{1+\frac{1}{\delta(z_{1})^{2}}}
\ee
for all $a,b\in \partial_{r} D$. Obviously it suffices to prove the inequality
(\ref{thm1.6-eq1}), as the proof of the inequality (\ref{thm1.6-eq2})
follows by symmetry.

We begin by observing that for the case $a\in L_{1}$, we have
$$\frac{\lambda_{D}(z_{1},a)}{\lambda_{D}(z_{2},a)}\leq
\frac{\sqrt{|z_{1}-z_{2}|^{2}+\delta(z_{2})^{2}}}
{\delta(z_{2})}=\sqrt{1+\frac{1}{\delta(z_{2})^{2}}}
$$
since
$$\lambda_{D}(z_{1},a)\leq |z_1+\frac{1}{2}+\frac{i}{2}|=\sqrt{|z_{1}-z_{2}|^{2}+\delta(z_{2})^{2}}
~\mbox{ and } ~\lambda_{D}(z_{2},a)\geq \delta(z_{2}).
$$

Secondly,  for the case $a\in L_{2}$, we find that
$$\frac{\lambda_D(z_1,a)}{\lambda_D(z_2,a)}=
\frac{\sqrt{1+\delta(z_2)^2}+|a+\frac{1}{2}+\frac{i}{2}|}
{\delta(z_2)+|a+\frac{1}{2}+\frac{i}{2}|}
\le \sqrt{1+\frac{1}{\delta(z_{2})^{2}}}.
$$

Thirdly, for the case $a\in L_{4}$, we easily obtain that
$$\frac{\lambda_D(z_1,a)}{\lambda_D(z_2,a)}=
\frac{\delta(z_1)+|a+\frac{1}{2}-\frac{i}{2}|}
{\sqrt{1+\delta(z_2)^2}+|a+\frac{1}{2}-\frac{i}{2}|} \le
\sqrt{1+\frac{1}{\delta(z_{2})^{2}}}.
$$

Finally, for the last case $a\in L_{3}$, we see that there exists a
point $p=\frac{1}{2}+is\in L_3$ with $-\frac{1}{2}<s<0$, such that
$$1+\delta(z_2)+\left |p-\frac{1}{2}-\frac{i}{2}\right |=
1+\sqrt{1+\delta(z_2)^2}+\left |p-\frac{1}{2}+\frac{i}{2}\right |.
$$
Indeed, for any such point we have $s=(\delta(z_2)-\sqrt{1+\delta(z_2)^2})/2.$
Similarly, one can see that there exists a point
$q=\frac{1}{2}+it$ with $0<t<\frac{1}{2}$, such that
$$1+\delta(z_2)+\left |q-\frac{1}{2}+\frac{i}{2}\right |=
1+\sqrt{1+\delta(z_2)^2}+\left |q-\frac{1}{2}-\frac{i}{2}\right |.
$$

Now, if the point $a$ lies below the point $p$
then by the same argument as in the second case, we obtain
(\ref{thm1.6-eq1}). If the point $a$ lies above the point $q$, then
arguing as in the third case verifies the inequality
(\ref{thm1.6-eq1}). If the point $a$ lies between the points $p$ and
$q$, then it follows that there exists a point $z=\frac{1}{2}+ir$
$\left (-\frac{1}{2}\le r<s\right )$ such that \be\label{thm1.6-eq3}
\frac{1+\delta(z_2)+|z-\frac{1}{2}-\frac{i}{2}|}
{1+\delta(z_2)+|z-\frac{1}{2}+\frac{i}{2}|} =\sqrt{1+\frac{1}{\delta(z_2)^2}}.
\ee In fact, it is easy to see that
$$r=\frac{(\delta(z_2)-\sqrt{1+\delta(z_2)^2}) (\delta(z_2)+\frac{3}{2})}
{\delta(z_2)+\sqrt{1+\delta(z_2)^2}}\ge -\frac{1}{2},
$$
since $\delta(z_2)\ge 1$. Consequently,
\begin{eqnarray*}
\frac{\lambda_D(z_1,a)}{\lambda_D(z_2,a)} & = & \frac{1+\delta(z_2)-{\rm
Im}\,(a)+\frac{1}{2}}
{1+\delta(z_2)+{\rm Im}\,(a)+\frac{1}{2}}\\
& \le & \frac{1+\delta(z_2)-s+\frac{1}{2}}
{1+\delta(z_2)+s+\frac{1}{2}}\\
& \leq & \frac{1+\delta(z_2)-r+\frac{1}{2}}
{1+\delta(z_2)+r+\frac{1}{2}}\\
& = & \sqrt{1+\frac{1}{\delta(z_2)^2}},
\end{eqnarray*}
where the last equality occurs by (\ref{thm1.6-eq3}).
The proof of (\ref{thm1.6-eq1}) is completed.
Combining (\ref{thm1.6-eq1}) and (\ref{thm1.6-eq2}) gives that
\begin{eqnarray*}
\alpha'_D(z_1,z_2) & = & \sup_{w_1,w_2\in \partial_r
D}\log\frac{\lambda_D(z_1,w_1)\lambda_D(z_2, w_2)}
{\lambda_D(z_1, w_2)\lambda_D(z_2, w_1)}\\
& \leq & \log\left(1+\frac{1}{\delta(z_{1})^{2}}\right),
\end{eqnarray*}
since $\delta(z_{1})=\delta(z_{2})$.

Finally, suppose on the contrary that there exists a constant $c$ such that
$$j_D(z_1,z_2)\le c\,\alpha'_D(z_1,z_2)
$$
for all $z_1,z_2\in D$. Denote $y=\frac{1}{\delta(z_1)}$.
Then, on one hand, we have
$$2\log (1+y)\leq c\,\log (1+y^2).
$$
But, on the other hand, we see that
$$\lim\limits_{y\rightarrow 0} \frac{\log(1+y)^2}{\log(1+y^{2})}=\infty.
$$
This contradiction completes the solution.  \hfill $\Box$

As a consequence of the following example, we conclude that the
converse part of Theorem \ref{thm1.4} is not true in general.

\begin{example}\label{thm1.6}
Let $D=\{x+iy:\,x>0,|y|< 1\}$. Then there exists a constant $c$ such
that
$$j_{D}(z_{1},z_{2})\leq c\;\alpha'_{D}(z_{1},z_{2})
$$
for all $z_1,$ $z_2\in D$, but $D$ is not a uniform domain.
\end{example}
\noindent{\em Solution.}
Theorem C implies that $D$ is not a John disk and hence it is not
uniform. Since $D$ is convex, we have $j_D\le 2 \alpha_D$, by
\cite[Theorem 4.2]{Se}. Thus by Lemma \ref{lem2.1} we have
$j_D(z_1,z_2)\le 2 \alpha'_D(z_1,z_2)$ for all $z_1,z_2\in D$.
\hfill $\Box$

In the following, we construct an example and show that the
converse part of Theorem \ref{thm1.4} does not hold in the case of
$D$ being multiply connected domains.

\begin{example}\label{thm1.7}
Let $r=\frac{2\tan (\pi/36)}{1+2\tan (\pi/36)}$ and
$D=D_{1}\backslash D_{2}$, where $D_{1}=\{x+iy:\,x> 0,|y|<1\}$ and
$D_{2}=\{x+iy:\, r\leq x\leq 2-r,|y|\leq
1-r\}$. Then
there exists a constant $c$ such that
$$j_{D}(z_{1},z_{2})\leq c\;\alpha'_{D}(z_{1},z_{2})
$$
for all $z_1,$ $z_2\in D$, but $D$ is not a uniform domain.
\end{example}

\noindent{\em Solution.}
For our solution, we need the following lemma whose proof is easy to
obtain by using basic trigonometry and so we omit the details.

\begin{lemma}\label{angle}
Let $D$ be the same as that in Example {\rm \ref{thm1.7}} and $x_{1}=r+(1-r)i$,
$y_{1}=r+i$, $x_{2}=2-r+(1-r)i$. Then $x_{1}$, $y_{1}$, $x_{2}\in
\partial D$ and  $\angle
x_{1}x_{2}y_{1}=\frac{\pi}{36}$.
\end{lemma}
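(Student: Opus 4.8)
The plan is to reduce the statement to an elementary coordinate computation, since the value of $r$ has been chosen precisely to make one ratio collapse to $\tan(\pi/36)$. First I would record the geometry. Because $r=2\tan(\pi/36)/(1+2\tan(\pi/36))$ satisfies $0<r<1$, the set $D_2=\{x+iy:\,r\le x\le 2-r,\ |y|\le 1-r\}$ is a closed rectangle compactly contained in the strip $D_1=\{x+iy:\,x>0,\ |y|<1\}$ (its least abscissa $r$ is positive and its largest ordinate $1-r$ is below $1$). Hence $\partial D=\partial D_1\cup\partial D_2$. The rectangle $D_2$ has top edge $\{y=1-r\}$ and left edge $\{x=r\}$, so $x_1=r+(1-r)i$ is its top-left corner and $x_2=2-r+(1-r)i$ its top-right corner, both lying on $\partial D_2\subset\partial D$. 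The point $y_1=r+i$ has real part $r>0$ and imaginary part $1$, hence lies on the top edge $\{x+iy:\,x\ge 0,\ y=1\}$ of $\partial D_1\subset\partial D$. This settles the membership claim $x_1,y_1,x_2\in\partial D$.

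Next I would compute the angle at the vertex $x_2$ from the two displacement vectors. Writing the points in real coordinates as $x_2=(2-r,1-r)$, $x_1=(r,1-r)$ and $y_1=(r,1)$, one has
\[
x_1-x_2=(2r-2,\,0),\qquad y_1-x_2=(2r-2,\,r).
\]
The vector $x_1-x_2$ is horizontal, so $\angle x_1x_2y_1$ is exactly the angle that $y_1-x_2$ makes with the horizontal. Since both vectors share the same negative horizontal component $2r-2$, while $y_1-x_2$ carries the extra positive vertical component $r$, this angle is acute and
\[
\tan\bigl(\angle x_1x_2y_1\bigr)=\frac{r}{|2r-2|}=\frac{r}{2(1-r)}.
\]

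It then remains only to substitute the prescribed value of $r$. Setting $t=\tan(\pi/36)$, so that $r=\dfrac{2t}{1+2t}$ and hence $1-r=\dfrac{1}{1+2t}$, I would compute
\[
\frac{r}{2(1-r)}=\frac{2t/(1+2t)}{2/(1+2t)}=t=\tan\frac{\pi}{36},
\]
whence $\angle x_1x_2y_1=\arctan\bigl(\tan(\pi/36)\bigr)=\pi/36$, as claimed.

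I expect no genuine obstacle: the argument is purely computational. The only points that require a little care are, first, confirming that the three listed points lie on $\partial D$ rather than in its interior or exterior, which follows from $D_2$ being a compact subset of $D_1$; and second, correctly identifying that the leg $x_1-x_2$ is the horizontal one, so that the tangent of the desired angle is read off directly as the ratio of the vertical to the horizontal displacement of $y_1-x_2$. The clean cancellation in the final display is exactly what the choice $r=2\tan(\pi/36)/(1+2\tan(\pi/36))$ was engineered to produce.
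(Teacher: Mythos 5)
Your proof is correct, and it is exactly the ``basic trigonometry'' computation that the paper declines to write out (the thesis omits the proof of this lemma entirely). The coordinate check of boundary membership, the identification of $x_1-x_2$ as the horizontal leg, and the cancellation giving $\tan(\angle x_1x_2y_1)=r/(2(1-r))=\tan(\pi/36)$ are all accurate.
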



Now, let $z_1,$ $z_2$ be any two points in $D$. Without loss of
generality we may assume that $\delta(z_{1})=\ds\min_{j=1,2}\delta(z_{j})$,
where $\delta(z_j) = \dist (z_j, \partial D)$.
Let $z\in \partial D$ be such that $\delta(z_1)=|z_{1}-z|$.

If $|z_{1}-z_{2}|\geq 3\;\delta(z_{1})$, then
\begin{eqnarray*}
\alpha_{D}(z_{1},z_{2}) & \geq &
\log\frac{|z-z_{2}|}{|z-z_{1}|}\\
& \geq & \log\left(1+\frac{|z_{1}-z_{2}|}{3\delta(z_{1})}\right).
\end{eqnarray*}

By Lemmas \ref{lem2.1} and \ref{lem2.2}, we have
\begin{eqnarray}\label{thm1.7-eq1}
\nonumber j_{D}(z_{1},z_{2}) & \leq &
2\,\log\left(1+\frac{|z_{1}-z_{2}|}{\delta(z_{1})}\right)\\
\nonumber & \leq & 6\,\log \left(1+\frac{|z_1-z_2|}{3\delta(z_1)}\right)\\
\nonumber &\leq & 6\, \alpha_{D}(z_{1},z_{2})\\
&\leq & 6\, \alpha'_{D}(z_{1},z_{2}).
\end{eqnarray}

If $|z_{1}-z_{2}|< 3\,\delta(z_{1})$ and $z\in
\partial D_{1}$, then, by Theorem 4.2 in \cite{Se} and Lemma \ref{lem2.1},
it follows that
\be\label{thm1.7-eq2}
j_{D}(z_{1},z_{2})\leq 2\log\left(1+\frac{|z_{1}-z_{2}|}{\delta(z_{1})}\right)
\leq 2a'_{D_{1}}(z_{1},z_{2})\leq 2\alpha'_{D}(z_{1},z_{2}).
\ee
In the following, we always assume that $|z_{1}-z_{2}|< 3\,\delta(z_{1})$ and
$z\in \partial D_{2}$.

Following the notation of Lemma \ref{angle}, let $x_1=r+(1-r)i$,
$x_{2}=2-r+(1-r)i$, $x_{3}=2-r+(r-1)i$ and
$x_{4}=r+(r-1)i$, see Figure \ref{w_fig2}.
\begin{figure}
\begin{center}
\includegraphics[width=11cm]
{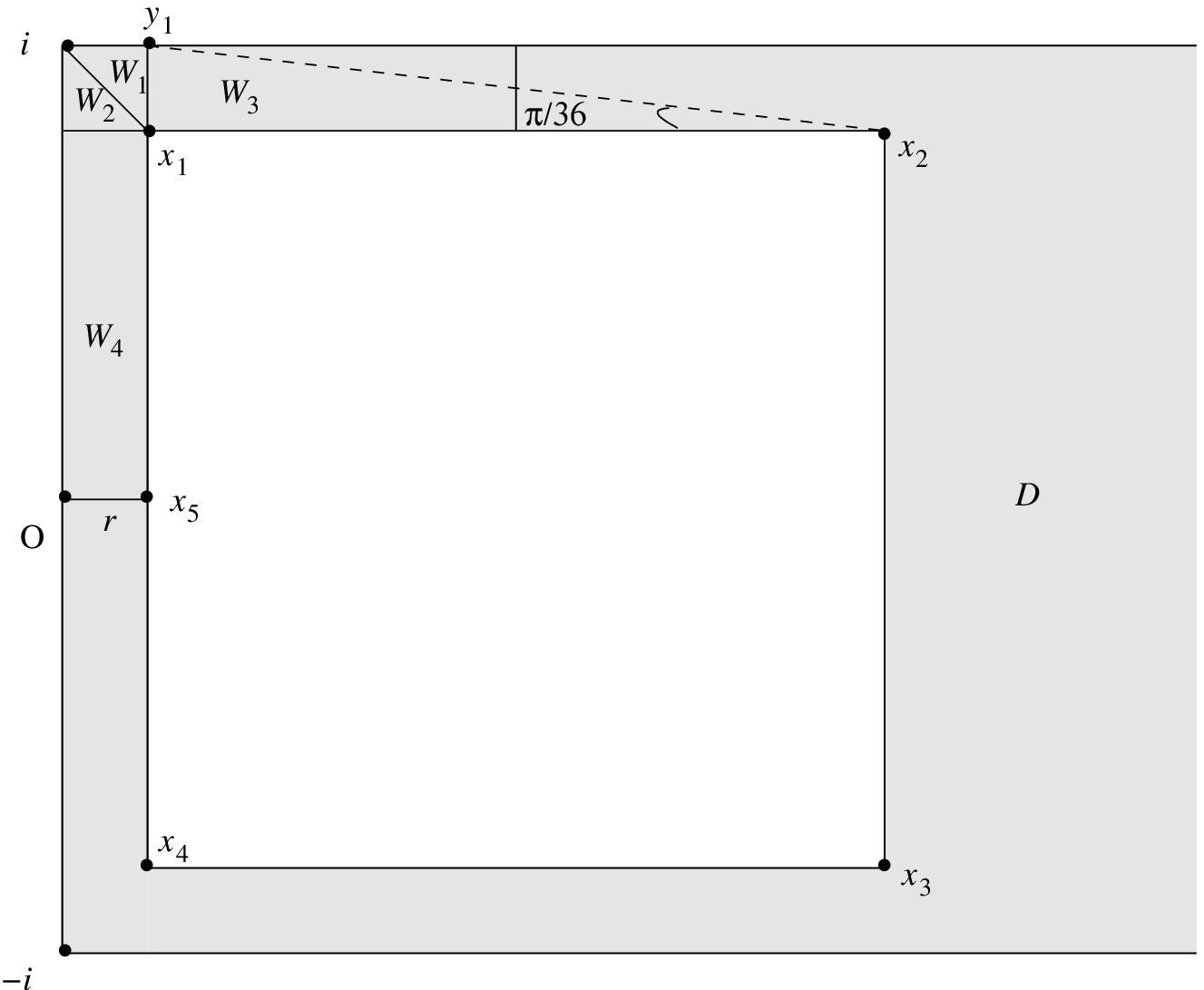}
\end{center}
\caption{$D$, the domain exterior of the rectangle (with vertices $x_1,x_2,x_3,x_4$) and
contained in the half strip.}\label{w_fig2}
\end{figure}

{\bf Case I:} First we consider the case $z=x_{1}$.

\smallskip

Without loss of generality, we may assume $z_{1}\in W_{1}$, where
$W_1$ denotes the closure of the triangular domain with vertices
$r+(1-r)i$, $r+i$ and $i$. Then
$$z_2\in W_1\cup W_2\cup W_3\cup W_4,
$$
where $W_2$ denotes the closure of the triangular domain with the
vertices $r+(1-r)i$, $i$ and $(1-r)i$; $W_3$ the closure of the
rectangular domain with the vertices $r+(1-r)i$, $1+(1-r)i$, $1+i$
and $r+i$; and $W_4$ the closure of the rectangular domain with
the vertices $0$, $r$, $r+(1-r)i$ and $(1-r)i$, see Figure
\ref{w_fig2}.
We divide our discussions into two subcases.

\smallskip

{\bf Subcase I:} The subcase $z_{2}\in W_{1}$.
\smallskip

In the following, when we mention an angle we always mean that one
which is not greater than $\pi$.
\smallskip

Obviously,
$\angle z_{2}z_{1}z\geq \frac{3\pi}{8}$. Next we obtain that, if
$\angle z_{2}z_{1}z\geq \frac{19 \pi}{36}$, then
$$\frac{|z-z_{2}|}{|z-z_{1}|}\geq\frac{\sqrt{|z-z_{1}|^{2}+
|z_{2}-z_{1}|^{2}+2 c_0|z-z_{1}|\, |z_{2}-z_{1}|}}{|z-z_{1}|}
\geq\sqrt{1+\frac{2c_0|z_{2}-z_{1}|}{\delta(z_{1})}},
$$
where $c_0=\sin\frac{\pi}{36}$, which yields that
$$\alpha_{D}(z_{1},z_{2})\geq \log \frac{|z-z_{2}|}{|z-z_{1}|}\geq\log
\sqrt{1+\frac{2c_0|z_{2}-z_{1}|}{\delta(z_{1})}}.
$$
Appealing to Lemmas \ref{lem2.1} and \ref{lem2.2}, we obtain
\be\label{thm1.7-eq3}
j_{D}(z_{1},z_{2})\leq 2\log\left(1+\frac{|z_{1}-z_{2}|}{\delta(z_{1})}\right)
\leq \frac{2}{c_0}\alpha'_{D}(z_{1},z_{2}).
\ee

If  $\angle z_{2}z_{1}z\leq\frac{19 \pi}{36}$, then, by Lemma $3.1$,
there must exist a point $a\in [x_{1},x_{2}]$ or $[x_1,x_5]$ such
that
$$\angle z_{2}z_{1}a= \frac{19 \pi}{36}.
$$
Elementary computations show that there exists a constant $c_1$
such that
$$\frac{|a-z_{2}|}{|a-z_{1}|}
\geq \sqrt{1+\frac{2c_0|z_{1}-z_{2}|}{|a-z_{1}|}}
$$
and
$$\alpha_{D}(z_{1},z_{2})\geq
\log \sqrt{1+\frac{c_1|z_{1}-z_{2}|}{\delta(z_{1})}},
$$
where we can take
$$c_1=\min \left\{c_0,
\frac{\sin\frac{\pi}{36}\sin\frac{\pi}{12}} {1+2\sin\frac{\pi}{12}},
\frac{2\sin\frac{\pi}{36}\sin\frac{\pi}{18}}{2\sin\frac{\pi}{36}+
\sin\frac{\pi}{18}}\right\} =\frac{\sin\frac{\pi}{36}\sin\frac{\pi}{12}}
{1+2\sin\frac{\pi}{12}}.
$$
 Therefore, \be\label{thm1.7-eq4}
j_{D}(z_{1},z_{2})\leq
2\log\left(1+\frac{|z_{1}-z_{2}|}{\delta(z_{1})}\right) \leq
\frac{4}{c_1}\alpha'_{D}(z_{1},z_{2}). \ee

{\bf Subcase II:} The subcase $z_2\in W_2\cup W_3\cup W_4$.
\smallskip

Arguing as in {\it Subcase I}, we see that there exists
a constant $c_2$ such that
\be\label{thm1.7-eq5}
j_{D}(z_{1},z_{2})\leq \frac{4}{c_2}\alpha'_{D}(z_{1},z_{2}),
\ee
where we can take
\begin{eqnarray*}
c_2 \!\! & \!= \!&\!\!  \min\left\{c_0,
\frac{2\sin\frac{\pi}{36}\sin\frac{\pi}{18}}
{\sin\frac{\pi}{18}+\sin\frac{7\pi}{36}},
\frac{2\sin\frac{\pi}{36}\sin\frac{\pi}{18}}{\sin\frac{\pi}{18}+
\sin\frac{5\pi}{18}},
\frac{\sin\frac{\pi}{36}\sin\frac{7\pi}{36}}{2(\sin\frac{7\pi}{36}+
\sin\frac{11\pi}{36})}, \frac{\sin\frac{\pi}{36}
\sin\frac{5\pi}{36}} {2(\sin\frac{5\pi}{36}+
\sin\frac{13\pi}{36})}\right\}\\
\!\! & \!= \!&\!\!  \frac{\sin\frac{\pi}{36}
\sin\frac{5\pi}{36}} {2(\sin\frac{5\pi}{36}+
\sin\frac{13\pi}{36})}.
\end{eqnarray*}

\smallskip
{\bf Case II:} The case $z\in (x_{1},x_{4}]$.
\smallskip

Let $x_{5}=r$. Without loss of generality, we may assume that
$z\in (x_{1},x_{5}]$. Clearly $z_1\in W_4$. Then we see that
$$z_2\in U_1\cup W_3\cup U_2,
$$
where $U_1$ denotes the closure of the rectangular domain with
the vertices ${\rm Im}\,(z_1)i$, $r+{\rm Im}\,(z_1)i$, $r+i$ and $i$; and $U_2$
the closure of the rectangular domain with the vertices
$(r-1)i$, $r+(r-1)i$, $r+{\rm Im}\,(z_1)i$ and ${\rm Im}\,(z_1)i$.

\smallskip

{\bf Subcase III:} The subcase $z_{2}\in U_1$.
\smallskip

First we observe that $\angle z_{2}z_{1}z>\frac{\pi}{4}$.
If $\angle z_{2}z_{1}z\geq\frac{19\pi}{36}$, then
\be\label{thm1.7-eq6}
j_{D}(z_{1},z_{2})\leq \frac{2}{c_0}\alpha'_{D}(z_{1},z_{2}).
\ee
On the other hand, if $\angle z_{2}z_{1}z<\frac{19\pi}{36}$, then there must exist a
point $a\in [x_{1},x_{4}]$ such that
$$\angle z_{2}z_{1}a=\frac{19 \pi}{36}.
$$
Obviously $\angle zz_{1}a\leq \frac{5\pi}{18}$ and $\angle
z_{1}az\geq \frac{2\pi}{9}$. Hence,
$$\frac{|a-z_{2}|}{|a-z_{1}|}
\geq \sqrt{1+\frac{2c_0|z_{1}-z_{2}|}{|a-z_{1}|}}
$$
and
$$\alpha_{D}(z_{1},z_{2})
\geq \log \sqrt{1+\frac{c_{3}|z_{1}-z_{2}|}{\delta(z_{1})}},
$$
where $c_{3}$ can be taken as
$$c_3=\frac{2\sin\frac{\pi}{36}\sin\frac{2\pi}{9}}
{\sin\frac{5\pi}{18}+\sin\frac{2\pi}{9}}.
$$
Consequently,
\be\label{thm1.7-eq7}
j_{D}(z_{1},z_{2})\leq \frac{4}{c_{3}}\alpha'_{D}(z_{1},z_{2}).
\ee

\smallskip

{\bf Subcase IV:} The subcase $z_2\in W_{3}$.
\smallskip

Our choice of point ensures that $\angle z_{2}z_{1}z\geq\frac{\pi}{4}$ or $\angle
z_{1}z_{2}b\geq\frac{\pi}{4}$, where
$$b={\rm Re}\,(z_2)+(1-r)i.
$$
Then there must exist a point $a\in [x_{1},x_{4}]$ or $a\in
[x_{1},x_{2}]$ such that
$$\angle z_{2}z_{1}a= \frac{19 \pi}{36}\;\; \mbox{or}\;\;
\angle z_{1}z_{2}a=\frac{19 \pi}{36}.
$$
We have known that there exists a constant $c_4$ such that
\be\label{thm1.7-eq8}
j_{D}(z_{1},z_{2})\leq \frac{4}{c_{4}}\alpha'_{D}(z_{1},z_{2}),
\ee
where we can take
$$c_4=\frac{\sin\frac{\pi}{36}\sin\frac{2\pi}{9}}{2(\sin\frac{2\pi}{9}
+\sin\frac{5\pi}{18})}.
$$

\smallskip

{\bf Subcase V:} The subcase $z_2\in U_{2}$.
\smallskip

If $\angle z_{2}z_{1}z\geq \frac{19 \pi}{36}$, then
$$\frac{|z-z_{2}|}{|z-z_{1}|}
\geq \sqrt{1+\frac{2c_0|z_{1}-z_{2}|}{\delta(z_{1})}}
$$
and
\be\label{thm1.7-eq9}
j_{D}(z_{1},z_{2})\leq\frac{2}{c_0}\alpha'_{D}(z_{1},z_{2}).
\ee
If $\angle z_{2}z_{1}z\leq\frac{19 \pi}{36}$, then $\frac{17
\pi}{36}\leq\angle z_{1}z_{2}z\leq \frac{\pi}{2}$ and there must
exist a point $a\in [x_{1},x_{4}]$ such that
$$\angle z_{1}z_{2}a= \frac{19 \pi}{36}.
$$
Therefore, there exists a constant $c_5$ such that
\be\label{thm1.7-eq10} j_{D}(z_{1},z_{2})\leq
\frac{4}{c_{5}}\alpha'_{D}(z_{1},z_{2}), \ee where we can take
$$c_5=\min\left\{c_0,\;
\frac{\sin\frac{\pi}{36}\sin\frac{17\pi}{36}}{2(\sin\frac{17\pi}{36}
+\sin\frac{\pi}{36})}\right\}=\frac{\sin\frac{\pi}{36}\sin\frac{17\pi}{36}}{2(\sin\frac{17\pi}{36}
+\sin\frac{\pi}{36})}.
$$
\smallskip

By the symmetry of $D$, there is only one possibility about the
place of $z$ which needs to be discussed, which is:

\smallskip

{\bf Case III:} The case $z\in [x_2,x_3]$.
\smallskip

By the above discussions, we may assume that $\delta(z_1)\geq r$.
Then $z_1$, $z_2\in W_5$, where $W_5$ denotes
the closure of the rectangular domain with the vertices
$2+(r-1)i$, $3-r+(r-1)i$, $3-r+(1-r)i$ and $2+(1-r)i$.

If $\angle z_{2}z_{1}z\geq
 \frac{19 \pi}{36}$, then
\be\label{thm1.7-eq11}
j_{D}(z_{1},z_{2})\leq \frac{2}{c_0}\alpha'_{D}(z_{1},z_{2}).
\ee
If $\angle z_{2}z_{1}z<\frac{19\pi}{36}$, then there exists a
point $a\in [x_2, x_3]$ such that
$$\angle z_{2}z_{1}a=\frac{19 \pi}{36}\;\;\mbox{or}\;\; \angle
z_{1}z_{2}a=\frac{19 \pi}{36}.
$$
We have known that there is a constant $c_6$ such that
\be\label{thm1.7-eq12} j_{D}(z_{1},z_{2})\leq
\frac{4}{c_{6}}\alpha'_{D}(z_{1},z_{2}), \ee where we can take
$$c_{6}=\min\left\{\frac{2\sin\frac{\pi}{36}\sin\frac{17\pi}{36}}
{\sin\frac{17\pi}{36}
+\sin\frac{\pi}{36}},\;\frac{2\sin\frac{\pi}{36}
\sin\frac{4\pi}{9}}{\sin\frac{\pi}{18}+\sin\frac{4\pi}{9}}\right\}
=\frac{2\sin\frac{\pi}{36}
\sin\frac{4\pi}{9}}{\sin\frac{\pi}{18}+\sin\frac{4\pi}{9}}.
$$

Finally, we let
$$c=\max \left\{6,\;\frac{4}{c_{i}}:\, i=0,\ldots, 6\right\}.
$$
Then $c>0$ and equations (\ref{thm1.7-eq1}) -- (\ref{thm1.7-eq12})
show that
$$j_{D}(z_{1},z_{2})\leq c\;\alpha'_{D}(z_{1},z_{2})
$$
for all $z_{1}$,$z_{2}\in D$.
At last, the proof of $D$ being not a uniform domain easily follows from
the definition.
 \hfill $\Box$

\section{John Disks and Quasi-isotropic Domains}\label{chap3-sec4}

In this section, we give answer to Problem \ref{question-1.3}. We see
that neither John disks are quasi-isotropic nor conversely.

From the definition of inner metric we have seen that $d_1 \approx d_2$ implies
$\tilde{d_1} \approx \tilde{d_2}$. Hence we have the following result which in view
of Theorem J says that domains satisfying comparison property are quasi-isotropic.


\begin{proposition}\label{prop4.2}
For a domain $D\psubset \IR^n$ if $\alpha_D\approx j_D$, then $\tilde{a}_{D}\approx k_{D}$
holds.
\end{proposition}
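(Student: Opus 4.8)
The plan is to obtain the proposition immediately from a general stability property of the inner-metric construction that has already been recorded in the text, namely that for two metrics on the same domain, $d_1\aeq d_2$ forces $\tilde d_1\aeq \tilde d_2$. The point is that the hypothesis $\alpha_D\aeq j_D$ is exactly the comparison property, while the two metrics appearing in the conclusion are precisely the inner metrics of $\alpha_D$ and $j_D$: by definition $\ak_D$ is the inner metric of $\alpha_D$, and $k_D$ is the inner metric of $j_D$ (see Section~\ref{Ineq-geom}). Thus the whole statement reduces to applying this stability principle to the pair $(\alpha_D, j_D)$.

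First I would spell out the stability step, since it is the only ingredient with any content. Assume $d_1\le K d_2$ on $D$. For a rectifiable path $\gamma$ and any partition $0=t_0<\cdots<t_k=1$ one has $\sum_{i} d_1(\gamma(t_i),\gamma(t_{i+1})) \le K\sum_{i} d_2(\gamma(t_i),\gamma(t_{i+1})) \le K\, d_2(\gamma)$, and taking the supremum over all partitions gives $d_1(\gamma)\le K\, d_2(\gamma)$ for the $d$-lengths. Passing to the infimum over all paths joining a fixed pair of points then yields $\tilde d_1\le K\tilde d_2$. Applying the same reasoning to a reverse inequality $d_2\le K' d_1$ shows that $d_1\aeq d_2$ implies $\tilde d_1\aeq \tilde d_2$, as required.

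It then remains only to substitute the relevant metrics. Since $\ak_D$ is the inner metric of $\alpha_D$ and $k_D$ is the inner metric of $j_D$, applying the stability step to the assumption $\alpha_D\aeq j_D$ gives $\ak_D\aeq \tilde{j}_D$, and rewriting $\tilde{j}_D=k_D$ yields $\ak_D\aeq k_D$, which is the assertion.

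I do not expect any genuine obstacle here: the entire content lies in the elementary stability argument together with the identifications of $\ak_D$ and $k_D$ as the inner metrics of $\alpha_D$ and $j_D$. I would only remark, for context, that combined with Theorem~J this is exactly what justifies the sentence preceding the statement, namely that domains satisfying the comparison property are quasi-isotropic.
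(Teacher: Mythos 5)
Your proof is correct and follows exactly the route the paper takes: the paper also derives this proposition from the observation that $d_1\aeq d_2$ implies $\tilde d_1\aeq \tilde d_2$ for the inner metrics, together with the identifications of $\ak_D$ and $k_D$ as the inner metrics of $\alpha_D$ and $j_D$. Your explicit verification of the stability step via partitions is a welcome elaboration of what the paper simply asserts ``directly follows from the definition of the inner metric.''
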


The following examples show that the necessary part in Problem \ref{question-1.3} does not
hold irrespective of whether $D$ is bounded or unbounded.

\begin{example}\label{thm1.8}
Let $D$ be the domain as in Example {\rm \ref{thm1.6}}. Then $D$ is
a quasi-isotropic domain, but $D$ is not a John disk.
\end{example}

\noindent{\em Solution.}
It follows from item (3) in Theorem C that $D$
is not a John disk.

Since $D$ is convex, by  \cite[Theorem 4.2]{Se}, it follows that
$j_{D}\leq 2\alpha_{D}$ and so, we have
$$j_{D}(z_{1},z_{2})\leq 2 \alpha_{D}(z_{1},z_{2})\leq 2 \tilde{a}_{D}(z_{1},z_{2})
$$
for all $z_{1},z_{2}\in D$. Thus, Theorem J implies that $D$ is
quasi-isotropic.  \hfill $\Box$

\begin{example}\label{thm1.9}
Let $D=D_{1}\backslash D_{2}$, where $D_{1}=\{x+iy:\, -1<
x<0,-1< y< 0\}$ and $D_{2}=\{x+iy: \, x^{2}+y^{2}\leq1,x<0,y<0\}$.
Then $D$ is quasi-isotropic, but not a John disk.
\end{example}

\noindent{\em Solution.}
First we prove that $D$ defined in Example \ref{thm1.9} is quasi-isotropic.
By Theorem J and Proposition \ref{prop4.2}, it suffices to prove the following:
\be\label{thm1.9-eq1}
\alpha_{D}\thickapprox j_{D}.
\ee
It is well-known that $\alpha_{D}(z_{1},z_{2})\leq j_{D}(z_{1},z_{2})$
for all $z_{1},z_{2}\in D$. So, in order to prove (\ref{thm1.9-eq1}),
we only need to prove
\be\label{thm1.9-eq2}
j_D\lesssim \alpha_D.
\ee
For any $z_{1},z_{2}\in D$, without loss of generality, we may
assume that $\delta(z_{1})=\ds\min_{j=1,2}\delta(z_j)$. Let $z\in \partial D$
be such that
$\delta(z_1)=|z-z_1|$.

We need to deal with two cases.

\smallskip

{\bf Case I:} The case $|z_{1}-z_{2}|\geq 3\delta(z_{1})$.

We see from the first part of the solution of Example \ref{thm1.7} that
\be\label{thm1.9-eq3}
j_D(z_1,z_2)\le 6\, \alpha_D(z_1,z_2).
\ee



\smallskip

{\bf Case II:} The case $|z_{1}-z_{2}|< 3\,\delta(z_1)$.

Define
\begin{eqnarray*}
L_{1}&=&\{x+iy: \, x=-1,-1\leq y \leq 0\}\\
L_{2}&=&\{x+iy: \,y=-1,-1\leq x \leq 0\}\\
L_{3}&=&\{x+iy: \,x^{2}+y^{2}=1,x<0,y<0\}.
\end{eqnarray*}
If $z\in L_{1}\cup L_2$, then $\angle z_{2}z_{1}z\geq
\frac{\pi}{2}$ and there must exist a point $z_{0}\in L_{1}\cup L_2$ such that
$$\angle z_{0}z_{1}z=\frac{\pi}{6} ~\mbox{ and }~
\frac{2\pi}{3}\leq \angle z_{2}z_{1}z_{0}\leq \pi.
$$
Then it follows that
\begin{eqnarray*}
\alpha_{D}(z_{1},z_{2}) & \geq & \log\frac{|z_{2}-z_{0}|}{|z_{1}-z_{0}|}\\
& = & \log \frac{\sqrt{|z_{1}-z_{2}|^{2}+|z_{1}-z_{0}|^{2}-
2\cos\angle z_{2} z_{1}z_{0}|z_{1}-z_{2}||z_{1}-z_{0}|}}{|z_1-z_0|}\\
& \geq & \log\sqrt{1+\frac{\sqrt{3}|z_{1}-z_{2}|}{2\delta(z_{1})}}.
\end{eqnarray*}

Moreover, by Lemma \ref{lem2.2}, we get \be\label{thm1.9-eq4}
j_{D}(z_{1},z_{2})\leq \frac{8}{\sqrt{3}}\alpha_{D}(z_{1},z_{2}). \ee
If $z\in L_{3}$, then $\angle z_{2}z_{1}z\geq \frac{\pi}{3}$ and
there must exist a point $z_{0}\in L_{3}$ such that $\angle
z_{2}z_{1}z_{0}=\frac{5\pi}{9}$.
\smallskip
It follows that there must exist $c_1$ such that
\be\label{thm1.9-eq5} |z_{1}-z_{0}|\leq c_1\,\delta(z_{1}), \quad
c_1=1+\frac{\sin\frac{2\pi}{9}}{\sin\frac{\pi}{9}}. \ee Then
(\ref{thm1.9-eq5}) implies that
\begin{eqnarray*}
\alpha_{D}(z_{1},z_{2}) & \geq &
\log\frac{|z_{2}-z_{0}|}{|z_{1}-z_{0}|}\\
& = & \log\frac{\sqrt{|z_{1}-z_{2}|^{2}+2\sin
\frac{\pi}{18}|z_{1}-z_{2}||z_{1}-z_{0}|}}{|z_{1}-z_{0}|}\\
& \geq & \log\sqrt{1+\frac{2\sin
\frac{\pi}{18}|z_{1}-z_{2}|}{c_1\,\delta(z_{1})}}.
\end{eqnarray*}
On the other hand, Lemma \ref{lem2.2} yields that
\be\label{thm1.9-eq6}
j_{D}(z_{1},z_{2})\leq 2\log\left(1+\frac{|z_{1}-z_{2}|}{\delta(z_{1})}\right)
\leq c_2\,\alpha_{D}(z_{1},z_{2}), \quad c_2=\frac{c_{1}}{\sin\frac{\pi}{18}}.
\ee
Therefore, using (\ref{thm1.9-eq3}), (\ref{thm1.9-eq4}) and (\ref{thm1.9-eq6}),
we obtain (\ref{thm1.9-eq2}).

Now we prove that $D$ is not a John disk. For this, we let
$$x_{0}=-1,~ x_{1}=-1-ti\in L_{1} ~\mbox{ and }~ x_{2}=s-ti\in L_{3} \qquad
(0<t<1/4).
$$
Then $|x_{1}-x_{2}|=|1+s|$ and the straight crosscut
$[x_{1},x_{2}]$ divides the domain $D$ into two subdomains which
are denoted by $D_{1}$ and $D_{2}$. Obviously,
$$\min_{j=1,2}\diam(D_{j})=|x_{0}-x_{2}|=
\sqrt{|1+s|^{2}+t^{2}}=\sqrt{2+2s}.
$$

Suppose on the contrary that $D$ is a John disk. Then Theorem C implies that
there exists a constant $c$ such that
$$\min_{j=1,2}\diam(D_{j})\leq c|x_{1}-x_{2}|.
$$
But then
$$\lim_{s\to -1}\frac{\sqrt{2+2s}}{|1+s|}=\infty
$$
which is a contradiction. This completes the solution.
\hfill $\Box$

\begin{remark}\label{remark-1.10}
{\rm
By \cite[Example 4.4]{Ha2} it follows
that $\mathbb{H}\setminus [0,i]$ is not quasi-isotropic  but is a
John disk by Theorem C, where $\mathbb{H}$ denotes the upper
half-plane. As another motivation to Problem  \ref{question-1.3}, we observe by a
proof similar to \cite[Example 4.4]{Ha2} that there exist bounded simply
connected domains (eg. $\D\setminus[0,1)$) which are John but are not
quasi-isotropic. Also there exist doubly connected domains which are John
domains, but not quasi-isotropic. For instance, \cite[Example
3.11]{HPWS} gives that $\D\setminus\{0\}$ is not
quasi-isotropic but is clearly a John domain.
}
\end{remark}

\begin{remark}
{\rm
Remark \ref{remark-1.10}
shows that the sufficiency part in Problem \ref{question-1.3}
does not hold whether $D$ is bounded or unbounded. These
observations clearly provide us a solution to Problem
\ref{question-1.3}.
Examples \ref{thm1.8} and \ref{thm1.9} show that the
inequality $\widetilde{a}_D(z_1, z_2)\leq c\,j'_D(z_1,z_2)$ in
Theorem \ref{th1} cannot be removed.
}
\end{remark}



\chapter{ISOMETRIES OF SOME HYPERBOLIC-TYPE PATH METRICS}\label{chap4}
This chapter begins with a survey on isometries of
some hyperbolic-type path metrics and continues with some
old results as well as some new results.
Section \ref{chap4-sec1} deals with definitions and
brief introduction to the isometry problems of
such metrics.
In Section~\ref{kSect}, the work of H\"ast\"o \cite{Ha10} 
for the quasihyperbolic metric is described.
In addition, we relate how Herron,
Ibragimov and Minda \cite{HIM} used circular geodesics and the
curvature of the K--P metric to take care of its isometries
in most domains. Finally, in Section~\ref{newSect} we show
how the isometry problem can be solved for the K--P metric in
doubly connected domains using a new concept which we call the
hyperbolic medial axis, and we also present some new results for
the quasihyperbolic metric and Ferrand's metric.

\vskip 1cm
The results of this chapter have been published in:
{\bf P. H\"ast\"o, Z. Ibragimov, D. Minda, S. Ponnusamy and S.K. Sahoo}
Isometries of some hyperbolic type path metrics and the hyperbolic medial axis.
{\em In the Tradition of Ahlfors-Bers, IV (Ann Arbor, MI, 2005)}, 63--74,
{\em Contemp. Math.} {\bf 432}, Amer. Math. Soc., Providence, RI, 2007.


\section{Introduction}\label{chap4-sec1}

A conformal path metric is a special kind of
Finslerian metric, in which the density depends only on the
location, not on the direction. If $D$ is a connected
subset of $\Rn$ and $p$ is a non-negative
real valued function defined on $D$, then we can define such a
metric by (\ref{conf-metric}) with the density function $p(z)$.
If $p$ is a $C^2$ function, then we are in the
standard Riemannian setting, but there is nothing preventing us
from considering also a more general $p$.

In fact, choosing $p(z) = \delta(z)^{-1}$, where $\delta$ is
the distance-to-the-boundary function, gives us the
well-known quasihyperbolic metric\index{quasihyperbolic metric}
\index{metric!quasihyperbolic}.
Despite the prominence of this metric,
there have been almost no investigations of its geometry (some exceptions
are \cite{Li, MO}).
Part of the reason for this lack of geometrical investigations
is probably that the density of the quasihyperbolic metric is not
differentiable in the entire domain, which places the
metric outside the standard framework of Riemannian metrics.

At least two modifications of the quasihyperbolic metric have
been proposed that go some way to alleviate this problem.
J.\ Ferrand \cite{Fe} suggested replacing the density $\delta(z)^{-1}$
with $\sigma_D(x)$ defined by (\ref{Ferrand-density}).
Note that $\delta(x)^{-1} \le \sigma_D(x) \le 2 \delta(x)^{-1}$,
so the Ferrand metric\index{Ferrand's metric}\index{metric!Ferrand's}
and the quasihyperbolic metric
are bilipschitz equivalent.
Moreover, the Ferrand metric is M\"obius invariant, whereas the
quasihyperbolic metric is only M\"obius quasi-invariant.
A second variant was proposed more recently by R.\ Kulkarni and
U.\ Pinkall \cite{KP}, see also \cite{HMM}. The K--P metric is
defined by the density (\ref{KP-metric}).
This density satisfies the same estimates as Ferrand's density, i.e.\
$\delta(x)^{-1} \le \mu _D(x) \le 2 \delta(x)^{-1}$, and the K--P
metric is also M\"obius invariant. Although the Ferrand and the K--P
metrics are in some sense better behaved than the quasihyperbolic
metric, they suffer from the shortcoming that it is very difficult
to get a grip on the density, even in simple domains.

Despite this, D.\ Herron, Z.\ Ibragimov and D.\ Minda \cite{HIM}
recently managed to solve the isometry problem for the
K--P metric in most cases. Recall that by the isometry
problem\index{isometry problem}
for the metric $d$ we mean
characterizing mappings $f:\, D\to \Rt$ which satisfy (\ref{isometry})
for all $x,y\in D$. Notice that in some sense we
are here dealing with two different metrics, due
to the dependence on the domain. Hence the usual
way of approaching the isometry problem is by looking at
some intrinsic features of the metric which are then
preserved under the isometry. Since irregularities
(e.g.\ cusps) in the domain often lead to more
distinctive features, this implies that the
problem is often easier for more complicated domains.
The work by Herron, Ibragimov and Minda \cite{HIM} bears out this
heuristic -- they were able to show that all isometries of the K--P
metric are M\"obius mappings except possibly in simply and doubly
connected domains. Their proof is based on studying the circular
geodesics of the K--P metric.
For the quasihyperbolic metric,
formulas for the curvature were worked out in \cite{MO} (see
Section~\ref{kSect}) and were used in that paper to prove
that all the isometries of the disk are similarity mappings.
The proofs in \cite{Ha10} are based
on both the curvature and its gradient and work for domains with
$C^3$ boundary. 

Besides the aforementioned works, the proofs in this chapter were
inspired by the work on the isometries of other (non-path) metrics
\cite{HI2,HI,HIL}.


\subsection*{Notation}\label{genSubsect}

We tacitly identify $\Rt$ with $\C$, and speak about
real and imaginary axes, etc.
By $B(x,r)$ we denote a disk with center $x$ and radius $r$,
and by $(x,y]$ or $[x,y)$ the half-open ( or semi open) segment between $x$ and $y$.

The cross-ratio\index{cross-ratio} $|a,b,c,d|$ is defined by
\[ |a,b,c,d| = \frac{|a-c|\,|b-d|}{|a-b|\,|c-d|}\]
for distinct points $a,b,c,d\in\Rnbar$, with the understanding
that $|\infty-x| / |\infty-y| = 1$ for all $x,y\in \Rn$.
A homeomorphism $f\colon \Rnbar \to \Rnbar$ is a
M\"obius mapping\index{M\"obius mapping} if
\[ |f(a),f(b),f(c),f(d)| = |a,b,c,d| \]
for every quadruple of distinct points $a,b,c,d\in \Rnbar$.
A mapping of a subdomain of $\Rnbar$ is M\"obius if it is the
restriction of a M\"obius mapping defined on $\Rnbar$.
For more information on M\"obius mappings, see, for example,
\cite[Section~3]{Be2}.
Note that a M\"obius mapping can always be decomposed as
$i\circ s$, where $i$ is an inversion or the identity
and $s$ is a similarity\index{similarity} (i.e. a homeomorphism satisfying
$|s(x)-s(y)|=c\,|x-y|$ for some positive constant $c$).


\section{Isometries of $k_D$ and $\mu_D$; Known Results}\label{kSect}

\subsection{Isometries of the quasihyperbolic metric $k_D$}

By $f\in C^k$ we mean that $f$ is a $k$ times continuously differentiable
function. By a $C^k$ domain we mean a domain whose boundary can be locally
represented as the graph of a $C^k$ function.
To carry out step (2) of the isometry program
discussed in Section \ref{isometry-sec},
the following result was proved in \cite[Proposition~2.2]{Ha10}:

\begin{proposition}\label{c1lem2}
Let $D\subsetneq \Rt$ be a $C^1$ domain, and let $f\colon D \to \Rt$
be a quasihyperbolic isometry which is also M\"obius. If $D$ is not
a half-plane, then $f$ is a similarity.
\end{proposition}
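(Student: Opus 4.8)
The plan is to reduce the statement to a rigidity property of inversions and then detect the obstruction to non-similarity by a curvature comparison. First I would record that, by \cite[Theorem~2.8]{MO}, the isometry $f$ is conformal, so the two quasihyperbolic densities are related by the infinitesimal scaling law
\[ \delta_{f(D)}(f(z)) = |f'(z)|\,\delta_D(z), \qquad z\in D, \]
where $\delta$ denotes distance to the respective boundary (this is exactly the statement that a conformal map is an isometry between the densities $1/\delta_D$ and $1/\delta_{f(D)}$). Next I would use the decomposition recorded in the Notation subsection, $f=i\circ s$ with $s$ a similarity and $i$ an inversion or the identity. Since a similarity scales Euclidean length and distance-to-the-boundary by the same constant, it is automatically a quasihyperbolic isometry; hence $f$ is an isometry if and only if the inversion $i$ is one, on the domain $s(D)$. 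As $s$ preserves the class of $C^1$ domains and the property of being a half-plane, it suffices to prove: if a genuine inversion $i$ is a quasihyperbolic isometry of a $C^1$ domain, then that domain is a half-plane. Normalizing by a further similarity I may take $i(z)=1/z$.

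Then I would bring in Gaussian curvature, which an isometry of conformal metrics preserves, so that $K_D(z)=K_{f(D)}(f(z))$ for every $z$. Here I would invoke the Martin--Osgood curvature formula \cite{MO}: at a point $z$ whose nearest boundary point $\zeta$ is unique and near which $\partial D$ is twice differentiable,
\[ K_D(z) = -\frac{R_\zeta}{R_\zeta-\delta(z)}, \]
with $R_\zeta$ the signed radius of curvature of $\partial D$ at $\zeta$; in particular $K_D\equiv-1$ exactly at points whose nearest boundary arc is straight. Feeding the scaling law into the curvature identity, together with the elementary fact that $i$ carries the boundary circles and lines of $\partial D$ to circles and lines of $\partial i(D)$ with an explicit, non-constant change of radius of curvature, produces a relation that must hold simultaneously for all $z\in D$. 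The point is that the inversion forces $|i'(z)|=1/|z|^2$ to vary, whereas matching the two curvature expressions for all $z$ pins $R_\zeta$ down to be infinite, i.e.\ forces $\partial D$ to be flat everywhere. To separate the half-plane from other flat-boundary domains (a strip, say, also has $K_D\equiv-1$ off its medial axis), I would use that the singular set of $\delta_D$, its medial axis, is an isometric invariant: the half-plane is the unique flat-boundary domain with empty medial axis, and a non-identity inversion cannot respect the medial-axis structure of a domain that has one. Thus $i$ must be the identity and $f=s$ is a similarity.

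The main obstacle is twofold and lies exactly in this last step. First, because an inversion is not a Euclidean isometry, the nearest boundary point of $i(z)$ is \emph{not} the image $i(\zeta)$ of the nearest boundary point of $z$, so the two sides of the curvature identity refer to geometrically different boundary points, and this correspondence must be controlled carefully. Second, the curvature formula uses two derivatives of the boundary, while only $C^1$ is assumed; I would bridge this gap by exploiting the global rigidity of M\"obius maps: the scaling law holds identically on $D$, so it is enough to test it at the boundary-regular points, which are dense, or, equivalently, to track the medial-axis invariant, neither of which needs the full $C^2$ hypothesis. Verifying that these two ingredients do force flatness, and hence a half-plane, is the technical heart of the argument.
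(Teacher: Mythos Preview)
The paper does not actually prove this proposition; it is quoted from \cite[Proposition~2.2]{Ha10} in the survey section on isometries, with no argument supplied. So there is no in-paper proof to compare against, and one can only assess your sketch on its own merits.

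Your reduction to a genuine inversion via $f=i\circ s$ and the density identity $\delta_{f(D)}(f(z))=|f'(z)|\,\delta_D(z)$ are correct and are the right starting point. The curvature route, however, is a wrong turn at the $C^1$ level: the Martin--Osgood formula needs two boundary derivatives, and your proposed bridges fail (a $C^1$ curve need have no $C^2$ points whatsoever, so ``density of regular points'' is unavailable; the medial-axis assertion is essentially a restatement of the conclusion rather than a proof of it). The thesis itself separates this $C^1$ proposition from the curvature-based $C^3$ theorem quoted immediately afterwards, which already signals that curvature is not the mechanism here. In fact the density identity alone does the job. Normalizing to $i(z)=1/\bar z$ and using $|i(z)-i(\eta)|=|z-\eta|/(|z|\,|\eta|)$, the identity becomes $\inf_{\zeta\in\partial D}|z-\zeta|=\inf_{\eta\in\partial D}|z|\,|z-\eta|/|\eta|$ for every $z\in D$; evaluating each infimum at the other's minimizer gives $|\zeta_0(z)|\le|z|\le|\eta_0(z)|$. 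Taking $z=\zeta+tn$ on the inward normal (here $C^1$ ensures $\zeta_0(z)=\zeta$ for small $t$) yields $\langle\zeta,n\rangle\ge 0$, and the same reasoning applied to $i(D)$ yields $\langle\zeta,n\rangle\le 0$. Thus the tangent to $\partial D$ passes through the origin at every point, forcing $\partial D$ to be a line through the origin and $D$ a half-plane. No second-order information is needed.
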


Note that if we do not assume $C^1$ boundary, then there are some
domains with non-similarity isometries: punctured planes
$\Rt\setminus\{a\}$ and sector domains (i.e.\ domains whose boundary
consists of two rays). In both cases inversions centered at the
distinguished boundary point ($a$ or the vertex of the sector) are
also isometries. The previous proposition strongly suggests that
these are all the examples of domains with non-similarity
isometries.

An immediate consequence is the solution of the isometry problem
in higher dimensions for $C^1$ domains \cite[Corollary~2.3]{Ha10}:

\begin{corollary}
Let $D$ be a $C^1$ domain in $\Rn$, $n\ge 3$, which
is not a half-space. Then every quasihyperbolic isometry
is a similarity mapping.
\end{corollary}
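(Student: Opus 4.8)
The plan is to run the three-step program for conformal-metric isometries recalled in Section~\ref{isometry-sec}: show that an arbitrary quasihyperbolic isometry $f$ is (1)~conformal, then (2)~M\"obius, then (3)~a similarity. So fix a $C^1$ domain $D\psubset\Rn$ with $n\ge 3$ which is not a half-space, and let $f\colon D\to\Rn$ be a quasihyperbolic isometry. For step~(1) I would simply invoke the theorem of Martin and Osgood \cite[Theorem~2.8]{MO}, which says that every quasihyperbolic isometry of an arbitrary domain is conformal, requiring only continuity of the density $1/\delta$; this holds in every dimension, so nothing needs to be added.

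Step~(2) is exactly where the hypothesis $n\ge 3$ is used, and there it comes for free: by Liouville's theorem every conformal mapping of a domain in $\Rn$ with $n\ge 3$ is the restriction of a M\"obius transformation, so $f$ is M\"obius. (In the plane this implication fails in general, which is why the planar theory must treat step~(2) as genuine work; in higher dimensions the rigidity of conformal maps removes it.) Thus after steps~(1) and~(2) we are reduced to a M\"obius quasihyperbolic isometry.

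For step~(3) I would apply the $\Rn$-analogue of Proposition~\ref{c1lem2}: a M\"obius quasihyperbolic isometry of a $C^1$ domain that is not a half-space is a similarity. Since $k_D$ and $k_{f(D)}$ are conformal length metrics and $f$ is conformal with linear stretching $|f'(x)|$, the isometry property is equivalent to the pointwise density identity
\[
\delta_{f(D)}(f(x)) = |f'(x)|\,\delta_D(x) \qquad (x\in D).
\]
Suppose, toward a contradiction, that $f$ is \emph{not} a similarity; then $q:=f^{-1}(\infty)$ is a finite point, and since $f(D)\subset\Rn$ we have $q\notin D$, while the conformal factor takes the inversion form $|f'(z)|=C/|z-q|^2$. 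Using the M\"obius distance formula $|f(a)-f(b)|=|f'(a)|^{1/2}|f'(b)|^{1/2}|a-b|$ together with $\partial f(D)=f(\partial D)$, the identity above rewrites as a purely metric condition on $D$, $q$ and the nearest-boundary-point geometry. The heart of the proof, and the step I expect to be the main obstacle, is the local analysis of this condition as $x$ approaches a boundary point $z_0$: expanding in the distance $s=\delta_D(x)\to 0$ and using the $C^1$ tangent hyperplane at $z_0$, the first-order term forces $q$ to lie on that tangent hyperplane. Since $q$ must then lie on the tangent hyperplane at \emph{every} boundary point, and a single finite point meets all these hyperplanes only when they coincide, $\partial D$ must be a hyperplane, i.e.\ $D$ a half-space---contradicting our hypothesis. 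The $C^1$ assumption is indispensable precisely because it excludes the known non-similarity examples (punctured spaces $\Rn\setminus\{a\}$ and sector-type domains), whose distinguished boundary points furnish admissible inversion centers. All of this argument is local and governed only by the transformation rule of the quasihyperbolic density, so it transfers verbatim from $\Rt$ to $\Rn$, completing step~(3) and hence the proof.
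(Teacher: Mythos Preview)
Your proposal is correct and follows exactly the three-step program the paper itself lays out in Section~\ref{isometry-sec} and Section~\ref{kSect}: Martin--Osgood gives conformality, Liouville's theorem in $n\ge 3$ gives M\"obius, and then the $\Rn$-version of Proposition~\ref{c1lem2} gives similarity. The paper presents the corollary as an ``immediate consequence'' of Proposition~\ref{c1lem2} (citing \cite[Corollary~2.3]{Ha10}) without writing out the argument, so your sketch of step~(3) actually goes further than the paper does here.
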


The \textit{medial axis}\index{medial axis} of $D$ is the set of centers of
maximal balls (with respect to the inclusion order) in $D$.
The medial axis is denoted by $\MA(D)$. For
some mathematical investigations of the medial axis, see \cite{CCM,
Da}, and for an application to the quasiworld see \cite{Bi}.

By $R_\zeta$ we denote
the reciprocal of the curvature of $\partial D$ at the boundary
point $\zeta$.
The principal tool in \cite{Ha10} for attacking the main problem in
the isometry program (namely, step (3)) is the following curvature
formula based on the estimates of Martin and Osgood
\cite{MO}.

\begin{proposition}[Proposition~3.2, \cite{Ha10}]\label{MOprop}
Let $D\subsetneq \Rt$ be a $C^2$ domain and $z\in D\setminus \MA(D)$
have closest boundary point $\zeta \in \partial D$. Then
\[ \kappa_D(z) = - \frac{R_\zeta}{R_\zeta - \delta(z)}
= - \frac1{1 - \delta(z)/ R_\zeta }. \]
If $z$ lies on the medial axis, then $\kappa_D(z)= - \infty$.
\end{proposition}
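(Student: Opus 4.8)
The plan is to realize $\kappa_D$ as the Gaussian curvature of the conformal metric with density $\rho(z)=\delta(z)^{-1}$ and to reduce everything to a computation of $\Delta\log\delta$ in suitable coordinates. Recall that for a conformal metric $\rho\,|dz|$ in the plane the Gaussian curvature is $\kappa=-\rho^{-2}\Delta\log\rho$, where $\Delta$ is the Euclidean Laplacian. Since $\log\rho=-\log\delta$, this gives the clean identity $\kappa_D(z)=\delta(z)^2\,\Delta\log\delta(z)$, valid at every point where $\delta$ is $C^2$. So the whole proposition hinges on evaluating $\Delta\log\delta$ off the medial axis, the density estimates of Martin and Osgood \cite{MO} being exactly what makes this rigorous.

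The first genuine step is regularity: I would invoke the standard fact that, for a $C^2$ domain, the distance function $\delta$ is $C^2$ on $D\setminus\MA(D)$ and that $\nabla\delta$ is the inward unit normal at the unique nearest boundary point, so in particular $|\nabla\delta|\equiv 1$ there. This already supplies half of the Laplacian, because $\Delta\log\delta=\delta^{-1}\Delta\delta-\delta^{-2}|\nabla\delta|^2=\delta^{-1}\Delta\delta-\delta^{-2}$, leaving only $\Delta\delta$ to be understood geometrically.

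To compute $\Delta\delta$ I would pass to Fermi (normal) coordinates based at the boundary: parametrize a neighborhood of $z$ by $(s,t)\mapsto \zeta(s)+t\,n(s)$, where $\zeta(s)$ is an arc-length parametrization of $\partial D$ near the foot point $\zeta$, $n(s)$ the inward unit normal, and $t=\delta$. Using the Frenet relations $\zeta'=T$ and $n'=-\kappa_{\partial}T$, with $\kappa_{\partial}$ the signed boundary curvature, one finds the Euclidean metric in these coordinates is $(1-\kappa_{\partial}t)^2\,ds^2+dt^2$. Feeding the orthogonal-coordinate Laplacian formula into $f=\log t$ (so $\partial_s f=0$ and $\partial_t f=1/t$) collapses to a one-line computation giving $\Delta\log\delta=-\big(t^2(1-\kappa_{\partial}t)\big)^{-1}$. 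Multiplying by $\delta^2=t^2$ yields $\kappa_D(z)=-(1-\kappa_{\partial}t)^{-1}$, and writing $R_\zeta=1/\kappa_{\partial}$ together with $t=\delta(z)$ turns this into the asserted value $-R_\zeta/(R_\zeta-\delta(z))$.

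The main obstacle, and the part needing genuine care rather than calculation, is the sign bookkeeping and the boundary cases. One must fix the convention that $\kappa_{\partial}$ (hence $R_\zeta$) is positive exactly when the osculating circle lies on the interior side of $D$, and then verify it against test domains — a half-plane ($R_\zeta=\infty$, giving constant curvature $-1$) and a round disk ($R_\zeta=r$, with the center lying on the medial axis) — to be sure the formula carries the right orientation. For the medial axis statement, the point is that approaching $\MA(D)$ forces $t\uparrow R_\zeta$, so $1-\kappa_{\partial}t\downarrow 0$ and $\kappa_D\to-\infty$; since $\delta$ fails to be $C^2$ there, the value $\kappa_D(z)=-\infty$ should be read as this limit (equivalently, through the lower bound on focal distances guaranteed by the $C^2$ hypothesis), and I would record it as a short separate case rather than trying to force it through the smooth formula.
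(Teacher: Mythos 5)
This proposition is quoted in the thesis directly from \cite{Ha10} (``based on the estimates of Martin and Osgood \cite{MO}'') and no proof of it appears in the paper, so there is no in-house argument to compare against. Your derivation of the formula off the medial axis is correct and is essentially the standard computation underlying \cite{MO}: writing $\kappa_D=\delta^2\Delta\log\delta$, using $\abs{\nabla\delta}\equiv 1$ off $\MA(D)$, and evaluating $\Delta\delta=-\kappa_{\partial}/(1-\kappa_{\partial}\delta)$ in Fermi coordinates does give $\kappa_D(z)=-1/(1-\delta(z)/R_\zeta)$, and your sign checks against the disk and the half-plane are the right sanity tests for the orientation of $R_\zeta$.

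The genuine gap is in the medial-axis case. You justify $\kappa_D(z)=-\infty$ by asserting that approaching $\MA(D)$ forces $\delta\uparrow R_\zeta$, so that $1-\delta/R_\zeta\downarrow 0$. That is true only at focal points, i.e.\ at medial-axis points whose maximal disk meets $\partial D$ in a single point with second-order contact. The generic medial-axis point is the centre of a maximal disk touching $\partial D$ at two or more distinct points, and there $\delta(z)<R_\zeta$ is entirely possible: for the strip $\{x\in\Rt\colon \abs{x_2}<1\}$ the medial axis is the real line, $R_\zeta=\infty$ at every boundary point, and your smooth formula tends to $-1$, not $-\infty$, as one approaches the axis. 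At such points the value $-\infty$ is not a limit of the smooth formula at all; it comes from the concave kink of $\delta$ (locally a minimum of two smooth functions with distinct gradients), which contributes a negative singular measure to the distributional $\Delta\log\delta$ supported on $\MA(D)$, and it is the generalized (Martin--Osgood) curvature of such a density that equals $-\infty$ there. So the medial-axis statement requires invoking the generalized curvature definition of \cite{MO} together with a two-case analysis (multi-point contact versus focal points), rather than the limiting argument you propose; as written, that part of your proof would fail on the strip.
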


Using this proposition, the following theorem was proved
in \cite[Theorem~4.3]{Ha10}.

\begin{theorem}
Let $D\subsetneq \Rt$ be a $C^3$ domain that is not a half-plane.
Then every isometry $f\colon D \to \Rt$ of the quasihyperbolic metric is a
similarity mapping.
\end{theorem}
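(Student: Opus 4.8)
The plan is to follow the three-step isometry program described in Section~\ref{isometry-sec}: first show that $f$ is conformal, then that it is M\"obius, and finally that it is a similarity. The first and third steps are essentially already available. Conformality is supplied by \cite[Theorem~2.8]{MO}, and Proposition~\ref{c1lem2} tells us that a quasihyperbolic isometry which happens to be M\"obius is automatically a similarity as soon as $D$ is a $C^1$ domain other than a half-plane (and a $C^3$ domain is in particular $C^1$). Thus the whole weight of the argument falls on the middle step: upgrading a conformal quasihyperbolic isometry to a M\"obius map. This is where the curvature formula of Proposition~\ref{MOprop} enters.

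First I would record the consequences of conformality. Identifying $\Rt$ with $\C$, a conformal map is holomorphic or antiholomorphic; composing with complex conjugation if necessary, I may assume $f$ is holomorphic. Writing $D':=f(D)$, the isometry condition for the density $1/\delta$ reads $\delta_{D'}(f(z)) = |f'(z)|\,\delta_D(z)$ for all $z\in D$. Away from the medial axis the quasihyperbolic metric is a smooth Riemannian metric $\delta^{-2}|dz|^2$, so $f$ restricts to a Riemannian isometry on the open dense set $D\setminus\MA(D)$ and therefore preserves Gaussian curvature: $\kappa_D(z) = \kappa_{D'}(f(z))$. By Proposition~\ref{MOprop}, $\kappa_D(z) = -1/(1-\delta_D(z)/R_\zeta)$ depends only on the ratio $\delta_D(z)/R_\zeta$, where $\zeta\in\partial D$ is the nearest boundary point; hence curvature invariance is equivalent to the scalar identity $\delta_D(z)/R_\zeta = \delta_{D'}(f(z))/R_{\zeta'}$, with $\zeta'$ the boundary point of $D'$ nearest to $f(z)$.

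Curvature alone does not determine $f$, so next I would differentiate. Since $f$ is a conformal isometry, it also preserves the $g$-length of the metric gradient of any isometric invariant; applied to $\kappa$ this yields a second scalar relation, $\delta_D(z)\,|\nabla\kappa_D(z)| = \delta_{D'}(f(z))\,|\nabla\kappa_{D'}(f(z))|$. Computing $\nabla\kappa$ from Proposition~\ref{MOprop} brings in the variation of $R_\zeta$ along $\partial D$, so this relation, together with the curvature identity and the conformal factor $|f'(z)|$, constrains the boundary correspondence induced by $f$ in terms of the curvature radii and their first variation. The heart of the argument is then to feed these relations into the Schwarzian derivative of $f$ and show that it must vanish, so that $f$ is M\"obius. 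This is precisely the step that needs $\partial D\in C^3$: one differentiates the curvature, already built from second derivatives of the boundary, one further time, and one must control the behaviour as $z$ approaches $\MA(D)$, where $\kappa\to-\infty$, extending the conclusion from $D\setminus\MA(D)$ to all of $D$ by continuity. I expect this M\"obius step to be the main obstacle, both because of the smoothness bookkeeping and because the degenerate loci (the medial axis, and points where $\nabla\kappa$ vanishes) must be handled separately.

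Once $f$ is known to be M\"obius, the proof concludes at once: $D$ is a $C^3$, hence $C^1$, domain which is not a half-plane, so Proposition~\ref{c1lem2} applies and forces $f$ to be a similarity. It is worth noting that the half-plane hypothesis cannot be dropped, since in a half-plane the quasihyperbolic metric coincides with the hyperbolic metric and admits non-similarity (genuinely M\"obius) isometries; the exclusion of half-planes is exactly what makes Proposition~\ref{c1lem2} available in this final step.
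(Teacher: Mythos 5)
Your outer skeleton matches the paper's: conformality is imported from Martin and Osgood \cite{MO}, and once $f$ is known to be M\"obius, Proposition~\ref{c1lem2} (applicable because a $C^3$ domain is in particular $C^1$ and $D$ is not a half-plane) forces $f$ to be a similarity. Your remarks that the half-plane exclusion is exactly what makes that final step available, and that curvature is preserved because $f$ is a Riemannian isometry of $\delta^{-2}|dz|^2$ away from $\MA(D)$, are also correct. The problem is the middle step, which carries the entire weight of the theorem and which you do not actually prove. You extract two pointwise scalar identities --- preservation of $\kappa$, hence of $\delta_D(z)/R_\zeta$, and preservation of $\delta\,\abs{\nabla\kappa}$ --- and then propose to ``feed these relations into the Schwarzian derivative of $f$ and show that it must vanish.'' As written this is a hope, not an argument: the Schwarzian has two real components at each point, and two scalar constraints relating quantities on the two sides of $f$ do not by themselves determine it, let alone force it to vanish. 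You give no mechanism for the implication, and you yourself flag this step as the main obstacle.

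What is missing is the directional and global information that the paper's route (following H\"ast\"o \cite{Ha10}) exploits. By Proposition~\ref{MOprop}, $\nabla\kappa$ points along the inward normal, and its integral curve through $z\in D\setminus\MA(D)$ is the Euclidean segment $[z,\zeta_z)$, which is simultaneously a quasihyperbolic geodesic half-line; the curvature and its derivative along this geodesic recover $\delta(z)$ and $R_\zeta$ separately, so these normal half-lines are intrinsically characterized and must be carried by the isometry to half-lines of the same type. One then shows that $f$ maps segments to segments, and a conformal map carrying such a two-parameter family of segments to segments is M\"obius --- only after that does Proposition~\ref{c1lem2} enter. If you want to keep your Schwarzian formulation, you would have to substitute an actual computation (e.g.\ starting from $\log\abs{f'}=\log\delta_{D'}\circ f-\log\delta_D$) together with the case analysis the paper alludes to: boundary points with $R_\zeta=\infty$ or $R_\zeta<0$, the degenerate locus of $\nabla\kappa$, and the extension across $\MA(D)$, none of which your proposal addresses beyond naming them.
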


The idea of the proof is the following: Let $z\in D\setminus \MA(D)$
and let $\zeta_z$ be its unique nearest boundary point. Then the
half-open segment $[z,\zeta_z)$ is a geodesic half-line with respect
to the quasihyperbolic metric. The proof of the theorem is based on
showing that this type of geodesic is somehow special and is thus mapped to
another geodesic half-line of the same type. There are a couple of
different cases based on the curvature $R_\zeta$ at the nearest
boundary point, but essentially this part of the proof is based on
Proposition~\ref{MOprop}. It is then shown that an isometry maps a
segment to a segment, which implies that it is a M\"obius mapping.
The proof is concluded by applying Proposition~\ref{c1lem2},
which says that the M\"obius isometry is a similarity.

In fact, the smoothness assumption on the boundary of the
domain can be dropped to $C^2$, except in two special
cases, namely, when the domain is strictly convex or strictly concave!
Corollary~\ref{k-cor}, proved below, takes care of the
concave case, so only the convex case remains.


\subsection{Isometries of the K--P metric $\mu_D$}\label{muSect}

{\it{Extremal disks}} and {\it{circular geodesics}} are important
objects when discussing the isometries of the K--P metric. Consider
a domain $D$ in $\C$ with $\card(\p D)\geq 2$. A disk or a
half-plane $B\subset D$ with $\card(\p B\cap\p D)\ge 2$ is called an
extremal disk. We call $\Gamma$ a circular geodesic in $D$ if there
exists an extremal disk $B\subset D$ such that $\Gamma$ is a
hyperbolic geodesic line in $B$ with endpoints in $\partial
B\cap\partial D$. While the definition may not suggest the
importance of extremal disks, it is the existence of a unique
extremal disk associated to each point in the domain that plays a
crucial role in the study of the K--P metric.

More precisely, given
a domain $D\subset\C$ with $\card(\p D)\geq 2$ and a point $z\in D$,
let $i_z$ be the inversion in a circle centered at $z$ with radius
$1$. Then the complement of $i_z(D)$ is a compact set in $\C$ and
hence by Jung's Theorem (see \cite[11.5.8, p.~357]{Ber}) there
exists a unique disk $B$ of smallest radius whose closure contains
the set. In particular, $\card(\p B\cap\p D)\geq 2$ and $z\in
i_z(\Cc\setminus\overline{B})\subset D$. Hence the set
$i_z(\Cc\setminus\overline{B})$ is an extremal disk, which is
properly called the extremal disk at $z$ and is denoted by $B_z$. An
observant reader will notice that $B_z$ is also the extremal disk
for each point of a circular geodesic contained in $B_z$. Using more
delicate arguments it is proved that $B_z$ is the extremal disk for
each point of $\hat{K}_z$ and only for these points, where
$\hat{K}_z$ is the hyperbolic convex hull of the set $\p B_z\cap\p
D$ in $B_z$ (see \cite[Proposition 2.5]{HIM}). In particular, for
each pair of points $z,w\in D$, we have either $\hat{K}_z=\hat{K}_w$
or $\hat{K}_z\cap\hat{K}_w=\emptyset$.

Another important property of
the extremal disks is that $\mu_D(z)=\lambda_{B_z}(z)$, where
$\lambda_{B_z}$ is the density of the hyperbolic metric in $B_z$. In
particular,
$$
\mu_D(z)=\sup_{a,b\in\p B_z}\frac {|a-b|}{|a-z|\,|z-b|}
$$
and if $\gamma_z$ is any hyperbolic geodesic in $B_z$ passing
through $z$, then
$$
\mu_D(z)=\frac {|a(z)-b(z)|}{|a(z)-z|\,|z-b(z)|},
$$
where $a(z)\in\p B_z$ and $b(z)\in\p B_z$ are the endpoints of
$\gamma_z$. Hence by the monotonicity of the Ferrand metric we obtain
$$
\mu_D(z)=\sup_{a,b\in\p B_z}\frac
{|a-b|}{|a-z|\,|z-b|}=\sigma_{B_z}(z)\geq\sigma_D(z),
$$
and $\mu_D(z)=\sigma_D(z)$ if and only if $z$ lies on a circular
geodesic. Using Jung's Theorem we also obtain that $\mu_D(z)\leq
(2/\sqrt{3})\sigma_D(z)$ (see \cite{HIM} for details).

We also
need the following lower bound for the Ferrand (and hence for the
K--P) distance. Given a domain $D\subset\Cc$ with $\card(\p D)\geq
2$ and points $z,w\in D$, we consider the following distance
function
$$
s_D(z,w)=\log\Big (1+\sup_{a,b\in\p D}\frac
{|a-b|\,|z-w|}{|a-z|\,|b-w|}\Big).
$$
The function $s_D$, introduced by Seittenranta
\index{Seittenranta's metric}\index{metric!Seittenranta's}
\cite{Se}, defines a
metric in $D$ and since
$$
\lim_{w\to z}\frac {s_D(z,w)}{|z-w|}=\sup_{a,b\in\p D}\frac
{|a-b|}{|a-z|\,|z-b|}=\sigma_D(z)\qquad\text{for\ each}\quad z\in
D\cap\C,
$$
the Ferrand metric is the inner
metric of Seittenranta's metric.
Hence we have the aforementioned lower bound for the Ferrand and the
K--P distances $\sigma_D(z,w)$ and $\mu_D(z,w)$:
$$
s_D(z,w)\leq\sigma_D(z,w)\leq\mu_D(z,w)\qquad\text{for\  all}\quad
z,w\in D.
$$
In particular, the length of a curve in Seittenranta's metric is
smaller than its length in the K--P metric.

Next we show that each circular geodesic is a geodesic line for both
the Ferrand and the K--P metrics, justifying its name. Let $\gamma$
be a circular geodesic in $D$ with endpoints $a,b$ and
$z,w\in\gamma$ be arbitrary points. Let $\gamma(z,w)$ be the subarc
of $\gamma$ joining $z$ and $w$. We need to show that
$$
\sigma_D(z,w)=\int_{\gamma(z,w)}\sigma_D(\xi)|d\xi|\qquad\text{and}\qquad
\mu_D(z,w)=\int_{\gamma(z,w)}\mu_D(\xi)|d\xi|.
$$
An easy observation shows that
$$
s_D(z,w)=\log\left(1+\frac {|a-b|\,|z-w|}{|a-z|\,|b-w|}\right)=\log\frac
{|z-b|\,|w-a|}{|z-a|\,|w-b|}
$$
for all $z,w\in\gamma$ such that the points $a,z,w,b$ are in this
order on $\gamma$. In particular,
$s_D(z_1,z_3)=s_D(z_1,z_2)+s_D(z_2,z_3)$ for all points $z_1,z_2,z_3$
in this order on $\gamma$ and, as a consequence, the
$s_D$-length of $\gamma(z,w)$ is equal to $s_D(z,w)$. Then
$$
\sigma_D(z,w)\leq\int_{\gamma(z,w)}\sigma_D(\xi)|d\xi|=\int_{\gamma(z,w)}\frac
{|a-b|}{|a-\xi|\,|\xi-b|}|d\xi|=s_D(z,w)\leq\sigma_D(z,w),
$$
and, similarly,
$$
\mu_D(z,w)\leq\int_{\gamma(z,w)}\mu_D(\xi)|d\xi|=\int_{\gamma(z,w)}\frac
{|a-b|}{|a-\xi|\,|\xi-b|}|d\xi|=s_D(z,w)\leq\mu_D(z,w).
$$
Thus, $\gamma$ is a geodesic for both the Ferrand and the K--P
metrics.

Next we discuss another type of geodesic for the K--P metric. As we
have mentioned above, given $z\in D$, the extremal disk $B_z$ is
also the extremal disk for all points in $\hat{K_z}$, where
$\hat{K_z}$ is the hyperbolic convex hull of the set $\p B_z\cap\p
D$ in $B_z$. In particular, $\mu_D(\xi)=\lambda_{B_z}(\xi)$ for all
$\xi\in\hat{K_z}$ and as a result, all the hyperbolic geodesics in
$B_z\cap\hat{K_z}$, which are circular arcs, are also geodesics in
the K--P metric $\mu_D$.  Hence through every point of $D$ there
passes a K--P geodesic which is a circular arc. Notice also that the
interior of $\hat{K_z}$ is non-empty if and only if $\card(\p
B_z\cap\p D)\geq 3$.

Now we are ready to present the result on the isometries of the K--P
metric.

\begin{theorem}\label{KPIsom} Let $f\colon D\to\Cc$ be a K--P isometry. Assume
that $D'=f(D)$  contains a point $z'$ so that $\card(\p B_{z'}\cap\p
D')\geq 3$. Then $f$ is the restriction of a M\"obius
transformation.
\end{theorem}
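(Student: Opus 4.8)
The plan is to exhibit an open set on which $\mu_{D'}$ is genuinely the hyperbolic metric of a round disk, to transport this structure back through $f$, and then to invoke the rigidity of hyperbolic isometries together with the conformality of $f$. I would begin by recording that $f$ is conformal or anti-conformal: since $\mu_D$ is a continuous density and the K--P metric is the length metric it generates, the result of Martin and Osgood (Theorem~2.6 of \cite{MO}, quoted as step~(1) of the isometry program in Section~\ref{isometry-sec}) applies and shows that every K--P isometry is a conformal or anti-conformal homeomorphism of $D$ onto $D'$; in either case $f$ is real-analytic. Because the present definition of a M\"obius mapping requires only preservation of the \emph{absolute} cross-ratio $|a,b,c,d|$, anti-conformal maps such as $z\mapsto\bar z$ are themselves M\"obius in this sense, so no orientation bookkeeping will be needed.

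Next I would produce the hyperbolic patch in the target. By hypothesis $\card(\p B_{z'}\cap\p D')\ge 3$, so by the dichotomy recalled just before the theorem the hyperbolic convex hull $\hat K_{z'}$ of $\p B_{z'}\cap\p D'$ in $B_{z'}$ has non-empty interior $V':=\Int\hat K_{z'}$. For every $w'\in\hat K_{z'}$ the extremal disk is the same disk, $B_{w'}=B_{z'}$, whence $\mu_{D'}(w')=\lambda_{B_{z'}}(w')$. Thus on the open connected set $V'$ the K--P metric coincides with $\lambda_{B_{z'}}$, the hyperbolic metric of the round disk $B_{z'}$; in particular $\mu_{D'}|_{V'}$ is real-analytic with constant Gaussian curvature $\kappa\equiv -1$, and its geodesics are the circular arcs orthogonal to $\p B_{z'}$.

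The heart of the matter is to transport this back. I would set $V:=f^{-1}(V')$, an open connected subset of $D$; since $f$ is an isometry, $(V,\mu_D)$ is isometric to $(V',\lambda_{B_{z'}})$, so the Gaussian curvature of $\mu_D$ equals $-1$ throughout $V$. At this point I would invoke the curvature computation for the K--P metric from \cite{HIM}: the curvature of $\mu_D$ equals $-1$ exactly on the union of the interiors $\Int\hat K_w$ taken over those $w$ with $\card(\p B_w\cap\p D)\ge 3$, these being precisely the points where $\mu_D$ is locally the hyperbolic metric of a fixed extremal disk. Consequently every point of $V$ lies in some $\Int\hat K_w$; since distinct hulls are either equal or disjoint, the extremal disk $B_w$ is locally constant on $V$, and by connectedness there is a single round disk $B$ with $\mu_D=\lambda_B$ on all of $V$. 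I expect this to be the main obstacle, since it rests on the delicate curvature analysis of the K--P density rather than on the soft facts in the survey; the naive alternative of arguing that $f^{-1}$ carries circular geodesics to circular geodesics is awkward because ``circular'' is a M\"obius, not a purely metric, notion.

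Finally I would conclude by rigidity and analytic continuation. On $V$ the map $f$ is a conformal (respectively anti-conformal) isometry of the hyperbolic-disk piece $(V,\lambda_B)$ onto $(V',\lambda_{B_{z'}})$; such a map extends to a global isometry of the hyperbolic plane, realised on the round disks as an automorphism $B\to B_{z'}$, and every conformal or anti-conformal automorphism of a round disk is the restriction of a M\"obius transformation $g$ of $\Cc$. Thus $f=g$ on the non-empty open set $V$. Since $f$ and $g$ are both conformal (or both anti-conformal) on the connected domain $D$ and agree on $V$, the identity theorem forces $f=g$ throughout $D$. Hence $f$ is the restriction of the M\"obius transformation $g$, as claimed.
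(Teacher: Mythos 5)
Your overall architecture is sound and close in spirit to the paper's: localize to the fat hull $V'=\Int\hat K_{z'}$, where $\mu_{D'}=\lambda_{B_{z'}}$, transport this structure back through $f$, conclude that $f$ agrees with a M\"obius map on an open set, and finish by the identity theorem. The first and last steps are fine. The gap is in the middle. To pass from ``$(f^{-1}(V'),\mu_D)$ is isometric to a piece of a hyperbolic disk'' to ``$\mu_D$ equals $\lambda_B$ for a single round disk $B$ near some $a\in f^{-1}(V')$'', you invoke a purported result of \cite{HIM} that the curvature of $\mu_D$ equals $-1$ \emph{exactly} on $\bigcup_w\Int\hat K_w$. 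No such statement appears in this paper, and the direction you actually need --- that curvature $-1$ forces a point into a fat hull --- is precisely the nontrivial content. What comes for free is only an inequality: since $\mu_D\le\lambda_{B_a}$ near any $a$ with equality at $a$, a maximum-principle comparison gives curvature $\ge -1$ where the density is $C^2$; but equality of curvature, even on an open set, does not by itself identify a conformal metric with the Poincar\'e density of a round disk (the general conformal metric of curvature $-1$ has density $2|g'|/(1-|g|^2)$ for a locally univalent $g$, which need not be any $\lambda_B$). So the step ``every point of $V$ lies in some $\Int\hat K_w$'' is unsupported as written.

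The paper closes exactly this gap in two ways, both cheaper than curvature. The analytic proof uses only the pointwise facts you already have: $f^\star[\lambda_{B_{z'}}]=f^\star[\mu_{D'}]=\mu_D\le\lambda_{B_a}$ near $a$, with equality at $a$; this is precisely the hypothesis of the Schwarz-lemma rigidity statement (Theorem \ref{ESL}), which then says $f$ is a holomorphic covering of $B_a$ onto $B_{z'}$, hence a conformal bijection of disks, hence M\"obius --- no curvature enters. The geometric proof instead takes a circular-arc K--P geodesic $\gamma$ through $z=f^{-1}(z')$ whose image lies in $\Int\hat K_{z'}$; since all geodesics inside the hull are circular arcs, $f(\gamma)$ is a circular arc, and after normalizing both arcs into $(-1,1)$ the isometry condition forces $f$ to be the identity on $\gamma$. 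If you replace your curvature step by the Theorem \ref{ESL} argument, the rest of your outline goes through unchanged.
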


\begin{proof} Recall that $f$ is conformal and
$f^{-1}$ is also a K--P isometry. Put
$z=f^{-1}(z')$. Let $\gamma$ be a circular arc containing $z$ which
is also a K--P geodesic segment with the property that $f(\gamma)$
is contained in the interior of $\hat{K}_{z'}$. Since all the
geodesics in $\hat{K}_{z'}$ are circular arcs, so is $f(\gamma)$.
Using auxiliary M\"obius transformations, if necessary, we can
assume that $B_z=B_{z'}=B^2(0,1)$, that $z=z'=0$ and that both $\gamma$ and
$f(\gamma)$ are subarcs of the real interval $(-1,1)$. Then the fact
that $f$ is an isometry implies that $f$ is identity on $\gamma$ and
hence it is also identity on $D$, up to a M\"obius map. This
completes the proof.
\end{proof}

There is an alternative way to prove the Theorem~\ref{KPIsom} based on the
following result for holomorphic functions, which can be thought of
as an extension of Schwarz's Lemma. This approach also extends to
prove a similar theorem for the Ferrand metric (see
Theorem~\ref{Fisom}).

\begin{theorem}[Fact~2.1, \cite{HIM}]\label{ESL}
Let $D$ and $D'$ be hyperbolic
regions. Assume that $f$ is holomorphic in some neighborhood of $a\in D$
and takes values in $D'$. Let $\lambda$ and $\lambda'$ be the
densities of the hyperbolic metrics in $D$ and $D'$, respectively,
and let $f^\star[\lambda'](z)=\lambda'(f(z))|f^\prime(z)|$ be the pullback
of $\lambda'$ in $D$ to a neighborhood of $a$. Suppose that $f^\star[\lambda'](z)\leq\lambda(z)$
for all $z$ near $a$, with equality
holding at $z=a$. Then $f\colon D\to D'$ is a holomorphic covering
projection; in particular, $f^\star[\lambda']=\lambda$.
\end{theorem}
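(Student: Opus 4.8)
The plan is to treat this as the rigidity (equality) case of the Schwarz--Pick lemma, transplanted from the unit disk to the hyperbolic regions $D$ and $D'$ through their universal coverings. First I would fix holomorphic universal covering projections $p\colon\D\to D$ and $q\colon\D\to D'$, normalised so that $p^\star[\lambda]=\lambda_\D$ and $q^\star[\lambda']=\lambda_\D$, where $\lambda_\D$ denotes the hyperbolic density of $\D$. Choosing $\tilde a\in p^{-1}(a)$ and a small simply connected neighbourhood $U$ of $\tilde a$, the composition $f\circ p$ is holomorphic on $U$ with values in $D'$, so it lifts through the covering $q$ to a holomorphic map $F\colon U\to\D$ satisfying $q\circ F=f\circ p$.

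Next I would transport the metric hypothesis to $F$. Using the covering relations,
\[ F^\star[\lambda_\D]=F^\star[q^\star[\lambda']]=(q\circ F)^\star[\lambda']=(f\circ p)^\star[\lambda']=p^\star[f^\star[\lambda']]\le p^\star[\lambda]=\lambda_\D, \]
with equality at $\tilde a$ because equality holds for $f$ at $a$. Thus on $U$ the two conformal metrics $\rho:=F^\star[\lambda_\D]$ and $\sigma:=\lambda_\D$ both have the same constant negative curvature (the pullback of a constant-curvature metric by a holomorphic map with $F'\ne 0$ keeps that curvature, and $F'(\tilde a)\ne 0$), they satisfy $\rho\le\sigma$, and $\rho(\tilde a)=\sigma(\tilde a)$. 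Writing $u:=\log(\rho/\sigma)\le 0$, the curvature identities give a semilinear equation which linearises as $\Delta u=c\,u$ with $c\ge 0$; since $u$ attains at the interior point $\tilde a$ the value $u(\tilde a)=0$, which is its maximum and is nonnegative, the strong maximum principle for the operator $\Delta-c$ forces $u\equiv 0$ on $U$. Hence $\rho\equiv\sigma$, so $F$ is a local isometry of the hyperbolic metric of $\D$ and is therefore the restriction of an automorphism $\widehat F\in\Aut(\D)$, unique by the identity theorem.

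Finally I would descend $\widehat F$ to the surfaces. The composite $q\circ\widehat F\colon\D\to D'$ is a holomorphic covering projection, and since $q\circ\widehat F=f\circ p$ near $\tilde a$, a monodromy argument shows that $\widehat F$ conjugates the deck group of $p$ into the deck group of $q$; consequently $q\circ\widehat F$ is constant on the fibres of $p$ and descends to a holomorphic covering projection $\bar f\colon D\to D'$ with $\bar f\circ p=q\circ\widehat F$. This $\bar f$ agrees with $f$ on a neighbourhood of $a$, so it is the asserted continuation of $f$, and because $\widehat F$ is a $\lambda_\D$-isometry the equality $\bar f^\star[\lambda']=\lambda$ holds throughout $D$.

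The step I expect to carry the weight of the argument is the passage from the purely local datum to the global conclusion. One must resist the temptation to propagate the interior equality by a naive maximum-principle argument: $u=\log(\rho/\sigma)$ is only superharmonic, and a superharmonic function may well have an interior maximum, so it is the sharper structure $\Delta u=c\,u$ with $c\ge 0$ that makes the rigidity work. Equally delicate is the descent: verifying the deck-group compatibility (equivalently, that the germ of $f$ at $a$ continues single-valuedly to a covering of $D'$) is where the hyperbolic geometry of $D$ and $D'$ really enters, and I would expect this, rather than the disk computation, to require the most care. An alternative to the Schwarz--lemma step is the Ahlfors--Schwarz lemma comparing two conformal metrics of equal curvature, but it meets the same local-to-global difficulty.
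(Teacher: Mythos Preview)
The paper does not prove this statement; it is quoted verbatim as Fact~2.1 from \cite{HIM} and used as a black box in the analytic proof of Theorem~\ref{KPIsom} and in Theorem~\ref{Fisom}. There is therefore no proof in the paper to compare your proposal against.

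On the substance of your sketch: the local step is fine. Writing $u=\log(\rho/\sigma)$ with $\rho=F^\star[\lambda_\D]$ and $\sigma=\lambda_\D$, the curvature relations give $\Delta u=\sigma^2(e^{2u}-1)$, which you may rewrite as $\Delta u - c(z)u=0$ with $c(z)=\sigma^2\cdot\frac{e^{2u}-1}{u}\ge 0$; the strong maximum principle then forces $u\equiv 0$ on $U$, so $F$ is the restriction of some $\widehat F\in\Aut(\D)$.

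The descent step, however, is where your argument is incomplete, and your own closing caveat is well placed. You assert that ``a monodromy argument shows that $\widehat F$ conjugates the deck group of $p$ into the deck group of $q$,'' but under the literal reading of the hypothesis---$f$ defined only near $a$---this is not automatic and indeed can fail: two annuli of different moduli admit local holomorphic hyperbolic isometries (any two small hyperbolic disks are conformally isometric), yet there is no covering between them. So the germ of $f$ need not continue single-valuedly over $D$.

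The way the result is actually used in the paper (and almost certainly the intended reading, despite the phrasing) is that $f\colon D\to D'$ is globally holomorphic; the sentence about ``some neighborhood of $a$'' merely localizes the inequality $f^\star[\lambda']\le\lambda$. Under that reading your argument simplifies considerably: since $\D$ is simply connected, $f\circ p$ lifts through $q$ to a holomorphic $F\colon\D\to\D$ defined on all of $\D$, classical Schwarz--Pick gives $F^\star[\lambda_\D]\le\lambda_\D$ with equality at $\tilde a$, so $F\in\Aut(\D)$ immediately---no curvature computation needed. The descent is then automatic, because $q\circ F=f\circ p$ already factors through $p$ by construction, hence $F$ conjugates the deck group of $p$ into that of $q$, and $f$ is the induced covering map.
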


\begin{proof}[Analytic proof of Theorem~\ref{KPIsom}]
Let $f\colon D\to D'$ be a K--P isometry, hence conformal. Observe first that
\[ f^\star[\mu_{D'}]=\mu_{D'}(f(z))|f^\prime(z)|=\lim_{w\to z}\frac
{\mu_{D'}(f(w),f(z))}{|f(w)-f(z)|}\frac
{|f(w)-f(z)|}{|w-z|}=\lim_{w\to z}\frac
{\mu_D(w,z)}{|w-z|}=\mu_D(z).\]
The assumption in the theorem implies that there is a point $b=f(a)$
contained in $G'$, where $G'$ is the interior of $\hat K_b$ and
$K_b=B_b\cap\p D'$. Then in $f^{-1}(G')\cap B_a$ we have
\[ f^\star[\lambda_{B_b}]=f^\star[\mu_{D'}]=\mu_D\leq\lambda_{B_a}, \]
with equality holding at the point $z=a$ (see the proof of
\cite[Theorem 4.10]{HIM}). Theorem~\ref{ESL} now implies that $f$
maps $B_a$ conformally onto $B_b$ and hence is a M\"obius
map.
\end{proof}


\section{Isometries of $\mu_D$, $\sigma_D$ and $k_D$; New Results}\label{newSect}

If a disk touches the boundary of a domain in exactly $k$ points,
then we call it \textit{$k$-extremal}. In this section we are
interested only in domains in which every extremal disk is
$2$-extremal\index{$2$-extremal domain}\index{domain!$2$-extremal}
-- we call such a domain also $2$-extremal, as there is no danger of
confusion. Examples of $2$-extremal domains include parallel strips,
angular sectors with angular openings strictly less than $\pi$,
annuli and many other domains and their images under M\"obius
mappings.

\begin{theorem}\label{Fer-eq-KP}
If $D$ is $2$-extremal domain in $\R^2$, then circular geodesics
foliate $D$. In particular, $\mu_D=\sigma_D$.
\end{theorem}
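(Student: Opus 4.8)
The plan is to reduce everything to the structure theory of the extremal disks $B_z$ and their hyperbolic convex hulls $\hat{K}_z$ recalled in Subsection~\ref{muSect}, the point being that $2$-extremality forces each $\hat{K}_z$ to collapse to a single circular geodesic. First I would fix an arbitrary $z\in D$ and pass to its unique extremal disk $B_z$. Since $D$ is $2$-extremal, $\card(\partial B_z\cap\partial D)=2$ by definition; write $\partial B_z\cap\partial D=\{a,b\}$ with $a\neq b$. The hyperbolic convex hull $\hat{K}_z$ of the two ideal points $a,b$ inside $B_z$ is then exactly the complete hyperbolic geodesic line of $B_z$ joining $a$ to $b$, which is a circular arc. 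Because $a,b\in\partial B_z\cap\partial D$, this line is, by definition, a circular geodesic of $D$.

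Next I would observe that $z$ itself lies on this circular geodesic. Indeed, $B_z$ is the extremal disk at $z$, and by the characterization from \cite[Proposition~2.5]{HIM} recalled above, $B_z$ is the extremal disk precisely for the points of $\hat{K}_z$; hence $z\in\hat{K}_z$. As $z$ was arbitrary, every point of $D$ lies on a circular geodesic, namely $\hat{K}_z$. To turn this covering into a foliation, I would invoke the dichotomy (also recalled in Subsection~\ref{muSect}) that for all $z,w\in D$ either $\hat{K}_z=\hat{K}_w$ or $\hat{K}_z\cap\hat{K}_w=\emptyset$. This shows that the family $\{\hat{K}_z\}_{z\in D}$ consists of pairwise disjoint circular geodesics whose union is $D$, so they partition $D$; the continuity of the assignment $z\mapsto B_z$ (hence $z\mapsto\hat{K}_z$) established in \cite{HIM} then upgrades this partition to a genuine foliation with circular-arc leaves.

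Finally, for the identity $\mu_D=\sigma_D$ I would use the equivalence from Subsection~\ref{muSect}: one has $\mu_D(z)=\sigma_D(z)$ if and only if $z$ lies on a circular geodesic. Having shown that every $z\in D$ lies on the circular geodesic $\hat{K}_z$, I conclude that $\mu_D(z)=\sigma_D(z)$ as densities at each point of $D$. Since both $\mu_D$ and $\sigma_D$ are conformal path metrics built by integrating their densities along paths as in (\ref{conf-metric}), pointwise equality of the densities throughout $D$ forces equality of the two induced distance functions, giving $\mu_D=\sigma_D$.

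The step I expect to require the most care is not the assembly above but the verification that $\hat{K}_z$ really collapses to a single nondegenerate circular geodesic and that the leaves vary continuously, so that \emph{foliate} holds in the strong (locally trivializable) sense rather than as a bare set-theoretic partition; this rests entirely on the uniqueness and continuity of the extremal disk map from \cite{HIM}, and once those facts are cited the remaining reasoning is purely formal.
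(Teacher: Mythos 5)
Your proposal is correct and follows essentially the same route as the paper: both show that $2$-extremality forces $\hat{K}_z$ to degenerate to a single circular geodesic containing $z$, that distinct circular geodesics cannot meet (you via the dichotomy $\hat{K}_z=\hat{K}_w$ or $\hat{K}_z\cap\hat{K}_w=\emptyset$, the paper by noting two circular geodesics through $x$ would both have endpoints in the two-point set $\partial B_x\cap\partial D$), and that $\mu_D=\sigma_D$ then follows from the pointwise density identity on circular geodesics. Your extra remark about upgrading the partition to a locally trivial foliation via continuity of $z\mapsto B_z$ goes beyond what the paper records, but is not needed for the way the result is used.
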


\begin{proof}
We will show that each point of $D$ lies on a circular geodesic and
that circular geodesics of $D$ are disjoint. Indeed, given an
arbitrary point $x\in D$, since $\card(B_x\cap\p D)=2$, the interior
of the set $\hat{K}_x$ is empty, whence $\hat{K}_x$ is a circular
geodesic containing $x$. Next if $\gamma_1$ and $\gamma_2$ are two
circular geodesics in $D$ and if $x\in\gamma_1\cap\gamma_2$, then
the endpoints of $\gamma_1$ and $\gamma_2$ belong to the set $\p
B_x\cap\p D$. Since $\card(B_x\cap\p D)=2$, we conclude that
$\gamma_1=\gamma_2$. The second part of the theorem now follows from
the fact that $\mu_D(x)=\sigma_D(x)$ whenever $x$ lies on a circular
geodesic (see Section~\ref{muSect}).
\end{proof}

Herron, Ibragimov and Minda proved that every planar $2$-extremal domain is
either simply or doubly connected, see \cite{HIM}. Given a
$2$-extremal disk $B$ in a domain $D$, we denote the unique circular
geodesic in $B$ by $\gamma(B)$. The (Euclidean) midpoint of the
circular geodesic is called the hyperbolic centere\index{hyperbolic center}
of $B$ and denoted
by $\HC(B)$. Let $E(D)$ be the set of all $2$-extremal disks  in $D$.

\begin{definition}
The set of hyperbolic centers of disks in $E(D)$ is called the
{\it{hyperbolic medial axis}}\index{hyperbolic medial axis} of the domain
$D$ and is denoted by
$\HMA(D)$.
\end{definition}

The hyperbolic medial axis is a modification of the usual medial
axis, whose definition was presented in Section~\ref{kSect}. In
certain respects the hyperbolic medial axis is better behaved than
the medial axis; for example, this is the case for smoothness and
localization properties. A more thorough investigation of these
issues is underway \cite{HI3}.

\begin{theorem}\label{c1-og-thm}
If $D\subset \R^2$ is a $2$-extremal domain, then $\HMA(D)$ is
locally the graph of a $C^1$ curve. If $B$ is a $2$-extremal disk in
$D$, then the circular geodesic $\gamma(B)$ and $\HMA(D)$ are
orthogonal at the hyperbolic center $\HC(B)$.
\end{theorem}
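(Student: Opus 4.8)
The plan is to exploit the foliation of $D$ by circular geodesics furnished by Theorem~\ref{Fer-eq-KP} and to analyse the locus of their Euclidean midpoints directly, rather than to appeal to any smoothness of $\partial D$. First I would record the continuity input: the extremal disk $B_z$ is uniquely determined at each $z\in D$ by Jung's theorem (Section~\ref{muSect}), and this construction depends continuously on $z$; consequently the leaf $\gamma(B_z)$, its pair of contact points $\partial B_z\cap\partial D$, and the hyperbolic center $\HC(B_z)$ all vary continuously, so that $\HMA(D)$ is a priori a continuous transversal to the foliation. Both $C^1$-regularity and orthogonality will then follow once I show that, at each center $c_0=\HC(B_0)$, the secant directions $(c_t-c_0)/|c_t-c_0|$ along $\HMA(D)$ converge, as a neighbouring leaf $\gamma_t\to\gamma_0$ from either side, to the direction normal to $\gamma_0$ at $c_0$. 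By applying a Euclidean similarity (which preserves circles, Euclidean midpoints and angles, so none of the relevant data is disturbed) I would normalize $B_0$ to be the unit disk $B^2(0,1)$ with its two contact points symmetric about the real axis; then $\gamma_0$ is symmetric about the real axis and crosses it orthogonally at $c_0\in(0,1)$.

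The key structural observation I would isolate next makes the orthogonality assertion transparent. For a $2$-extremal disk $B$ with contact points $a,b$, let $N$ denote the perpendicular bisector of the chord $[a,b]$. Since $a,b\in\partial B$, the Euclidean center $w$ of $B$ is equidistant from $a$ and $b$, hence lies on $N$; therefore $B$ and the pair $\{a,b\}$ are invariant under reflection in $N$, so the hyperbolic geodesic $\gamma(B)$ joining $a$ to $b$ in $B$ is symmetric about $N$. Consequently its Euclidean midpoint $\HC(B)$ is the fixed point of this reflection on the arc, lies on $N$, and $\gamma(B)$ meets $N$ orthogonally there. In other words the normal line to the leaf at its center is exactly $N$, and both $w\in\MA(D)$ and $c=\HC(B)$ lie on it. Orthogonality of $\HMA(D)$ to $\gamma(B)$ at $c$ is thus \emph{equivalent} to $\HMA(D)$ being tangent to the moving line $N$ at $c$, that is, to $\HMA(D)$ being the envelope of the one-parameter family of these perpendicular bisectors. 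In the normalized picture $N=\R$, so the target reduces to showing that the secant directions of $\HMA(D)$ at $c_0$ are horizontal.

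To produce this first-order behaviour I would use the characterization, valid by similarity-invariance after checking it in the normalized disk, that $\HC(B)$ is precisely the point of $\gamma(B)$ nearest the Euclidean center $w$ of $B$. Comparing $\gamma_0$ with a neighbour $\gamma_t$, the centers $w_t$, radii and contact points of the extremal disks move continuously, and I would quantify these motions through the contact relation $a_t,b_t\in\partial B_t\cap\partial D$ so as to squeeze $(c_t-c_0)/|c_t-c_0|$ between two directions that both tend to the normal $N_0$. Once the secant directions converge to the continuous normal line field $N$, the curve $\HMA(D)$ acquires at every point a tangent equal to that field, which simultaneously gives orthogonality and exhibits $\HMA(D)$ locally as the $C^1$ integral curve of $N$. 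The hard part will be exactly this quantitative estimate: controlling the displacement of the nearest point $c_t$ with no regularity of $\partial D$, so that the usual device of using tangent lines to $\partial D$ at the contact points is unavailable. I expect to handle this by playing off the disjointness and nesting of the foliating arcs (Theorem~\ref{Fer-eq-KP}) against the extremality of the disks, which pins the contact points closely enough to force the secant direction toward $N_0$.

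Finally I would dispose of the degenerate case in which the extremal disk is a half-plane by repeating the argument in the upper half-plane model, where the leaf is the semicircle on the chord $[a,b]\subset\R$, its midpoint is $\tfrac{a+b}{2}+i\tfrac{|b-a|}{2}$, and the bisector $N$ is the vertical line through $\tfrac{a+b}{2}$; the same reflection-symmetry argument applies, with the nearest-point description of $\HC(B)$ replaced by the equivalent one that $\HC(B)$ is the point of $\gamma(B)$ where the tangent is parallel to the chord. Since by Herron, Ibragimov and Minda \cite{HIM} every planar $2$-extremal domain is simply or doubly connected, only these model leaves occur, so the local statement established above covers every point of $D$, proving that $\HMA(D)$ is locally a $C^1$ curve orthogonal to each circular geodesic at its hyperbolic center.
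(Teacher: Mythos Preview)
Your perpendicular-bisector observation is correct and is essentially the same symmetry the paper exploits: reflection in $N$ fixes $B$, swaps $a\leftrightarrow b$, and so fixes $\gamma(B)$ and its Euclidean midpoint $\HC(B)$, which therefore lies on $N$ with $\gamma(B)\perp N$ there. So the orthogonality assertion is indeed \emph{equivalent} to showing that $\HMA(D)$ is tangent to $N_0$ at $c_0$. Where your proposal stalls is exactly at the step you flag as ``the hard part'': you need a \emph{first-order} squeeze on $c_t-c_0$, and nothing you have written produces one. Continuity of $B_z$, of the contact points, and of the bisectors $N_t$ gives only that $N_t\to N_0$; it does \emph{not} show that $\operatorname{dist}(c_t,N_0)=o(|c_t-c_0|)$, which is what tangency requires. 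The disjointness of leaves constrains $c_t$ only to one side of the curve $\gamma_0$, again a zeroth-order condition. Your nearest-point description of $\HC(B)$ is correct but, without control on how $w_t$ and $\gamma_t$ move to first order (and you have no regularity of $\partial D$ to supply this), it does not yield the needed estimate.

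The paper closes precisely this gap with a concrete geometric barrier you are missing. In the extremal disk $B$ with contact points $a,b$, take the two \emph{horodisks} $B_a,B_b\subset B$ internally tangent to $\partial B$ at $a$ and $b$, respectively, and passing through $\HC(B)$. Each $\partial B_{a}$, $\partial B_{b}$ is orthogonal to $\gamma(B)$ at $\HC(B)$ (since it is orthogonal at the tangency point $a$ or $b$, hence everywhere along $\gamma(B)$), so $B_a$ and $B_b$ are mutually tangent at $\HC(B)$ with common tangent line $N_0$. The key containment is that, in a neighbourhood $U$ of $\HC(B)$, one has $\HMA(D)\cap U\subset U\setminus(B_a\cup B_b)$. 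This traps $\HMA(D)$ in the cusp between two circles tangent to $N_0$, which forces the secant directions to converge to $N_0$ and simultaneously gives the $C^1$ conclusion. This horodisk barrier is the missing idea in your outline; once you have it, the envelope picture you set up goes through, but the foliation-and-extremality heuristic you propose in its place does not by itself supply a second-order trap.
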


\begin{proof}
Let $B$ be a $2$-extremal disk corresponding to the boundary point
$a$ and $b$. Let $B_a$ be the disk in $B$ with $a$ and $\HC(B)$ as
boundary points; $B_b$ is defined similarly. Note that $B_a$ and
$B_b$ are horodisks in $B$. The circle $\partial B_a$ is
tangent to $\partial B$ at $a$, so it is orthogonal to $\gamma(B)$
there, hence also at $\HC(B)$. Thus both $\partial B_a$ and
$\partial B_b$ are orthogonal to $\gamma(B)$ at $\HC(B)$ and, in
particular, $\HMA(D) \cap U \subset U\setminus (B_a\cup B_b)$
for some sufficiently small neighborhood $U$ of $\HC(B)$.
It is clear that $\HMA(D)$ is orthogonal to $\gamma(B)$ and has
smoothness $C^{1}$ at $\HC(B)$.
\end{proof}

For metric densities which are at least $C^2$ smooth it is
well-known that geodesics are locally unique \index{locally unique}
(i.e., through a given
point in a given direction there is only one geodesic). For metrics
defined by densities with less smoothness this is not the case. For
instance for the quasihyperbolic metric in the strip $\{x\in \Rt
\colon |x_2|<1\}$ we know that a geodesic consists of a circular
arc, a segment lying in the real axis and a second circular arc (any
two of these three pieces may of course be degenerate). In
particular, geodesics are not locally unique on the real axis in the
real direction.

It was shown in Theorem~\ref{Fer-eq-KP} that there is a unique
circular geodesic through every point in a $2$-extremal domain.
We next prove a stronger statement: there is no geodesic
which is tangent to a circular geodesic.

\begin{lemma}\label{uniqGeod}
Smooth geodesics of the K--P metric are locally unique in $2$-extremal domains
in the direction of the circular geodesic.
\end{lemma}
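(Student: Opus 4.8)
The plan is to compare the K--P metric near a circular geodesic with the hyperbolic metric of the associated extremal disk, for which geodesics are genuinely locally unique, and then to transfer that uniqueness back to $\mu_D$. First I would fix a point $z_0$ lying on a circular geodesic $\gamma_0=\gamma(B_0)$, where $B_0=B_{z_0}$ is the ($2$-)extremal disk at $z_0$, and use the M\"obius invariance of $\mu_D$ to normalise so that $B_0=B^2(0,1)$, $z_0=0$, and $\gamma_0$ is the diameter $(-1,1)$; the prescribed direction is then $e_1$. Throughout I would lean on the two structural facts already in place: that $\mu_D(\xi)=\lambda_{B_0}(\xi)$ for every $\xi\in\hat{K}_{z_0}=\gamma_0$ (Section~\ref{muSect} together with Theorem~\ref{Fer-eq-KP}), and that for every $\xi\in B_0$ one has $\mu_D(\xi)\le\lambda_{B_0}(\xi)$, by the infimum definition of $\mu_D$ and the uniqueness of the extremal disk, with \emph{strict} inequality whenever $\xi\notin\gamma_0$.

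The heart of the argument is the observation that the nonnegative function $\phi:=\log\lambda_{B_0}-\log\mu_D$ vanishes precisely on $\gamma_0$. Since $\lambda_{B_0}$ is real-analytic on $B_0$ and $\phi\ge0$ attains its minimum value $0$ all along $\gamma_0$, its full gradient must vanish there, so that $\nabla\log\mu_D=\nabla\log\lambda_{B_0}$ on $\gamma_0$; in particular the two densities agree to first order along the circular geodesic. I would then invoke the geodesic equation for a conformal metric $\rho\,|dz|$ in the plane, namely that a smooth unit-speed curve is a geodesic exactly when its signed Euclidean curvature equals the normal derivative $\partial_N\log\rho$. Because a smooth $\mu_D$-geodesic $\Gamma$ tangent to $\gamma_0$ at $z_0$ shares the point and direction of $\gamma_0$, its curvature at $z_0$ is forced to equal $\partial_N\log\mu_D(z_0)=\partial_N\log\lambda_{B_0}(z_0)$, which is exactly the curvature of the $h_{B_0}$-geodesic $\gamma_0$; thus $\Gamma$ osculates $\gamma_0$ to second order. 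As the gradient matching holds at every point of $\gamma_0$, the curves $\gamma_0$ and $\Gamma$ satisfy one and the same geodesic ODE with identical initial $1$-jet, and local uniqueness of solutions gives $\Gamma=\gamma_0$ near $z_0$, which is the assertion.

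I expect the main obstacle to be regularity: to make sense of $\partial_N\log\mu_D$ and to run the ODE-uniqueness step I must first show that $\mu_D$ is differentiable along $\gamma_0$ with the claimed gradient, and is regular enough transverse to $\gamma_0$ for the geodesic equation to pin down a unique curve. Differentiability on $\gamma_0$ I would extract by a two-sided sandwich: the upper bound $\mu_D\le\lambda_{B_0}$ controls the normal difference quotients from above, while the same estimate applied with the extremal disks tangent from the opposite side of $\gamma_0$ controls them from below, pinching the normal derivative to $\partial_N\log\lambda_{B_0}$. The delicate point is the hyperbolic centre $\HC(B_0)$, the single intersection of $\gamma_0$ with the hyperbolic medial axis $\HMA(D)$ and the one place where $\mu_D$ may fail to be smooth; there I would avoid the centre by establishing $\Gamma=\gamma_0$ on each side of $\HC(B_0)$ where $\mu_D$ is smooth and then close up across $\HC(B_0)$ by continuity, using that $\HMA(D)$ meets $\gamma_0$ orthogonally and is itself $C^1$ (Theorem~\ref{c1-og-thm}). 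Finally I would record the contrast with metrics of lower smoothness, such as the quasihyperbolic metric in a strip, where precisely this regularity fails and tangent geodesics are genuinely non-unique.
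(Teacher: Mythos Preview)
Your plan has a genuine gap at the ODE-uniqueness step. You correctly observe that $\phi=\log\lambda_{B_0}-\log\mu_D\ge 0$ vanishes on $\gamma_0$, and from this you want to conclude that $\nabla\log\mu_D=\nabla\log\lambda_{B_0}$ along $\gamma_0$. Even granting this, it only tells you that the geodesic equation for $\mu_D$ and for $\lambda_{B_0}$ agree \emph{at points of $\gamma_0$}. But the competing geodesic $\Gamma$ immediately leaves $\gamma_0$, and at those nearby points the geodesic ODE involves $\nabla\log\mu_D$ off $\gamma_0$, about which your argument says nothing. To invoke ``local uniqueness of solutions'' for the $\mu_D$-geodesic ODE you need the right-hand side to be Lipschitz, i.e.\ $\log\mu_D\in C^{1,1}$ in a full neighbourhood of $z_0$. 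Your sandwich only pins down the normal derivative \emph{on} $\gamma_0$; it does not control the modulus of continuity of $\nabla\log\mu_D$ nearby, and the one-sided inequality $\mu_D\le\lambda_{B_0}$ cannot do this by itself. Your own closing remark about the quasihyperbolic strip is exactly the cautionary example: there the density also attains a smooth minimum along the axis, yet geodesics tangent to the axis are non-unique because regularity fails transversally.

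The paper avoids this regularity issue altogether. Instead of comparing with $\lambda_{B_0}$ and invoking ODE theory, it compares $\mu_D$ with the explicit strip density $\tilde\mu(x)=2(1-x_1^2)^{-1}$ and uses the horodisk constraint on the level sets of $\mu_D$ (the same mechanism as in Theorem~\ref{c1-og-thm}) to obtain a quantitative two-sided bound $|\mu_D(x)-(1-x_1^2)^{-1}|\le C\,|x_1|\,x_2^2$ near the origin. It then writes a putative second tangent geodesic as a graph $y=c_2x^2+O(x^3)$, computes the $\mu_D$-lengths of this arc and of the straight chord to third order in $\epsilon$, and shows the chord is strictly shorter. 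This is a direct length comparison requiring no differentiability of $\mu_D$, only the integral estimate; that is precisely what your approach is missing.
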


\begin{proof}
Using an auxiliary M\"obius mapping we may restrict ourselves to the
circular geodesic $(-1,1)\subset \R$. More specifically, we show
that there is no other geodesic through the origin which is parallel
to the real axis there.

As before we denote by $\mu_D$ the density of the K--P metric in our
domain. By $\tilde \mu$ we denote the density of the K--P metric in
the domain $\{ x\in \Rt\colon |x_1|<1\}$. Obviously, $\tilde\mu(x) =
2(1-x_1^2)^{-1}$. As in the proof of Theorem~\ref{c1-og-thm} we find
that the level sets of $\mu_D$ are constrained by a pair of balls.
We restrict our attention to a small neighborhood of the origin.
Then the radii of these balls are greater than some constant $r>0$,
so we see that the level-sets of $\mu_D$ are approximated by the
level-sets of $\tilde\mu$ near the real axis. More precisely,
\[ \begin{split}
\frac1{\mu_D(x)} \ge \frac1{\tilde\mu(|x_1| + \Delta x)}
& = 1 - \Big(|x_1| + r \Big(1-\sqrt{1-x_2^2}\Big) \Big)^2 \\
& \ge 1 - \big(|x_1| + r x_2^2\big)^2
= 1 - x_1^2 + O(|x_1|\, x_2^2 +x_2^4).
\end{split}\]
Here $\Delta x$ is the maximal distance between the level-set of $\mu_D$
and $\tilde\mu$ at distance $x_2$ from the real axis.
A similar lower bound applies, so we have
\[ | \mu_D(x) - (1-x_1^2)^{-1} | \le C |x_1|\, x_2^2 \]
provided $x_2 =O(x_1)$.

Now suppose that there is a second smooth geodesic
through the origin that is parallel to the real axis.
Locally such a geodesic can be represented by
\[ y = f(x) = c_2 x^2 + O(x^3). \]
We also define $F\colon \R \to \Rt$ by $F(x) = (x,f(x))$. We assume
that $c_2>0$; the cases of negative coefficient or lower order
leading term are similar. We will show that for small enough
$\epsilon>0$, the segment $L_1=[0, F(\epsilon)]$ is shorter than the
curve $L_2=\{F(x) \colon 0<x<\epsilon\}$. Thus the latter curve is
certainly not a geodesic, which proves local uniqueness.
We also introduce the function $G\colon \R \to \Rt$ which parameterizes
$L_1$: $G(x) = (x,\frac{x}{\epsilon}f(\epsilon))$.

We start by calculating the length of $L_2$:
\[ \mu_D(L_2) = \int_{L_2} \mu_D(z)\, dz =
\int_0^\epsilon \mu(F(x))\sqrt{1 + f'(x)^2}\, dx.\]
We know that
\[ \mu(F(x)) = 1 - x^2 + O(x (c_2x^2)^2) = 1 - x^2 + O(x^5).\]
Thus we find that
\[\begin{split}
\mu(L_2)
& = \int_0^\epsilon \mu(F(x)) (1 + \tfrac12f'(x)^2 + O(f'(x)^4))\, dx \\
& = \int_0^\epsilon \big(1 - x^2 + O(x^5)\big)\big(1 + 2 c_2^2 x^2 +
O(x^4)\big)\, dx  \\
& = \int_0^\epsilon (1 + (2 c_2^2 -1) x^2 + O(x^4)) \, dx \\
& = \epsilon + (\tfrac23 c_2^2-\tfrac13)\epsilon^3  + O(\epsilon^5) .
\end{split}\]
For $L_1$ we calculate
\[ \begin{split}
\mu(L_1) & =
\int_0^\epsilon \mu(G(x))\sqrt{1 + (f(\epsilon)/\epsilon)^2}\, dx \\
& = (1 + \tfrac12c_2^2 \epsilon^2 + O(\epsilon^4))
\int_0^\epsilon \big(1-x^2 + O(x (\tfrac{x}\epsilon f(\epsilon))^2)\big)\,
dx \\
& = (1+ \tfrac12c_2^2 \epsilon^2) (\epsilon - \tfrac13 \epsilon^3) +
O(\epsilon^4) \\
& = \epsilon + (\tfrac12c_2^2 - \tfrac13) \epsilon^3 + O(\epsilon^4).
\end{split}
\]
A comparison with the expression for $\mu(L_2)$ shows that
$\mu(L_1) < \mu(L_2)$ whenever $\epsilon$ is sufficiently small, so
$L_2$ is not a geodesic.
\end{proof}

\begin{lemma}\label{orthogLem}
Let $D$ be $2$-extremal and doubly connected domain in $\Rt$.
Then a simple $C^1$ curve $\gamma$ in $D$ which is not contractible is
orthogonal to a circular geodesic at some point.
\end{lemma}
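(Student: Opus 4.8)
The plan is to reduce the statement to a min--max argument for a suitable ``radial'' function along $\gamma$. By Theorem~\ref{Fer-eq-KP}, since $D$ is $2$-extremal the circular geodesics foliate $D$; because $D$ is doubly connected and each circular geodesic is the hyperbolic geodesic of an extremal disk joining its two contact points with $\partial D$, each circular geodesic runs from one boundary component of $D$ to the other and meets $\partial D$ orthogonally at both endpoints (the extremal disk is internally tangent to $\partial D$ at each contact point). Thus the circular geodesics determine a continuous line field $g$ on $D$, and the conclusion ``$\gamma$ is orthogonal to a circular geodesic at some point'' is exactly the statement that the tangent line of $\gamma$ agrees with the orthogonal line field $g^{\perp}$ at some point of $\gamma$. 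Note that $\gamma$, being simple and non-contractible in the doubly connected domain $D$, is a simple closed curve winding once around the hole and hence separating the two boundary components.

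Granting for the moment a single-valued $C^{1}$ function $F\colon D\to\R$ with $\nabla F$ nowhere zero and everywhere parallel to $g$ (equivalently, whose level sets are the orthogonal trajectories of the circular-geodesic foliation), the lemma follows at once: the restriction $F\circ\gamma$ is continuous on the circle $\gamma$ and hence attains a maximum at some $t_{0}$, where $\frac{d}{dt}F(\gamma(t_{0}))=\langle \nabla F(\gamma(t_{0})),\gamma'(t_{0})\rangle=0$. Since $\nabla F$ is parallel to $g$ at $\gamma(t_{0})$, this forces $\gamma'(t_{0})\perp g(\gamma(t_{0}))$, i.e.\ $\gamma$ is orthogonal there to the circular geodesic through $\gamma(t_{0})$, as desired.

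So the heart of the matter---and the step I expect to be the main obstacle---is the construction of $F$, that is, showing that the Euclidean orthogonal trajectories of the circular geodesics close up into non-contractible loops rather than spiralling. Three facts should drive this. First, because the geodesics meet $\partial D$ orthogonally, their orthogonal trajectories run parallel to $\partial D$ near each boundary component and so are closed loops there. Second, by Theorem~\ref{c1-og-thm} the hyperbolic medial axis $\HMA(D)$ is a $C^{1}$ curve to which every circular geodesic is orthogonal at its hyperbolic center, so $\HMA(D)$ is already one closed orthogonal trajectory; the $C^{1}$ foliation structure it provides is what should rule out holonomy and let the orthogonal trajectories be organized as level sets of a function. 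Third, since $\sigma_{D}=\mu_{D}$ is a conformal metric, $\sigma_{D}$-orthogonality coincides with Euclidean orthogonality, so one may take $F$ to be the $\sigma_{D}$-distance to one boundary component and verify that its level sets meet the circular geodesics orthogonally; this both produces $F$ and shows its level sets are closed curves encircling the hole.

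As a cross-check (and an alternative if one prefers to avoid the global construction of $F$), one can run a degree argument on the angle function $\beta(t)$ between $\gamma'(t)$ and $g(\gamma(t))$. The total turning of $\gamma'$ is $2\pi$ by the Umlaufsatz, while the index of the line field $g$ around the hole is $1$---this is visible from $\HMA(D)$, along which $g$ is the normal direction to a simple closed curve and hence also turns by $2\pi$. Consequently $\beta$ has winding number $0$ and lifts to a single-valued continuous real function, and the extremum argument above (applied to $F\circ\gamma$, or directly to the radial coordinate) supplies a point where $\cos\beta$ vanishes, i.e.\ where $\beta=\pi/2$.
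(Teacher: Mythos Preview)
Your strategy is correct and is essentially what the paper does: build a ``radial'' function whose level curves are the orthogonal trajectories of the circular-geodesic foliation, then argue that $\gamma$ must be tangent to one of them. The paper's implementation, however, is much more direct than yours. Your proposed choice $F=\sigma_D$-distance to a boundary component does not work as stated: $\sigma_D$ (equivalently $\mu_D$) is complete and the circular geodesics are geodesic \emph{lines} of infinite $\sigma_D$-length, so that distance is identically $+\infty$. The paper sidesteps the whole holonomy discussion by writing $D=G\setminus K$ and, for each $x\in D$, letting $l_K(x)$ and $l_G(x)$ be the \emph{Euclidean} lengths of the two arcs into which $x$ cuts its circular geodesic $C_x$. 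The level sets $L_r=\{x: l_G(x)=r\,l_K(x)\}$ are then automatically well-defined simple closed curves, and the same horodisk argument as in Theorem~\ref{c1-og-thm} shows each $L_r$ is $C^1$ and orthogonal to every circular geodesic. The paper phrases the final step contrapositively rather than via an extremum: if $\gamma$ were nowhere orthogonal to the foliation, then $T_x\cdot\gamma'(x)$ would have constant sign along $\gamma$, so $\gamma$ could meet the level curve $L_r$ through any chosen $x_0\in\gamma$ only once---contradicting that $\gamma$ is a closed loop around $K$.

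So the one repair you need is to replace the $\sigma_D$-distance by the Euclidean arc-length ratio along the circular geodesic through $x$; this gives you the single-valued function for free and makes your degree/Umlaufsatz alternative unnecessary.
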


\begin{proof}
The claim clearly holds in the special case $D^c \subset \R$.
Thus we assume that $D$ has a non-degenerate boundary component.
Since our domain is doubly connected, it is a ring domain of the
type $G\setminus K$, where $G$ is open and $K$ is a
closed subset of $G$. We assume without loss of generality
that $\infty \not\in \overline{G}$.

By Theorem~\ref{Fer-eq-KP}, $D$ is foliated by circular geodesics.
We think of circular geodesics as directed curves which start
at $K$. We denote the circular geodesic through $x$ by $C_x$, and the
tangent of this curve at $x$ by $T_x$.
If $\gamma$ is not orthogonal to any of the circular geodesics,
then either $T_x \cdot \nabla \gamma(x) > 0$ for all $x$, or
$T_x \cdot \nabla \gamma(x) < 0$ for all $x$, where $\nabla$
denotes the gradient vector.

A point $x\in \gamma$
divides $C_x$ in two parts whose lengths are denoted by $l_K(x)$ and
$l_G(x)$.
Let $L_r$ be the set of points $x\in D$ such that $l_G(x) = r\, l_K(x)$.
As in Theorem~\ref{c1-og-thm}, we find that the simple closed curve
$L_r$ is $C^1$ and orthogonal to all circular geodesics.
Let $x_0 \in \gamma$ and $r = l_G(x_0) /l_K(x_0)$. Then $x_0\in L_r$.
Now if $T_x \cdot \nabla \gamma(x) > 0$ for all $x$, then
we see that $\gamma$ will not cross $L_r$ again. Therefore $\gamma$ cannot
be a closed curve, which is a contradiction. The same conclusion
holds if $T_x \cdot \nabla \gamma(x) < 0$ for all $x$.
Thus there must be a point of orthogonality between the curves.
\end{proof}

We are now ready to prove the main result of this chapter. Note that this result
combined with the results from Section~\ref{muSect} takes care of the
isometry problem for the K--P metric,
except in some cases of simply connected planar domains.

\begin{theorem}
Let $D$ be $2$-extremal and doubly connected domain in $\Rt$.
Then every isometry of $\mu_D$ is a M\"obius mapping.
\end{theorem}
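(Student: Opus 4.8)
The plan is to split the argument according to whether the image domain $D'=f(D)$ is itself $2$-extremal. Recall first that every K--P isometry $f$ is conformal, that $f^{-1}$ is again a K--P isometry, and that $f$ is in particular a homeomorphism; hence $D'$ is again doubly connected. Since every extremal disk meets the boundary in at least two points, $D'$ fails to be $2$-extremal precisely when some extremal disk meets $\partial D'$ in at least three points. In that case, choosing a point $z'$ in the (now nonempty) interior of the corresponding hyperbolic convex hull yields $\card(\partial B_{z'}\cap\partial D')\ge 3$, so Theorem~\ref{KPIsom} applies directly and $f$ is M\"obius. Thus I may assume from now on that $D'$ is $2$-extremal (and doubly connected).

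In this remaining case the idea is to show that $f$ carries at least one circular geodesic of $D$ onto a circular geodesic of $D'$, using the hyperbolic medial axis as a bridge. By Theorem~\ref{Fer-eq-KP} the domain $D$ is foliated by circular geodesics, each meeting $\HMA(D)$ exactly once at its hyperbolic center; combined with the local $C^1$ regularity of Theorem~\ref{c1-og-thm} and the topology of a doubly connected domain, this shows $\HMA(D)$ is a simple, non-contractible, closed $C^1$ curve. Its image $f(\HMA(D))$ is then a simple, non-contractible $C^1$ curve in $D'$, since $f$ is a conformal homeomorphism. Applying Lemma~\ref{orthogLem} to $D'$, the curve $f(\HMA(D))$ is orthogonal to some circular geodesic $\gamma'$ of $D'$ at a point $p'$. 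Set $p:=f^{-1}(p')\in\HMA(D)$. As $f$ is conformal, orthogonality is preserved, so $\HMA(D)$ is orthogonal to the geodesic $f^{-1}(\gamma')$ at $p$. On the other hand $p=\HC(B)$ for some $2$-extremal disk $B$, and by Theorem~\ref{c1-og-thm} the circular geodesic $\gamma(B)$ is also orthogonal to $\HMA(D)$ at $p$. Hence $\gamma(B)$ and the smooth geodesic $f^{-1}(\gamma')$ pass through $p$ with the same tangent direction, namely the normal to the $C^1$ curve $\HMA(D)$ at $p$, and Lemma~\ref{uniqGeod} forces $f^{-1}(\gamma')=\gamma(B)$ near $p$. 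Equivalently, $f$ maps the circular geodesic $\gamma(B)$ onto the circular geodesic $\gamma'$.

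To finish, I would normalize by M\"obius maps, which leave the K--P metric invariant: precomposing and postcomposing $f$ with suitable M\"obius transformations, I may assume the extremal disks carrying $\gamma(B)$ and $\gamma'$ are both the unit disk $\D$, that $\gamma(B)=\gamma'=(-1,1)$, and that the resulting isometry $g$ fixes $0$. On a circular geodesic the K--P density agrees with the hyperbolic density of its extremal disk (Section~\ref{muSect}), so $g$ restricts to an orientation-preserving self-isometry of the hyperbolic line $(-1,1)$ fixing $0$; composing with $z\mapsto -z$ if necessary, $g$ is the identity on $(-1,1)$. Since $g$ is holomorphic on its domain and equals the identity on the interval $(-1,1)$, which has a limit point there, the identity theorem gives $g=\mathrm{id}$. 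Undoing the normalizations shows that $f$ is the restriction of a M\"obius transformation.

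The main obstacle is the hard case, and within it the two global-to-local transfers. First, one must verify that $\HMA(D)$ is genuinely a single simple non-contractible $C^1$ loop rather than merely a local $C^1$ graph, which requires combining the foliation of Theorem~\ref{Fer-eq-KP} with the transversality statement of Theorem~\ref{c1-og-thm} and the topology of the ring domain. Second, the crucial geometric point is that orthogonality to the medial axis pins down the geodesic direction uniquely, so that local uniqueness of K--P geodesics in the circular direction (Lemma~\ref{uniqGeod}) upgrades a single orthogonality point into the preservation of an entire circular geodesic; everything after that is soft.
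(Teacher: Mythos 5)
Your proposal is correct and follows essentially the same route as the paper's proof: reduce to the case where $f(D)$ is also $2$-extremal, push $\HMA(D)$ forward, apply Lemma~\ref{orthogLem} to find an orthogonal intersection with a circular geodesic, pull back and use Theorem~\ref{c1-og-thm} together with the local uniqueness of Lemma~\ref{uniqGeod} to identify a circular geodesic mapped to a circular geodesic, then conclude as in the first proof of Theorem~\ref{KPIsom}. Your explicit remark that one must check $\HMA(D)$ is a single simple non-contractible closed $C^1$ loop is a point the paper glosses over, but otherwise the arguments coincide.
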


\begin{proof}
Let $f$ an isometry of $\mu_D$.
As in the previous proof, we may assume that
$D=G\setminus K$, where $G$ is open and bounded, and $K$ is a
closed subset of $G$.
So every circular geodesic connects $\partial G$ to $\partial K$.
By Theorem
\ref{c1-og-thm}, for each $2$-extremal disk $B$ the hyperbolic medial
axis $\HMA(D)$ is orthogonal to the circular geodesic $\gamma(B)$ at
the hyperbolic center $\HC(B)$. We recall that every
isometry is a conformal mapping in the usual, Euclidean sense.
Hence, if we show that the
isometry coincides with a M\"obius map on an arc of a circle, then
it follows that $f$ is M\"obius.

By \cite[Theorem~B]{HIM} we know that every isometry of a domain which
is not 2-extremal is M\"obius. Note that $f^{-1}$ is also an
isometry. Thus, it is enough for us to consider the case when $f(D)$
is also 2-extremal. Now, Theorem \ref{c1-og-thm} shows that $\HMA(D)$ is
a $C^1$ curve and thus image of $\HMA(D)$ under $f$ is
a simple closed $C^1$ curve in $f(D)$. By Lemma~\ref{orthogLem},
$f(\HMA(D))$ is orthogonal to a circular geodesic $C'$ at some
point, say $f(x)$. Since $f^{-1}$ is an isometry, we find that
$f^{-1}(C')$ is a geodesic line in $D$. Since $f^{-1}$ is conformal,
we see that $f^{-1}(C')$ is orthogonal to $\HMA(D)$ at $x$.
But by Theorem~\ref{c1-og-thm}, $\HMA(D)$ is orthogonal to
a circular geodesic $C$ at $x$, and since geodesics are unique by
Lemma~\ref{uniqGeod}, it follows that $C=f^{-1}(C')$.
Since $f$ maps an arc of a circle to an arc of a circle and is a
K--P isometry, we easily conclude as in the first
proof of Theorem~\ref{KPIsom} that $f$ coincides with a M\"obius map
on $C$, which completes the proof.
\end{proof}

We end this section by presenting two results on the isometries of
the Ferrand and the quasihyperbolic metrics.

\begin{corollary}\label{k-cor}
Let $K\subset\Rt$ be convex, closed and non-degenerate,
and set $D=\Rt\setminus K$.
Then every isometry $f\colon D\to \Rt$ of the quasihyperbolic
metric is a similarity mapping.
\end{corollary}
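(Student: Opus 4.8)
The plan is to run the standard three-step isometry program (conformal, then M\"obius, then similarity), using convexity of $K$ to supply what the smoothness hypotheses supply in the corresponding theorems for $C^3$ domains. First, by \cite[Theorem~2.6]{MO} every quasihyperbolic isometry is conformal, so $f$ is conformal and it remains only to prove that it is a similarity; write $D'=f(D)$, and note that $f^{-1}$ is also a quasihyperbolic isometry, so $D$ and $D'$ play symmetric roles. The decisive feature here is convexity: the nearest-point projection $\pi\colon D\to\partial K$ is single-valued and continuous, the cut locus is empty, and consequently $\MA(D)=\emptyset$. Hence the Euclidean segments $[z,\pi(z))$ foliate $D$, along each of them $\delta(w)=|w-\pi(z)|$, and extending such a segment away from $K$ along the same ray keeps $\pi$ fixed (again by convexity), so each ray is in fact a complete quasihyperbolic geodesic line with one end converging to the single boundary point $\pi(z)$ and the other escaping to $\infty$. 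I would record these facts first as the geometric backbone.

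Next I would single out these normal geodesics metrically, exactly as in the proof of \cite[Theorem~4.3]{Ha10} but adapted to the concave side. At every boundary point where $\partial K$ is $C^2$ the curvature formula of Proposition~\ref{MOprop} applies; since $K$ is convex the osculating circle lies inside $K$, so the signed radius satisfies $R_\zeta<0$ and therefore
\[
\kappa_D(z)=-\frac{1}{1-\delta(z)/R_\zeta}\in(-1,0),
\]
increasing monotonically to $0$ as $\delta(z)\to\infty$ and tending to $-1$ as $z\to\partial D$. Because $R_\zeta$ is fixed along a given normal ray, this monotone curvature profile distinguishes the normal ray from every other geodesic through a point, so an isometry must carry nearest-point geodesics of $D$ to nearest-point geodesics of $D'$. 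The $C^2$ points of a convex boundary are dense and $f$ is smooth (being conformal), so I would upgrade this from the dense set of regular boundary points to all of $D$ by continuity; where $\partial K$ has corners or flat pieces the supporting-line description of convexity, together with the monotonicity of $\delta$ along the rays, gives the same conclusion without invoking curvature.

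Since a nearest-point geodesic is, in any domain, a Euclidean segment, and $f$ maps the nearest-point geodesics of $D$ onto those of $D'$ by the previous step, $f$ sends segments to segments; as in \cite[Theorem~4.3]{Ha10} this forces $f$ to be the restriction of a M\"obius transformation. Finally I would promote ``M\"obius'' to ``similarity.'' For a non-degenerate convex body $K$ the point $\infty$ is an isolated boundary point of $D$ in $\Rtbar$, and the quasihyperbolic end of $D$ at $\infty$ (where $\delta(z)\approx|z|$ and the metric is asymptotically cylindrical) is metrically unlike any end at a finite boundary point. An isometry must match ends, so the M\"obius extension of $f$ fixes $\infty$ and is therefore affine, i.e.\ a similarity; alternatively, once enough regularity is in hand one may quote Proposition~\ref{c1lem2}. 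The excluded half-plane is precisely the case where $\infty$ fails to be isolated, which is why non-degeneracy of $K$ is required.

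The main obstacle is the combination flagged in the text just before the statement: as seen from $D$ the boundary is concave, and it need not be smooth. Convexity of $K$ is the single tool that defuses both difficulties — it yields the global normal foliation and the empty medial axis with no smoothness assumption, and it forces $f(\infty)=\infty$ in the last step. I expect the most delicate point to write carefully to be the metric characterization of the normal geodesics at non-smooth boundary points, where the curvature formula is unavailable and one must argue instead with supporting lines and the monotonicity of $\delta$ along the rays.
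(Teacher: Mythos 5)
Your proof takes a genuinely different route from the paper's, and the step you yourself flag as ``the most delicate point'' is where it breaks down. The central claim --- that the monotone curvature profile along a normal ray distinguishes it from every other geodesic through a point --- is false for general convex $K$. Take $K$ a square: on each of the four slabs of $D$ lying over an edge, $\delta(z)$ is the distance to the edge's line, so the quasihyperbolic metric coincides locally with that of a half-plane and $\kappa_D\equiv -1$ there; on each of the four sectors of points whose nearest boundary point is a corner $c$, $\delta(z)=|z-c|$, so the metric coincides locally with that of the punctured plane and $\kappa_D\equiv 0$ there. In both regions the metric is locally homogeneous with locally constant curvature, so no argument based on the curvature (or its gradient) can single out the normal rays. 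Your fallback for non-smooth boundary --- ``the monotonicity of $\delta$ along the rays'' --- is not a quasihyperbolic-isometry invariant, so it cannot be invoked in place of curvature. This is not a removable technicality: the discussion immediately preceding the corollary states that the curvature method of \cite{Ha10} needs $C^3$ (curvature \emph{and} its gradient), can be relaxed to $C^2$ except for strictly convex and strictly concave domains, and that this corollary exists precisely to dispose of the concave case by a different route. (Your last step is also shaky as written: the end of $D$ at $\infty$ and the end at an isolated finite boundary point are both asymptotically flat cylinders for the quasihyperbolic metric, so ``the ends are metrically unlike'' needs a real argument; but this is moot given the earlier gap.)

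For comparison, the paper avoids boundary curvature entirely. It notes $\MA(D)=\emptyset$ and uses Proposition~\ref{MOprop} only through the dichotomy ``$\kappa_D=-\infty$ exactly on the medial axis'': since an isometry preserves curvature, $\MA(f(D))=\emptyset$ as well, whence $f(D)=\Rt\setminus K'$ with $K'$ convex. It then observes that $k_G=\mu_G$ whenever $G$ is the complement of a closed convex set (through every $z\in G$ the supporting line at the nearest boundary point yields a half-plane $H\subset G$ with $\dist(z,\partial H)=\delta(z)$, so $\delta(z)^{-1}\le\mu_G(z)\le\lambda_H(z)=\delta(z)^{-1}$). Hence $f$ is an isometry of the K--P metric of a $2$-extremal doubly connected domain, and the chapter's main theorem --- a global argument via circular geodesics and the hyperbolic medial axis --- finishes the proof. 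If you want to salvage your approach, you would need a global, not pointwise, characterization of the normal foliation; the reduction to $\mu_D$ is exactly the device that supplies one.
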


\begin{proof}
It is clear that $\MA(D)=\emptyset$. From Proposition~\ref{MOprop}
we see that this implies $\MA(f(D))=\emptyset$. From this it
follows easily that $f(D)=\Rt\setminus K'$, where $K'$ is convex. Moreover,
we easily see that $k_G = \mu_G$ if $G$ is the complement of a
convex closed set. Thus $k_D = \mu_D$ and $k_{f(D)} = \mu_{f(D)}$.
Therefore $f$ is an isometry of the K--P metric, so the claim
follows from the previous theorem.
\end{proof}

The next result deals with Ferrand isometries in the special case
of a domain with a circular arc as part of its boundary. Although this
is quite a restrictive assumption, we would like to point out that so far
no results whatsoever have been derived for the isometries of this metric.

\begin{theorem}\label{Fisom}
Let $D\subset\R^2$ be a domain, and $f\colon D\to\R^2$ be a Ferrand
isometry. Assume that there exists a disk $B\subset D$
with the property that $\p B\cap\p D$ contains an arc $\gamma$.
Then $f$ is the restriction of a M\"obius transformation.
\end{theorem}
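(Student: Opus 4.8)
The plan is to run the analytic Schwarz-lemma argument that was used for the second (analytic) proof of Theorem~\ref{KPIsom}, exploiting the extended Schwarz lemma (Theorem~\ref{ESL}). First I would record that $f$ is automatically conformal: by \cite[Theorem~2.6]{MO} every isometry of a continuously-weighted conformal metric is conformal, so $f$ is a conformal (sense-preserving or -reversing) homeomorphism $D\to D':=f(D)$, and the infinitesimal isometry condition reads $\sigma_{D'}(f(z))\,|f'(z)|=\sigma_D(z)$, i.e. $f^\star[\sigma_{D'}]=\sigma_D$. As in the first proof of Theorem~\ref{KPIsom}, it then suffices to show that $f$ agrees with a M\"obius transformation on one circular arc, since a conformal map coinciding with a M\"obius map on an arc is globally M\"obius.

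The first real step is to localize the structure coming from the hypothesis. Since $\gamma\subset\p B\cap\p D$ is an arc, $\card(\p B\cap\p D)\ge 3$, so $B$ is an extremal disk, and by \cite[Proposition~2.5]{HIM} $B$ is the extremal disk $B_z$ for every $z$ in the hyperbolic convex hull $\widehat K$ of $\p B\cap\p D$ in $B$; in particular $\widehat K$ contains the hull of $\gamma$, which is a genuine two-dimensional region foliated by hyperbolic geodesics of $B$ with both endpoints on $\gamma$. Each such geodesic is a circular geodesic of $D$, so by the identity $\mu_D(z)=\sigma_D(z)$ valid on circular geodesics (Section~\ref{muSect}) together with $\mu_D(z)=\lambda_{B_z}(z)=\lambda_B(z)$, I obtain that $\sigma_D=\mu_D=\lambda_B$ holds throughout the open set $\widehat K$. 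This is the Ferrand analogue of the flat region $\hat K_{z'}$ used in the K--P proof, except that here the flat region sits in the source domain because the hypothesis is imposed on $D$.

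Next I would transfer this to the image. Because $f^\star[\sigma_{D'}]=\sigma_D=\lambda_B$ on $\widehat K$, the pullback $\sigma_{D'}=(f^{-1})^\star[\lambda_B]$ on the open set $\Omega':=f(\widehat K)$, so $\sigma_{D'}$ is a real-analytic metric of constant curvature $-1$ there. Fix $a$ in the interior of $\widehat K$, put $b:=f(a)$, and let $B_b$ be the extremal disk of $D'$ at $b$. For $g:=f^{-1}$ one computes $g^\star[\lambda_B]=\sigma_{D'}\le\mu_{D'}\le\lambda_{B_b}$ on $B_b\cap\Omega'$ near $b$, using $\sigma_{D'}\le\mu_{D'}=\inf\lambda_{B'}\le\lambda_{B_b}$. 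This is exactly the inequality $f^\star[\lambda']\le\lambda$ required by Theorem~\ref{ESL} (with source $B_b$, target $B$, map $g$). Provided equality holds at $b$, Theorem~\ref{ESL} forces $g$ to be a conformal covering of $B$ by $B_b$, hence a M\"obius identification of the two disks; thus $f$ coincides with a M\"obius map on the circular geodesic through $a$, and by conformality on all of $D$.

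The hard part will be verifying the equality hypothesis at $b$, namely $\sigma_{D'}(b)=\mu_{D'}(b)$, which is equivalent to $b$ lying on a circular geodesic of $D'$. The plan here is to use the rigidity that $\sigma_{D'}$ has constant curvature $-1$ on the whole open set $\Omega'$: since $\sigma_{D'}\le\mu_{D'}\le(2/\sqrt 3)\,\sigma_{D'}$ and $\sigma_{D'}=\mu_{D'}$ occurs precisely on circular geodesics, a curvature-$(-1)$ Ferrand metric on a two-dimensional set cannot avoid circular-geodesic points; concretely one expects to show that $D'$ must itself carry a circular boundary arc facing $\Omega'$, so that $\Omega'$ meets a circular geodesic and the base point $a\in\widehat K$ can be chosen with $b=f(a)$ on it. Establishing this curvature-rigidity statement for $\sigma_{D'}$ (that coincidence with a hyperbolic density on an open set localizes a circular boundary arc) is the crux of the argument; everything else is a direct transcription of the K--P proof.
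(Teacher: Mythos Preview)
Your overall architecture is exactly the paper's: reduce to Theorem~\ref{ESL} by producing the inequality chain $f^\star[\lambda_{B'}]=\sigma_{D}\le\mu_{D}\le\lambda_{B_a}$ with equality at a chosen base point. You have also correctly located the only nontrivial step, namely finding a point where $\sigma=\mu$ on the side of the map that does \emph{not} carry the boundary arc.

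The gap is in your plan for that step. Your ``curvature-rigidity'' argument (that a constant-curvature $-1$ Ferrand density on an open set $\Omega'$ must force a circular boundary arc of $D'$) is only a hope, not an argument: there is no mechanism in your proposal linking curvature of $\sigma_{D'}$ to the existence of circular geodesics, and the paper does not prove or use any such statement. What the paper does instead is completely concrete and avoids curvature. First it swaps roles (replacing $f$ by $f^{-1}$, which is harmless since both are Ferrand isometries) so that the boundary arc $\gamma'$ lives in $\partial D'$; then it looks for the equality point $\sigma_D(a)=\mu_D(a)$ in the \emph{source} set $G=f^{-1}(G')$ via a dichotomy on the extremal-disk decomposition of $G$. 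Either two points $x,y\in G$ have disjoint hulls $\hat K_x,\hat K_y$, in which case connectedness of $G$ produces a point $a\in G\cap\partial\hat K_x$, which automatically lies on a circular geodesic and hence satisfies $\sigma_D(a)=\mu_D(a)$; or all of $G$ sits in a single hull $\hat K\subset B$. In this second case one has a conformal map $f^{-1}\colon B'\to B$ with $|f^{-1}(z)|\to 1$ as $z\to\gamma'$, so the Schwarz Reflection Principle gives an analytic extension across $\gamma'$ whose image contains an open arc $\gamma\subset\partial B\cap\partial D$; the hyperbolic hull of that arc then furnishes plenty of points with $\sigma_D=\mu_D$. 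Either branch supplies the base point needed for Theorem~\ref{ESL}.

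So your framework is right, but you should replace the vague curvature-rigidity paragraph with this $\hat K$-dichotomy plus Schwarz reflection; that is the missing idea.
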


We will prove this claim using Theorem~\ref{ESL}. In order to
conform with the notation of that theorem, we will actually prove
the following claim, which is easily seen to be equivalent to
the previous theorem.

\begin{lemma}
Let $D\subset\R^2$ be a domain, and $f\colon D\to\R^2$ be a Ferrand
isometry. Assume that there exists a disk $B'\subset D'$, $D'=f(D)$,
with the property that $\p B'\cap\p D'$ contains an arc $\gamma'$.
Then $f$ is the restriction of a M\"obius transformation.
\end{lemma}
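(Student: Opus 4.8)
The plan is to mimic the analytic proof of Theorem~\ref{KPIsom}, replacing the K--P extremal disk on the target side by the genuine disk $B'$ and exploiting the arc $\gamma'$ to locate a point where a Schwarz--Pick inequality is sharp. First I would record the two standing facts that make the analytic machine run: every Ferrand isometry is conformal (step (1), from \cite{MO}), and $f^{-1}$ is again a Ferrand isometry. Conformality together with the isometry property gives the infinitesimal identity $\sigma_{D'}(f(z))\,|f'(z)| = \sigma_D(z)$ for all $z\in D$, by the same limit computation $\lim_{w\to z}\big(\sigma_{D'}(f(w),f(z))/|f(w)-f(z)|\big)\big(|f(w)-f(z)|/|w-z|\big)$ used in the analytic proof of Theorem~\ref{KPIsom}; in the notation of Theorem~\ref{ESL} this reads $f^\star[\sigma_{D'}] = \sigma_D$.

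Second, I would show that the arc forces the Ferrand density of $D'$ to coincide with a hyperbolic disk density on a two-dimensional region. Fix two points $p,q\in\gamma'$ and let $\Gamma'$ be the hyperbolic geodesic of $B'$ joining them; it is a circular arc orthogonal to $\partial B'$ with endpoints $p,q\in\partial D'$. For every $z\in\Gamma'$ monotonicity of the Ferrand metric gives $\sigma_{D'}(z)\le\sigma_{B'}(z)=\lambda_{B'}(z)$, while, since $p,q$ are the endpoints of a $B'$-geodesic through $z$, the admissible pair $p,q\in\partial D'$ yields $\sigma_{D'}(z)\ge |p-q|/(|p-z|\,|z-q|)=\lambda_{B'}(z)$; hence $\sigma_{D'}=\lambda_{B'}$ on $\Gamma'$. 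Letting $p,q$ vary over $\gamma'$, these circular geodesics sweep out an open region $R\subset B'$ abutting $\gamma'$, and $\sigma_{D'}=\lambda_{B'}$ throughout $R$ (so also $\mu_{D'}=\sigma_{D'}=\lambda_{B'}$ there). Choosing $b\in R$ and putting $a=f^{-1}(b)$, the identity from the first step localizes to the \emph{equality} $\sigma_D = f^\star[\lambda_{B'}]$ on a whole neighbourhood $U$ of $a$ (shrink $U$ so that $f(U)\subset R$).

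Third comes the ESL step. Let $B_a\subset D$ be the K--P extremal disk at $a$; it exists for \emph{any} domain by the Jung's-theorem construction recalled in Subsection~\ref{muSect}, and satisfies $\mu_D(a)=\lambda_{B_a}(a)$. On $B_a$ monotonicity gives $f^\star[\lambda_{B'}]=\sigma_D\le\mu_D\le\lambda_{B_a}$, which is exactly the inequality hypothesis of Theorem~\ref{ESL} for the map $f\colon B_a\to B'$. If this inequality is an equality at the base point $a$, then Theorem~\ref{ESL} forces $f$ to restrict to a holomorphic covering $B_a\to B'$; as both are disks this covering is a conformal bijection, i.e.\ a M\"obius map, and since $f$ is analytic on the connected set $D$ and agrees with this M\"obius map on the open set $B_a$, the identity theorem propagates the agreement to all of $D$. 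Thus $f$ is the restriction of a M\"obius transformation.

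The hard part is precisely securing the equality at $a$, i.e.\ showing that $\sigma_D(a)=\mu_D(a)$ (equivalently, that $a$ lies on a circular geodesic of $D$). For the K--P metric this holds automatically, which is what makes the proof of Theorem~\ref{KPIsom} short; for the Ferrand metric it is a genuine point, and it is here that the arc hypothesis must do its work. The natural route is to transport the circular geodesic $\Gamma'\subset R$ (along which $\sigma_{D'}=\mu_{D'}$) by $f^{-1}$ to a complete $\sigma_D$-geodesic $\Lambda$ through $a$, to observe that along $\Lambda$ the metric $\sigma_D=f^\star[\lambda_{B'}]$ is a \emph{smooth} conformal metric of constant curvature $-1$ (being the conformal pullback of a hyperbolic density), and to use this rigidity together with $\sigma_D\le\mu_D=\lambda_{B_a}$ to conclude that $\Lambda$ is a circular geodesic and hence $\sigma_D(a)=\lambda_{B_a}(a)$. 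I expect this curvature comparison, rather than the ESL application itself, to be the main obstacle; failing a clean rigidity argument, I would retreat to choosing $b$ (hence $a$) so that $B_a$ meets $\partial D$ in exactly the two images of $p,q$, placing $a$ on the corresponding chord and delivering the required equality directly.
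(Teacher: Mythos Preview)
Your overall architecture matches the paper exactly: conformality, the pullback identity $f^\star[\sigma_{D'}]=\sigma_D$, the observation that $\sigma_{D'}=\lambda_{B'}$ on the hyperbolic convex hull $G'$ of $\gamma'$ in $B'$, and then the chain $f^\star[\lambda_{B'}]=\sigma_D\le\mu_D\le\lambda_{B_a}$ fed into Theorem~\ref{ESL}. You have also correctly isolated the one genuine difficulty: producing a point $a\in G:=f^{-1}(G')$ with $\sigma_D(a)=\mu_D(a)$.

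Your two proposed routes to this point, however, do not close the gap. The curvature idea---that $\sigma_D=f^\star[\lambda_{B'}]$ has curvature $-1$ on $U$ and is dominated by $\lambda_{B_a}$---does not by itself force equality anywhere: two distinct hyperbolic metrics on a disk can satisfy a strict inequality throughout an open set. Your fallback, arranging that $B_a$ touch $\partial D$ exactly at ``the images of $p,q$'', presupposes that $f^{-1}$ carries $p,q\in\gamma'\subset\partial D'$ to points of $\partial D$, which you have not established (and which is in fact part of what needs proving).

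The paper's argument for this step is quite different and worth knowing. One runs a dichotomy on the K--P extremal disks in $D$. If some two points $x,y\in G$ have disjoint hulls $\hat K_x,\hat K_y$, then connectedness of $G$ produces a point $a\in G\cap\partial\hat K_x$; such a boundary point lies on a circular geodesic, so $\sigma_D(a)=\mu_D(a)$ (this is the equality characterisation from Section~\ref{muSect}). Otherwise $\hat K_x=\hat K_y$ for all $x,y\in G$, so all of $G$ sits inside a single extremal disk $B$, which is then not $2$-extremal. Now $f^{-1}$ is a conformal map of $B'$ into $B$ with $|f^{-1}(z)|\to\partial B$ as $z\to\gamma'$; Schwarz reflection extends $f^{-1}$ analytically across $\gamma'$, and the identity theorem prevents $\gamma'$ from collapsing to a point. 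Hence $f^{-1}(\gamma')\subset\partial B\cap\partial D$ contains an arc $\gamma$, and every point of the hyperbolic convex hull of $\gamma$ in $B$ satisfies $\sigma_D=\mu_D$. Either branch supplies the required $a$, and your ESL finish then goes through verbatim.
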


\begin{proof}
Since $f$ is conformal we see as in the second proof of Theorem~\ref{KPIsom}
that
\[
f^\star[\sigma_{D'}]=\sigma_{D'}(f(z))|f^\prime(z)|=
\sigma_D(z).
\]
Let $G'$ be the interior of the hyperbolic convex hull of $\gamma'$
in $B'$. Then one can easily see that
$\sigma_{D'}(x)=\lambda_{B'}(x)$ for all $x\in G'$. Let
$G=f^{-1}(G')$. First we claim that there exists a point $a\in G$
with $\sigma_D(a)=\mu_D(a)$. Using this claim we obtain
\[
f^\star[\lambda_{B'}]=f^\star[\sigma_{D'}]=\sigma_D\leq\mu_D\leq\lambda_{B_a},
\]
with equality holding at the point $x=a$. The proof is then completed
by invoking Theorem~\ref{ESL}. Thus, it remains to prove the claim.

Observe first that if there exist points $x, y\in G$ with
$\hat{K}_x\cap\hat{K}_y=\emptyset$, then due to the connectedness of $G$
there exists a point $a\in G\cap\p\hat{K}_x$ (i.e., $a$ lies on a
circular geodesic and hence $\sigma_D(a)=\mu_D(a)$; see
Section~\ref{muSect}). We can now assume that $\hat{K}_x=\hat{K}_y$
for all $x,y\in G$. In particular, all the points of $G$ share a
common extremal disk, which we can assume to be the unit disk $B$
about the origin. Put $K=\p B\cap\p D$, and let $\hat{K}$ be the
hyperbolic convex hull of $K$ in $B$. Note that $\hat{K}_x=\hat{K}$
for each $x\in G$. Since $G\subset\hat{K}\subset B$, $B$ is not
$2$-extremal. Hence we have a conformal map $f^{-1}$ of $B'$ into
$B$ with a property that $|f^{-1}(z)|\to 1$ as $z\to\gamma'$. Then the
Schwarz Reflection Principle implies that $f^{-1}$ has an analytic
continuation onto $\gamma'$ and by the identity theorem it can not
map $\gamma'$ onto a single point. Thus, the set
$f^{-1}(\gamma')\subset\p B\cap\p D$ contains an open arc, say
$\gamma$. Then $\sigma_D(a)=\mu_D(a)$ for each point of the
hyperbolic convex hull of $\gamma$, as required.
\end{proof}


\chapter{\bf CERTAIN CLASSES OF UNIVALENT FUNCTIONS\\ AND
RADIUS PROBLEMS}\label{chap5}
 This chapter is devoted to the study of certain subclasses of
the class $\es$ of univalent analytic functions with an aim to
obtain coefficient conditions for functions to be
in some subclasses of $\es$ and radius problems.
Section \ref{chap5-sec1} is introductory in nature.
Section \ref{chap5-sec2} contains some
lemmas those are require to prove our results.
In Section \ref{chap5-sec3}, we obtain some coefficient
conditions for functions in $\es_p(\alpha)$ in series form.
Section \ref{chap5-sec4} discusses radius problems.
In Section \ref{chap5-sec5}, we obtain some conditions
for functions to be in the class $\mathcal{U(\lambda,\mu)}$.
Section \ref{chap5-sec6} concludes with some observations.

\vskip 1cm
The results of this chapter are from the published paper:
{\bf S. Ponnusamy and S.K. Sahoo} (2006)
Study of some subclasses of univalent functions and their radius properties.
{\em Kodai Math. J.} {\bf 29}(3), 391--405.

\section{Introduction and Preliminaries}\label{chap5-sec1}

Recall that a function $f\in \mathcal{A}$ is
said to be in $\mathcal{U(\lambda,\mu)}$ if
$$ \left| f'(z)\left(\frac{z}{f(z)} \right)^{\mu+1}-1\right| \le
\lambda \quad (|z|<1)
$$
for some $\lambda\ge 0$ and $\mu>-1$.
We set $\mathcal{U}(\lambda,
1)=\mathcal{U}(\lambda) $, and $\mathcal{U}(1)=\mathcal{U}$.
In \cite{OP-05prea}, the authors studied a subclass
$\mathcal{P}(2\lambda)$ of
$\mathcal{U(\lambda)}$, consisting of functions $f$ for which
$$ \left|\left(\frac{z}{f(z)}\right)''\right| \le 2\lambda \quad
(|z|<1).
$$
We have the strict inclusion $\mathcal{P}(2)\subsetneq \mathcal{U}
\subsetneq\mathcal{S}$, see \cite{Ak,NOO-89,ON-72}. Moreover, a close
connection between the classes $\mathcal{P}(2\lambda)$ and
$\mathcal{U}(\lambda)$ is
given by ${\mathcal P}(2\lambda ) \subset {\mathcal U}(\lambda ),$ see
\cite{OP-01,OP-05prea}.

At this place it is important to remark that functions in $\mathcal U$
need not be starlike (see \cite{OP-07}). Also functions in $\es^*$
need not be in $\mathcal U$ (see \cite{FR-06}). Extremal functions of many
subclasses of $\es$ are in $\mathcal U$ (see \cite{OP-07}).
For instance if
$$L=\left\{z,\frac{z}{(1\pm z)^2},\frac{z}{1\pm z},\frac{z}{1\pm z^2},
\frac{z}{1\pm z+z^2}\right\},
$$
then each function in this collection is in ${\mathcal U}\cap \es^*$.
In \cite{Ob,PoSi1,PoSi2}, the authors considered the problem of finding
conditions on $\lambda$ and $\mu$ so that each function
in $\mathcal{U}(\lambda, \mu )$ is starlike or in some subsets of
$\mathcal{S}$. For example,  Ponnusamy and Singh \cite{PoSi1}  have shown that
$$\mathcal{U}(\lambda, \mu )\subseteq \es^* \quad
\mbox{if $\mu <0$ and $\displaystyle 0\le \lambda \leq
\frac{1-\mu}{\sqrt{(1-\mu)^2 +\mu ^2}}
:=\lambda ^ *(\mu )$}
$$
and in \cite{Ob}, Obradovi\'c proved that the above inclusion continues to
hold for $0<\mu\le 1$ and with the same bound for $\lambda$. The sharpness
part of these results may be obtained as a consequence of results from
\cite{RoRuSa}. However, it is not known whether each function $f$ in
$\mathcal{U}(1, \mu )$ (or more generally, $\mathcal{U}(\lambda, \mu )$ with $
\lambda ^ *(\mu ) <\lambda \leq 1$) is univalent in $\D$ for
certain
values of $\mu$ in the open interval  $(0,1)$. On the other hand, according to
a result due to Aksentiev \cite{Ak} (see also Ozaki and Nunokawa \cite{ON-72}
for a reformulated version as given by $\mathcal{U}$), we have the inclusion
${\mathcal U}(\lambda ) \subset {\mathcal S}$ for $0\leq \lambda \leq 1$. We
see that the Koebe function $z/(1-z)^2$ belongs to ${\mathcal U}$ but does not
belong to  $\es^* (\alpha )$ for any $\alpha >0$. In fact, the bounded
function $z+z^2/2$ belongs to ${\mathcal U}$ but not in ${\mathcal
S}^*(\alpha)$ for any $\alpha >0$.  That is, ${\mathcal U}\not\subset
{\es}^*(\alpha)$ for any $\alpha >0$. Thus, $\mathcal{U}
\subsetneq\mathcal{S}$ and the inclusion is strict as functions in
$\mathcal{S}$ are not necessarily in $\mathcal{U}$. Further work on these
classes, including some interesting generalizations of these classes, may be
found in \cite{OP-01,OPSV-02,PoV-05}.

A function $f\in \es^*(\alpha)$ is said to be in
$\mathcal{T}^*(\alpha)$ if it can be expressed as
$$f(z)=z- \sum_{k=2}^\infty |a_k|z^k .
$$
Functions of this form are discussed in detail by Silverman  \cite{Si-75}
and others \cite{SST-78}.

In this chapter we shall be mainly concerned with functions
$f\in\mathcal{A}$ of the form
\be\label{eq1}\left(\frac{z}{f(z)}\right)^\mu=1+\sum_{n=1}^\infty b_nz^n,
\quad z\in\D, \ee
where $(z/f(z))^\mu$ represents principal powers (i.e. the principal branch
of $(z/f(z))^\mu$ is chosen). The class of functions $f$
of this form for which  $b_n\geq 0$ is especially interesting and deserves
separate attention.  We remark that if $f\in\mathcal{S}$ then $z/f(z)$ is
nonvanishing and hence, $f\in\mathcal{S}$ may be expressed as
$$f(z) =\frac{z}{g(z)},\quad\mbox{ where }
g(z)=1+\sum_{n=1}^\infty c_nz^n,\quad z\in\D.
$$
These two representations are convenient for our investigation.
Finally, we introduce a subclass  $\es_p(\alpha)$,
$-1\le \alpha \le 1$, of starlike functions in the following way \cite{Ro-91}:
$${\es}_p(\alpha) = \left \{f\in {\mathcal S}:\,
\left |\frac{zf'(z)}{f(z)} -1\right |\leq {\rm Re}\,
\frac{zf'(z)}{f(z)}-\alpha, \quad z\in \D  \right \}.
$$
Geometrically, $f\in{\es}_p(\alpha)$ if and only if the domain
values of $zf'(z)/f(z)$, $z\in \D$, is the parabolic region
$$({\rm Im}\,w)^2\leq (1-\alpha)[2{\rm Re}\,w -(1+\alpha)].
$$
In \cite{Ro-91}, R{\o}nning has shown that
the class ${\es}_p(\alpha)$ must contain non-univalent functions
if $\alpha <-1$, and ${\es}_p(\alpha)\subset {\es}^*$ if
$-1\le \alpha \le 1$. We set
${\es}_p(0)={\es}_p$. The class of uniformly convex
functions was introduced by Goodman in \cite{Go-91a} (see also \cite{Go-91b}
where Goodman studied the class of uniformly starlike functions). Later
R{\o}nning \cite{Ro-93} studied these classes along with the class
$\es_p$. Moreover, from the work of R{\o}nning
\cite{Ro-93}, it follows easily that
$f(z)=z+a_nz^n$ is in ${\es}_p(\alpha)$ if and only if
$(2n-1-\alpha)|a_n|\leq 1-\alpha$.

We refer to Section \ref{coeff-subsec} for the definition
of radius problem. There are
many results of this type in the theory of univalent functions.  For
example, the $\es_p$ radius in $\es^*$ was found by
R{\o}nning in \cite{Ro-93} to be $1/3$. Also,
$\mathcal {P}(2)$ radius in $\mathcal{U}$ has been obtained by
Obradovi\'c and Ponnusamy in \cite{OP-05prea1} and is given by $2/3$.
At this place, it is appropriate to recall the following result:

\vspace{8pt}

{\bf Theorem A.} {\rm \cite[Theorem 4]{Ro-93}}
{\em If $f\in {\mathcal S}$, then  $\frac{1}{r}f(rz)\in \es_p$
if and only if $0<r\leq 0.33217\ldots $.}

\section{Lemmas}\label{chap5-sec2}

For the  proof of our results, we need the following result (see \cite[Theorem
11 in p.193 of Vol-2]{Go}) which reveals the importance of the area theorem in
the theory of univalent functions.

\begin{lemma}\label{lem2} 
Let $\mu >0$ and $f\in\mathcal{S}$ be of the form $(\ref{eq1})$. Then we
have
$$\sum_{n=1}^\infty (n-\mu )|b_n|^2\leq \mu .
$$
\end{lemma}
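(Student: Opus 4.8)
The plan is to deduce the estimate from the classical area theorem, realized as the statement that a suitable area integral attached to $f$ is nonpositive. First I would record the structural facts that make the expansion meaningful: since $f\in\es$ is univalent with $f(0)=0$ and $f'(0)=1$, the quotient $f(z)/z$ is non-vanishing on $\D$ and tends to $1$ at the origin, so $(z/f(z))^\mu$ is a well-defined single-valued analytic function with $(z/f(z))^\mu=1+\sum_{n\ge1}b_nz^n$; in particular $b_0=1$. Writing the claim in the homogeneous form
$$\sum_{n\ge0}(n-\mu)\abs{b_n}^2\le 0,$$
it suffices to produce, for each $r\in(0,1)$, the inequality $\sum_{n\ge0}(n-\mu)\abs{b_n}^2 r^{2n}\le 0$ and then let $r\to1^-$ (the finitely many terms of negative weight converge directly, the nonnegative-weight terms by monotone convergence).

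Next I would introduce $\phi(z):=f(z)^{-\mu}=z^{-\mu}\bigl(1+\sum_{n\ge1}b_nz^n\bigr)$ and study the area functional $A(r):=\tfrac{1}{2i}\oint_{\abs{z}=r}\overline{\phi}\,d\phi$. Although $z^{-\mu}$ is multivalued when $\mu\notin\Z$, the monodromy factors of $\phi$ and $\overline{\phi}$ cancel, so the one-form $\overline{\phi}\,d\phi$ is single-valued on each circle $\abs{z}=r$ and the contour integral is legitimate. A direct residue computation, using $\phi'(z)=\sum_{n\ge0}(n-\mu)b_nz^{n-\mu-1}$ and $\overline{\phi(z)}=\sum_{k\ge0}\overline{b_k}\,r^{2(k-\mu)}z^{\mu-k}$ on $\abs{z}=r$, isolates the coefficient of $z^{-1}$ (forcing $n=k$) and yields the exact identity
$$A(r)=\pi\sum_{n\ge0}(n-\mu)\abs{b_n}^2 r^{2(n-\mu)}.$$
Thus the entire problem reduces to showing $A(r)\le 0$: multiplying by $r^{2\mu}/\pi>0$ recovers the truncated inequality displayed above.

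The decisive step, and the one I expect to be the main obstacle, is the sign $A(r)\le0$. Interpreting the integral via Green's theorem one has $A(r)=\iint n(\Gamma_r,\zeta)\,dA(\zeta)$, the winding-number weighted (signed) area of the image curve $\Gamma_r=\phi(\abs{z}=r)$; equivalently $A(r)=-\tfrac{\mu}{2}\int_0^{2\pi}\abs{f}^{-2\mu}\,\mathrm{Re}\bigl(zf'/f\bigr)\,d\theta$. When $f$ is starlike the integrand is nonnegative and $A(r)\le0$ is immediate, but for a general $f\in\es$ the quantity $\mathrm{Re}(zf'/f)$ can change sign, so local positivity is unavailable and the global univalence of $f$ must be exploited. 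Here I would invoke the area principle: since $f(\abs{z}=r)$ is a positively oriented Jordan curve winding once about $0$, the map $\zeta\mapsto\zeta^{-\mu}$ forces every winding number $n(\Gamma_r,\cdot)$ to be nonpositive, whence $A(r)\le 0$. For $\mu\le1$ this can be made fully rigorous by passing to a single-valued branch of $\phi$ on the disk slit from $0$ to $\partial\D$, on which $\phi$ is genuinely univalent (because $\zeta\mapsto\zeta^{-\mu}$ is injective on the correspondingly slit image of $f$), and applying the ordinary area theorem; for general $\mu>0$ the same conclusion is precisely the content of the generalized area theorem, which I would cite. Letting $r\to1^-$ then gives $\sum_{n\ge1}(n-\mu)\abs{b_n}^2\le\mu$, completing the proof.
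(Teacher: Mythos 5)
Your setup, the reduction to the truncated inequality $\sum_{n\ge0}(n-\mu)\abs{b_n}^2r^{2n}\le0$ followed by $r\to1^-$, the observation that $\overline{\phi}\,d\phi$ is single-valued on $\abs{z}=r$ because the monodromy factors cancel, and the residue computation giving $A(r)=\pi\sum_{n\ge0}(n-\mu)\abs{b_n}^2r^{2(n-\mu)}$ are all correct. The genuine gap sits exactly at the step you yourself flagged as decisive: the sign $A(r)\le0$ is never actually established for non-integer $\mu$. When $\mu\notin\Z$ the curve $\Gamma_r=\phi(\abs{z}=r)$ is \emph{not closed} (its endpoints differ by the factor $e^{-2\pi i\mu}$), so $n(\Gamma_r,\zeta)$ is undefined and the identity $A(r)=\iint n(\Gamma_r,\zeta)\,dA(\zeta)$ has no content; asserting that $\zeta\mapsto\zeta^{-\mu}$ ``forces every winding number to be nonpositive'' is therefore not an argument. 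The repair you propose for $\mu\le1$ also breaks down: a branch of $w^{-\mu}$ on $f(\D)$ slit from $0$ to the boundary need not be injective, since injectivity forces points of equal modulus to have branch-argument difference below $2\pi/\mu$, and for $\mu<1$ this can fail when $f(\D)$ contains a long, shallow spiral band (such domains do occur as images of functions in $\es$); moreover, even granting injectivity, one cannot ``apply the ordinary area theorem'': $\phi$ has a fractional-power expansion at $0$, the slit cuts the circle $\abs{z}=r$, and the image of that circle under the branch is again a non-closed arc, so the whole area argument would have to be rebuilt rather than quoted. Finally, for general $\mu>0$ the ``generalized area theorem'' you would cite \emph{is} the statement of the lemma (Prawitz's inequality); invoking it is circular, and in effect reduces your proof to the same move the thesis makes, namely citing Goodman (Vol.~2, Theorem~11, p.~193) without proof.

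What closes the gap --- and is the classical argument behind the cited result --- is to run the sign computation in the image plane rather than on the image curve. With $w=f(z)$, every continued branch satisfies $\overline{\phi}\,d\phi=-\mu\abs{w}^{-2\mu}\,dw/w$, a form that is single-valued in $w$; since $\oint_{\partial D_r}\abs{w}^{-2\mu}\,d\log\abs{w}=0$ over the closed Jordan curve $\partial D_r$, where $D_r:=f(\{\abs{z}<r\})\ni 0$ (this is where univalence of $f$ enters), one gets $A(r)=-\tfrac{\mu}{2}\oint_{\partial D_r}\abs{w}^{-2\mu}\,d(\arg w)$. Green's theorem on $D_r\setminus\{\abs{w}\le\delta\}$, where $d\bigl(\abs{w}^{-2\mu}\,d\arg w\bigr)=-2\mu\abs{w}^{-2\mu-2}\,dA$, combined with the computation $\int_{\{\abs{w}>\delta\}}\abs{w}^{-2\mu-2}\,dA=\pi\delta^{-2\mu}/\mu$, yields the exact identity
$$A(r)=-\mu^{2}\int_{\C\setminus D_r}\abs{w}^{-2\mu-2}\,dA(w)\le 0,$$
that is, the complement of the image has nonnegative area in the metric $\abs{(w^{-\mu})'}^{2}\,dA$, with no closed-curve or injectivity issues for $w^{-\mu}$ ever arising. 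Substituting this for your third step, the rest of your proof goes through verbatim.
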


Next we recall the well-known coefficient condition that is sufficient
for functions to be in $\mathcal{U}(\lambda)$ or $\mathcal{P}(2\lambda )$ or
$\es^*(\alpha)$, respectively.

\begin{lemma}\label{lem5}{\rm \cite{OPSV-02}}
Let $\phi(z)=1+\sum_{n=1}^\infty b_nz^n$ be a non-vanishing analytic
function in $\D$ and $f(z)=z/\phi(z).$ If $\sum_{n=2}^\infty
(n-1)|b_n|\le \lambda$, then we have
\begin{enumerate}
\item[{\rm (a)}] $f\in\mathcal{U}(\lambda)$
\item[{\rm (b)}] $f\in \mathcal{U}(\lambda)\cap \es^*$ for
$0<\lambda\le\frac{\sqrt{2-|b_1|^2}-|b_1|}{2}=\lambda_* (f);$
\end{enumerate}
\begin{enumerate}
\item[{\rm (c)}] Further, if $\sum_{n=2}^\infty n(n-1)|b_n|\le 2\lambda,$
then we have $f\in\mathcal{P}(2\lambda )$.
\end{enumerate}
\end{lemma}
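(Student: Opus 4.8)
The plan is to compute, once and for all, the two quantities that govern the three membership conditions, and then to read off (a) and (c) from the triangle inequality and (b) from an estimate on arguments.

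First I would record that since $\phi(0)=1$ and $\phi$ is non-vanishing, $f=z/\phi\in\A$, and a direct differentiation gives
\[
f'(z)\left(\frac{z}{f(z)}\right)^{2}=\phi(z)-z\phi'(z)=1-\sum_{n=2}^{\infty}(n-1)b_nz^n ,
\]
\[
\left(\frac{z}{f(z)}\right)''=\phi''(z)=\sum_{n=2}^{\infty}n(n-1)b_nz^{n-2}.
\]
With these identities (a) and (c) are immediate: for $|z|<1$ one has $\big|f'(z)(z/f(z))^{2}-1\big|\le\sum_{n=2}^\infty(n-1)|b_n|\le\lambda$, which is exactly $f\in\mathcal{U}(\lambda)$, and $\big|(z/f(z))''\big|\le\sum_{n=2}^\infty n(n-1)|b_n|\le2\lambda$, which is $f\in\mathcal{P}(2\lambda)$.

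For (b) I would first note that part (a) already gives $f\in\mathcal{U}(\lambda)\subset\es$ (since $\lambda\le\lambda_*(f)\le 1/\sqrt2<1$, and $\mathcal U(\lambda)\subset\es$ for $\lambda\le1$ by \cite{Ak,ON-72}), so it remains only to prove starlikeness, i.e. $\real\big(zf'(z)/f(z)\big)>0$. Writing $U(z):=f'(z)(z/f(z))^2=\phi-z\phi'$, a short computation yields $zf'/f=U/\phi$, so the goal becomes $\real(U/\phi)>0$, for which it suffices to control the arguments of $U$ and $\phi$ separately. From the hypothesis $\sum_{n\ge2}(n-1)|b_n|\le\lambda$ one gets $\sum_{n\ge2}|b_n|\le\lambda$, whence for $|z|=r<1$ the estimates $|U(z)-1|\le\lambda r^2\le\lambda$ and $|\phi(z)-1|\le|b_1|r+\lambda r^2\le|b_1|+\lambda$ hold. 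Thus $U$ lies in the disk $\{|w-1|\le\lambda\}$ and $\phi$ in $\{|w-1|\le|b_1|+\lambda\}$; since both radii are less than $1$, these disks subtend half-angles $\arcsin\lambda$ and $\arcsin(|b_1|+\lambda)$ at the origin, so $|\arg U|\le\arcsin\lambda$ and $|\arg\phi|\le\arcsin(|b_1|+\lambda)$.

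The crux is then the elementary equivalence $\lambda\le\lambda_*(f)\iff\lambda^2+(\lambda+|b_1|)^2\le1\iff\arcsin\lambda+\arcsin(\lambda+|b_1|)\le\pi/2$, which reinterprets the bound $\lambda_*(f)=\tfrac12(\sqrt{2-|b_1|^2}-|b_1|)$ as the statement that the two angular spreads add up to at most a right angle. Consequently $|\arg(U/\phi)|\le|\arg U|+|\arg\phi|\le\pi/2$, and since $r<1$ makes all the estimates strict, in fact $|\arg(U/\phi)|<\pi/2$, i.e. $\real(zf'/f)>0$, giving $f\in\es^*$. The hard part will be precisely this reduction: one must resist bounding $\real(zf'/f)$ by triangle inequalities on real parts, which loses too much and only yields a threshold strictly below $\lambda_*(f)$, and instead argue through $\arg U$ and $\arg\phi$, where $\sqrt{2-|b_1|^2}$ emerges naturally as $\cos(\arcsin(\,\cdot\,))$.
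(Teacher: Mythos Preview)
The paper does not actually prove this lemma: it is quoted from \cite{OPSV-02} as a known result, so there is no ``paper's own proof'' to compare against. That said, your argument is correct. Parts (a) and (c) are exactly the intended one-line computations; in fact the paper later proves a generalization of (a) as Lemma~\ref{lem1} via the identical identity $f'(z)(z/f(z))^{\mu+1}=\phi(z)-\tfrac{z}{\mu}\phi'(z)$ specialized to $\mu=1$.

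Your treatment of (b) is also sound. The reduction $zf'/f=U/\phi$ with $U=\phi-z\phi'$ is correct, the bounds $|U-1|\le\lambda$ and $|\phi-1|\le|b_1|+\lambda$ follow from the coefficient hypothesis (using $\sum_{n\ge2}|b_n|\le\sum_{n\ge2}(n-1)|b_n|$), and the algebraic equivalence $\lambda\le\lambda_*(f)\Longleftrightarrow\lambda^2+(\lambda+|b_1|)^2\le1\Longleftrightarrow\arcsin\lambda+\arcsin(\lambda+|b_1|)\le\pi/2$ is easily checked by squaring. One small point worth making explicit: the condition $\lambda^2+(\lambda+|b_1|)^2\le1$ forces $\lambda+|b_1|<1$, so $\phi$ really does stay in a disk of radius strictly less than $1$ about $1$ and the $\arcsin$ bound on $\arg\phi$ is legitimate. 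The strictness for $|z|<1$ then gives $\real(zf'/f)>0$ as you claim.
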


In \cite{RST84}, it was shown that if
$\phi(z)=1+\sum_{n=1}^\infty b_n z^n$
is a non-vanishing analytic function in $\D$ and
$f(z)=z/\phi(z)$,
then $f\in\es^*(\alpha)$, $0\le \alpha \le 1$, whenever
$$\sum_{k=2}^\infty (k-1+\alpha)|b_k|\le \left\{\begin{array}{ll}
1-\alpha-(1-\alpha)
|b_1| & \mbox{ if $0\le \alpha\le 1/2$}\\
1-\alpha-\alpha |b_1| & \mbox{ if $1/2\le \alpha \le 1$.}\end{array}\right.
$$

\section{Coefficient Conditions for Functions in $\es_p(\alpha)$}\label{chap5-sec3}

\begin{theorem} \label{th2}
If a function $f$ of the form $(\ref{eq1})$ with $b_n\geq 0$
and $\mu >0$ is in $\es_p(\alpha)$, we then have
\be\label{th2-eq1}
\sum_{n=1}^{\infty}(2n-\mu(1-\alpha))b_{n}\leq \mu(1-\alpha).
\ee
\end{theorem}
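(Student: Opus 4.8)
The plan is to translate the membership $f\in\es_p(\alpha)$ into a statement about $\phi(z):=(z/f(z))^\mu=1+\sum_{n=1}^\infty b_nz^n$ and then to exploit the hypothesis $b_n\ge 0$ by testing the defining inequality only along the positive real axis. First I would rewrite the logarithmic derivative occurring in the definition of $\es_p(\alpha)$ in terms of $\phi$. Taking logarithms in $\phi=(z/f)^\mu$ and differentiating gives $\phi'/\phi=\mu\,(1/z-f'/f)$, hence
\begin{equation*}
\frac{zf'(z)}{f(z)}=1-\frac{z\phi'(z)}{\mu\,\phi(z)}.
\end{equation*}
Writing $w:=zf'(z)/f(z)$, the condition defining $\es_p(\alpha)$ is $|w-1|\le\real w-\alpha$ for all $z\in\D$.

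Next I would restrict to $z=r\in(0,1)$. Because $b_n\ge0$, we have $\phi(r)=1+\sum_{n=1}^\infty b_nr^n>0$ and $r\phi'(r)=\sum_{n=1}^\infty nb_nr^n\ge0$, so that $w-1=-r\phi'(r)/(\mu\phi(r))$ is real and nonpositive. Consequently $|w-1|=r\phi'(r)/(\mu\phi(r))$ while $\real w=1-r\phi'(r)/(\mu\phi(r))$, and the inequality $|w-1|\le\real w-\alpha$ collapses to the scalar estimate
\begin{equation*}
\frac{2\,r\phi'(r)}{\mu\,\phi(r)}\le 1-\alpha,\qquad 0<r<1.
\end{equation*}
Clearing the denominator and inserting the power series $r\phi'(r)=\sum nb_nr^n$ and $\phi(r)=1+\sum b_nr^n$ yields, after collecting terms,
\begin{equation*}
\sum_{n=1}^\infty\bigl(2n-\mu(1-\alpha)\bigr)b_nr^n\le\mu(1-\alpha)\qquad\text{for every }r\in(0,1).
\end{equation*}

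It then remains to let $r\to1^-$, and this passage to the limit is the one point that needs care, since the coefficients $c_n:=2n-\mu(1-\alpha)$ need not all be nonnegative. The plan is to observe that $c_n<0$ holds only for the finitely many indices $n<\mu(1-\alpha)/2$; for those indices $b_nr^n\to b_n$ by ordinary continuity of a finite sum, whereas in the tail (where $c_n\ge0$ and $b_n\ge0$) each term $c_nb_nr^n$ increases to $c_nb_n$ as $r\uparrow1$, so the monotone convergence theorem applies to the tail. Transposing the finitely many negative terms to the right-hand side, passing to the limit $r\to1^-$, and transposing them back, the finite contributions cancel and one is left with
\begin{equation*}
\sum_{n=1}^\infty\bigl(2n-\mu(1-\alpha)\bigr)b_n\le\mu(1-\alpha),
\end{equation*}
which is exactly (\ref{th2-eq1}). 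Thus the main obstacle is solely this limiting argument; the algebraic reduction to the real axis is immediate once the hypothesis $b_n\ge0$ is used to guarantee that $w-1$ is real and of constant sign, which is precisely what makes the defining inequality of $\es_p(\alpha)$ reduce to a single linear estimate.
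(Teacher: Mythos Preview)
Your proposal is correct and follows essentially the same route as the paper: rewrite $zf'/f$ in terms of $\phi=(z/f)^\mu$ via the identity $z\phi'/\phi=\mu(1-zf'/f)$, restrict to real $z=r\in(0,1)$ so that the positivity of the $b_n$ forces $w-1$ to be real and nonpositive, reduce the parabolic condition to $2r\phi'(r)\le\mu(1-\alpha)\phi(r)$, and let $r\to1^-$. Your treatment of the limiting step---separating the finitely many indices with $2n<\mu(1-\alpha)$ and invoking monotone convergence on the nonnegative tail---is in fact more careful than the paper's, which simply passes to the limit without comment.
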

\begin{proof}
Let $f\in \es_p(\alpha)$. Now, it is easy to see that
\be\label{th2-eq2}
z\frac{d}{dz}\left (\frac{z}{f(z)}\right)^{\mu}
= \mu\left[\left(\frac{z}{f(z)}\right)^{\mu}
-\left (\frac{z}{f(z)} \right )^{\mu +1}f'(z)\right].
\ee
Using the identity (\ref{th2-eq2}), we have
$$\left |\frac{zf'(z)}{f(z)}-1\right | \leq
{\rm Re}\, \left (\frac{zf'(z)}{f(z)} \right )-\alpha
\Leftrightarrow \left |\frac{-\frac{z}{\mu} \frac{d}{dz}
\left(\frac{z}{f(z)}\right)^{\mu}}
{\left(\frac{z}{f(z)}\right)^{\mu}}
\right|\leq {\rm Re}\, \frac{\left(\frac{z}{f(z)}\right)^{\mu}
-\frac{z}{\mu}\frac{d}{dz}\left (\frac{z}{f(z)}\right)^{\mu}}
{\left(\frac{z}{f(z)}\right)^{\mu}}-\alpha.
$$
Since $f$ is in the form (\ref{eq1}), the last inequality may be
equivalently written as
$$\frac{1}{\mu}\left|\frac{-\sum_{n=1}^{\infty}nb_nz^n}
{1+\sum_{n=1}^{\infty}b_nz^n} \right | \leq
{\rm Re}\, \left (1- \frac{1}{\mu}
\frac{\sum_{n=1}^{\infty}nb_nz^n}
{1+\sum_{n=1}^{\infty}b_nz^n} \right )-\alpha.
$$
If $z \in \D$ is real and tends to $1^{-}$ through reals,
then from the last inequality we have
$$\frac{1}{\mu}\left (\frac{\sum_{n=1}^{\infty}nb_n}{1+\sum_{n=1}^{\infty}b_n}\right )
\leq  1 -\alpha -\frac{1}{\mu}\left (
\frac{\sum_{n=1}^{\infty}nb_n}{1+\sum_{n=1}^{\infty}b_n}\right ),
$$
from which we obtain the desired inequality (\ref{th2-eq1}).
\end{proof}

The case $\mu =1$ leads to

\begin{corollary}\label{th2-cor}
Let $f\in \es_p(\alpha)$ be such that $z/f(z)= 1+\sum_{n=1}^{\infty}b_nz^n$
with $b_n\geq 0$. Then we have
$$\sum_{n=1}^{\infty}(2n-1+\alpha)b_{n}\leq 1-\alpha.
$$
\end{corollary}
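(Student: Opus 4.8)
The plan is to obtain this statement as the immediate specialization of Theorem~\ref{th2} to the case $\mu=1$, so essentially no new work is required beyond checking that the hypotheses match up. First I would observe that when $\mu=1$ the defining representation $\left(\frac{z}{f(z)}\right)^{\mu}=1+\sum_{n=1}^{\infty}b_nz^n$ used in Theorem~\ref{th2} collapses to $\frac{z}{f(z)}=1+\sum_{n=1}^{\infty}b_nz^n$, which is precisely the hypothesis imposed in the corollary. Thus a function $f\in\es_p(\alpha)$ whose reciprocal $z/f(z)$ admits such an expansion with $b_n\ge 0$ is exactly a function of the form treated in Theorem~\ref{th2} with $\mu=1>0$.

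Next I would simply invoke the conclusion of Theorem~\ref{th2}, namely
\[
\sum_{n=1}^{\infty}\bigl(2n-\mu(1-\alpha)\bigr)b_{n}\leq \mu(1-\alpha),
\]
and substitute $\mu=1$. This gives
\[
\sum_{n=1}^{\infty}\bigl(2n-(1-\alpha)\bigr)b_{n}\leq 1-\alpha,
\]
and rewriting $2n-(1-\alpha)=2n-1+\alpha$ yields the asserted inequality
\[
\sum_{n=1}^{\infty}(2n-1+\alpha)b_{n}\leq 1-\alpha.
\]

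There is really no obstacle to overcome here, since the corollary is a verbatim specialization; the only point worth verifying is the cosmetic one that the principal-branch power $\left(z/f(z)\right)^{\mu}$ reduces, for $\mu=1$, to the ordinary quotient $z/f(z)$ with the same coefficients $b_n$, so that the nonnegativity assumption $b_n\ge 0$ carries over unchanged. Once that identification is made, the result follows directly and the proof is complete.
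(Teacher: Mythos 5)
Your proposal is correct and matches the paper exactly: the paper introduces this corollary with the words ``The case $\mu=1$ leads to'' and offers no further argument, so the intended proof is precisely the substitution $\mu=1$ into Theorem~\ref{th2} together with the rewriting $2n-(1-\alpha)=2n-1+\alpha$ that you carry out.
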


\begin{theorem}\label{th3}
Let $z/f(z)$ be a nonvanishing analytic function of the form
$(\ref{eq1})$ with $\mu >0$. Then the condition
\be\label{th3-eq1}
\sum_{n=1}^{\infty}(2n+\mu(1-\alpha))|b_n|\leq \mu(1-\alpha)
\ee
is sufficient for $f$ to be in the class $\es_p(\alpha)$.
\end{theorem}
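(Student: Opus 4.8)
Let $z/f(z)$ be a nonvanishing analytic function of the form $(z/f(z))^\mu = 1+\sum_{n=1}^\infty b_n z^n$ with $\mu>0$. Then the condition $\sum_{n=1}^\infty (2n+\mu(1-\alpha))|b_n|\le \mu(1-\alpha)$ is sufficient for $f\in \es_p(\alpha)$.

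Let me sketch how I'd prove this.

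The plan is to work directly with the analytic characterization of $\es_p(\alpha)$, which asks that $\left|\frac{zf'(z)}{f(z)}-1\right|\le \real\frac{zf'(z)}{f(z)}-\alpha$ for all $z\in\D$. The key algebraic tool is already supplied in the previous proof: the identity $z\frac{d}{dz}(z/f(z))^\mu = \mu\big[(z/f(z))^\mu - (z/f(z))^{\mu+1}f'(z)\big]$, from which one reads off that $\frac{zf'(z)}{f(z)} = 1 - \frac{1}{\mu}\,\frac{z\frac{d}{dz}(z/f(z))^\mu}{(z/f(z))^\mu}$. Writing $\phi(z):=(z/f(z))^\mu = 1+\sum_{n\ge1} b_n z^n$, this becomes $\frac{zf'(z)}{f(z)}-1 = -\frac{1}{\mu}\frac{z\phi'(z)}{\phi(z)} = -\frac{1}{\mu}\,\frac{\sum_{n\ge1} n b_n z^n}{1+\sum_{n\ge1} b_n z^n}$.

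First I would reduce the defining inequality to a single clean sufficient condition. Since $\real w - \alpha \ge \real w - 1 \ge -|w-1|$ is the wrong direction, the standard move is instead: the inequality $|A|\le \real(1-A)-\alpha$ (where $A:=\frac{zf'(z)}{f(z)}-1$) is implied by $|A|\le (1-\alpha)-|A|$, i.e. by $2|A|\le 1-\alpha$, because $\real(1-A)\ge 1-|A|$. Thus it suffices to show $\big|\frac{zf'(z)}{f(z)}-1\big|\le \frac{1-\alpha}{2}$ for all $z\in\D$. Substituting the expression above, this amounts to proving
\[
\frac{1}{\mu}\left|\frac{\sum_{n\ge1} n b_n z^n}{1+\sum_{n\ge1} b_n z^n}\right|\le \frac{1-\alpha}{2},
\]
or equivalently $2\big|\sum_{n\ge1} n b_n z^n\big|\le \mu(1-\alpha)\big|1+\sum_{n\ge1} b_n z^n\big|$ for $|z|<1$.

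Next I would estimate both sides by triangle-inequality bounds valid on $\D$. For the numerator, $2\big|\sum n b_n z^n\big|\le 2\sum n|b_n||z|^n < 2\sum n|b_n|$. For the denominator, $\big|1+\sum b_n z^n\big|\ge 1-\sum|b_n||z|^n > 1-\sum|b_n|$, so it suffices to verify that $2\sum_{n\ge1} n|b_n|\le \mu(1-\alpha)\big(1-\sum_{n\ge1}|b_n|\big)$. Rearranging collects the terms as $\sum_{n\ge1}\big(2n+\mu(1-\alpha)\big)|b_n|\le \mu(1-\alpha)$, which is exactly the hypothesis. I expect the main subtlety — the one worth stating carefully rather than waving past — to be the opening reduction: one must ensure the denominator estimate $1-\sum|b_n|>0$ is actually forced by the hypothesis (it is, since $\sum|b_n|\le\sum\frac{2n+\mu(1-\alpha)}{\mu(1-\alpha)}|b_n|\le 1$, with strictness unless only the trivial case occurs), so that $\phi(z)$ stays nonvanishing and the division in the $\es_p(\alpha)$ characterization is legitimate throughout $\D$. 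Once that is in place, the chain of triangle inequalities closes the argument, and the sharpness/extremal discussion (if desired) would come from a function with a single nonzero $b_n$ saturating the bound.
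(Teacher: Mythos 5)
Your proposal is correct and follows essentially the same route as the paper's proof: both use the identity $z\frac{d}{dz}(z/f(z))^\mu=\mu[(z/f(z))^\mu-(z/f(z))^{\mu+1}f'(z)]$ to reduce membership in $\es_p(\alpha)$ to the single bound $\bigl|\sum_{n\ge1}nb_nz^n\bigr|/\bigl|1+\sum_{n\ge1}b_nz^n\bigr|\le\mu(1-\alpha)/2$ (the paper phrases this reduction via the parabolic region, you via $\real w\ge-\lvert w\rvert$, which is the same estimate), and then close with the identical triangle-inequality computation from the coefficient hypothesis. The only blemish is the typo $\real(1-A)$ where you mean $\real(1+A)$; since the bound $\real(1+A)\ge 1-\lvert A\rvert$ is what you actually use, the argument is unaffected, and your explicit check that $1-\sum\lvert b_n\rvert>0$ is a point the paper leaves tacit.
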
\begin{proof}
As in the proof of Theorem \ref{th2}, we notice that
$$ \left |\frac{zf'(z)}{f(z)}-1\right |\leq {\rm Re}\,
\left (\frac{zf'(z)}{f(z)} \right )-\alpha
$$
is equivalent to
$$\left|-\frac{\sum_{n=1}^{\infty}nb_nz^n}{1+\sum_{n=1}^{\infty}
b_nz^n} \right |\leq \mu (1-\alpha) -{\rm Re}\,
\left(\frac{\sum_{n=1}^{\infty}nb_nz^n}
{1+\sum_{n=1}^{\infty}b_nz^n} \right ).
$$
Thus, to show that $f$ is in $\es_p(\alpha)$, it suffices
to show that the quotient
$$ -\frac{\sum_{n=1}^{\infty}nb_nz^n}{1+\sum_{n=1}^{\infty}b_nz^n}
$$
lies in the parabolic region
$$({\rm Im}\,w)^2\leq \mu(1-\alpha)[\mu(1-\alpha)+2{\rm Re}\,w].
$$
Geometrically, this condition holds if we can show that
\be\label{th3-eq2}
\left | \frac{\sum_{n=1}^{\infty}nb_nz^n}
{1+\sum_{n=1}^{\infty}b_nz^n} \right |\leq
\frac{\mu(1-\alpha)}{2},
\quad z\in \D .
\ee
From the condition (\ref{th3-eq1}), we obtain that
$$\sum_{n=1}^{\infty}(2n+\mu(1-\alpha))|b_n|\, |z|^n \leq
\mu(1-\alpha)
$$
and so
$$\sum_{n=1}^{\infty}n|b_n|\, |z|^n\leq \frac{\mu(1-\alpha)}{2}
\left (1-\sum_{n=1}^{\infty}|b_n|\, |z|^n\right ).
$$
In view of this inequality, we deduce that
$$\left |\frac{\sum_{n=1}^{\infty}nb_nz^n}
{1+\sum_{n=1}^{\infty}b_nz^n} \right | \leq
\frac{\mu(1-\alpha)}{2} \left (
\frac{1-\sum_{n=1}^{\infty}|b_n|\,|z|^n}{1-\sum_{n=1}^{\infty}
|b_n|\,|z|^n} \right ) =\frac{\mu(1-\alpha)}{2}
$$
which is exactly the inequality (\ref{th3-eq2}) and therefore,
$f\in\es_p(\alpha)$.
\end{proof}

\begin{corollary}\label{th3-cor}
Let $z/f(z)$ be a nonvanishing analytic function in $\D$ of
the form
$z/f(z)= 1+\sum_{n=1}^{\infty}b_nz^n$.
Then the condition
$$\sum_{n=1}^{\infty}(2n+1-\alpha)|b_n|\leq 1-\alpha
$$
is sufficient for $f$ to be in the class $\es_p(\alpha)$.
\end{corollary}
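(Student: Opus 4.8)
The plan is to recognize that Corollary~\ref{th3-cor} is nothing more than the specialization of Theorem~\ref{th3} to the case $\mu=1$, so no new analysis is needed. First I would set $\mu=1$ in the representation $(\ref{eq1})$; since $(z/f(z))^1 = z/f(z)$, this reduces to $z/f(z) = 1+\sum_{n=1}^\infty b_n z^n$, which matches exactly the hypothesis placed on $z/f(z)$ in the corollary. I would also note that for $\mu=1$ the principal-branch convention attached to $(\ref{eq1})$ is vacuous: $z/f(z)$ is simply the reciprocal of the nonvanishing analytic function $f(z)/z$, so no ambiguity in fractional powers arises and the nonvanishing and analyticity hypotheses transfer verbatim.

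Next I would substitute $\mu=1$ into the sufficient condition $(\ref{th3-eq1})$, namely $\sum_{n=1}^\infty \bigl(2n+\mu(1-\alpha)\bigr)|b_n| \le \mu(1-\alpha)$. This yields $\sum_{n=1}^\infty \bigl(2n+(1-\alpha)\bigr)|b_n| \le 1-\alpha$, which is precisely the inequality displayed in the corollary. Since Theorem~\ref{th3} asserts that this inequality is sufficient for $f\in\es_p(\alpha)$, the conclusion of the corollary follows at once.

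I do not anticipate any genuine obstacle here, as the entire content is a direct substitution into an already-proved theorem. If one preferred a self-contained argument, one could instead repeat the proof of Theorem~\ref{th3} line by line with $\mu=1$: the identity $(\ref{th2-eq2})$ and the reduction to showing that $-\sum nb_n z^n/(1+\sum b_n z^n)$ lies in the parabolic region collapse to the $\mu=1$ estimate $\bigl|\sum_{n=1}^\infty nb_nz^n/(1+\sum_{n=1}^\infty b_nz^n)\bigr| \le (1-\alpha)/2$, and the coefficient hypothesis delivers exactly this bound. But invoking the theorem directly is cleaner, and the only expository point worth flagging is the harmless disappearance of the branch convention when $\mu=1$.
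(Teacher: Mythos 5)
Your proposal is correct and matches the paper's intent exactly: the paper states Corollary~\ref{th3-cor} with no separate proof, treating it as the immediate $\mu=1$ specialization of Theorem~\ref{th3}, which is precisely the substitution you carry out. Your remark that the branch convention becomes vacuous at $\mu=1$ is a harmless but accurate observation.
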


The case $\alpha =0$ of Corollaries \ref{th2-cor} and
\ref{th3-cor} has been obtained
recently by Obradovi\'c and Ponnusamy \cite{OP-05prea1}.

\section{Radius Problems}\label{chap5-sec4}

\begin{theorem}\label{psv-th4}
If $f\in \es$ is given by $(\ref{eq1})$ with $0<\mu<1$,
then $ \frac{1}{r}f(rz)\in
\es_p(\alpha)$ for $0<r\leq r_0$, where $r_{0}$ is the root of the integral equation
\be\label{psv-th4-eq1}
\frac{4r^2(1+\mu(2-\alpha)(1-r^2))}{(1-r^2)^2} +
\frac{r^2\mu^2(3-\alpha)^2}{1-\mu}\int_0^1\frac{dt}{1-r^2
t^{1/(1-\mu)}} =\mu(1-\alpha)^2.
\ee
\end{theorem}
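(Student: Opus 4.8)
The plan is to apply the sufficient coefficient condition of Theorem~\ref{th3} to the rescaled function $f_r(z):=\tfrac1r f(rz)$, after converting the hypothesis $f\in\es$ into a usable bound on the coefficients $b_n$ through the area theorem (Lemma~\ref{lem2}). First I would record the effect of the rescaling on the representation (\ref{eq1}). Since $z/f_r(z)=rz/f(rz)$, and $f\in\es$ guarantees that $z/f(z)$ is nonvanishing in $\D$ with value $1$ at the origin, the principal power gives
$$\left(\frac{z}{f_r(z)}\right)^\mu=\left(\frac{rz}{f(rz)}\right)^\mu=1+\sum_{n=1}^\infty b_n r^n z^n,$$
so $f_r$ is again of the form (\ref{eq1}) with coefficients $b_n r^n$. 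By Theorem~\ref{th3}, $f_r\in\es_p(\alpha)$ provided $\sum_{n=1}^\infty\bigl(2n+\mu(1-\alpha)\bigr)|b_n|\,r^n\le\mu(1-\alpha)$.

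The next step is to estimate this sum using the area inequality $\sum_{n\ge1}(n-\mu)|b_n|^2\le\mu$ from Lemma~\ref{lem2}, which is available because $0<\mu<1$ forces $n-\mu>0$ for every $n\ge1$. Writing $c:=\mu(1-\alpha)$ and splitting each summand as $\frac{(2n+c)r^n}{\sqrt{n-\mu}}\cdot\sqrt{n-\mu}\,|b_n|$, the Cauchy--Schwarz inequality yields
$$\sum_{n=1}^\infty(2n+c)|b_n|r^n\le\left(\sum_{n=1}^\infty\frac{(2n+c)^2r^{2n}}{n-\mu}\right)^{1/2}\left(\sum_{n=1}^\infty(n-\mu)|b_n|^2\right)^{1/2}\le\sqrt{\mu}\left(\sum_{n=1}^\infty\frac{(2n+c)^2r^{2n}}{n-\mu}\right)^{1/2}.$$
Hence the condition of Theorem~\ref{th3} is implied by the purely coefficient-free requirement
$$\sum_{n=1}^\infty\frac{(2n+c)^2r^{2n}}{n-\mu}\le\frac{c^2}{\mu}=\mu(1-\alpha)^2.$$

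It then remains to put the series in closed form. Using $2n+c=2(n-\mu)+(2\mu+c)$ with $2\mu+c=\mu(3-\alpha)$, I would decompose
$$\frac{(2n+c)^2}{n-\mu}=4(n-\mu)+4\mu(3-\alpha)+\frac{\mu^2(3-\alpha)^2}{n-\mu},$$
and sum termwise via $\sum_{n\ge1}(n-\mu)r^{2n}=\frac{r^2(1-\mu(1-r^2))}{(1-r^2)^2}$, $\sum_{n\ge1}r^{2n}=\frac{r^2}{1-r^2}$, together with the integral representation $\tfrac1{n-\mu}=\int_0^1 t^{n-\mu-1}\,dt$; after summing the geometric series this gives $\int_0^1 t^{-\mu}(1-r^2t)^{-1}\,dt$, and the substitution $t=s^{1/(1-\mu)}$ produces $\sum_{n\ge1}\frac{r^{2n}}{n-\mu}=\frac{r^2}{1-\mu}\int_0^1\frac{ds}{1-r^2 s^{1/(1-\mu)}}$. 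Collecting the first two contributions over the common denominator $(1-r^2)^2$ gives numerator $4r^2\bigl(1+\mu(2-\alpha)(1-r^2)\bigr)$, so the sum equals precisely the left-hand side $\Phi(r)$ of (\ref{psv-th4-eq1}), and the sufficient condition is $\Phi(r)\le\mu(1-\alpha)^2$. Since $\Phi$ is continuous and strictly increasing on $(0,1)$ with $\Phi(0^+)=0$ and $\Phi(r)\to\infty$ as $r\to1^-$, the equation $\Phi(r)=\mu(1-\alpha)^2$ has a unique root $r_0$, and $\Phi(r)\le\mu(1-\alpha)^2$ holds exactly for $0<r\le r_0$, which establishes the claim.

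The main obstacle I anticipate is the bookkeeping in this closed-form step: confirming that the substitution $t=s^{1/(1-\mu)}$ converts the naturally arising integral into the exact form displayed in (\ref{psv-th4-eq1}), and that the two elementary sums combine into the stated numerator $1+\mu(2-\alpha)(1-r^2)$. The monotonicity of $\Phi$ is intuitively clear, as every term is positive and increasing in $r$ (the integrand $1/(1-r^2 s^{1/(1-\mu)})$ increasing as $r$ grows), but it should be verified to ensure $r_0$ is genuinely the threshold. I note that Cauchy--Schwarz makes the resulting bound sufficient rather than sharp, which is consistent with the statement, since sharpness is not asserted.
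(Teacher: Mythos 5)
Your proposal is correct and follows essentially the same route as the paper: rescale to get coefficients $b_nr^n$, invoke the sufficient condition of Theorem~\ref{th3}, apply Cauchy--Schwarz together with the area inequality of Lemma~\ref{lem2}, and evaluate $\sum(2n+\mu(1-\alpha))^2r^{2n}/(n-\mu)$ in closed form via the same decomposition and the integral representation of $1/(n-\mu)$. Your added verification that $\Phi$ is continuous, strictly increasing, and spans $(0,\infty)$ is a worthwhile detail the paper leaves implicit, but it does not change the argument.
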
\begin{proof}
Let $f\in \mathcal{S}$ be given by $(\ref{eq1})$ with $0<\mu<1$.
Then $z/f(z)$ is nonvanishing in $\D$ and for $0<r\leq1$, we have
$$\left (\frac{z}{\frac{1}{r}f(rz)}\right )^\mu
=1+(b_{1}r)z+(b_{2}r^{2})z^{2}+\cdots .
$$
If
\be\label{psv-th4-eq2}
S:= \sum_{n=1}^{\infty}(2n+\mu(1-\alpha))|b_n|r^{n}\leq \mu(1-\alpha)
\ee
for some $r$, then $\frac{1}{r}f(rz)\in {\es}_{p}(\alpha)$,
by Theorem \ref{th3}. Now, using the Cauchy-Schwarz inequality and
Lemma \ref{lem2}, we see that
\begin{eqnarray*}
S & = & \sum_{n=1}^\infty \sqrt{n-\mu}|b_n|
\frac{2n+\mu(1-\alpha)}{\sqrt{n-\mu}}r^n\\
& \leq & \left(\sum_{n=1}^\infty (n-\mu)|b_n|^2\right)^{\frac{1}{2}}
\left(\sum_{n=1}^\infty \frac{(2n+\mu(1-\alpha))^2}
{n-\mu} r^{2n}\right)^{\frac{1}{2}}\\
& \leq & \sqrt{\mu} \left(\sum_{n=1}^\infty \frac{(2n+\mu(1-\alpha))^2}
{n-\mu} r^{2n}\right)^{\frac{1}{2}}\\
& = & \sqrt{\mu} \left(\sum_{n=1}^\infty \frac{(2n+\mu(1-\alpha))^2
-\mu^2(3-\alpha)^2}{n-\mu} r^{2n}
+\mu^2(3-\alpha)^2\sum_{n=1}^\infty
\frac{r^{2n}}{n-\mu}
\right)^{\frac{1}{2}}\\
& = & \sqrt{\mu} \left(\sum_{n=1}^\infty
4(n+\mu(2-\alpha)) r^{2n}+ \mu^2(3-\alpha)^2\sum_{n=1}^\infty
\frac{r^{2n}}{n-\mu}\right)^\frac{1}{2}\\
&= & \sqrt{\mu}\left(\frac{4r^2(1+\mu(2-\alpha)(1-r^2))}{(1-r^2)^2}
+ \frac{r^2\mu^2(3-\alpha)^2}{1-\mu}\int_0^1\frac{dt}{1-r^2
t^{1/(1-\mu)}} \right)^{\frac{1}{2}}.
\end{eqnarray*}
In particular, if the last expression is less than or equal to
$\mu(1-\alpha)$, then (\ref{psv-th4-eq2}) holds which gives
the condition (\ref{psv-th4-eq1}).
\end{proof}

In the case $\mu=1$, Theorem \ref{psv-th4}
takes the following form which needs a special attention
as we see that the radius quantity depends on the second coefficient
of the given function $f$.

\begin{theorem}\label{psv-th5}
If $f\in \mathcal{S}$ is of the form $z/f(z)=1+\sum_{n=1}^\infty b_nz^n$,
then $ \frac{1}{r}f(rz)\in
\es_p(\alpha)$ for $0<r\leq r_0$, where $r_{0}$, which depends
on the second coefficient of $f$, is the root of the equation
$$\frac{4r^4(1+(3-\alpha)(1-r^2))}{(1-r^2)^2}-(3-\alpha)^2 r^2\ln (1-r^2)
= (1-\alpha -(3-\alpha)(r/2)|f''(0)|)^2.
$$
\end{theorem}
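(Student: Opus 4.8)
The plan is to follow the proof of Theorem~\ref{psv-th4} almost verbatim down to the moment where Lemma~\ref{lem2} is invoked, and then to treat the first coefficient separately, since this is precisely where the case $\mu=1$ departs from the general one. As in that proof, writing $z/f(z)=1+\sum_{n=1}^\infty b_nz^n$ and replacing $f(z)$ by $\frac1r f(rz)$ gives $z/(\frac1r f(rz))=1+\sum_{n=1}^\infty b_n r^n z^n$, which is again nonvanishing since $\frac1r f(rz)\in\mathcal{S}$ for $0<r\le1$. Hence by Theorem~\ref{th3} with $\mu=1$ it suffices to find those $r$ for which $S:=\sum_{n=1}^\infty(2n+1-\alpha)|b_n|r^n\le 1-\alpha$.

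The crucial difference is that Lemma~\ref{lem2} with $\mu=1$ yields only $\sum_{n=2}^\infty(n-1)|b_n|^2\le 1$ and gives no control over $b_1$; this is exactly why the second coefficient of $f$ must enter the final bound. First I would split off the $n=1$ term, $S=(3-\alpha)|b_1|r+\sum_{n=2}^\infty(2n+1-\alpha)|b_n|r^n$, and record that, since $z/f(z)=1-a_2z+\cdots$ when $f(z)=z+a_2z^2+\cdots$, we have $|b_1|=|a_2|=|f''(0)|/2$, so the first term equals $(3-\alpha)(r/2)|f''(0)|$. It therefore suffices that the tail obey $\sum_{n=2}^\infty(2n+1-\alpha)|b_n|r^n\le 1-\alpha-(3-\alpha)(r/2)|f''(0)|$.

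Next I would estimate the tail by Cauchy--Schwarz exactly as in Theorem~\ref{psv-th4}: writing $(2n+1-\alpha)|b_n|r^n=\sqrt{n-1}\,|b_n|\cdot\frac{2n+1-\alpha}{\sqrt{n-1}}\,r^n$ and using $\sum_{n\ge2}(n-1)|b_n|^2\le1$, the tail is at most $\big(\sum_{n=2}^\infty\frac{(2n+1-\alpha)^2}{n-1}r^{2n}\big)^{1/2}$. The remainder is a routine series evaluation: after the shift $m=n-1$ and the expansion $(2m+3-\alpha)^2=4m^2+4m(3-\alpha)+(3-\alpha)^2$, the three resulting sums follow from $\sum_{m\ge1}mx^m=x/(1-x)^2$, $\sum_{m\ge1}x^m=x/(1-x)$ and $\sum_{m\ge1}x^m/m=-\ln(1-x)$ evaluated at $x=r^2$. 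This produces precisely $\frac{4r^4(1+(3-\alpha)(1-r^2))}{(1-r^2)^2}-(3-\alpha)^2r^2\ln(1-r^2)$, the left-hand side of the displayed equation. Imposing the sufficient condition of the previous paragraph and squaring then yields exactly the equation defining $r_0$.

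The main obstacle is conceptual rather than computational: one must recognize that Lemma~\ref{lem2} is vacuous at the index $n=1$ when $\mu=1$, so $b_1$ cannot be absorbed into the area-theorem estimate and must instead be carried along as the explicit term $(3-\alpha)(r/2)|f''(0)|$. The only other point requiring care is to confirm that the defining equation has a unique root $r_0\in(0,1)$ and that the sufficient inequality holds throughout $0<r\le r_0$: as $r\to0^+$ the left side tends to $0$ while the right side tends to $(1-\alpha)^2>0$, and as $r\to1^-$ the left side blows up through the $-\ln(1-r^2)$ term while the right side stays bounded, so monotonicity forces a single crossing, which is the claimed radius.
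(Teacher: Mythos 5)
Your proposal is correct and follows essentially the same route as the paper, which itself states only that one proceeds as in Theorem~\ref{psv-th4} with $\mu=1$, runs the summation from $n=2$, and uses $b_1=-f''(0)/2$ to carry the first coefficient as the explicit term $(3-\alpha)(r/2)|f''(0)|$. Your series evaluation reproduces the stated left-hand side exactly, and your closing observation about the existence of the root is a harmless (and welcome) addition to details the paper omits.
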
\begin{proof}
Note that, for $f\in\mathcal{S}$ satisfying $z/f(z)=1+\sum_{n=1}^\infty b_nz^n$,
we have $b_1=-f''(0)/2$. Proceeding exactly as in the
proof of Theorem \ref{psv-th4} (but with $\mu=1$) and by considering summation
to run from $2$ to $\infty$, we obtain the required conclusion. So we omit the
details.
\end{proof}

We remark that, the case $\alpha=0$ of  Theorem \ref{psv-th5} is due to
Obradovi\'c and Ponnusamy \cite{OP-05prea1}.

Now we prove a generalized version of Lemma \ref{lem5}(a) which is useful
to prove our next result.

\begin{lemma}\label{lem1}
Let $0\leq \alpha <1$ and  $\phi(z)=1+\sum_{n=1}^{\infty}b_nz^n$ be a
non-vanishing analytic function in $\D$ satisfying the coefficient
condition
\be\label{eq11}
\sum_{n=1}^{\infty}(n-1+\alpha )|b_{n}|\leq \lambda (1-\alpha ).
\ee
Then the function $f$ defined by the equation $(z/f(z))^{1-\alpha} =\phi (z)$
is in ${\mathcal U}(\lambda , 1-\alpha )$.
\end{lemma}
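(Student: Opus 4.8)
The plan is to reduce the membership $f\in\mathcal{U}(\lambda,1-\alpha)$ to a power-series identity already recorded in the proof of Theorem~\ref{th2}, and then to estimate coefficientwise, exactly as in the special case $\alpha=0$ treated in Lemma~\ref{lem5}(a). Writing $\mu:=1-\alpha$, the hypothesis $0\le\alpha<1$ gives $0<\mu\le 1$, so that $\mu>-1$ and the class $\mathcal{U}(\lambda,\mu)$ is meaningful. First I would check that $f$ genuinely belongs to $\mathcal{A}$: since $\phi$ is non-vanishing with $\phi(0)=1$, the principal branch of $\phi^{1/\mu}$ is analytic in $\D$ with value $1$ at the origin, whence the relation $z/f(z)=\phi(z)^{1/\mu}$ determines $f(z)=z\,\phi(z)^{-1/\mu}$ as an analytic function with $f(0)=0$ and $f'(0)=1$.

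The key step is to express the defining quantity of $\mathcal{U}(\lambda,\mu)$ in terms of $\phi$. Applying the identity~(\ref{th2-eq2}),
\[
z\frac{d}{dz}\left(\frac{z}{f(z)}\right)^{\mu}
=\mu\left[\left(\frac{z}{f(z)}\right)^{\mu}
-\left(\frac{z}{f(z)}\right)^{\mu+1}f'(z)\right],
\]
and substituting $(z/f(z))^{\mu}=\phi(z)$, I would solve for the relevant product to obtain
\[
f'(z)\left(\frac{z}{f(z)}\right)^{\mu+1}=\phi(z)-\frac{z\phi'(z)}{\mu}.
\]
Subtracting $1$ and inserting the expansion $\phi(z)=1+\sum_{n=1}^{\infty}b_nz^n$ then gives
\[
f'(z)\left(\frac{z}{f(z)}\right)^{\mu+1}-1
=\sum_{n=1}^{\infty}\frac{\mu-n}{\mu}\,b_nz^n
=-\frac{1}{1-\alpha}\sum_{n=1}^{\infty}(n-1+\alpha)\,b_nz^n,
\]
where the last equality uses $\mu=1-\alpha$ and the arithmetic $\frac{\mu-n}{\mu}=-\frac{n-1+\alpha}{1-\alpha}$.

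Finally I would estimate. For $z\in\D$ the triangle inequality together with $|z|^n<1$ yields
\[
\left|f'(z)\left(\frac{z}{f(z)}\right)^{\mu+1}-1\right|
\le\frac{1}{1-\alpha}\sum_{n=1}^{\infty}(n-1+\alpha)\,|b_n|\,|z|^n
\le\frac{1}{1-\alpha}\sum_{n=1}^{\infty}(n-1+\alpha)\,|b_n|,
\]
and the coefficient hypothesis~(\ref{eq11}) bounds the right-hand side by $\frac{1}{1-\alpha}\cdot\lambda(1-\alpha)=\lambda$. This is precisely the inequality defining $f\in\mathcal{U}(\lambda,1-\alpha)$, which completes the argument. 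The computation is entirely elementary, so I do not expect a genuine obstacle; the only points requiring care are the correct handling of the principal branch in the derivation of~(\ref{th2-eq2}) and the observation that the $n=1$ term now carries the nonzero factor $\alpha$ (rather than vanishing, as it does when $\alpha=0$). This makes the present result a clean generalization of Lemma~\ref{lem5}(a).
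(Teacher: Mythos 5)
Your proposal is correct and follows essentially the same route as the paper: the paper's proof likewise reduces to the identity $\bigl(z/f(z)\bigr)^{2-\alpha}f'(z)-1=-\tfrac{1}{1-\alpha}\sum_{n\ge1}(n-1+\alpha)b_nz^n$ and then applies the coefficient hypothesis. You simply supply the intermediate derivation via the identity (\ref{th2-eq2}) that the paper leaves implicit.
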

\begin{proof}
Let $f$ be given by $(z/f(z))^{1-\alpha} =\phi (z)$, where $\phi (z)\neq 0$ in
$\D$, and we choose here the principal branch so that
$(z/f(z))^{1-\alpha}$ at $z=0$ is $1$. Then the power series representation of
$\phi $ and the coefficient condition (\ref{eq11}), lead to
$$\left |\left (\frac{z}{f(z)}\right )^{2-\alpha } f'(z) -1 \right |
= \left |-\frac{1}{1-\alpha }\sum_{n=1}^{\infty}(n-1+\alpha)b_nz^n \right |
\leq \lambda
$$
and therefore, by the definition of the class, $f$ is in ${\mathcal U}(\lambda
, 1-\alpha )$.
\end{proof}

The following result determines the $\mathcal{U}(\lambda,\mu)$ radius in $\mathcal{S}$.

\begin{theorem}\label{th8}
Suppose that $f\in\mathcal{S}$,  $0\leq \alpha <1$, $\lambda >0$ and
$$r_{\alpha, \lambda} = \frac{\lambda \sqrt{2(1-\alpha)}}
{\left [\sqrt{\left (\alpha +2\lambda ^2(1-\alpha) \right )^2
+4\lambda ^2(1-\alpha)^2(1- \lambda ^2)} +(\alpha +2\lambda ^2(1-\alpha))\right ]^{1/2}}.
$$
Then we have $\frac{1}{r}f(rz)\in \mathcal{U}(\lambda ,1-\alpha )$ for
\be\label{eq15}
0<r\leq r_{\alpha, \lambda} .
\ee
In particular, $\frac{1}{r}f(rz)\in \mathcal{U} (1,1-\alpha )$ for
$0<r\leq \sqrt{(1-\alpha )/(2-\alpha )}$.
\end{theorem}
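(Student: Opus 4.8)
The plan is to combine the two lemmas just established: Lemma~\ref{lem1}, which converts membership in $\mathcal{U}(\lambda,1-\alpha)$ into a coefficient inequality, and Lemma~\ref{lem2}, the area theorem, which controls those coefficients. Write $\mu:=1-\alpha\in(0,1]$; since $f\in\mathcal{S}$ forces $z/f(z)$ to be non-vanishing with value $1$ at the origin, expand the principal branch as $(z/f(z))^{1-\alpha}=1+\sum_{n=1}^{\infty}b_nz^n$. First I would record that the rescaled function $g_r(z):=\frac1r f(rz)$ satisfies $(z/g_r(z))^{1-\alpha}=1+\sum_{n=1}^{\infty}b_nr^nz^n$, so that by Lemma~\ref{lem1} it suffices to prove
$$S(r):=\sum_{n=1}^{\infty}(n-1+\alpha)|b_n|r^n\le\lambda(1-\alpha)$$
for every $r\in(0,r_{\alpha,\lambda}]$.

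Next I would estimate $S(r)$. Factoring $(n-1+\alpha)|b_n|r^n=\sqrt{n-1+\alpha}\,|b_n|\cdot\sqrt{n-1+\alpha}\,r^n$ and applying the Cauchy--Schwarz inequality gives
$$S(r)\le\Big(\sum_{n=1}^{\infty}(n-1+\alpha)|b_n|^2\Big)^{1/2}\Big(\sum_{n=1}^{\infty}(n-1+\alpha)r^{2n}\Big)^{1/2}.$$
The first factor is at most $\sqrt{1-\alpha}$ by Lemma~\ref{lem2} applied with $\mu=1-\alpha$, while the second sum is elementary: summing the geometric and arithmetico-geometric series yields $\sum_{n\ge1}(n-1+\alpha)r^{2n}=r^2\big(\alpha+(1-\alpha)r^2\big)/(1-r^2)^2$. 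Hence
$$S(r)\le\sqrt{1-\alpha}\;\frac{r\sqrt{\alpha+(1-\alpha)r^2}}{1-r^2},$$
and it remains to determine for which $r$ this upper bound is at most $\lambda(1-\alpha)$.

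I would then square and clear denominators (both sides being non-negative), reducing the desired inequality, with $t:=r^2$, to
$$g(t):=(1-\alpha)(1-\lambda^2)t^2+\big(\alpha+2\lambda^2(1-\alpha)\big)t-\lambda^2(1-\alpha)\le0.$$
Since $g(0)=-\lambda^2(1-\alpha)<0$ and $g(1)=1>0$, the quadratic $g$ has a unique root $t^\ast$ in $(0,1)$ and satisfies $g\le0$ precisely on $[0,t^\ast]$; a short sign analysis confirms this whether the leading coefficient is positive ($\lambda<1$), zero ($\lambda=1$, where $g$ is linear), or negative ($\lambda>1$). Solving $g(t^\ast)=0$ and rationalising the quadratic formula gives
$$t^\ast=\frac{2\lambda^2(1-\alpha)}{\sqrt{(\alpha+2\lambda^2(1-\alpha))^2+4\lambda^2(1-\alpha)^2(1-\lambda^2)}+(\alpha+2\lambda^2(1-\alpha))},$$
which is exactly $r_{\alpha,\lambda}^2$. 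This establishes $S(r)\le\lambda(1-\alpha)$ for $0<r\le r_{\alpha,\lambda}$, proving the theorem. In the special case $\lambda=1$ the quadratic degenerates: the $t^2$-terms cancel and $g(t)=(2-\alpha)t-(1-\alpha)$, whose root is $t^\ast=(1-\alpha)/(2-\alpha)$, giving the stated radius $\sqrt{(1-\alpha)/(2-\alpha)}$.

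The one step needing genuine care---the main obstacle---is the passage from the Cauchy--Schwarz/area-theorem bound to the closed form for $r_{\alpha,\lambda}$: one must verify that the root produced by rationalising the quadratic formula really is the relevant threshold lying in $(0,1)$ across all three sign regimes of $1-\lambda^2$, rather than the spurious second root (here the elementary facts $g(0)<0$, $g(1)=1>0$ do the decisive work). The analytic content, namely the area theorem combined with Cauchy--Schwarz, is the heart of the argument, and the remaining manipulations are routine bookkeeping.
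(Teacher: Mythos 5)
Your proposal is correct and follows essentially the same route as the paper's proof: reduce to the coefficient inequality via Lemma~\ref{lem1}, bound the sum by Cauchy--Schwarz together with the area-theorem estimate of Lemma~\ref{lem2}, compute $\sum_{n\ge1}(n-1+\alpha)r^{2n}=r^2(\alpha+(1-\alpha)r^2)/(1-r^2)^2$, and solve $\tfrac{r}{1-r^2}\sqrt{\alpha+(1-\alpha)r^2}\le\lambda\sqrt{1-\alpha}$ for $r$. The only difference is that you carry out explicitly the quadratic-in-$t=r^2$ analysis (sign of the leading coefficient, choice of root) that the paper merely asserts as ``equivalent to (\ref{eq15})'', and your bookkeeping there is accurate.
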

\begin{proof}
Let $f\in\mathcal{S}$. Then $z/f(z) \neq 0$ in $\D$. So, we may assume
$f$ is of the form
\be\label{eq13}
\left ( \frac{z}{f(z)}\right )^{1-\alpha}=1+\sum_{n=1}^\infty b_nz^n.
\ee
Now, Lemma \ref{lem2} gives
$$\sum_{n=1}^\infty (n-1+\alpha)|b_n|^2\leq 1-\alpha .
$$
On the other side, for $0<r\leq 1$, we obtain from (\ref{eq13}) that
$$\left (\frac{z}{\frac{1}{r}f(rz)} \right )^{1-\alpha}
= 1+\sum_{n=1}^\infty (b_nr^n)z^n.
$$
According to Lemma \ref{lem1}, it suffices to verify the inequality
$$ \sum_{n=1}^{\infty}(n-1+\alpha )|b_{n}r^n|\leq \lambda (1-\alpha )
$$
for $0<r\leq r_{\alpha, \lambda}$. Now, as before, we have
\begin{eqnarray*}
\sum_{n=1}^{\infty}(n-1 +\alpha )|b_{n}r^{n}|
& \leq  &
\left (\sum_{n=1}^{\infty}(n-1+\alpha )|b_{n}|^2 \right )^{1/2}
\left (\sum_{n=1}^{\infty}(n-1 +\alpha)r^{2n}\right )^{1/2} \\
&\leq  & \sqrt{1-\alpha} \left (\frac{r^4}{(1-r^2)^2} +\alpha
\frac{r^2}{1-r^2} \right )^{1/2}\\
&= & \sqrt{1-\alpha} \left (\frac{r}{1-r^2} \right )
\left (\alpha +(1-\alpha)r^2\right)^{1/2} \\
&\leq  & \lambda (1-\alpha ),
\end{eqnarray*}
if $\frac{r}{1-r^2} \sqrt{\alpha +(1-\alpha)r^2} \leq  \lambda
\sqrt{1-\alpha}$. Note that
$$\frac{r}{1-r^2} \sqrt{\alpha +(1-\alpha)r^2} \leq  \lambda \sqrt{1-\alpha}
$$
is equivalent to (\ref{eq15}), and so we complete the proof.
\end{proof}

\section{Conditions for Functions to be in $\mathcal{U}(\lambda,\mu)$}
\label{chap5-sec5}

To present our next result, we consider the class of functions of
Bazilevi\v{c} type, see
\cite{Li01,SM83/84}. The result is simple and surprising as it identifies a
subclass which lies in $\mathcal{U}(\lambda,\mu)$. This generalizes the result
of Obradovi\'c and Ponnusamy, see \cite[Theorem 5]{OP-05prea1}.

\begin{theorem}\label{psv-th6}
Let $0<\mu\le 1$. If $f\in\mathcal{S}$ is given by $(\ref{eq1})$ with $b_n\ge
0$, and satisfies the condition that ${\rm Re}\left(f'(z)\left(\frac{f(z)}{z}
\right)^{\mu-1}\right)>0$.  Then $f\in\mathcal{U}(1,\mu)$.
\end{theorem}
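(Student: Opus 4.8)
The plan is to rewrite the defining quantity of $\mathcal{U}(1,\mu)$ as an explicit power series, reduce membership to a single coefficient inequality, and extract that inequality from the Bazilevič-type hypothesis by restricting to the real segment $(0,1)$. Set $\phi(z)=(z/f(z))^\mu=1+\sum_{n=1}^\infty b_nz^n$ and $\omega(z):=f'(z)(z/f(z))^{\mu+1}$, so that $f\in\mathcal{U}(1,\mu)$ means precisely $|\omega(z)-1|\le 1$ on $\D$. First I would apply the identity (\ref{th2-eq2}), which reads $z\phi'(z)=\mu[\phi(z)-\omega(z)]$, hence $\omega=\phi-\tfrac1\mu z\phi'$ and therefore $\omega(z)-1=-\sum_{n=1}^\infty\frac{n-\mu}{\mu}b_nz^n$.

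Because $0<\mu\le 1$ we have $n-\mu\ge 0$ for every $n\ge 1$, and since $b_n\ge 0$ each coefficient $\frac{n-\mu}{\mu}b_n$ is nonnegative. Using $|z|<1$ then gives $|\omega(z)-1|\le\sum_{n=1}^\infty\frac{n-\mu}{\mu}b_n|z|^n\le\frac1\mu\sum_{n=1}^\infty(n-\mu)b_n$, so the whole theorem reduces to proving the single inequality $\sum_{n=1}^\infty(n-\mu)b_n\le\mu$, after which $|\omega(z)-1|\le\frac1\mu\cdot\mu=1$ follows immediately.

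The main obstacle, and the only place the hypothesis enters, is producing this coefficient bound from the condition $\real\big(f'(z)(f(z)/z)^{\mu-1}\big)>0$, since that condition carries the power $\mu-1$ rather than $\mu+1$. The resolution I would use is the direct computation, via $f=z\phi^{-1/\mu}$, of $g(z):=f'(z)(f(z)/z)^{\mu-1}=\omega(z)/\phi(z)^2=\dfrac{\mu-\sum_{n=1}^\infty(n-\mu)b_nz^n}{\mu\,\phi(z)^2}$. Since the $b_n$ are real, $g$ has real Taylor coefficients, so $g(r)$ is real for $r\in(0,1)$; moreover $\phi(r)>0$ there because its coefficients are nonnegative and $\phi(0)=1$. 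Hence on $(0,1)$ the hypothesis $\real\,g(r)=g(r)>0$ forces $\mu-\sum_{n=1}^\infty(n-\mu)b_nr^n>0$, that is $\sum_{n=1}^\infty(n-\mu)b_nr^n<\mu$. Letting $r\to 1^-$ and invoking monotone convergence (all terms are nonnegative and nondecreasing in $r$) yields $\sum_{n=1}^\infty(n-\mu)b_n\le\mu$, which completes the reduction and shows $f\in\mathcal{U}(1,\mu)$.
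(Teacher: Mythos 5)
Your proposal is correct and follows essentially the same route as the paper: the same identity $z\phi'=\mu(\phi-\omega)$ turns the hypothesis into positivity of $\bigl(\mu-\sum_{n\ge1}(n-\mu)b_nz^n\bigr)/\bigl(\mu\,\phi(z)^2\bigr)$, and letting $z\to1^-$ along the reals with $b_n\ge0$ yields $\sum_{n\ge1}(n-\mu)b_n\le\mu$. The only difference is that you inline the proof of the sufficiency of this coefficient condition (and handle the $r\to1^-$ limit slightly more carefully via monotonicity), whereas the paper simply invokes its Lemma~\ref{lem1} at that point.
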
\begin{proof}
Using the equation (\ref{th2-eq2}), we notice that
\begin{eqnarray*}
{\rm Re}\left(f'(z)\left(\frac{f(z)}{z} \right)^{\mu-1}\right)>0
&\Leftrightarrow & {\rm Re}\,
\frac{\frac{zf'(z)}{f(z)}} {\left( \frac{z}{f(z)}\right)^\mu}>0\\
&\Leftrightarrow & {\rm Re}\, \frac{\left(\frac{z}{f(z)}\right)^{\mu}-
\frac{z}{\mu}\frac{d}{dz} \left (\frac{z}{f(z)}\right)^{\mu}}
{\left( \frac{z}{f(z)} \right)^{2\mu}}>0\\
&\Leftrightarrow & {\rm Re}\,\frac{1+\sum_{n=1}^\infty(1-n/\mu)b_n z^n}
{(1+\sum_{n=1}^\infty b_n z^n)^2}>0.
\end{eqnarray*}
Since $b_n\ge 0$, allow $z\to 1^{-}$ along the real axis, we get
$${\rm Re}\,\frac{1-\sum_{n=1}^\infty(n/\mu-1)b_n}{(1+\sum_{n=1}^\infty
b_n)^2}\ge 0,
$$
which gives that
$$\sum_{n=1}^\infty (n-\mu)b_n\le \mu
$$
and so by Lemma \ref{lem1}, we have $f\in\mathcal{U}(1,\mu)$.
\end{proof}

\begin{theorem}\label{psvth8}
Let $0<\mu\le 1$. A function $f$ of the form $(\ref{eq1})$ with $b_n\ge 0$ and
$z/f(z)\neq 0$, is in $\mathcal{U}(1,\mu)$ if and only if
\be\label{psvth8-eq1} \sum_{n=1}^\infty (n-\mu)b_n\le \mu.
\ee
\end{theorem}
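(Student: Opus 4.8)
The plan is to convert the defining inequality for $\mathcal{U}(1,\mu)$ into a single power-series estimate and then exploit the sign of the coefficients. First I would start from the identity (\ref{th2-eq2}), which relates $z\frac{d}{dz}(z/f(z))^{\mu}$ to $(z/f(z))^{\mu}$ and $(z/f(z))^{\mu+1}f'(z)$. Writing $w(z)=(z/f(z))^{\mu}=1+\sum_{n=1}^{\infty}b_nz^n$ as in (\ref{eq1}) and solving that identity for $(z/f(z))^{\mu+1}f'(z)$ gives $(z/f(z))^{\mu+1}f'(z)=w(z)-\tfrac1\mu\,zw'(z)$, whence
\[ f'(z)\left(\frac{z}{f(z)}\right)^{\mu+1}-1=\sum_{n=1}^{\infty}\Big(1-\frac{n}{\mu}\Big)b_nz^n=-\frac1\mu\sum_{n=1}^{\infty}(n-\mu)b_nz^n. \]
Consequently $f\in\mathcal{U}(1,\mu)$ is exactly the assertion that $\big|\sum_{n=1}^{\infty}(n-\mu)b_nz^n\big|\le\mu$ for every $z\in\D$, and the whole theorem reduces to comparing this uniform bound with the single inequality (\ref{psvth8-eq1}).

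For the sufficiency direction I would note that since $0<\mu\le1$ we have $n-\mu\ge1-\mu\ge0$ for all $n\ge1$, and as $b_n\ge0$ by hypothesis, each term $(n-\mu)b_n$ is nonnegative. Hence for $|z|<1$,
\[ \left|\frac1\mu\sum_{n=1}^{\infty}(n-\mu)b_nz^n\right|\le\frac1\mu\sum_{n=1}^{\infty}(n-\mu)b_n|z|^n\le\frac1\mu\sum_{n=1}^{\infty}(n-\mu)b_n\le1 \]
whenever (\ref{psvth8-eq1}) holds, so $f\in\mathcal{U}(1,\mu)$. This half is in fact the special case $\lambda=1,\ \alpha=1-\mu$ of Lemma~\ref{lem1} (as used in the proof of Theorem~\ref{psv-th6}), so it could instead be quoted directly.

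For the necessity direction I would assume $f\in\mathcal{U}(1,\mu)$, so $\tfrac1\mu\big|\sum_{n=1}^{\infty}(n-\mu)b_nz^n\big|\le1$ throughout $\D$, and then restrict to real $z=r\in(0,1)$ and let $r\to1^-$. Because $b_n\ge0$ and $n-\mu\ge0$, the series $\sum_{n=1}^{\infty}(n-\mu)b_nr^n$ consists of nonnegative terms, so its modulus equals itself and every partial sum is controlled: $\sum_{n=1}^{N}(n-\mu)b_nr^n\le\sum_{n=1}^{\infty}(n-\mu)b_nr^n\le\mu$. Letting $r\to1^-$ for fixed $N$ gives $\sum_{n=1}^{N}(n-\mu)b_n\le\mu$, and then letting $N\to\infty$ yields (\ref{psvth8-eq1}).

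The one place I would be most careful is this boundary passage, but the all-nonnegative (monotone) structure of the coefficients makes the interchange of the limits $r\to1^-$ and $N\to\infty$ elementary, so no delicate convergence argument is required. What is genuinely essential is that \emph{both} $b_n\ge0$ and $0<\mu\le1$, so that the coefficients $(n-\mu)b_n$ never change sign; this sign condition is precisely what upgrades the coefficient bound from merely sufficient (as in Lemma~\ref{lem1}) to both necessary and sufficient.
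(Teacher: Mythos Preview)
Your proof is correct and follows essentially the same route as the paper: the paper also rewrites $f'(z)(z/f(z))^{\mu+1}-1$ via identity (\ref{th2-eq2}) as $\tfrac1\mu\sum (n-\mu)b_nz^n$, cites Lemma~\ref{lem1} for sufficiency, and obtains necessity by letting $z\to1^{-}$ along the real axis using $b_n\ge0$. Your extra care with partial sums in the limit passage is a nice touch, but not a different method.
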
\begin{proof}
In view of Lemma \ref{lem1}, it suffices to prove the necessary part. To do this, we let $f\in\mathcal{U}(1,\mu)$ and $f$ is of the form
(\ref{eq1}). Then using (\ref{th2-eq2}), we get
$$\left|\left(\frac{z}{f(z)}\right)^{\mu+1}f'(z)-1\right|
=\left|\left(\frac{z}{f(z)}\right)^{\mu}-\frac{z}{\mu}\frac{d}{dz}
\left(\frac{z}{f(z)}\right)^{\mu}-1\right|
=\frac{1}{\mu}\left|\sum_{n=1}^\infty (n-\mu)b_n z^n\right|\le 1.
$$
Because $b_n\ge 0$, letting $z\to 1^{-}$ along the real axis, we obtain the coefficient condition (\ref{psvth8-eq1}).
\end{proof}

The following result gives a sufficient condition for starlike
functions of order $\alpha$ to be in the class
${\mathcal U}(\lambda,\mu)$.

\begin{theorem}\label{psvth9}
If $f\in {\es}^*(\alpha)$ is of the form $(\ref{eq1})$ with $b_n\geq 0$ and $\mu >0$, then
\be\label{psvth9-eq1}
\sum_{n=1}^\infty (n-\mu(1-\alpha))b_n\leq \mu(1-\alpha).
\ee
In particular, $f\in{\mathcal U}(1-\alpha,\mu)$.
\end{theorem}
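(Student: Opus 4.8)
The plan is to convert the order-$\alpha$ starlikeness of $f$ into a statement about the single power series $P(z):=(z/f(z))^\mu=1+\sum_{n=1}^\infty b_nz^n$, and then to exploit the hypothesis $b_n\ge 0$ by letting $z$ tend to $1$ along the positive real axis, exactly in the spirit of the proof of Theorem~\ref{th2}. First I would rewrite the identity (\ref{th2-eq2}) as $zP'(z)=\mu\big[P(z)-(z/f(z))^{\mu+1}f'(z)\big]$ and note that $(z/f(z))^{\mu+1}f'(z)=P(z)\,zf'(z)/f(z)$. Substituting this yields $zP'(z)=\mu P(z)\big(1-zf'(z)/f(z)\big)$, hence
\[
\frac{zf'(z)}{f(z)}=1-\frac{zP'(z)}{\mu P(z)}.
\]
Since $\mu>0$, the defining condition $\real\big(zf'(z)/f(z)\big)\ge\alpha$ of the class $\es^*(\alpha)$ becomes equivalent to $\real\big(zP'(z)/P(z)\big)\le\mu(1-\alpha)$ for every $z\in\D$.

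Next I would write $zP'(z)/P(z)=\big(\sum_{n\ge 1}n b_nz^n\big)\big/\big(1+\sum_{n\ge 1}b_nz^n\big)$ and restrict to $z=r\in(0,1)$. Because all $b_n\ge 0$, every quantity in sight is real and nonnegative, so the inequality above reads
\[
\frac{\sum_{n\ge 1}n b_nr^n}{1+\sum_{n\ge 1}b_nr^n}\le\mu(1-\alpha),
\]
which rearranges to $\sum_{n\ge 1}\big(n-\mu(1-\alpha)\big)b_nr^n\le\mu(1-\alpha)$. Letting $r\to 1^-$ and using that the nonnegative series $\sum n b_nr^n$ and $\sum b_nr^n$ increase monotonically to their limits then produces the asserted coefficient inequality (\ref{psvth9-eq1}).

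For the inclusion $f\in\mathcal{U}(1-\alpha,\mu)$ I would expand the defining expression directly: from $\mu\,(z/f(z))^{\mu+1}f'(z)=\mu P(z)-zP'(z)$ one obtains
\[
f'(z)\Big(\frac{z}{f(z)}\Big)^{\mu+1}-1=-\frac1\mu\sum_{n\ge 1}(n-\mu)b_nz^n,
\]
so that $\big|f'(z)(z/f(z))^{\mu+1}-1\big|\le\frac1\mu\sum_{n\ge 1}|n-\mu|\,b_n$ for $z\in\D$. In the natural range $0<\mu\le 1$ every $n-\mu\ge 0$, whence $|n-\mu|=n-\mu\le n-\mu(1-\alpha)$ (using $\alpha\ge 0$), and the last sum is dominated by $\frac1\mu\sum_{n\ge 1}(n-\mu(1-\alpha))b_n\le 1-\alpha$ by (\ref{psvth9-eq1}); this is exactly the statement $f\in\mathcal{U}(1-\alpha,\mu)$. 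The main obstacle I anticipate is analytic rather than algebraic, namely justifying the limit $r\to 1^-$ together with the termwise passage in the series, since a priori the series need not converge at $z=1$. I would resolve this through the monotone convergence of the nonnegative partial sums $\sum_{n=1}^N n b_nr^n$, observing that the rearranged inequality for each fixed $r$ already keeps these truncations bounded, so the limiting inequality follows term by term.
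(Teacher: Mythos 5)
Your proof is correct and follows essentially the same route as the paper's: both convert the condition $\real\left(zf'(z)/f(z)\right)\ge\alpha$ into an inequality for the series $(z/f(z))^\mu=1+\sum b_nz^n$ via the identity (\ref{th2-eq2}), let $z\to 1^-$ along the positive reals using $b_n\ge 0$ to obtain (\ref{psvth9-eq1}), and then deduce membership in $\mathcal{U}(1-\alpha,\mu)$ from $n-\mu\le n-\mu(1-\alpha)$. The only cosmetic difference is that the paper routes the starlikeness condition through the equivalent form $\left|\frac{w-1}{w+1-2\alpha}\right|\le 1$ rather than working with the real part directly; your explicit caveat that the absolute-value estimate for the inclusion needs $0<\mu\le 1$ (so that $|n-\mu|=n-\mu$) makes precise a point that the paper's appeal to Lemma~\ref{lem1} leaves implicit.
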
\begin{proof}
It is easy to see that
$$ f\in {\es}^*(\alpha) \Leftrightarrow
{\rm Re}\left(\frac{zf'(z)}{f(z)}\right)\geq  \alpha
\Leftrightarrow \left|\frac{{\frac{zf'(z)}{f(z)}-1}}
{{\frac{zf'(z)}{f(z)}+1-2\alpha}}\right|\leq 1.
$$
Now, using this relation and the identity (\ref{th2-eq2}), we have the
following
\begin{eqnarray*}
\left|\frac{{\frac{zf'(z)}{f(z)}-1}}{{\frac{zf'(z)}{f(z)}+1-2\alpha}}\right|
& = & \left|\frac{-z\frac{d}{dz}\left(\frac{z}{f(z)}\right)^\mu
}{2\mu(1-\alpha)\left(\frac{z}{f(z)}\right)^\mu -z\frac{d}{dz}
\left(\frac{z}{f(z)}\right)^\mu}\right|\\
& = & \left|\frac{{-\sum_{n=1}^\infty nb_nz^n}}
{{2\mu(1-\alpha)\left(1+\sum_{n=1}^\infty b_n z^n\right)
-\sum_{n=1}^\infty nb_n z^n}}\right|\leq 1.
\end{eqnarray*}
Since $b_n \ge 0$, if $z\to 1^-$ along the real axis, we see from the last
inequality that
$$\frac{\ds{\sum_{n=1}^\infty nb_n}}{\ds{2\mu(1-\alpha)
-\sum_{n=1}^\infty (n-2\mu(1-\alpha))b_n}}\leq 1.
$$
This gives the desired inequality (\ref{psvth9-eq1}).

Finally, since $n-\mu \leq n-\mu(1-\alpha)$, we have
$$\sum_{n=1}^\infty (n-\mu)b_n \leq \sum_{n=1}^\infty (n-\mu(1-\alpha))b_n\leq
\mu(1-\alpha).
$$
From Lemma \ref{lem1}, we conclude that $f\in {\mathcal U}(1-\alpha,\mu)$.
\end{proof}

As a consequence of Theorem \ref{psvth9}, we next see
that $\mathcal{T}^*(\alpha)\subset\mathcal{U}(1-\alpha)$.

\begin{corollary}\label{psvth9-cor1}
If $f(z)=z-\sum_{n=2}^\infty |a_n|z^n$ is in ${\es}^*(\alpha)$, then
$f\in {\mathcal U}(1-\alpha).$
\end{corollary}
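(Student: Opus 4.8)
The plan is to recognize the claim as the special case $\mu=1$ of Theorem~\ref{psvth9}. Since $\mathcal{U}(1-\alpha)=\mathcal{U}(1-\alpha,1)$ under the convention $\mathcal{U}(\lambda,1)=\mathcal{U}(\lambda)$, it will suffice to check that a function $f$ of the stated form satisfies the hypotheses of Theorem~\ref{psvth9} with $\mu=1$: namely that $f\in\es^*(\alpha)$ and that $z/f(z)$ can be written in the form $(\ref{eq1})$ with non-negative coefficients. The first requirement is handed to us for free, since the hypothesis is precisely that $f(z)=z-\sum_{n=2}^\infty|a_n|z^n$ belongs to $\es^*(\alpha)$ (equivalently, $f\in\mathcal{T}^*(\alpha)$).

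The one point that needs work is the non-negativity of the Taylor coefficients of $z/f(z)$. First I would write
$$\frac{z}{f(z)}=\frac{1}{1-\sum_{n=2}^\infty|a_n|z^{n-1}}=\frac{1}{1-g(z)},\qquad g(z):=\sum_{k=1}^\infty|a_{k+1}|z^{k},$$
so that $g$ is analytic near the origin with $g(0)=0$ and all coefficients $\ge0$. I would then expand the geometric series $1/(1-g(z))=\sum_{m=0}^\infty g(z)^m$ and observe that each power $g(z)^m$, being a product of power series with non-negative coefficients, again has non-negative coefficients. Summing, I would conclude
$$\frac{z}{f(z)}=1+\sum_{n=1}^\infty b_nz^n\quad\text{with}\quad b_n\ge0,$$
which is exactly the form $(\ref{eq1})$ with $\mu=1$, and $z/f(z)$ is non-vanishing near $0$ as required.

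With both hypotheses verified, Theorem~\ref{psvth9} applied with $\mu=1$ gives $\sum_{n=1}^\infty(n-(1-\alpha))b_n\le1-\alpha$ and, in particular, $f\in\mathcal{U}(1-\alpha,1)=\mathcal{U}(1-\alpha)$, which is the assertion. There is no real obstacle here; the only step requiring any care is the geometric-series argument that delivers $b_n\ge0$, and it is precisely this observation that lets Theorem~\ref{psvth9} be invoked.
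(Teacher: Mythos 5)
Your proposal is correct and follows essentially the same route as the paper: the paper likewise writes $z/f(z)=1/(1-|a_2|z-|a_3|z^2-\cdots)$, observes that the resulting Taylor coefficients $b_n$ are non-negative, and then invokes Theorem~\ref{psvth9}. The only difference is that you spell out the geometric-series justification for $b_n\ge 0$, which the paper leaves implicit.
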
\begin{proof}
Let $f\in {\es}^*(\alpha)$ be of the form $f(z)=z-\sum_{n=2}^\infty |a_n|z^n$.
Then $z/f(z)$ is nonvanishing in the unit
disk and so it can be expressed as
$$\frac{z}{f(z)}=\frac{1}{1-|a_{2}|z-|a_{3}|z^{2}- \cdots }
= 1+b_{1}z+b_{2}z^{2}+\cdots ,
$$
where $b_{n}\geq 0$ for all $n\in \IN$. Then by Theorem \ref{psvth9}, $f\in
{\mathcal U}(1-\alpha).$
\end{proof}

From \cite{OPSV-02}, we collect the following result.

\begin{lemma}\label{v-thsis-p.19}
Let  $0\leq \lambda, \gamma \leq 1$ and $f\in \mathcal {U}(\lambda)$.
Define
$$\lambda ^*_{\gamma}=\frac{-|f''(0)|\cos(\pi\gamma /4)
+\sin (\pi\gamma /4)\sqrt{16\cos ^2(\pi\gamma /4)
- |f''(0)|^2}} {2\cos(\pi \gamma /4)}
$$
and let $\lambda ^{\mathcal {R}_{\gamma}}$ be given by the inequality
$$\sin (\pi\gamma /2) \sqrt{4-\lambda^2}\geq  (|f''(0)|+\lambda )
\sqrt{4-(|f''(0)|+\lambda )^2} + \lambda \cos (\pi\gamma /2).
$$
Then
\begin{enumerate}
\item [{\rm (i)}] $f\in \mathcal {U}(\lambda) ~\Ra ~f\in\mathcal {S}_{\gamma }
$ for $0<\lambda\leq \lambda^*_{\gamma}/2$,
\item [{\rm (ii)}] $f\in \mathcal {U}(\lambda) ~ \Ra~ f\in \mathcal
{R}_{\gamma}$ for $0<\lambda\leq \lambda^{\mathcal {R}_{\gamma}}/2$,
\end{enumerate}
where
\begin{eqnarray*}
\mathcal{R}_\gamma: & = & \left \{f\in\mathcal{A}:\, |\arg
f'(z)|\leq \frac{\pi\gamma}{2}\right \} \quad \mbox{and }\\
\es_\gamma: & = &\left \{f\in\mathcal{A}:\,
\left |\arg \left ( zf'(z)/f(z) \right )\right |\leq \frac{\pi\gamma}{2}\right \}.
\end{eqnarray*}
\end{lemma}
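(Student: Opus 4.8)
\emph{The plan} is to reduce both implications to two disk estimates for $f\in\mathcal{U}(\lambda)$: that $f'(z)(z/f(z))^2$ stays in a small disk about $1$, and that $z/f(z)$ stays in a disk about $1$ whose radius is controlled by $|f''(0)|$ and $\lambda$. Writing $u(z):=z/f(z)=1+\sum_{n\ge1}b_nz^n$ (well defined and nonvanishing since $f\in\es$), a direct computation gives
\[
f'(z)\left(\frac{z}{f(z)}\right)^2-1=u(z)-zu'(z)-1=-\sum_{n\ge2}(n-1)b_nz^n=:g(z).
\]
Hence membership in $\mathcal{U}(\lambda)$ is precisely the statement $|g(z)|\le\lambda$ in $\D$; since $g$ vanishes to second order at the origin, Schwarz's lemma applied to $g(z)/z$ gives $|g(z)|\le\lambda|z|^2$, and the maximum principle gives $|g(t)/t^2|\le\lambda$.

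First I would integrate the identity $zu'(z)=u(z)-1-g(z)$ to obtain the representation
\[
u(z)=1+b_1z-z\int_0^z\frac{g(t)}{t^2}\,dt,\qquad |b_1|=\frac{|f''(0)|}{2}.
\]
This yields $|u(z)-1|\le \frac{|f''(0)|}{2}|z|+\lambda|z|^2\le \frac{|f''(0)|}{2}+\lambda$, so $u(z)$ lies in the closed disk $\{w:|w-1|\le \frac{|f''(0)|}{2}+\lambda\}$ and therefore
\[
|\arg u(z)|\le\arcsin\!\left(\frac{|f''(0)|}{2}+\lambda\right),
\]
valid as long as $\frac{|f''(0)|}{2}+\lambda<1$. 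In the same way $1+g(z)$ lies in $\{w:|w-1|\le\lambda\}$, so $|\arg(1+g(z))|\le\arcsin\lambda$.

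The two conclusions then come from writing each quantity as a quotient and adding argument bounds. For (i) we have $zf'(z)/f(z)=(1+g(z))/u(z)$, giving
\[
\left|\arg\frac{zf'(z)}{f(z)}\right|\le\arcsin\lambda+\arcsin\!\left(\frac{|f''(0)|}{2}+\lambda\right),
\]
and for (ii) we have $f'(z)=(1+g(z))/u(z)^2$, giving
\[
|\arg f'(z)|\le\arcsin\lambda+2\arcsin\!\left(\frac{|f''(0)|}{2}+\lambda\right).
\]
It then remains to check that the right-hand sides are at most $\pi\gamma/2$ exactly on the indicated ranges. For (i) I would confirm that $\arcsin\lambda+\arcsin(\frac{|f''(0)|}{2}+\lambda)=\pi\gamma/2$ when $\lambda=\lambda^*_\gamma/2$: taking sines, using $1+\cos(\pi\gamma/2)=2\cos^2(\pi\gamma/4)$ and $\sin(\pi\gamma/2)=2\sin(\pi\gamma/4)\cos(\pi\gamma/4)$, then isolating and squaring the radical $\sqrt{16\cos^2(\pi\gamma/4)-|f''(0)|^2}$, reproduces the defining formula for $\lambda^*_\gamma$; since the left-hand side increases with $\lambda$, the inequality holds throughout $0<\lambda\le\lambda^*_\gamma/2$, so $f\in\es_\gamma$. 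For (ii), setting $\alpha=\arcsin(\lambda/2)$ and $\beta=\arcsin((|f''(0)|+\lambda)/2)$ turns the inequality defining $\lambda^{\mathcal{R}_\gamma}$ into $\sin(\pi\gamma/2-\alpha)\ge\sin2\beta$, i.e. $\alpha+2\beta\le\pi\gamma/2$; accounting for the factor two relating the threshold to the radius, this is exactly the bound above, and $f\in\mathcal{R}_\gamma$ follows.

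\emph{The hard part} will be the trigonometric bookkeeping rather than any analytic subtlety: converting the radical expressions defining $\lambda^*_\gamma$ and $\lambda^{\mathcal{R}_\gamma}$ into the clean sum-of-arcsine conditions, keeping careful track of the factor two (the symbol $\lambda$ in the defining inequalities equals twice the radius), and checking that one stays in the regime $\frac{|f''(0)|}{2}+\lambda<1$ where the arcsines are defined and $\sin$ is monotone, so that squaring does not reverse the inequality. I would also remark that sharpness (that $\lambda^*_\gamma/2$ and $\lambda^{\mathcal{R}_\gamma}/2$ cannot be enlarged) is obtained by tracking the extremal case, in which $g(z)$ and the deviation of $u$ from $1$ are simultaneously aligned in argument; but the stated implications require only the sufficiency direction developed above.
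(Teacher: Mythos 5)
The thesis does not prove this lemma at all --- it is imported verbatim from \cite{OPSV-02} (``From \cite{OPSV-02}, we collect the following result'') --- so there is no in-paper proof to compare against; judged on its own terms, your argument is essentially correct and is the standard one for results of this type. The identity $f'(z)(z/f(z))^2-1=u(z)-zu'(z)-1=g(z)$ with $u=z/f$, the Schwarz bound $|g(z)|\le\lambda|z|^2$, the integrated representation $u(z)=1+b_1z-z\int_0^z g(t)t^{-2}\,dt$ giving $|u(z)-1|\le \tfrac12|f''(0)|+\lambda$, and the factorizations $zf'/f=(1+g)/u$ and $f'=(1+g)/u^2$ are all right. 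I also checked the identification of the threshold: writing $a=|f''(0)|$, the equation $\arcsin(\mu/2)+\arcsin((a+\mu)/2)=\pi\gamma/2$ is most cleanly handled through the cosine addition formula, which gives $\sqrt{(4-\mu^2)\,(4-(a+\mu)^2)}=4\cos(\pi\gamma/2)+\mu(a+\mu)$ and, after squaring, the quadratic $4\mu^2\cos^2(\pi\gamma/4)+4a\mu\cos^2(\pi\gamma/4)+a^2-16\sin^2(\pi\gamma/4)\cos^2(\pi\gamma/4)=0$, whose positive root is exactly $\lambda^*_\gamma$; substituting $\mu=2\lambda$ turns this into your condition $\arcsin\lambda+\arcsin(a/2+\lambda)\le\pi\gamma/2$, so the factor-of-two bookkeeping is consistent, and monotonicity in $\lambda$ extends the conclusion to the whole interval.

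The one place where your safeguard is not strong enough is part (ii). The condition you flag, $\tfrac12|f''(0)|+\lambda<1$, keeps $\beta=\arcsin(\tfrac12|f''(0)|+\lambda)$ defined, and in part (i) nothing more is needed (at the threshold $(a+\lambda^*_\gamma)/2=\sin\beta$ with $\beta\le\pi\gamma/2\le\pi/2$ automatically). But the passage from $\sin(\pi\gamma/2-\alpha)\ge\sin(2\beta)$ to $\alpha+2\beta\le\pi\gamma/2$ additionally requires $2\beta\le\pi/2$, i.e.\ $|f''(0)|+2\lambda\le\sqrt2$, and this is not implied by the displayed inequality: for $|f''(0)|$ near $2$ and $\lambda$ small one has $2\beta$ near $\pi$, so $\sin(2\beta)$ is small and the inequality holds while the angle conclusion fails. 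This is partly a defect of the lemma as transcribed (the set of $\lambda$ satisfying the displayed inequality need not be an interval, and $\lambda^{\mathcal{R}_\gamma}$ should be read as the first positive root of the corresponding equality under the standing assumption $2\arcsin(|f''(0)|/2)\le\pi\gamma/2$), but your write-up should make that restriction explicit rather than subsuming it under the weaker check you state. With that caveat added, the proof closes.
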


Using the containment results of Lemma \ref{v-thsis-p.19}
and Corollary \ref{psvth9-cor1}, one can derive a number of
interesting results. For instance, we obtain the following:

\begin{corollary}\label{psvth9-cor2}
If $0\leq \gamma\le 1$ and $f(z)
=z-\sum_{n=3}^\infty |a_n|z^n\in\es^*(1-\sin \frac{\pi\gamma}{6})$,
then $f\in \mathcal{R}_\gamma$. In particular, if $f''(0)=0$, then
$f\in\es^*(1/2)$ implies that ${\rm Re}\,f'(z)\geq 0$.
\end{corollary}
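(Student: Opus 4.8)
The plan is to chain together the two inclusion results already available to us: Corollary~\ref{psvth9-cor1}, which sends a starlike function with nonpositive Taylor coefficients into the class $\mathcal{U}$, and part~(ii) of Lemma~\ref{v-thsis-p.19}, which passes from $\mathcal{U}(\lambda)$ to $\mathcal{R}_\gamma$. The one numerical coincidence to be verified is that the order of starlikeness $\alpha = 1-\sin\frac{\pi\gamma}{6}$ is tuned exactly so that the resulting $\mathcal{U}$-parameter lands on the upper endpoint of the admissible range in Lemma~\ref{v-thsis-p.19}(ii).

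First I would record that, since $f(z)=z-\sum_{n=3}^{\infty}|a_n|z^n$ carries no quadratic term, we have $f''(0)=0$; this is precisely what makes the threshold in Lemma~\ref{v-thsis-p.19} explicit. Writing $\alpha=1-\sin\frac{\pi\gamma}{6}$, so that $1-\alpha=\sin\frac{\pi\gamma}{6}$, and noting that $\gamma\in[0,1]$ forces $\alpha\in[\tfrac12,1]$, Corollary~\ref{psvth9-cor1} applies and gives at once that $f\in\mathcal{U}(1-\alpha)=\mathcal{U}(\sin\frac{\pi\gamma}{6})$.

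Next I would compute the threshold $\lambda^{\mathcal{R}_\gamma}$ from Lemma~\ref{v-thsis-p.19} under the simplification $|f''(0)|=0$. The defining inequality collapses to $(\sin\frac{\pi\gamma}{2}-\lambda)\sqrt{4-\lambda^2}\ge\lambda\cos\frac{\pi\gamma}{2}$. Setting $\theta=\pi\gamma/6$, so that $\pi\gamma/2=3\theta$ and $\cos\theta\ge 0$, and testing the value $\lambda=2\sin\theta$, one gets $\sqrt{4-\lambda^2}=2\cos\theta$, while the identity $\sin 3\theta\,\cos\theta-\sin\theta\,\cos 3\theta=\sin 2\theta=2\sin\theta\cos\theta$ turns the inequality into the equality $2\sin\theta\cos\theta=2\sin\theta\cos\theta$. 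Hence $\lambda^{\mathcal{R}_\gamma}=2\sin\frac{\pi\gamma}{6}$, and therefore $\sin\frac{\pi\gamma}{6}=\lambda^{\mathcal{R}_\gamma}/2$. Applying Lemma~\ref{v-thsis-p.19}(ii) with $\lambda=\sin\frac{\pi\gamma}{6}$, which is exactly the endpoint $\lambda^{\mathcal{R}_\gamma}/2$ of its admissible interval, then yields $f\in\mathcal{R}_\gamma$. The specialization $\gamma=1$ gives $\alpha=1-\sin\frac{\pi}{6}=\tfrac12$ and $\mathcal{R}_1=\{f:|\arg f'(z)|\le\pi/2\}=\{f:{\rm Re}\,f'(z)\ge 0\}$, which is precisely the concluding assertion.

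The step I expect to be the main obstacle is the threshold computation: one must confirm that the defining inequality for $\lambda^{\mathcal{R}_\gamma}$ genuinely reduces, through the triple-angle identity, to an equality at $\lambda=2\sin\frac{\pi\gamma}{6}$, that this value is indeed the largest admissible $\lambda$ (so that reading off $\lambda^{\mathcal{R}_\gamma}/2$ is legitimate), and that the boundary case is permitted by the non-strict ``$\le$'' in Lemma~\ref{v-thsis-p.19}(ii). A minor secondary point is checking that $\theta=\pi\gamma/6\in[0,\pi/6]$ keeps the correct positive branch of $\sqrt{4-\lambda^2}$, and that the degenerate endpoint $\gamma=0$ (where $\lambda=0$) is handled separately or dismissed as trivial.
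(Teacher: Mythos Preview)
Your proposal is correct and follows exactly the route the paper indicates: the paper does not spell out a proof but states that the corollary follows from Corollary~\ref{psvth9-cor1} together with Lemma~\ref{v-thsis-p.19}, which is precisely the chain you execute. Your triple-angle computation verifying that $\lambda^{\mathcal{R}_\gamma}=2\sin\tfrac{\pi\gamma}{6}$ when $f''(0)=0$ is the one piece of work the paper leaves implicit, and you carry it out correctly.
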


\begin{corollary}\label{psvth9-cor3}
If $0\leq \gamma \leq  1$ and $f(z)=z-\sum_{n=3}^\infty
|a_n|z^n\in\es^*(1-\sin
\frac{\pi\gamma}{4})$, then $f\in \es_\gamma$. In particular, if
$f''(0)=0$, then $f\in\es^*(1/2)$ implies that $|\arg \left (
zf'(z)/f(z) \right )| \leq \pi/3$.
\end{corollary}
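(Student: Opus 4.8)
The plan is to derive this corollary by chaining together two facts already established in this chapter, namely the inclusion $\mathcal{T}^*(\alpha)\subset\mathcal{U}(1-\alpha)$ recorded in Corollary~\ref{psvth9-cor1} and the containment criterion of Lemma~\ref{v-thsis-p.19}(i). First I would observe that writing $f(z)=z-\sum_{n=3}^\infty|a_n|z^n$, with the summation beginning at $n=3$, forces the second Taylor coefficient to vanish, so that $f''(0)=0$. This single normalization is what makes the bound appearing in Lemma~\ref{v-thsis-p.19}(i) collapse to exactly the value we need, and it is why the hypothesis ``$f''(0)=0$'' in the final sentence is automatic for the stated form.

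Next I would apply Corollary~\ref{psvth9-cor1} with $\alpha=1-\sin\frac{\pi\gamma}{4}$. Since $f\in\es^*(\alpha)$ and $f$ has the prescribed form $z-\sum_{n=2}^\infty|a_n|z^n$ (with $a_2=0$), the corollary yields $f\in\mathcal{U}(1-\alpha)=\mathcal{U}(\sin\frac{\pi\gamma}{4})$, so the relevant parameter is $\lambda=\sin\frac{\pi\gamma}{4}$. It then remains only to check that this $\lambda$ satisfies the hypothesis $0<\lambda\le\lambda^*_\gamma/2$ of Lemma~\ref{v-thsis-p.19}(i). Substituting $f''(0)=0$ into the defining expression for $\lambda^*_\gamma$ and using $\cos\frac{\pi\gamma}{4}>0$ for $0\le\gamma\le1$, I would compute
$$\lambda^*_\gamma=\frac{\sin\frac{\pi\gamma}{4}\,\sqrt{16\cos^2\frac{\pi\gamma}{4}}}{2\cos\frac{\pi\gamma}{4}}=2\sin\frac{\pi\gamma}{4},$$
so that $\lambda^*_\gamma/2=\sin\frac{\pi\gamma}{4}=\lambda$. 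Thus the hypothesis holds, and Lemma~\ref{v-thsis-p.19}(i) gives $f\in\es_\gamma$, that is, $\left|\arg\left(zf'(z)/f(z)\right)\right|\le\frac{\pi\gamma}{2}$.

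For the ``in particular'' statement I would specialize to the parameter value for which $\es^*(\alpha)=\es^*(1/2)$: the equation $1-\sin\frac{\pi\gamma}{4}=\frac{1}{2}$ gives $\sin\frac{\pi\gamma}{4}=\frac{1}{2}$, whence $\gamma=\frac{2}{3}$ and $\frac{\pi\gamma}{2}=\frac{\pi}{3}$. Since $f''(0)=0$ for the stated form, the general case just proved yields $\left|\arg\left(zf'(z)/f(z)\right)\right|\le\frac{\pi}{3}$, which is precisely the asserted conclusion.

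I do not expect a genuine obstacle here: the whole argument is a bookkeeping chain through Corollary~\ref{psvth9-cor1} and Lemma~\ref{v-thsis-p.19}. The only point requiring any care is the evaluation of $\lambda^*_\gamma$ at $f''(0)=0$ together with the recognition that the resulting inequality $\lambda\le\lambda^*_\gamma/2$ holds as an \emph{equality}. This equality shows that the threshold $\sin\frac{\pi\gamma}{4}$ is exactly the one for which the method succeeds, so no slack is lost in passing from starlikeness of order $1-\sin\frac{\pi\gamma}{4}$ to membership in $\es_\gamma$.
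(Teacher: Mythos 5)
Your proposal is correct and follows exactly the route the paper indicates (the paper states this corollary without a written proof, noting only that it follows from Corollary~\ref{psvth9-cor1} and Lemma~\ref{v-thsis-p.19}): since the series starts at $n=3$ you have $f''(0)=0$, Corollary~\ref{psvth9-cor1} places $f$ in $\mathcal{U}(\sin\frac{\pi\gamma}{4})$, and the evaluation $\lambda^*_\gamma/2=\sin\frac{\pi\gamma}{4}$ makes Lemma~\ref{v-thsis-p.19}(i) apply with equality in the threshold. The specialization $\sin\frac{\pi\gamma}{4}=\frac12$, $\gamma=\frac23$ for the final assertion is also exactly right.
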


\section{Conclusion}\label{chap5-sec6}

To present a meromorphic analog of the class $\mathcal{U}(\lambda )$, we recall,
for example, the following result.

\begin{lemma}\cite[Theorem~1.2]{PoV-05} \label{4exth4}
If $f\in {\mathcal U}(\lambda)$ and $a=|f''(0)|/2\leq 1$,
then $f\in {\es}^*(\delta)$ whenever $0\leq \lambda\leq \lambda(\delta)$,
where
\begin{equation}\label{eq19}
\lambda(\delta )=\left\{ \begin{array}{ll}
\displaystyle\frac{\sqrt{(1-2\delta)(2-a^2-2\delta)}-a(1-2\delta)}{2(1-\delta)}
& \mbox{ if }~ 0\leq \delta< \displaystyle\frac{1+a}{3+a}, \\[5mm]
\displaystyle\frac{1-\delta(1+a)}{1+\delta} &  \mbox{ if }~
\displaystyle\frac{1+a}{3+a}
\leq \delta<\frac{1}{1+a}  .
\end{array} \right.
\end{equation}
In particular,
$$f\in {\mathcal U}(\lambda),~ f''(0)=0 \Longrightarrow
f\in {\es}^* ~\mbox{ whenever $0\leq \lambda\leq 1/\sqrt{2}$}.
$$
\end{lemma}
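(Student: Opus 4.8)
The plan is to translate $f\in\es^*(\delta)$ into a one--sided bound on $\real\big(zf'(z)/f(z)\big)$ and then feed in the two coefficient constraints that membership in $\mathcal U(\lambda)$ provides. Writing $g(z)=z/f(z)$ and $U(z)=g(z)^2f'(z)-1$, the identity (\ref{th2-eq2}) with $\mu=1$ gives the linear relation $zg'(z)=g(z)-1-U(z)$, whence $1+U=g-zg'$ and
\[
\frac{zf'(z)}{f(z)}=\frac{1+U(z)}{g(z)},\qquad
\real\frac{zf'(z)}{f(z)}>\delta\ \Longleftrightarrow\ \real\!\big[(1+U)\bar g\big]>\delta|g|^2 .
\]
Since $f\in\mathcal U(\lambda)$ means exactly $|U|\le\lambda$ on $\D$ while $U$ vanishes to second order at the origin, the Schwarz lemma applied to $U(z)/z^2$ gives $|U(z)|\le\lambda|z|^2$; I would write $U=z^2h$ with $|h|\le\lambda$, and record $b_1:=g'(0)=-f''(0)/2$, so that $|b_1|=a$.

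Next I would extract the coefficient information. From $g=1+\sum_{n\ge1}b_nz^n$ one has $h=-\sum_{n\ge2}(n-1)b_nz^{n-2}$, so $|h|\le\lambda$ yields, via Parseval, the $\ell^2$ bound $\sum_{n\ge2}(n-1)^2|b_n|^2\le\lambda^2$, while Lemma~\ref{lem2} (the area theorem with $\mu=1$) gives $\sum_{n\ge2}(n-1)|b_n|^2\le1$. Using $1+U-g=-zg'$ and $B:=1+U+(1-2\delta)g=(2-2\delta)+(1-2\delta)b_1z+\sum_{n\ge2}(2-2\delta-n)b_nz^n$, the starlikeness criterion $|1+U-g|<|B|$ (the disk form of $\real(zf'/f)>\delta$ already used in the proof of Theorem~\ref{psvth9}) reduces, after the triangle inequality and for $\delta\le\tfrac12$, to the clean sufficient condition, with $r=|z|$,
\[
(1-\delta)\,a\,r+\sum_{n=2}^{\infty}(n-1+\delta)\,|b_n|\,r^{\,n}\le 1-\delta
\qquad(0\le r<1).
\]

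To finish I would bound the tail by Cauchy--Schwarz against the two $\ell^2$ constraints and then optimise over $r\in(0,1)$; two regimes appear. For $\delta$ large the binding case is $r\to1$, where the inversion estimate $\big|f/(zf')-1\big|=|zg'/(1+U)|\le r(a+2\lambda r)/(1-\lambda r^2)$, together with the elementary fact that $|\zeta-1|\le\rho<1$ forces $\real(1/\zeta)\ge 1/(1+\rho)$, gives $\real(zf'/f)\ge(1-\lambda)/(1+\lambda+a)$, i.e.\ the second branch $\lambda\le\big(1-\delta(1+a)\big)/(1+\delta)$, with extremal $f(z)=z/(1+az+\lambda z^2)$ and worst point $z\to1$. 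For $\delta$ small the maximum of the relevant quotient is attained at an interior radius $r^\ast<1$, and it is here that the Cauchy--Schwarz step converts the $\ell^2$ data into the square root $\sqrt{(1-2\delta)(2-a^2-2\delta)}$ of the first branch; the two expressions for $\lambda(\delta)$ are arranged to agree at the crossover $\delta=(1+a)/(3+a)$. Setting $a=0$ and $\delta=0$ then yields $\lambda(0)=1/\sqrt2$, which is the displayed ``in particular'' statement.

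The hard part will be the sharp evaluation of the first branch. The $\ell^2$ sums arising after Cauchy--Schwarz, namely $\sum_{n\ge2}\frac{(n-1+\delta)^2}{(n-1)^2}r^{2n}$ (and its area--theorem analogue), contain logarithmic and dilogarithmic terms that do \emph{not} collapse to the closed form $\sqrt{(1-2\delta)(2-a^2-2\delta)}$ if one lets the whole tail run free. The delicate point is therefore to identify the genuine extremal profile --- which is \emph{not} the two--term function $z/(1+az+\lambda z^2)$ governing the second branch, but the spread--coefficient profile forced by the equality case of Cauchy--Schwarz --- to compute $r^\ast$ exactly, and to verify that the resulting bound is attained, thereby securing both the inclusion and its sharpness. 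A secondary technical chore is to confirm that the two formulas join continuously (the common value $\tfrac12$ when $a=0$) at $\delta=(1+a)/(3+a)$, and that the range $\tfrac12\le\delta<1/(1+a)$ is handled by the inversion argument, where the sign change $1-2\delta\le0$ must replace the estimates made above for $\delta\le\tfrac12$.
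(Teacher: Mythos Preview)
The paper does not prove this statement. Lemma~\ref{4exth4} is quoted verbatim from \cite[Theorem~1.2]{PoV-05} (Ponnusamy--Vasundhra) and is used in the thesis only as a tool: immediately after stating it, the text records that sharpness was later settled by Fournier--Ponnusamy \cite{FR-06} and then reformulates the lemma for meromorphic functions in $\Sigma$. There is therefore no ``paper's own proof'' to compare your proposal against.

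As a standalone sketch, your outline is in the right spirit---writing $g=z/f$, exploiting $|U(z)|\le\lambda|z|^2$ via Schwarz, and splitting into the boundary regime (which cleanly gives the second branch with extremal $z/(1+az+\lambda z^2)$) versus an interior regime---but, as you yourself flag, the Cauchy--Schwarz route with the area/Parseval $\ell^2$ data does not collapse to the closed form $\sqrt{(1-2\delta)(2-a^2-2\delta)}$. That first branch is genuinely the hard part, and your proposal is still a plan rather than a proof there. If you want to complete it, consult the original source \cite{PoV-05}; the argument there does not go through the area theorem at all.
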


Fournier and Ponnusamy \cite{FR-06}  settled the
question of sharpness of the bound for $\lambda$ for which
${\mathcal U}(\lambda )\subset {\es}^*$.
As a motivation for our next result, we consider the class,
denoted by $\Sigma $, of all functions of the form
$$F(\zeta )=\zeta +\sum_{n=0}^{\infty} c_n\zeta ^{-n}
$$
that are analytic and univalent for $|\zeta |>1$. Thus
$$F\in \Sigma \Llra f\in \mathcal{S}, \quad f(z) =\frac{1}{F(1/z)}
=\frac{z}{1+\sum_{n=1}^\infty c_{n-1}z^n} .
$$
Also, we note that
$$
f'(z)\left(\frac{z}{f(z)} \right)^{2} =F'(1/z)~\mbox{ and }~
 \frac{zf'(z)}{f(z)} =\frac{(1/z)F'(1/z)}{F(1/z)}.
$$
Consequently, for $0\leq \lambda \leq 1$, $f\in {\mathcal U}(\lambda )$ if and
only if $|F'(\zeta )-1|\leq \lambda$ for $|\zeta|>1$. Similarly, for $0\leq \alpha
\leq 1$, $f\in {\es}^*(\alpha )$ if and only if
$${\rm Re}\,\left(\frac{\zeta F'(\zeta )}{F(\zeta )}\right)\geq \alpha
~\mbox{ for $|\zeta|>1$} .
$$
The class of all such functions satisfying the later condition is denoted by
${\Sigma}^*(\alpha )$. Thus, Lemma \ref{4exth4} takes the following form:

\begin{theorem}
Let $F(\zeta )=\zeta +\sum_{n=0}^{\infty} c_n\zeta ^{-n}$ be analytic and
univalent for $|\zeta |>1$. If $F$ satisfies the condition
$$|F'(\zeta )-1| \leq \lambda ~\mbox{ for $|\zeta|>1$}
$$
and $a=|-c_0|\leq 1$, then $F\in {\Sigma }^*(\delta)$ whenever $0<\lambda\leq
\lambda(\delta)$, where $\lambda(\delta )$ is given by $(\ref{eq19})$. In
particular, for $c_0=0$, $F\in {\Sigma }^*(\delta)$ whenever $0<\lambda\leq
1/\sqrt{2}$.
\end{theorem}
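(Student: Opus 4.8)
The plan is to obtain this statement as an immediate transcription of Lemma~\ref{4exth4} through the dictionary between $\Sigma$ and $\es$ recorded just above the theorem. Given $F\in\Sigma$, I would set $f(z)=1/F(1/z)$, so that $f\in\es$ and $z/f(z)=1+\sum_{n=1}^\infty c_{n-1}z^n$. The first bookkeeping step is the coefficient correspondence: since $f(z)=z\bigl(1+c_0z+c_1z^2+\cdots\bigr)^{-1}=z-c_0z^2+\cdots$, the second Taylor coefficient of $f$ is $a_2=-c_0$, whence $a=|f''(0)|/2=|{-c_0}|=|c_0|$. Thus the hypothesis $a\le 1$ imposed on $F$ is exactly the hypothesis $|f''(0)|/2\le 1$ appearing in Lemma~\ref{4exth4} for the associated $f$.

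Next I would translate the two analytic conditions using the identities quoted before the statement. From $f'(z)(z/f(z))^2=F'(1/z)$, the defining inequality $|f'(z)(z/f(z))^2-1|\le\lambda$ for $f\in\mathcal{U}(\lambda)$ becomes, after the substitution $\zeta=1/z$, precisely $|F'(\zeta)-1|\le\lambda$ for $|\zeta|>1$; hence $f\in\mathcal{U}(\lambda)$ if and only if $F$ satisfies the hypothesis of the theorem. Likewise, from $zf'(z)/f(z)=(1/z)F'(1/z)/F(1/z)$ the condition $\real\bigl(zf'(z)/f(z)\bigr)\ge\delta$ on $\D$ is equivalent to $\real\bigl(\zeta F'(\zeta)/F(\zeta)\bigr)\ge\delta$ on $|\zeta|>1$, i.e. $f\in\es^*(\delta)$ if and only if $F\in\Sigma^*(\delta)$. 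These equivalences are the ones already established in the excerpt, so no new computation is required.

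With the dictionary in place the proof closes itself. The three hypotheses on $F$, namely $F\in\Sigma$, $|F'(\zeta)-1|\le\lambda$ on $|\zeta|>1$, and $a\le 1$, say precisely that $f\in\mathcal{U}(\lambda)$ with $|f''(0)|/2=a\le 1$; applying Lemma~\ref{4exth4} to $f$ gives $f\in\es^*(\delta)$ for $0<\lambda\le\lambda(\delta)$, with $\lambda(\delta)$ as in (\ref{eq19}), and translating back via $f\in\es^*(\delta)\Leftrightarrow F\in\Sigma^*(\delta)$ yields $F\in\Sigma^*(\delta)$ in the same range of $\lambda$. For the final clause, $c_0=0$ forces $a=0$, so $\lambda(0)=\sqrt{2}/2=1/\sqrt{2}$, and the particular case of Lemma~\ref{4exth4} transfers to $F\in\Sigma^*$ for $0<\lambda\le 1/\sqrt{2}$. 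Since every step is a direct transcription under $\zeta=1/z$, I do not expect a genuine obstacle; the only points needing care are the coefficient identity $a=|c_0|$ and applying the substitution consistently so that the exterior region $|\zeta|>1$ and the disk $\D$ correspond correctly (with the boundary value at $z=0$, $\zeta=\infty$, handled by continuity).
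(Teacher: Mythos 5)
Your proposal is correct and is essentially identical to the paper's own argument: the paper obtains this theorem by exactly the same dictionary $f(z)=1/F(1/z)$, the identities $f'(z)(z/f(z))^2=F'(1/z)$ and $zf'(z)/f(z)=(1/z)F'(1/z)/F(1/z)$, and a direct appeal to Lemma~\ref{4exth4}. Your explicit check that $a=|f''(0)|/2=|{-c_0}|$ is the only bookkeeping the paper leaves implicit, and it is right.
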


This result may be used to generate a number of results
for various subclasses of the class of meromorphic univalent functions.

\chapter{NORM ESTIMATES OF CERTAIN ANALYTIC FUNCTIONS}\label{chap6}
This chapter is devoted to the study of pre-Schwarzian norm estimates
of certain subclasses of analytic functions.
Section \ref{chap6-sec1} consists of definitions and preliminary results.
In Section \ref{chap6-sec2}, we collect some results
to prove our main theorems.
In Section \ref{chap6-sec3}, we state and prove our main results and some of
their consequences. Finally, Section \ref{chap6-sec4}
concludes with a number of open problems.

\vskip 1cm
Most of the results in this chapter are from the articles:
{\bf S. Ponnusamy and S.K. Sahoo} (2008)
Norm estimates for convolution transforms of certain classes
of analytic functions. {\em J. Math. Anal. Appl.} {\bf 342}, 171--180\\[2mm]
and\\[2mm]
{\bf R. Parvatham, S. Ponnusamy and S.K. Sahoo} (2008)
Norm estimate for the Bernardi integral transforms of functions
defined by subordination.
{\em Hiroshima Math. J.} {\bf 38}, 19--29.

\section{Introduction}\label{chap6-sec1}

We refer to Chapter \ref{chap1} for related definitions and notations
used in this chapter.
First we recall the subclass ${\mathcal F}_\beta$ of
$\A$ defined by
$$\F_\beta=\left\{f\in\A:\,
\real\left(1+\frac{zf''(z)}{f'(z)}\right)<\frac{3}{2}\beta,
\quad z\in\D\right\}
$$
for some  $\beta>\frac{2}{3}$.

We also consider the subclasses $\es^*(A,B)$ and $\K(A,B)$ of
$\A$ defined by (see Janowski \cite{Jan73})
$$\es^*(A,B)=\left\{f\in\A:\,\frac{zf'(z)}{f(z)}\prec\frac{1+Az}{1+Bz}\right\}
$$
and
$$\K(A,B)=\left\{f\in\A:\,1+\frac{zf''(z)}{f'(z)}\prec
\frac{1+Az}{1+Bz}\right\}.
$$
Here we assume that $-1\leq B<A\leq 1$, but a relaxed
restriction on $A,B$ will be used in the last section.
These classes
are widely used in the literature. 
For $0\leq\alpha<1$, we observe that
$$\es^*(1-2\alpha,-1)=\es^*(\alpha)\quad\mbox{ and }\quad \K
(1-2\alpha,-1)=\K(\alpha).
$$
We note that $f\in \es^*(A,B)$ if and only if
$J[f]\in\K(A,B)$, where $J[f]$ is defined by (\ref{Alex-trans}).

In addition, we estimate the pre-Schwarzian norm
of functions from the subclass
$\es^*(\alpha,\beta)$ of
$\A$ defined by
$$\es^*(\alpha,\beta)=\left\{f\in\A:\, \frac{zf'(z)}{f(z)}\prec
h_{\alpha,\beta}(z) \equiv \left(\frac{1+(1-2\beta)z}{1-z}\right)^\alpha\right\},
$$
for $0<\alpha\le 1$ and $0\le \beta< 1$.
Since functions in $\es^*(\alpha,\beta)$ belong to  $\es^*(1,0)\equiv \es^*$,
$\es^*(\alpha,\beta)\subsetneq \es$ for $0<\alpha\le 1$ and $0\le \beta< 1$.

The class $\es^*(\alpha,\beta)$ has been studied by Weso{\l}owski
in \cite{Wes71}. With $0<\alpha\le 1$ and $0<\beta< 1$, we  have
$$ h_{\alpha,\beta}(e^{i\theta})=(\beta +i(1-\beta)\cot (\theta /2))^{\alpha}
$$
from which we easily see that the univalent function $h_{\alpha,\beta}(z)$
maps $\D$ onto a convex domain bounded by the curve given by
$$w=\left(\frac{\beta}{\cos \phi}\right)^\alpha e^{i\alpha\phi},
\quad -\pi/2<\phi<\pi/2,
$$
where $\phi$ and $\theta$ satisfy the relation $(1-\beta)\cot (\theta /2)=\beta\tan \phi$.
In particular, functions in the class $\es\es^*(\alpha)\equiv \es^*(\alpha,0)$
are called the strongly starlike functions of order $\alpha$;
equivalently, $f\in\es\es^*(\alpha)$ if and only if $|\arg(zf'(z))/f(z)|<\pi \alpha/2$,
for $z\in\D$.
Every strongly starlike function
$f$ of order $\alpha <1$ is bounded (see \cite{BK}).
Further, this class of functions has been studied by many authors,
for example by Sugawa (see \cite{SugawaST}).




\section{Preparatory Results}\label{chap6-sec2}

In this section, we collect some known results on starlikeness of
hypergeometric functions and as a consequence we also obtain
a useful result that deals with the starlikeness of
the derivative of hypergeometric functions. We also
need an invariance property of subordination in terms of
convolution of convex functions.



The following result is a reformulated version of
Ma and Minda \cite[Theorem 1]{MM92B} (see also \cite{KS2}).

\begin{lemma}\label{lem:mm}
Let $\psi\in\hol_1$ be starlike and suppose that $g\in\A$ satisfies the
equation
$$1+\frac{zg''(z)}{g'(z)}=\psi(z), \ \ z\in\D .
$$
Then for $f\in\A$, the condition $1+zf''(z)/f'(z)\prec\psi(z)$
implies $f'(z)\prec g'(z).$
\end{lemma}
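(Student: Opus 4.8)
The plan is to pass to the logarithmic derivatives $\log f'$ and $\log g'$ and to reduce the claim to a single subordination obtained by integration. Throughout I read the hypothesis that $\psi\in\hol_1$ is \emph{starlike} as saying that $\psi$ is univalent and that $\psi(\D)$ is starlike with respect to $\psi(0)=1$; equivalently, $\psi-1$ fixes the origin and maps $\D$ onto a domain starlike with respect to $0$. First I would integrate the defining relation $1+zg''(z)/g'(z)=\psi(z)$. Since $g\in\A$ gives $g'(0)=1$ and $g'$ is non-vanishing, the function $\phi:=\log g'$ is analytic with $\phi(0)=0$, and $z\phi'(z)=zg''(z)/g'(z)=\psi(z)-1$, so that
\[
\log g'(z)=\int_0^z\frac{\psi(t)-1}{t}\,dt.
\]
Because $z\phi'=\psi-1$ is starlike with respect to $0$, Alexander's theorem shows that $\phi=\log g'$ is convex, in particular univalent.

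Next I would exploit the subordination hypothesis. Let $\omega$ be the Schwarz function furnished by $1+zf''/f'\prec\psi$, so that $1+zf''(z)/f'(z)=\psi(\omega(z))$. Analyticity of the left-hand side forces $f'\ne 0$ in $\D$, hence $\log f'$ is analytic with $\log f'(0)=0$ and
\[
\log f'(z)=\int_0^z\frac{\psi(\omega(t))-1}{t}\,dt.
\]
The heart of the argument is the classical integration lemma of subordination theory: if $\Omega$ is starlike with $\Omega(0)=0$ and $h\prec\Omega$, then $\int_0^z h(t)/t\,dt\prec\int_0^z\Omega(t)/t\,dt$. I would apply this with $\Omega=\psi-1$, which is starlike by hypothesis, and $h=\psi\circ\omega-1=(\psi-1)\circ\omega\prec\psi-1$. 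This yields exactly $\log f'\prec\log g'$.

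Finally I would exponentiate. By definition of subordination, $\log f'\prec\log g'$ produces a Schwarz function $\eta$ with $\log f'=\log g'\circ\eta$; applying $\exp$ and using $f'=e^{\log f'}$ and $g'=e^{\log g'}$ gives $f'=g'\circ\eta$, that is, $f'\prec g'$, as desired. The univalence of $\log g'$ established in the first step guarantees that the two descriptions of subordination agree, although it is not strictly needed once $\eta$ is produced. The main obstacle is the integration lemma used in the second paragraph: its validity rests precisely on the starlikeness of $\psi-1$, which is why the hypothesis is phrased as the starlikeness of $\psi$; one must also note the routine normalization point that $z\phi'=\psi-1$ need not have derivative $1$ at the origin, but this causes no difficulty since starlikeness and convexity are unaffected by the scaling $w\mapsto\psi'(0)w$.
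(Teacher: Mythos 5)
Your proof is correct. The paper does not prove this lemma itself---it quotes it as a reformulation of Ma and Minda's Theorem~1---and your argument (passing to $\log f'$ and $\log g'$, noting via Alexander's theorem that $\log g'$ is convex because $\psi-1$ is starlike, applying the Suffridge integration lemma to get $\log f'\prec\log g'$, and exponentiating) is precisely the standard proof given in that source, including the correct reading of ``starlike'' as starlikeness of $\psi(\D)$ with respect to $\psi(0)=1$.
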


Recall that in Lemma \ref{lem:mm}, the notation $\hol_1$
is used for the class of analytic functions which take
origin into $1$.

From the theory of prestarlike functions (see \cite[p. 61]{Rus}
and \cite[Theorem B]{RS86}), one obtains the following
starlikeness criterion for hypergeometric functions.


\begin{lemma}\label{lem:lewis}
Let $a,b,c$ be real numbers with $0\le a\le b\le c.$ Then the function $z
F(a,b;c;z)$ is starlike of order $1-a/2.$
\end{lemma}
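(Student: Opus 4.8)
The plan is to realize $zF(a,b;c;z)$ as a Hadamard product of two explicit functions and then to apply the convolution calculus for prestarlike functions. Comparing Taylor coefficients, one has the factorization
\[
zF(a,b;c;z) = \frac{z}{(1-z)^{a}} * zF(b,1;c;z),
\]
since the coefficient of $z^{n+1}$ on the right is $\frac{(a)_n}{(1)_n}\cdot\frac{(b)_n}{(c)_n}=\frac{(a)_n(b)_n}{(c)_n(1)_n}$, which is exactly the coefficient on the left. Here $k_a(z):=z/(1-z)^a=zF(a,1;1;z)$ and $\phi(z):=zF(b,1;c;z)$ is the (normalized) incomplete beta function.

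First I would record that $k_a$ is starlike of order $1-a/2$. This is a direct computation: $zk_a'(z)/k_a(z)=(1+(a-1)z)/(1-z)$, and the M\"obius map $z\mapsto (1+(a-1)z)/(1-z)$ carries $\D$ onto the half-plane $\{\real w>1-a/2\}$ (it is the set of $w$ lying closer to $1$ than to $1-a$). Hence $\real(zk_a'/k_a)>1-a/2$ on $\D$, so $k_a\in\es^*(1-a/2)$; notice that no upper bound on $a$ is required, the order merely being negative when $a>2$, which is harmless since the assertion is a real-part inequality.

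Second I would show that $\phi=zF(b,1;c;z)$ is prestarlike of order $1-a/2$; write $R_\gamma$ for the class of prestarlike functions of order $\gamma$. This is the substantive step and is where the theory of prestarlike functions enters: the relevant results on prestarlikeness of incomplete beta functions (see \cite[Theorem~B]{RS86}) give $\phi\in R_{1-b/2}$ whenever $0<b\le c$. Since $a\le b$ yields $1-a/2\ge 1-b/2$, and prestarlike classes are nested with $R_{1-b/2}\subseteq R_{1-a/2}$, we conclude $\phi\in R_{1-a/2}$.

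Finally, I would invoke Ruscheweyh's convolution theorem (see \cite[p.~61]{Rus}), namely $R_\gamma*\es^*(\gamma)\subseteq\es^*(\gamma)$ for $\gamma\le 1$, with $\gamma=1-a/2$. Combining the factorization with $k_a\in\es^*(1-a/2)$ and $\phi\in R_{1-a/2}$ then gives $zF(a,b;c;z)\in\es^*(1-a/2)$, which is the claim. The main obstacle is the second step, establishing the precise order of prestarlikeness of the incomplete beta function under the hypothesis $0\le a\le b\le c$; the remaining parts are the elementary half-plane computation for $k_a$ and routine bookkeeping with the nested prestarlike classes and the convolution calculus.
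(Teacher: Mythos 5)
Your proposal is correct and follows exactly the route the paper itself gestures at: the thesis gives no proof of this lemma, merely citing the prestarlike-function machinery of \cite[p.~61]{Rus} and \cite[Theorem~B]{RS86}, and your argument --- the factorization $zF(a,b;c;z)=\frac{z}{(1-z)^a}*zF(b,1;c;z)$, the half-plane computation showing $z(1-z)^{-a}\in\es^*(1-a/2)$, the prestarlikeness of the incomplete beta function of order $1-b/2$ together with the nesting of prestarlike classes, and Ruscheweyh's convolution theorem $R_\gamma*\es^*(\gamma)\subseteq\es^*(\gamma)$ --- is precisely the standard derivation being cited. All the individual steps check out, including the harmless degenerate case $a=0$.
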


Starlikeness of functions in the form $z F(a,b;c;z)$ has also been studied by
many other authors (see, for example, \cite{Kus02,PV01} and the references
therein).

\begin{corollary}\label{cvx-of-hyp}
Suppose that the real numbers $b$ and $c$ are related by $1\le b\le c$
or $-1< b\le 1\le c$.
Then  $zF'(1,b;c;z)$ is starlike and hence $F(1,b;c;z)$ is convex.
\end{corollary}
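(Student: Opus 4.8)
The plan is to reduce everything to Lemma~\ref{lem:lewis} (the starlikeness criterion for $zF(a,b;c;z)$) by means of the hypergeometric derivative formula, and then to read off convexity from the Alexander-type correspondence between starlike and convex functions. I would begin by observing that it suffices to prove that $S(z):=zF'(1,b;c;z)$ is starlike. Indeed, writing $G(z):=F(1,b;c;z)$, so that $S(z)=zG'(z)$, one has the identity
\[
\frac{zS'(z)}{S(z)}=1+\frac{zG''(z)}{G'(z)} .
\]
Hence, once $\real\bigl(zS'/S\bigr)>0$ is established in $\D$, it follows both that $G'(z)\neq0$ there and that $\real\bigl(1+zG''/G'\bigr)>0$, i.e.\ that $G=F(1,b;c;z)$ is convex. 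This disposes of the word ``hence'' in the statement, and leaves only the starlikeness of $S$ to be proved.

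The central computation is the derivative identity quoted in the introduction, $F'(a,b;c;z)=\tfrac{ab}{c}F(a+1,b+1;c+1;z)$, specialized to $a=1$:
\[
F'(1,b;c;z)=\frac{b}{c}\,F(2,b+1;c+1;z),
\qquad\text{so}\qquad
zF'(1,b;c;z)=\frac{b}{c}\,zF(2,b+1;c+1;z).
\]
Since the analytic condition for starlikeness, $\real\bigl(zg'/g\bigr)>0$, is unchanged under multiplication of $g$ by the nonzero constant $b/c$, I would then only need to show that $zF(2,b+1;c+1;z)$ is starlike, and this is where Lemma~\ref{lem:lewis} enters.

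In the first range $1\le b\le c$ the parameters of $zF(2,b+1;c+1;z)$ already satisfy $0\le 2\le b+1\le c+1$, so Lemma~\ref{lem:lewis} applies directly and gives starlikeness of order $1-2/2=0$. The step I expect to be the main obstacle is the second range $-1<b\le1\le c$, where now $2>b+1$, so the parameters violate the ordering $a\le b$ required by the lemma. I would circumvent this using the symmetry $F(2,b+1;c+1;z)=F(b+1,2;c+1;z)$ of the hypergeometric function in its first two parameters; after this reordering the hypotheses $0\le b+1\le 2\le c+1$ of Lemma~\ref{lem:lewis} do hold, since $b>-1$ gives $b+1>0$, $b\le1$ gives $b+1\le2$, and $c\ge1$ gives $c+1\ge2$. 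This yields starlikeness of order $(1-b)/2\ge0$.

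Combining the two cases with the scaling remark shows that $zF'(1,b;c;z)$ is starlike, and then by the first paragraph $F(1,b;c;z)$ is convex. I would finally note that the degenerate value $b=0$ (which makes $F(1,0;c;z)\equiv1$ and $S\equiv0$) must be excluded, consistent with the standing assumption $b>0$ under which the operator $B_{b,c}$ was introduced; for all admissible $b\neq0$ the argument above is complete.
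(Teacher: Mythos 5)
Your proof is correct and follows exactly the paper's route: apply the derivative formula to get $zF'(1,b;c;z)=\tfrac{b}{c}zF(2,b+1;c+1;z)=\tfrac{b}{c}zF(b+1,2;c+1;z)$ and invoke Lemma~\ref{lem:lewis} on each of the two parameter orderings to cover the two ranges of $b$. The extra care you take with the constant factor $b/c$, the Alexander-type step for convexity, and the degenerate value $b=0$ only makes explicit what the paper leaves tacit.
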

\begin{proof} We have
$$zF'(1,b;c;z)=\frac{b}{c} zF(2,b+1;c+1;z)=\frac{b}{c} zF(b+1,2;c+1;z).
$$
The desired conclusion follows if we apply Lemma \ref{lem:lewis}
to the two expressions on the right of the last equality.
\end{proof}

The following result is due to Ruscheweyh \cite[Theorem 2.36, p. 86]{Rus}
(see also \cite[Theorem 8.9, p. 254]{Duren:univ}):

\begin{lemma}\label{Rus}
Let $f\in \hol$ and $g$ be a convex function such that $f\prec g$.
Then for all convex functions $h$, we have $h*f\prec h*g$.
\end{lemma}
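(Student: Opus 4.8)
The plan is to deduce the result from two classical pillars of convolution theory: the P\'olya--Schoenberg theorem (the Hadamard product of two convex functions is again convex) and Ruscheweyh's duality principle. First I would record that $g$ and $h$ are convex univalent with $g(0)=h(0)=0$, and that $f\prec g$ forces $f(0)=g(0)=0$, so all three functions vanish at the origin. Since $h$ and $g$ are convex, the P\'olya--Schoenberg theorem gives that $h*g$ is convex, hence univalent. Now recall the criterion, quoted earlier in this chapter, that $\varphi\prec\psi$ is equivalent to $\varphi(0)=\psi(0)$ together with $\varphi(\D)\subset\psi(\D)$ whenever $\psi$ is univalent. Applying this with $\psi=h*g$ reduces the whole claim to two assertions: the matching $(h*f)(0)=(h*g)(0)$, which is immediate since the constant terms both vanish, and the image containment $(h*f)(\D)\subset(h*g)(\D)$.

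The heart of the proof is this containment, and here I would invoke the duality machinery rather than manipulate the subordination $f\prec g$ directly. The point is that $(h*g)(\D)$ is a convex region, so it is the intersection of all the closed half-planes containing it; equivalently, membership of a point in $(h*g)(\D)$ can be tested by a family of continuous linear functionals. The convolution identity $\phi*\frac{z}{1-z}=\phi$ (so that $z/(1-z)$ is the convolution unit on functions vanishing at $0$), together with the fact that the half-plane mappings $z/(1-xz)$, $|x|=1$, are the extreme points of the closed convex hull of the convex class, lets one express these functionals applied to $h*f$ and $h*g$ in terms of elementary convolutions. Ruscheweyh's duality principle then asserts that the range over $\D$ of such a functional is already attained on the extreme kernels, so the inequalities that encode $f\prec g$ are transported intact through the operator $\phi\mapsto h*\phi$. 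This yields $(h*f)(\D)\subset(h*g)(\D)$ and, with the first paragraph, completes the argument.

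The main obstacle is precisely this transfer step. One cannot argue naively by writing $f=g\circ\omega$ for a Schwarz function $\omega$, because the Hadamard product does not interact with composition in any usable way, so $h*(g\circ\omega)$ admits no direct description. It is exactly to circumvent this that the duality formalism is needed, and its deepest ingredient is the P\'olya--Schoenberg theorem itself (equivalently, the Ruscheweyh--Sheil-Small result that the convolution of a convex function with a starlike function is starlike), which guarantees that the convolution operator preserves the geometric structure on which the whole argument rests. Once that structure is in place, the remaining work is the routine dual-set computation with the kernels $z/(1-z)$ and $z/(1-xz)$, which I would not grind through here.
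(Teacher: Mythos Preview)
The paper does not actually supply a proof of this lemma; it is quoted verbatim as a known result, with pointers to Ruscheweyh's monograph \cite[Theorem~2.36]{Rus} and Duren \cite[Theorem~8.9]{Duren:univ}. So there is no in-paper argument to compare against, and your sketch is in effect a reconstruction of the proof in those references.

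Your outline is on the right track and matches the standard approach: use P\'olya--Schoenberg to make $h*g$ convex univalent, reduce the subordination to the image inclusion $(h*f)(\D)\subset (h*g)(\D)$, and then exploit convexity of the target together with convolution duality. Two remarks. First, you correctly flag that writing $f=g\circ\omega$ is useless here; that observation is exactly why the duality machinery is needed. Second, your ``transfer step'' is where all the content lives, and you have essentially cited it rather than carried it out. The concrete ingredient that makes it work is the Ruscheweyh--Sheil-Small lemma: for $h,g$ convex and $F$ analytic in $\D$, the quotient $(h*Fg)/(h*g)$ takes values in $\overline{\operatorname{co}}\,F(\D)$. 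With this in hand, the inclusion $(h*f)(\D)\subset (h*g)(\D)$ follows by a short argument, and no separate appeal to extreme points of the convex class is required. If you intend this as more than a pointer to the literature, that lemma is the statement to write down and invoke explicitly.
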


We also need the following integral representation of quotient of
two hypergeometric functions which is due to K\"{u}stner \cite[Theorem 1.5]{Kus02}
(see also \cite[Lemma 7]{CKPS04}).

\begin{lemma}\label{lem:pos}
Suppose that $a,b,c\in\R$ satisfy $-1\le a\le c$ and $0<b\le c.$ Then there
exists a Borel probability measure $\mu$ on the interval $[0,1]$ such that
$$\frac{F(a+1,b+1;c+1;z)}{F(a,b;c;z)}=\int_0^1\frac{d\mu(t)}{1-tz},
\quad z\in\D.
$$
\end{lemma}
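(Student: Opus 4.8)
The plan is to read the asserted formula as saying that $q(z):=F(a+1,b+1;c+1;z)/F(a,b;c;z)$ lies in the closed convex hull of the family $\{(1-tz)^{-1}:t\in[0,1]\}$; equivalently, writing $q(z)=\sum_{n\ge0}q_nz^n$, the existence of a Borel probability measure $\mu$ on $[0,1]$ with $q(z)=\int_0^1 d\mu(t)/(1-tz)$ is exactly the statement that $q_n=\int_0^1 t^n\,d\mu(t)$ for all $n$, i.e.\ that $(q_n)_{n\ge0}$ is a Hausdorff moment sequence. By the Hausdorff moment theorem this holds if and only if $(q_n)$ is completely monotone, $(-1)^k\Delta^k q_n\ge0$ for all $k,n\ge0$, where $\Delta$ is the forward difference; the normalization $q_0=q(0)=1$ (immediate from $F(\,\cdot\,;0)=1$) then forces $\mu$ to be a probability measure. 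Thus the whole statement reduces to proving complete monotonicity of the coefficient sequence of $q$ under the hypotheses $-1\le a\le c$ and $0<b\le c$.

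I would first dispose of the seed case $a=0$, which both fixes the normalization and produces the measure explicitly. Here $F(0,b;c;z)=1$, so $q=F(1,b+1;c+1;z)=\sum_{n\ge0}\frac{(b+1)_n}{(c+1)_n}z^n$, and the Euler beta integral gives $\frac{(b+1)_n}{(c+1)_n}=\int_0^1 t^n\,d\mu_0(t)$ with $d\mu_0(t)=\frac{\Gamma(c+1)}{\Gamma(b+1)\Gamma(c-b)}\,t^{b}(1-t)^{c-b-1}\,dt$ on $[0,1]$ (interpreted as the unit mass at $t=1$ when $b=c$); the conditions $b>-1$ and $c>b$ are exactly what make this a probability density. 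For general $a$ I would pass to the Nevanlinna--Pick picture: a Stieltjes transform of a measure on $[0,1]$ is precisely a function analytic on $\C\setminus[1,\infty)$, real on $(-\infty,1)$, mapping the upper half-plane $\{\imaginary z>0\}$ into its closure, and normalized by the value $1$ at the origin. I would therefore show that $q$ extends analytically across $(-\infty,1)$ and is such a Pick function, and then recover $\mu$ (supported on $[0,1]$) from the boundary values by the change of variable $t\mapsto1/t$. The two ingredients needed are that $F(a,b;c;z)$ is zero-free on the slit plane $\C\setminus[1,\infty)$ and that $\imaginary q\ge0$ on the upper half-plane; the former can be extracted from known univalence and starlikeness results for $zF(a,b;c;z)$ in the spirit of Lemma~\ref{lem:lewis}, using the contiguity relation $zF'(a,b;c;z)=\tfrac{ab}{c}\,zF(a+1,b+1;c+1;z)$ together with $q=\tfrac{c}{ab}\,(\log F(a,b;c;z))'$ when $ab\neq0$ (the cases $a=0$ and $b=c$ being handled by continuity from the seed case).

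The main obstacle is the positivity itself: establishing $\imaginary q\ge0$ on the upper half-plane, equivalently the complete monotonicity of $(q_n)$. The cleanest route I expect is through the Gauss continued fraction for the contiguous ratio, which under the present parameter constraints is a Stieltjes-type continued fraction with partial quotients of constant sign; by Stieltjes' theory such a fraction is automatically the transform of a positive measure on $[0,1]$, whence the representation. The delicate point is verifying that the sign pattern of the continued-fraction coefficients is preserved throughout the stated parameter range, and in particular at the endpoints $a=-1$, $a=c$ and $b=c$, where the corresponding measure degenerates (e.g.\ acquires an atom at an endpoint). An equivalent, purely coefficient-level alternative would be to prove complete monotonicity of $(q_n)$ directly by induction on the parameters via the hypergeometric contiguous relations, checking that each elementary step preserves the Hausdorff property; the bookkeeping at the boundary of the admissible parameter region is again where the real work lies.
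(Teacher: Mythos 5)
First, a point of comparison: the thesis contains no proof of this lemma at all --- it is quoted verbatim from K\"ustner \cite[Theorem 1.5]{Kus02} (see also \cite[Lemma 7]{CKPS04}) --- so there is no in-paper argument to measure you against. Your outline is in fact the route K\"ustner himself takes: identify the asserted representation with the Hausdorff moment problem for the Taylor coefficients of $q=F(a+1,b+1;c+1;\cdot)/F(a,b;c;\cdot)$, and obtain the measure from the Gauss continued fraction via Wall's theorem that a $g$-fraction with parameters in $[0,1]$ converges precisely to a function of the form $\int_0^1(1-tz)^{-1}\,d\mu(t)$ with $\mu$ a probability measure on $[0,1]$. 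Your seed case $a=0$ (Euler's beta integral, with the unit mass at $t=1$ when $b=c$) and the observation that $q(0)=1$ forces $\mu$ to be a probability measure are both correct.

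As a proof, however, the proposal has a genuine gap: the one step that carries all the content --- exhibiting the continued fraction of $q$ as a $g$-fraction, i.e.\ writing its partial numerators as $g_1z,\ (1-g_1)g_2z,\ (1-g_2)g_3z,\dots$ and verifying $0\le g_n\le1$ exactly on the range $-1\le a\le c$, $0<b\le c$ --- is announced as ``where the real work lies'' and then not carried out. This is not routine bookkeeping. Mere positivity of the partial numerators (a Stieltjes fraction) only yields a measure supported on $[0,\infty)$; the support in $[0,1]$, which is what the lemma asserts, is \emph{equivalent} to the $g$-fraction factorization, and checking that the classical Gauss coefficients $\frac{(a+n)(c-b+n)}{(c+2n)(c+2n+1)}$ and $\frac{(b+n)(c-a+n)}{(c+2n-1)(c+2n)}$ (suitably adjusted for the three-parameter shift, which costs one more contiguous relation) admit such a factorization on this non-classical parameter range is precisely the substance of K\"ustner's proof. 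Two secondary defects: (i) your Nevanlinna--Pick characterization is stated too loosely --- analyticity on $\C\setminus[1,\infty)$, reality on $(-\infty,1)$, a nonnegative imaginary part on the upper half-plane and the value $1$ at the origin do \emph{not} characterize Stieltjes transforms of probability measures on $[0,1]$ (the function $1-\log(1-z)$ satisfies all four yet is unbounded below on the negative real axis, whereas any such transform lies in $[0,1]$ there); an additional growth or positivity condition on $(-\infty,0]$ is needed. (ii) Zero-freeness of $F(a,b;c;\cdot)$ in $\D$ cannot be ``extracted from'' Lemma~\ref{lem:lewis}, whose hypothesis $0\le a\le b\le c$ excludes most of the present range (in particular all $a<0$); in the continued-fraction approach it comes out as a consequence, not as an input, and your argument should say so rather than lean on the starlikeness lemma.
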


\section{Pre-Schwarzian Norm Estimates}\label{chap6-sec3} 

In this section we mainly concentrate in estimating the
pre-Schwarzian norm of functions and that of
the transforms $B_{b,c}[f]$ of functions  $f$ from the subclasses
defined in Section \ref{chap6-sec1}. We also present
some consequences in terms of quasidisks.

In order to discuss norm estimates for the class $\F_\beta$,
for $2/3<\beta \le 1$, $b>0$ and $c>0$, we define
$$L(\beta, b,c)=\frac{b}{c}(3\beta -2)\sup_{0\le x<1}(1-x^2)
\frac{F(3-3\beta,b+1;c+1;x)}{F(2-3\beta,b;c;x)}.
$$

Here we present

\begin{theorem}\label{thm1}
Let $2/3<\beta \le 1$ and $f\in\F_\beta$. Then
$\|f\|\le 2(3\beta -2)$. If moreover
$1\le b\le c$ or  $0< b\le 1\le c$,
then $\|B_{b,c} [f]\|\leq L(\beta,b,c)$.
The bounds in both cases are sharp and the quantity $L(\beta,b,c)$
is bounded above by $2(3\beta -2)b/c$.
\end{theorem}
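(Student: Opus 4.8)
The plan is to reduce both inequalities to a single subordination $f'\prec g'$ with an explicit extremal function $g$, and then to extract the norm bounds from the Schwarz--Pick lemma together with the representations assembled in Section~\ref{chap6-sec2}.

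First I would pin down the extremal function. Put $p(z)=1+zf''(z)/f'(z)$, so that $f\in\F_\beta$ means $\real\,p(z)<\tfrac32\beta$ and $p(0)=1$. Because $\beta>2/3$ the value $1$ is interior to the half-plane $\{\real\,w<\tfrac32\beta\}$, hence $p\prec\psi$, where $\psi(z)=\frac{1+(1-3\beta)z}{1-z}$ maps $\D$ conformally onto that half-plane; this half-plane is convex and contains $1$, so $\psi$ is admissible in Lemma~\ref{lem:mm}. Solving $1+zg''/g'=\psi$ gives $g'(z)=(1-z)^{3\beta-2}$, and Lemma~\ref{lem:mm} turns $1+zf''/f'\prec\psi$ into $f'\prec g'$. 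The first estimate is then immediate from the general fact that $f'\prec g'$ forces $\|f\|\le\|g\|$: writing $f'=g'\circ\omega$ with $\omega$ a Schwarz function and differentiating logarithmically gives $(1-|z|^2)|f''/f'|=(1-|z|^2)|\omega'(z)|\,\bigl|(g''/g')(\omega(z))\bigr|\le(1-|\omega(z)|^2)\bigl|(g''/g')(\omega(z))\bigr|\le\|g\|$ by Schwarz--Pick. Since $g''/g'=-(3\beta-2)/(1-z)$ and $(1-|z|^2)/|1-z|$ is largest on the positive real axis, one finds $\|g\|=\sup_{0\le r<1}(3\beta-2)(1+r)=2(3\beta-2)$, the supremum being approached as $z\to1^-$; this also gives sharpness of the first bound.

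For the transform I would push the subordination through the Hadamard product. Under either hypothesis $1\le b\le c$ or $0<b\le1\le c$, Corollary~\ref{cvx-of-hyp} gives that $F(1,b;c;z)$ is convex; moreover $g'$ is convex, since for $h=g'=(1-z)^{3\beta-2}$ one computes $1+zh''(z)/h'(z)=\frac{1-(3\beta-2)z}{1-z}$, which has positive real part whenever $3\beta-2\in(0,1]$. Hence Lemma~\ref{Rus} applied to $f'\prec g'$ yields $F(1,b;c;z)*f'\prec F(1,b;c;z)*g'$, that is $(B_{b,c}[f])'\prec(B_{b,c}[g])'$, and the Schwarz--Pick argument above again gives $\|B_{b,c}[f]\|\le\|B_{b,c}[g]\|$. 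A Hadamard-product computation identifies $(B_{b,c}[g])'=F(1,b;c;z)*(1-z)^{3\beta-2}=F(2-3\beta,b;c;z)$, whence by the derivative formula for $F$,
\[
\frac{(B_{b,c}[g])''(z)}{(B_{b,c}[g])'(z)}=\frac{(2-3\beta)b}{c}\,\frac{F(3-3\beta,b+1;c+1;z)}{F(2-3\beta,b;c;z)}.
\]

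The crux is to show that the supremum defining $\|B_{b,c}[g]\|$ is attained along $[0,1)$, so that it coincides with $L(\beta,b,c)$; this is exactly where Lemma~\ref{lem:pos} is needed. Taking $a=2-3\beta\in[-1,0)$ and $0<b\le c$, the lemma writes the above quotient as $\int_0^1(1-tz)^{-1}\,d\mu(t)$ for a Borel probability measure $\mu$ on $[0,1]$; the elementary inequality $|1-tz|\ge1-t|z|$ then shows that its modulus at any $z$ is dominated by its (real, positive) value at $|z|$, and together with $|2-3\beta|=3\beta-2$ this yields $\|B_{b,c}[g]\|=L(\beta,b,c)$, so $\|B_{b,c}[f]\|\le L(\beta,b,c)$ with equality for $f=g$. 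Finally the bound $L(\beta,b,c)\le2(3\beta-2)b/c$ drops out of the same representation: since $\frac{1-x^2}{1-tx}=(1+x)\frac{1-x}{1-tx}\le1+x<2$ for $t,x\in[0,1)$ and $\mu$ is a probability measure, $(1-x^2)\int_0^1(1-tx)^{-1}d\mu(t)\le2$. I expect the reduction to the real axis via Lemma~\ref{lem:pos} to be the principal obstacle, since without the integral representation the disk supremum of the hypergeometric quotient is not visibly governed by its values on $[0,1)$.
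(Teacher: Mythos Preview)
Your proposal is correct and follows essentially the same route as the paper: identify the extremal $g$ with $g'(z)=(1-z)^{3\beta-2}$ via Lemma~\ref{lem:mm}, use Schwarz--Pick to get $\|f\|\le\|g\|=2(3\beta-2)$, push the subordination through the convolution with $F(1,b;c;z)$ via Lemma~\ref{Rus}, and then use the integral representation of Lemma~\ref{lem:pos} both to reduce the supremum to the real axis and to obtain the upper bound $2(3\beta-2)b/c$. The only cosmetic difference is that you verify the convexity of $g'$ by a direct computation of $1+zh''/h'$, whereas the paper cites Corollary~\ref{cvx-of-hyp} applied to $g'(z)=F(1,2-3\beta;1;z)$.
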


\begin{proof}
Let $f\in\F_\beta$. Then we have
$$1+\frac{zf''(z)}{f'(z)}\prec
\frac{1+(1-3\beta)z}{1-z}=\phi(z),\quad z\in\D,
$$
where $\phi$ is clearly a convex function and therefore starlike.
Let $g\in\A$ be such that
$$1+\frac{zg''(z)}{g'(z)}=\frac{1+(1-3\beta)z}{1-z},\quad z\in\D.
$$
A simple computation shows that
$$g'(z)=(1-z)^{3\beta -2}=F(1,2-3\beta;1;z)
$$
so that
\begin{equation}\label{eq4}
g(z)=\frac{1-(1-z)^{3\beta -1}}{3\beta -1}.
\end{equation}
By Lemma \ref{lem:mm}, we conclude that
\begin{equation}\label{ABbc-eq3}
f'(z)\prec g'(z)=(1-z)^{3\beta -2}, \quad z\in \D,
\end{equation}
which, by the definition of subordination, implies that
$$f'(z)=(1-w(z))^{3\beta -2}
$$
for some Schwarz function $w(z)$, i.e. $w:\D \to \D$ is analytic
with $w(0)=0$. By Schwarz-Pick lemma we get
$$|w'(z)|\leq \frac{1-|w(z)|^2}{1-|z|^2}, \quad z\in \D ,
$$
and hence,
\begin{eqnarray*}
\left|\frac{f''(z)}{f'(z)}\right| & = &(3\beta -2)\left |\frac{w'(z)}{1-w(z)}\right |\\
& \leq & (3\beta -2)\frac{1-|w(z)|^2}{1-|z|^2}\frac{1}{1-|w(z)|}\\
& = & (3\beta -2) \frac{1+|w(z)|}{1-|z|^2}
\end{eqnarray*}
which gives that $\|f\|\le 2(3\beta -2)$ and the
equality holds for the function $g\in \F_\beta$ defined in (\ref{eq4}). Indeed,
we compute that
$$\|g\|=(3\beta -2)\sup_{|z|<1}\frac{1-|z|^2}{|1-z|}=2(3\beta -2).
$$

We now proceed to prove the second part.
By Corollary \ref{cvx-of-hyp}, we observe that $g'(z)$ is convex in $\D$,
since $2/3<\beta \le 1$.
Furthermore, by Corollary \ref{cvx-of-hyp}, it follows that if $b$ and $c$ are
related by $1\le b\le c$ or $-1< b\le 1\le c$ (which holds by the hypothesis
of the theorem), then the hypergeometric function $F(1,b;c;z)$ is convex.
In view of (\ref{ABbc-eq3}) and Lemma \ref{Rus}, we also have
$$
F(1,b;c;z)*f'(z) \prec  F(1,b;c;z)*g'(z), \quad \mbox{i.e. }~
(B_{b,c}[f])'(z) \prec (B_{b,c}[g])'(z).
$$
We see that (see the proof of Proposition \ref{prop})
$\|B_{b,c}[f]\|\le \|B_{b,c}[g]\|$ holds.
So it remains to compute the norm $\|B_{b,c}[g]\|$.

By the definition of Hadamard product we have
$$(J_{b,c}[g])'(z) =  F(1,b;c;z) * F(1,2-3\beta;1;z) =  F(2-3\beta,b;c;z).
$$
In view of the representation $(B_{b,c}[g])'(z)=F(2-3\beta,b;c;z)$ and
Lemma \ref{lem:pos}, we deduce that
there exists a Borel probability measure $\mu$ on the interval $[0,1]$ such that
\begin{eqnarray*}
\frac{(B_{b,c}[g])''(z)}{(B_{b,c}[g])'(z)}
& = & \frac{b}{c}(2-3\beta )\frac{F(3-3\beta,b+1;c+1;z)}{F(2-3\beta,b;c;z)}\\
& = & \frac{b}{c}(2-3\beta )\int_0^1\frac{d\mu(t)}{1-tz}, \quad z\in\D,
\end{eqnarray*}
whenever $0<b\le c$ and $\frac{2-c}{3}\le \beta\le 1$ (and so is by the
hypothesis). The above formulation clearly shows that
\begin{eqnarray*}
\|B_{b,c}[g]\| & = & \sup_{|z|<1}(1-|z|^2)
\left|\frac{(B_{b,c}[g])''(z)}{(B_{b,c}[g])'(z)}\right|\\
& = & \sup_{0\le x<1} (1-x^2)\frac{(B_{b,c}[g])''(x)}{(B_{b,c}[g])'(x)}\\
& = & \frac{b(3\beta -2)}{c}\sup_{0\le x<1} (1-x^2)
\frac{F(3-3\beta,b+1;c+1;x)}{F(2-3\beta,b;c;x)}\\
& = & L(\beta, b,c).
\end{eqnarray*}
Thus, we have the sharp inequality $\|B_{b,c}[f]\|\le L(\beta, b,c)$.
To obtain an upper bound for the quantity $L(\beta,b,c)$, it suffices to
observe that
$$(1-x^2)\frac{F(3-3\beta,b+1;c+1;x)}{F(2-3\beta,b;c;x)}
=\int_0^1\frac{1-x^2}{1-tx} \,d\mu(t) \leq \int_0^1(1+x) \,d\mu(t) \leq 2
$$
which shows that
$$L(\beta, b,c)\le \frac{2b(3\beta-2)}{c}.
$$
We thus completed our proof.
\end{proof}

Recall that a quasidisk\index{quasidisk} is the image of a disk under a
quasiconformal self map of $\Cbar$.
In 1984 the following theorem was proved by Becker and Pommerenke \cite{BP84}
(see also \cite[Theorem 4.2]{Hag01}).
\begin{theorem}\label{Becker-Pom}
If $\|f\|< 1$, then $f(\D)$ is a quasidisk.
\end{theorem}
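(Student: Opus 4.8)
The plan is to exhibit $f$ as the restriction to $\D$ of a global quasiconformal self-map of $\Cbar$; by the definition of a quasidisk recalled just above the statement, this gives the conclusion at once. Write $k := \|f\| < 1$. First I would record that $f$ is univalent in $\D$: this is the well-known fact (noted earlier in the introduction) that $\|f\| \le 1$ forces univalence, and here $\|f\| = k < 1$. It then remains to extend $f$ across the unit circle to a quasiconformal homeomorphism of the sphere.

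The natural candidate is the Becker-type extension $F\colon \Cbar \to \Cbar$ defined by $F(z) = f(z)$ for $|z| \le 1$, by $F(\infty) = \infty$, and by
$$F(z) = f\left(\frac{1}{\bar z}\right) + \left(z - \frac{1}{\bar z}\right) f'\left(\frac{1}{\bar z}\right), \qquad |z| > 1.$$
On $|z| = 1$ we have $1/\bar z = z$, so $F$ is continuous across the circle and agrees with $f$ there; and as $z \to \infty$ one has $F(z) \sim z\,f'(0) \to \infty$, so the value at $\infty$ is consistent. Next I would compute the complex dilatation of $F$ on $|z| > 1$. Putting $w = 1/\bar z$ and differentiating (using $\partial_{\bar z} w = -w^2$ and $\partial_z w = 0$), one gets $\partial_z F = f'(w)$ and $\partial_{\bar z} F = -w^2(z-w) f''(w)$, so that
$$\mu_F(z) = \frac{\partial_{\bar z} F}{\partial_z F} = -w^2 (z-w)\,\frac{f''(w)}{f'(w)}.$$
Since $z - w = (|z|^2-1)/\bar z$ and $|w| = 1/|z|$, the modulus factor is $|w^2(z-w)| = (|z|^2-1)/|z|^3$, while the norm bound gives $|f''(w)/f'(w)| \le k/(1-|w|^2) = k|z|^2/(|z|^2-1)$. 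The two factors telescope, yielding $|\mu_F(z)| \le k/|z| < k < 1$ for every $|z| > 1$. Hence $F$ is locally quasiconformal off the circle with $\|\mu_F\|_\infty \le k$, and since an analytic arc is removable for quasiconformality, $F$ is quasiconformal on $\Cbar$ with maximal dilatation at most $(1+k)/(1-k)$.

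The step I expect to be the main obstacle is proving that $F$ is a genuine homeomorphism of $\Cbar$, not merely a continuous locally quasiregular map: local quasiconformality plus continuity does not by itself guarantee global injectivity. To handle this I would show that $F$ is a sense-preserving, proper, open, locally injective map of $\Cbar$ of topological degree one, using that $f$ is univalent on $\overline{\D}$, that the dilatation is uniformly bounded away from $1$, and a degree or argument-principle comparison on large circles (where $F(z) \approx z f'(0)$); a proper open map of the sphere of degree one is a homeomorphism. Once $F$ is established as a quasiconformal self-map of $\Cbar$ with $F|_\D = f$, the set $f(\D) = F(\D)$ is the image of the disk $\D$ under a quasiconformal self-map of $\Cbar$, which is precisely a quasidisk, completing the proof.
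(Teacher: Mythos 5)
First, note that the thesis does not prove this statement at all: it is quoted as a known theorem of Becker and Pommerenke, with references to \cite{BP84} and to \cite[Theorem 4.2]{Hag01}. So there is no internal proof to compare against; what you have written is a reconstruction of the argument from that literature, and your choice of extension $F(z)=f(1/\bar z)+(z-1/\bar z)f'(1/\bar z)$ is exactly Becker's classical extension. Your dilatation computation is correct and is the heart of the matter: $|\mu_F(z)|\le k/|z|<k<1$ on $|z|>1$ follows precisely as you telescope the two factors. However, two points are genuinely incomplete. The smaller one is continuity of $F$ across the unit circle: the observation that the two formulas agree when $1/\bar z=z$ does not by itself give continuity of the glued map, because $f'(w)$ may blow up as $w\to\partial\D$. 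You need the growth estimate that follows from $\|f\|\le k$, namely $|f'(w)|\le|f'(0)|\bigl((1+|w|)/(1-|w|)\bigr)^{k/2}$ (integrate $|f''/f'|\le k/(1-|w|^2)$ along radii), which gives $|(z-w)f'(w)|=O((1-|w|)^{1-k/2})\to0$ and also shows that $f$ itself extends continuously to $\overline{\D}$. Without $k<2$ this step fails, so it is not cosmetic.

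The larger gap is the one you flag yourself: global injectivity of $F$. Your proposed degree argument is circular as sketched. To say that $F$ is open and discrete (hence has a well-defined degree, with degree one forcing it to be a homeomorphism), you need $F$ to be quasiregular across $\partial\D$; but the standard removability theorems for a circle are stated for \emph{homeomorphisms} that are quasiconformal off the circle, so they presuppose the injectivity you are trying to establish. Likewise, local injectivity of $F$ \emph{at} points of the unit circle is not a consequence of local injectivity on either side plus continuity. The classical resolution, which is how Becker and Pommerenke actually argue, is via Loewner (subordination) chains: one shows that $f_t(z)=f(e^{-t}z)+(e^t-e^{-t})zf'(e^{-t}z)$, $t\ge0$, is a subordination chain precisely when the dilatation bound you computed holds with $k<1$; the chain structure gives that each $f_t$ is univalent with increasing images, and $F(re^{i\theta})=f_{\log r}(e^{i\theta})$ is then injective on $|z|\ge1$ with image disjoint from $f(\D)$. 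Only after injectivity is in hand does the removability of $\partial\D$ apply to conclude that $F$ is a $(1+k)/(1-k)$-quasiconformal self-map of $\Cbar$, whence $f(\D)=F(\D)$ is a quasidisk. So your plan identifies the right extension and the right estimate, but the injectivity step needs the Loewner-chain machinery (or an equivalent substitute), not a degree count.
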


As a consequence of Theorem \ref{thm1}, by using Theorem \ref{Becker-Pom},
we obtain the following.
\begin{corollary}
For $f\in \F_\beta$, we obtain that
$f(\D)$ is a quasidisk if  $2/3<\beta <5/6$.
\end{corollary}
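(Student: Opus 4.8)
The plan is to combine the norm bound from Theorem~\ref{thm1} with the quasidisk criterion of Becker and Pommerenke in Theorem~\ref{Becker-Pom}. Since the corollary asserts membership of $f(\D)$ in the class of quasidisks under a one-sided smallness hypothesis on the parameter $\beta$, the natural route is to show that the hypothesis $2/3<\beta<5/6$ forces the pre-Schwarzian norm $\|f\|$ to lie strictly below $1$, after which Theorem~\ref{Becker-Pom} applies verbatim.

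First I would recall that any $f\in\F_\beta$ with $2/3<\beta\le 1$ satisfies the sharp estimate $\|f\|\le 2(3\beta-2)$, which is precisely the first conclusion of Theorem~\ref{thm1}. The point is then simply to determine for which $\beta$ this upper bound guarantees $\|f\|<1$. Solving the inequality $2(3\beta-2)<1$ gives $3\beta-2<1/2$, that is $3\beta<5/2$, and hence $\beta<5/6$. Intersecting this with the standing requirement $\beta>2/3$ coming from the very definition of the class $\F_\beta$ yields exactly the range $2/3<\beta<5/6$ stated in the corollary.

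Next I would invoke Theorem~\ref{Becker-Pom}: under the condition $2/3<\beta<5/6$ just isolated, every $f\in\F_\beta$ obeys $\|f\|\le 2(3\beta-2)<1$, so the hypothesis $\|f\|<1$ of that theorem is met, and we conclude that $f(\D)$ is a quasidisk. This completes the argument.

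I do not anticipate any genuine obstacle here: the entire content is the elementary inequality $2(3\beta-2)<1$ together with the two quoted theorems, so the proof is a short chain of implications rather than a substantive computation. The only point worth stating explicitly is that the bound $2(3\beta-2)$ used is the one valid for the whole sub-range $2/3<\beta\le 1$, so no separate justification of the norm estimate is needed beyond citing Theorem~\ref{thm1}; the restriction to $\beta<5/6$ is forced solely by the requirement $\|f\|<1$.
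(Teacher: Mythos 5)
Your argument is correct and is precisely the one the paper intends: combine the sharp bound $\|f\|\le 2(3\beta-2)$ from Theorem~\ref{thm1} with the Becker--Pommerenke criterion of Theorem~\ref{Becker-Pom}, noting that $2(3\beta-2)<1$ exactly when $\beta<5/6$. Nothing further is needed.
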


In the case $\beta =1$, Theorem \ref{thm1} takes the following simple
form.

\begin{corollary}\label{cor1}
Suppose that $1\le b\le c$ or $0<b\le 1\le c$ holds.
If $f\in\F$, then we have $\|f\|\le 2$ and
$$\|B_{b,c} [f]\|\leq L(1,b,c)=\frac{2(c-\sqrt{c^2-b^2})}{b}.
$$
The bounds are sharp.
\end{corollary}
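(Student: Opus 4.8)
The plan is to obtain Corollary~\ref{cor1} as the specialization $\beta = 1$ of Theorem~\ref{thm1}, so that the only genuine task is to evaluate the quantity $L(1,b,c)$ in closed form. First I would note that the hypothesis $1\le b\le c$ or $0<b\le 1\le c$ is exactly what is needed in order to invoke Theorem~\ref{thm1}, and that putting $\beta=1$ in the first bound there gives $\|f\|\le 2(3\beta-2)=2$ at once, with the extremal function inherited from (\ref{eq4}), namely $g(z)=z-z^2/2$. It then remains only to compute $L(1,b,c)$.

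For the second bound I would substitute $\beta=1$ into the definition of $L(\beta,b,c)$. Since $3-3\beta=0$ and $2-3\beta=-1$, both hypergeometric functions collapse to terminating series, $F(0,b+1;c+1;x)=1$ and $F(-1,b;c;x)=1-(b/c)x$, whence
$$L(1,b,c)=\frac{b}{c}\sup_{0\le x<1}\frac{1-x^2}{1-(b/c)x}=b\,\sup_{0\le x<1}\frac{1-x^2}{c-bx}.$$

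The heart of the argument is the one-variable maximization of $h(x)=(1-x^2)/(c-bx)$ on $[0,1)$. Differentiating, the numerator of $h'(x)$ is $bx^2-2cx+b$, whose relevant root is $x_0=(c-\sqrt{c^2-b^2})/b$; I would check that $x_0\in[0,1)$ whenever $b<c$ (the inequality $x_0<1$ reduces to $c-b<\sqrt{c^2-b^2}$, valid after squaring since $b<c$) and that $h'$ changes sign from $+$ to $-$ there, so that $x_0$ is the maximum. A clean evaluation then uses the two identities $c-bx_0=\sqrt{c^2-b^2}$ and $b^2-(c-\sqrt{c^2-b^2})^2=2\sqrt{c^2-b^2}\,(c-\sqrt{c^2-b^2})$, which together give $h(x_0)=2(c-\sqrt{c^2-b^2})/b^2$ and hence $L(1,b,c)=2(c-\sqrt{c^2-b^2})/b$. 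Sharpness of both bounds is automatic from the corresponding sharpness in Theorem~\ref{thm1}.

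The main (and essentially only) obstacle I foresee is the boundary behaviour when $b=c$: there the critical point $x_0$ degenerates to $x=1$, so the supremum is approached as $x\to 1^-$ rather than attained in the interior. In that case I would simplify $h(x)=(1+x)/c$ directly to read off $\sup=2/c$ and $L(1,c,c)=2$, which is consistent both with the general formula and with the first bound $\|f\|\le 2$. All remaining steps are routine algebraic simplifications.
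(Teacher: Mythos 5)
Your proposal is correct and follows essentially the same route as the paper: specialize Theorem~\ref{thm1} to $\beta=1$, reduce the hypergeometric quotient to $(1-x^2)/(1-(b/c)x)$, and maximize over $[0,1)$ at $x_0=(c-\sqrt{c^2-b^2})/b$, with the degenerate case $b=c$ handled separately. Your computation of $h(x_0)$ and the boundary check $x_0<1$ match the paper's (stated but unelaborated) ``simple exercise.''
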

\begin{proof}
From Theorem \ref{thm1}, we see that
$$L(1, b,c)=\frac{b}{c}\sup_{0\le x<1}(1-x^2)
\frac{F(0,b+1;c+1;x)}{F(-1,b;c;x)} =
\frac{b}{c}\sup_{0\le x<1}\frac{1-x^2}{1-(b/c)x}.
$$
For $b=c>0$ the conclusion is obvious. For $b/c<1$ $(b>0,c>0)$, it is a simple
exercise to see that the function $h(x)=(1-x^2)/(1-(b/c)x)$ defined on $[0,1)$
attains its maximum at
$$x_0=\frac{c-\sqrt{c^2-b^2}}{b}
$$
so that
$$h(x)\leq h(x_0)=\frac{2c(c-\sqrt{c^2-b^2})}{b^2}
$$
and the conclusion follows.
\end{proof}

Here we see that
$$\frac{2(c-\sqrt{c^2-b^2})}{b}\le 1 \iff b\le \frac{4}{5}c.
$$
Thus, as a consequence of Corollary \ref{cor1} we obtain the following
by using a result of Becker \cite{Be72} and Theorem \ref{Becker-Pom}.
\begin{corollary}
Let $f$ be in $\F$. Then $J_{b,c}[f]$ is univalent if $1\le b\le \frac{4}{5}c$
and $J_{b,c}[f](\D)$ is a quasidisk if $1\le b< \frac{4}{5}c$.
\end{corollary}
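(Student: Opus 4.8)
The plan is to read the result off directly from the sharp norm bound of Corollary~\ref{cor1}, combined with the two classical criteria already quoted in this chapter: Becker's univalence criterion (the fact, due to \cite{Be72}, that $\|g\|\le 1$ forces $g$ to be univalent in $\D$) and the Becker--Pommerenke theorem (Theorem~\ref{Becker-Pom}), which upgrades the strict bound $\|g\|<1$ to the conclusion that $g(\D)$ is a quasidisk. Since the operators coincide, $J_{b,c}=B_{b,c}$ (indeed $(J_{b,c}[g])'=(B_{b,c}[g])'$ as recorded in the proof of Theorem~\ref{thm1}), everything reduces to controlling the pre-Schwarzian norm $\|B_{b,c}[f]\|$ and then invoking the elementary equivalence displayed just before the statement.

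For the univalence assertion I would first observe that the hypothesis $1\le b\le \frac{4}{5}c$ yields in particular $1\le b\le c$, so that $f\in\F=\F_1$ lies in the range of validity of Corollary~\ref{cor1}. That corollary then supplies the sharp bound
$$\|B_{b,c}[f]\|\le L(1,b,c)=\frac{2(c-\sqrt{c^2-b^2})}{b}.$$
Applying the equivalence $\frac{2(c-\sqrt{c^2-b^2})}{b}\le 1\iff b\le \frac{4}{5}c$ shows that $L(1,b,c)\le 1$ under the hypothesis, whence $\|B_{b,c}[f]\|\le 1$. Becker's criterion then gives that $B_{b,c}[f]=J_{b,c}[f]$ is univalent in $\D$.

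For the quasidisk assertion I would repeat the same steps under the strict hypothesis $1\le b<\frac{4}{5}c$. The strict counterpart of the displayed equivalence, namely $\frac{2(c-\sqrt{c^2-b^2})}{b}<1\iff b<\frac{4}{5}c$, comes out of the very same squaring argument (replacing $\le$ by $<$ throughout, which reduces to $b(5b-4c)<0$ after clearing roots), and yields $L(1,b,c)<1$. Hence $\|B_{b,c}[f]\|<1$, and Theorem~\ref{Becker-Pom} shows that $J_{b,c}[f](\D)$ is a quasidisk.

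There is essentially no genuine obstacle here, since the real work was already carried out in proving the sharp estimate of Corollary~\ref{cor1}. The only points demanding care are bookkeeping ones: verifying that the hypothesis places the pair $(b,c)$ inside the admissible range of Corollary~\ref{cor1}, and, more importantly, keeping the two thresholds distinct, i.e.\ pairing the non-strict inequality $b\le\frac{4}{5}c$ with Becker's criterion for univalence but the strict inequality $b<\frac{4}{5}c$ with the Becker--Pommerenke theorem for the quasidisk conclusion, since the latter requires $\|\cdot\|<1$ rather than $\|\cdot\|\le 1$.
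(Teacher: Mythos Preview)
Your proposal is correct and follows essentially the same approach as the paper, which simply states that the corollary is a consequence of Corollary~\ref{cor1} together with Becker's univalence criterion \cite{Be72} and Theorem~\ref{Becker-Pom}. Your write-up is in fact more detailed than the paper's, explicitly checking the parameter range and carefully distinguishing the non-strict versus strict inequality cases.
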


As a consequence of Corollary \ref{cor1}, we easily have

\begin{corollary}\label{cor2}
Let $\gamma>-1$, and $B_\gamma [f]$ be the Bernardi transform  of $f\in\F$.
Then, we have
$$\|B_\gamma [f]\|\leq  \frac{2(\gamma +2-\sqrt{3+2\gamma })}{\gamma +1}
$$
and the bound is sharp.
\end{corollary}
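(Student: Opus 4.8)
The plan is to derive this directly from Corollary \ref{cor1} by specializing the parameters to those of the Bernardi transform. Recalling the definition (\ref{bit}), we have $B_\gamma[f] = B_{\gamma+1,\gamma+2}[f]$, so the relevant hypergeometric parameters are $b = \gamma+1$ and $c = \gamma+2$. First I would verify that these satisfy the hypothesis of Corollary \ref{cor1}, namely that either $1 \le b \le c$ or $0 < b \le 1 \le c$ holds. Since $\gamma > -1$ we always have $b = \gamma + 1 > 0$ and $c = \gamma + 2 = b + 1 > b$. If $\gamma \ge 0$, then $b \ge 1$, giving $1 \le b \le c$; if $-1 < \gamma < 0$, then $0 < b < 1 < c$, giving $0 < b \le 1 \le c$. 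Thus in either case Corollary \ref{cor1} applies to $f\in\F$.

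Next I would compute $L(1,b,c)$ for these parameters. The only quantity needing simplification is $c^2 - b^2$, and here
$$c^2 - b^2 = (\gamma+2)^2 - (\gamma+1)^2 = 2\gamma + 3 = 3 + 2\gamma,$$
so that $\sqrt{c^2 - b^2} = \sqrt{3 + 2\gamma}$. Substituting into the formula supplied by Corollary \ref{cor1},
$$\|B_\gamma[f]\| \le L(1,\gamma+1,\gamma+2) = \frac{2\big((\gamma+2) - \sqrt{3+2\gamma}\big)}{\gamma+1},$$
which is precisely the asserted bound. The sharpness is inherited directly from the sharpness statement in Corollary \ref{cor1}, where equality is attained by the extremal function $g$ of (\ref{eq4}) specialized to $\beta = 1$.

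Since the argument is a routine specialization, I do not expect any genuine analytic obstacle; the only point requiring care is the case distinction needed to confirm the hypotheses of Corollary \ref{cor1} across the full range $\gamma > -1$, which I would handle exactly as indicated above. Everything else reduces to the elementary identity $c^2-b^2 = 3+2\gamma$.
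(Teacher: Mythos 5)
Your proposal is correct and matches the paper's route exactly: the paper obtains Corollary~\ref{cor2} precisely as an immediate specialization of Corollary~\ref{cor1} with $b=\gamma+1$, $c=\gamma+2$, using the identity $c^2-b^2=3+2\gamma$. Your explicit case check that the hypotheses $1\le b\le c$ or $0<b\le 1\le c$ hold for all $\gamma>-1$ is a small but welcome addition that the paper leaves implicit.
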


Here we compute that
$$\frac{2(\gamma +2-\sqrt{3+2\gamma })}{\gamma +1}\le 1 \iff \gamma\le 3,
$$
because $\gamma+1>0$. Thus, as a consequence of Corollary \ref{cor2} we
have the following result.
\begin{corollary}
Let $f$ be in $\F$. Then $B_{\gamma}[f]$ is univalent for $-1<\gamma\le 3$
and $B_{\gamma}[f](\D)$ is a quasidisk for $-1<\gamma<3$.
\end{corollary}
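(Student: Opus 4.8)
The plan is to read the result off directly from the sharp norm estimate in Corollary~\ref{cor2}, combined with the two classical consequences of a small pre-Schwarzian norm that were recorded earlier in the thesis: Becker's univalence criterion, which guarantees that $\|g\|\le 1$ forces $g$ to be univalent in $\D$ (see \cite{Be72}), and the Becker--Pommerenke theorem (Theorem~\ref{Becker-Pom}), which guarantees that $\|g\|<1$ forces $g(\D)$ to be a quasidisk. Since $f\in\F$ and $\gamma>-1$, Corollary~\ref{cor2} applies and yields
\[
\|B_\gamma[f]\|\le \frac{2(\gamma+2-\sqrt{3+2\gamma})}{\gamma+1}.
\]
Thus the whole argument reduces to deciding for which $\gamma$ this explicit bound is at most $1$ (respectively, strictly less than $1$).

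First I would carry out the elementary comparison already indicated in the surrounding text. Because $\gamma+1>0$, the inequality $\frac{2(\gamma+2-\sqrt{3+2\gamma})}{\gamma+1}\le 1$ is equivalent, after clearing the denominator and isolating the square root, to $\gamma+3\le 2\sqrt{3+2\gamma}$; squaring (both sides are positive for $\gamma>-1$) turns this into $(\gamma-3)(\gamma+1)\le 0$, i.e.\ $-1\le\gamma\le 3$. Combined with the standing hypothesis $\gamma>-1$, this shows that the norm bound is at most $1$ exactly when $-1<\gamma\le 3$, and strictly less than $1$ exactly when $-1<\gamma<3$ (at $\gamma=3$ the bound equals $1$, as the boundary case of the computation makes explicit).

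Then I would conclude in two steps corresponding to the two ranges. For the univalence assertion, on the range $-1<\gamma\le 3$ we have $\|B_\gamma[f]\|\le 1$, so Becker's criterion immediately gives that $B_\gamma[f]$ is univalent in $\D$. For the quasidisk assertion, on the strictly smaller range $-1<\gamma<3$ we have $\|B_\gamma[f]\|<1$, so Theorem~\ref{Becker-Pom} gives that $B_\gamma[f](\D)$ is a quasidisk. The only point requiring care is to keep these two ranges distinct, since the quasidisk conclusion genuinely needs the strict inequality $\|\cdot\|<1$ whereas univalence tolerates equality at $\gamma=3$; beyond this bookkeeping there is no real obstacle, as the statement is an immediate specialization of the sharp estimate of Corollary~\ref{cor2}.
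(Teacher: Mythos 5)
Your proposal is correct and follows exactly the route the thesis takes: the text immediately preceding this corollary records the equivalence $\frac{2(\gamma+2-\sqrt{3+2\gamma})}{\gamma+1}\le 1 \iff \gamma\le 3$ and then derives the statement from Corollary~\ref{cor2} together with Becker's univalence criterion \cite{Be72} and Theorem~\ref{Becker-Pom}. Your only addition is to spell out the squaring argument and the careful separation of the weak and strict inequality ranges, which the paper leaves implicit.
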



Setting $\gamma=0$ and $\gamma=1$ respectively. Thus, we have

\begin{corollary}\label{cor3}
Let $f\in\F$. Then we have
$\|J[f]\|\leq 4-2\sqrt{3}$ and $\|L[f]\|\leq  3-\sqrt{5}$.
The bounds are sharp.
\end{corollary}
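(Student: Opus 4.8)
The plan is to obtain Corollary~\ref{cor3} as a direct specialization of Corollary~\ref{cor2}. Recall from the discussion following~(\ref{bit}) that the Alexander and Libera transforms are precisely the Bernardi transforms at the parameter values $\gamma=0$ and $\gamma=1$; that is, $B_0[f]=J[f]$ and $B_1[f]=L[f]$. Since both values satisfy $\gamma>-1$, the hypothesis of Corollary~\ref{cor2} is met for each of them, so the estimate
$$
\|B_\gamma[f]\|\le \frac{2(\gamma+2-\sqrt{3+2\gamma})}{\gamma+1}
$$
is available for any $f\in\F$. Thus the whole argument reduces to substituting the two parameter values and simplifying the resulting algebraic expression.

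First I would set $\gamma=0$. Then $B_0[f]=J[f]$ and the right-hand side becomes
$$
\frac{2(0+2-\sqrt{3+0})}{0+1}=2(2-\sqrt{3})=4-2\sqrt{3},
$$
which gives $\|J[f]\|\le 4-2\sqrt{3}$. Next I would set $\gamma=1$, so that $B_1[f]=L[f]$, and compute
$$
\frac{2(1+2-\sqrt{3+2})}{1+1}=\frac{2(3-\sqrt{5})}{2}=3-\sqrt{5},
$$
yielding $\|L[f]\|\le 3-\sqrt{5}$. These two substitutions already furnish both inequalities asserted in the statement, and no further estimation is needed since the heavy lifting (the passage through Lemma~\ref{lem:mm}, Corollary~\ref{cvx-of-hyp}, Lemma~\ref{Rus} and Lemma~\ref{lem:pos}) is already encapsulated in Corollary~\ref{cor2}, itself derived from Corollary~\ref{cor1} and Theorem~\ref{thm1}.

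For sharpness, I would simply invoke the sharpness clause of Corollary~\ref{cor2}: the bound there is attained, via Corollary~\ref{cor1} and Theorem~\ref{thm1}, by the extremal function $g\in\F$ determined by~(\ref{eq4}) with $\beta=1$, namely $g'(z)=(1-z)^{-1}$. Specializing the extremal configuration to $\gamma=0$ and $\gamma=1$ transfers the equality to $J[g]$ and $L[g]$ respectively, showing that neither constant $4-2\sqrt{3}$ nor $3-\sqrt{5}$ can be lowered. Since the entire proof is a two-line evaluation of an already established formula, there is no genuine obstacle; the only point requiring any care is the routine arithmetic simplification of $\sqrt{3+2\gamma}$ at the two chosen values and the verification that the sharpness extremal for Corollary~\ref{cor2} indeed lies in $\F$, both of which are immediate.
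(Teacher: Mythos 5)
Your proposal matches the paper's own derivation exactly: Corollary~\ref{cor3} is obtained simply by setting $\gamma=0$ and $\gamma=1$ in Corollary~\ref{cor2}, and your arithmetic ($2(2-\sqrt{3})=4-2\sqrt{3}$ and $2(3-\sqrt{5})/2=3-\sqrt{5}$) is correct. One small slip worth noting: the extremal function for $\F=\F_1$ coming from~(\ref{eq4}) satisfies $g'(z)=(1-z)^{3\beta-2}=1-z$ at $\beta=1$, not $g'(z)=(1-z)^{-1}$, but this does not affect the argument since sharpness is inherited directly from the sharpness clause of Corollary~\ref{cor2}.
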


Combining Theorem \ref{Becker-Pom} and Corollary \ref{cor3},
we obtain the following.
\begin{corollary}
If $f\in\F$, then the images $J[f](\D)$ and $L[f](\D)$ are quasidisks.
\end{corollary}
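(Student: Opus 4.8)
The plan is to read off the conclusion directly from the two results already established, namely the sharp norm estimates of Corollary~\ref{cor3} and the quasidisk criterion of Becker and Pommerenke (Theorem~\ref{Becker-Pom}). Recall that Theorem~\ref{Becker-Pom} asserts that $\|g\|<1$ forces $g(\D)$ to be a quasidisk, while Corollary~\ref{cor3} gives, for $f\in\F$, the bounds $\|J[f]\|\le 4-2\sqrt{3}$ and $\|L[f]\|\le 3-\sqrt{5}$. Thus the entire task reduces to verifying the two purely numerical inequalities $4-2\sqrt{3}<1$ and $3-\sqrt{5}<1$, after which the criterion applies verbatim to $g=J[f]$ and to $g=L[f]$.

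First I would dispose of the arithmetic. The inequality $4-2\sqrt{3}<1$ is equivalent to $3<2\sqrt{3}$, i.e.\ to $9<12$, which holds; and $3-\sqrt{5}<1$ is equivalent to $2<\sqrt{5}$, i.e.\ to $4<5$, which also holds. (Numerically $4-2\sqrt{3}\approx 0.536$ and $3-\sqrt{5}\approx 0.764$, both comfortably below $1$.) It is worth flagging that Theorem~\ref{Becker-Pom} requires the \emph{strict} bound $\|g\|<1$, so one should be careful to record that both norm bounds are strictly less than $1$, which is exactly what the two equivalences above confirm.

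With this in hand the proof is immediate: for any $f\in\F$ one has $\|J[f]\|\le 4-2\sqrt{3}<1$, so Theorem~\ref{Becker-Pom} shows $J[f](\D)$ is a quasidisk, and likewise $\|L[f]\|\le 3-\sqrt{5}<1$ gives that $L[f](\D)$ is a quasidisk. There is no genuine obstacle here; the only point demanding the slightest attention is the strictness of the inequalities, since an estimate of the form $\|g\|\le 1$ with possible equality would not be enough to invoke the criterion. In short, the statement is a corollary in the literal sense, obtained by chaining Corollary~\ref{cor3} into Theorem~\ref{Becker-Pom} once the constants are checked to lie below the threshold $1$.
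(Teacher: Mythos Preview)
Your proof is correct and matches the paper's approach exactly: the paper simply states that the corollary follows by combining Theorem~\ref{Becker-Pom} and Corollary~\ref{cor3}, which is precisely what you do, with the added service of checking the two numerical inequalities explicitly.
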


The class $\F$ is particularly interesting because of the inclusion
$\F\subset \es^*\subset \es$. On the other hand,
if $f\in \es^*$, then $\|f\|\le 6$ and $\|J[f]\|\le 4$.
Both the bounds here are sharp and was proved by S. Yamashita \cite{Yam97}
(see also \cite[Theorem A]{CKPS04}). Later from Corollary \ref{theoA}, we
see that if $f\in \K $, then $\|f\|\le 4$, $\|J[f]\|\le 2$
and $\|L[f]\|\leq  8/3$. All these bounds are sharp.

Corresponding to the class $\K(A,B)$,
$-1\leq B<A\le 1$, we introduce $N(A,B)$
\begin{equation}\label{NAB}
N(A,B):=\left\{
\begin{array}{ll}
2(A-B)\left[\ds\frac{1-\sqrt{1-B^2}}
{B^2}\right] & \mbox{ for $B\neq 0$,}\\
A & \mbox{ for $B=0$}.
\end{array}\right.
\end{equation}
To state our next theorem, we also need to define another quantity $M(A,B,b,c)$ by
\begin{equation}\label{ABbc-eq4}
M(A,B,b,c):=\frac{b(A-B)}{c}\sup_{0\le x<1} (1-x^2)
\frac{F(2-A/B,b+1;c+1;|B|x)}{F(1-A/B,b;c;|B|x)}
\end{equation}
where $A,B,b,c$ are related by
\begin{equation}\label{ABbc-eq1}
-1\le B<A\le \min\{1,B+1\},~ B\neq 0, ~1\le b\le c, ~\mbox{ and }
-2 \le -A/B\le c-1
\end{equation}
or
\begin{equation}\label{ABbc-eq2}
-1\le B<A\le \min\{1,B+1\},~ B\neq 0, ~0<b\le 1\le c,  ~\mbox{ and }
-2 \le -A/B\le c-1.
\end{equation}

\begin{theorem}\label{thm:main1}
Let $-1\le B<A\le 1$ and $f\in\K(A,B)$. Then
$\|f\|\le N(A,B)$. If moreover the real constants $A,B,b,c$ are related by $(\ref{ABbc-eq1})$ or $(\ref{ABbc-eq2})$, then $\|B_{b,c}[f]\|\le M(A,B,b,c)$.
The bounds are sharp and the quantity $M(A,B,b,c)$
is bounded from above by $\frac{b}{c}(1+|B|)(A-B)$.
\end{theorem}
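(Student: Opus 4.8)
The plan is to follow the template already established in the proof of Theorem~\ref{thm1}, with the Janowski function $\phi(z)=(1+Az)/(1+Bz)$ playing the role of the convex (hence starlike) dominating function. First I would isolate the extremal function $g\in\K(A,B)$ by solving $1+zg''(z)/g'(z)=(1+Az)/(1+Bz)$; integrating $g''/g'=(A-B)/(1+Bz)$ gives $g'(z)=(1+Bz)^{(A-B)/B}$ when $B\neq0$ and $g'(z)=e^{Az}$ when $B=0$. Since $\phi$ is starlike, Lemma~\ref{lem:mm} yields $f'(z)\prec g'(z)$ for every $f\in\K(A,B)$, so that $f'(z)=(1+Bw(z))^{(A-B)/B}$ for some Schwarz function $w$.

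For the bound $\|f\|\le N(A,B)$ I would then differentiate logarithmically to get $f''/f'=(A-B)w'/(1+Bw)$ and apply the Schwarz--Pick inequality $|w'(z)|\le(1-|w(z)|^2)/(1-|z|^2)$ together with $|1+Bw|\ge1-|B|\,|w|$. This reduces the estimate to maximizing $h(s)=(1-s^2)/(1-|B|s)$ over $s=|w(z)|\in[0,1)$; an elementary computation gives $\max h=2(1-\sqrt{1-B^2})/B^2$, which is exactly $N(A,B)$ in (\ref{NAB}). The case $B=0$ is handled directly, giving $\|f\|\le A=N(A,0)$, and in both cases the extremal $g$ shows sharpness.

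The transform estimate is the technical heart. Here I would first verify that $g'$ is convex, which, by the computation $1+z(g')''/(g')'=(1+(A-B)z)/(1+Bz)$, is equivalent to $\real\,(1+(A-B)z)/(1+Bz)>0$ on $\D$; checking this positivity is the main obstacle, and it is precisely the constraint $A\le\min\{1,B+1\}$ in (\ref{ABbc-eq1})--(\ref{ABbc-eq2}) that guarantees it (on $|z|=1$ the real part has the sign of $1+(A-B)B+A\cos\theta$, whose minimum $1+(A-B)B-|A|$ is nonnegative exactly under these hypotheses). Since $F(1,b;c;z)$ is convex by Corollary~\ref{cvx-of-hyp}, Lemma~\ref{Rus} applied to $f'\prec g'$ gives $(B_{b,c}[f])'\prec(B_{b,c}[g])'$, whence $\|B_{b,c}[f]\|\le\|B_{b,c}[g]\|$ as in Proposition~\ref{prop}. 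It then remains to compute $\|B_{b,c}[g]\|$: writing $g'(z)=F(1-A/B,1;1;-Bz)$ and taking the Hadamard product gives $(B_{b,c}[g])'(z)=F(1-A/B,b;c;-Bz)$, and after differentiating and using $-B(1-A/B)=A-B$, the logarithmic derivative becomes $\frac{b(A-B)}{c}\,F(2-A/B,b+1;c+1;-Bz)/F(1-A/B,b;c;-Bz)$. Lemma~\ref{lem:pos}, whose hypotheses $-2\le-A/B\le c-1$ and $0<b\le c$ are built into (\ref{ABbc-eq1})--(\ref{ABbc-eq2}), represents this quotient as $\int_0^1 d\mu(t)/(1+Btz)$ for a probability measure $\mu$. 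Choosing the argument of $z$ so that $Btz$ is real and nonpositive turns $\sup_{|z|<1}$ into a supremum over $x=|z|\in[0,1)$ with the hypergeometric functions evaluated at $|B|x$, which identifies $\|B_{b,c}[g]\|$ with $M(A,B,b,c)$ of (\ref{ABbc-eq4}); sharpness again comes from $g$.

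Finally, for the upper bound I would establish the pointwise inequality $(1-x^2)/(1-t|B|x)\le1+|B|$ valid for all $x\in[0,1)$, $t\in[0,1]$ and $|B|\le1$; since the left side increases in $t$, the worst case $t=1$ reduces to $2/(1+\sqrt{1-B^2})\le1+|B|$, i.e.\ to $(1+|B|)^3\ge1-|B|$, which is immediate. Integrating this bound against the probability measure $\mu$ then gives $M(A,B,b,c)\le\frac{b}{c}(1+|B|)(A-B)$, completing the proof.
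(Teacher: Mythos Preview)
Your proposal is correct and follows essentially the same route as the paper's proof: the same extremal function $g$, the same use of Lemma~\ref{lem:mm}, Lemma~\ref{Rus}, Corollary~\ref{cvx-of-hyp}, and Lemma~\ref{lem:pos}, and the same identification of $(B_{b,c}[g])'(z)=F(1-A/B,b;c;-Bz)$. The only cosmetic differences are that the paper checks the convexity of $g'$ by noting that $(1+(A-B)z)/(1+Bz)$ maps $\D$ onto a disk and testing the diametral values $S(\pm1)$, and it obtains the final upper bound via the simpler chain $(1-x^2)/(1-t|B|x)\le(1-|B|^2x^2)/(1-|B|x)=1+|B|x\le1+|B|$ rather than your optimization argument.
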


\begin{proof}
The proof is similar to that of Theorem \ref{thm1}. Suppose that $f\in\K(A,B)$.
In terms of subordination, $f$ can be characterized by
$$1+\frac{zf''(z)}{f'(z)}\prec \frac{1+Az}{1+Bz}=\phi_{A,B}(z),\quad z\in\D,
$$
where $\phi_{A,B}$ is known to be a convex function and therefore starlike.
Define $g\in\A$ by the relation
\begin{equation}\label{d-relation}
1+\frac{zg''(z)}{g'(z)}=\frac{1+Az}{1+Bz},\quad z\in\D.
\end{equation}
By Lemma \ref{lem:mm}, we have
\begin{equation}\label{gprime-eq}
f'(z)\prec g'(z)=\begin{cases}
(1+Bz)^{(A/B)-1}&\quad\text{if}~B\neq 0,\\
e^{Az}&\quad\text{if}~B=0.
\end{cases}
\end{equation}
If $B=0$, then we see that $f'(z)\prec e^{Az}$ for $0<|A|\leq 1$ and so, by the definition of
subordination, we have $f'(z)=e^{Aw(z)}$ for some Schwarz function
$w(z)$. By Schwarz-Pick lemma we obtain
$$(1-|z|^2)\left|\frac{f''(z)}{f'(z)}\right|\le |A|(1-|w(z)|^2), \quad z\in\D ,
$$
and hence, for $B=0$ and $0<|A|\leq 1$, we finally get $\|f\|\le |A|.$
The estimate is sharp for the function $f(z)=(e^{Az}-1)/A$.

On the other hand, if $0\neq B$ and $-1\le B<A\le 1$, then by the same process we see that
$$(1-|z|^2)\left|\frac{f''(z)}{f'(z)}\right|\le
\frac{(A-B)(1-|w(z)|^2)}{1-|B|\,|w(z)|}
$$
for some Schwarz function $w(z)$ and hence we obtain
$$\|f\|\le (A-B)\sup_{0\le x<1}\frac{1-x^2}{1-|B|x}
=2(A-B)\left[\ds\frac{1-\sqrt{1-B^2}}{B^2}\right].
$$
Thus, for $-1\le B<A\le 1$, we formulate the pre-Schwarzian norm estimates of
the functions $f\in \K(A,B)$ by $\|f\| \le  N(A,B)$, where $N(A,B)$
is defined by (\ref{NAB}).

Our next task is to show that
$$\|B_{b,c}[f]\|\le \|B_{b,c}[g]\|.
$$
To do this, we first observe the fact that $f'(z)\prec g'(z)$ in $\D$. The convexity
of $g'(z)$ is easy when $A\le B+1\neq 1$. Indeed, set $h=g'$.
By the
defining relation (\ref{d-relation}) we then have
$$\frac{h'(z)}{h(z)}=\frac{A-B}{1+Bz}.
$$
Taking the logarithmic derivative of both sides and multiplying
with $z$, we obtain
$$\frac{zh''(z)}{h'(z)}-\frac{zh'(z)}{h(z)}=-\frac{Bz}{1+Bz}.
$$
Therefore,
$$1+\frac{zh''(z)}{h'(z)}=1+\frac{zh'(z)}{h(z)}-\frac{Bz}{1+Bz}
=\frac{1+Az}{1+Bz}-\frac{Bz}{1+Bz}=\frac{1+(A-B)z}{1+Bz}.
$$
We write
$$S(z)=\frac{1+(A-B)z}{1+Bz},\quad z\in\D.
$$
Since the M\"obius transformation $S(z)$ has no pole in the unit disk $\D$,
the image $S(\D)$ is the disk centered at $\frac{1-B(A-B)}{1-B^2}$ and
radius $\frac{A-2B}{1-B^2}$. Clearly the points $S(-1)$ and $S(1)$
are diametrically opposite points to this disk. Therefore,
$h(z)$ is convex (equivalently, $S(z)=1+zh''(z)/h'(z)$ has
a positive real part) if and only if $S(-1)\ge 0$ and $S(1)\ge 0$.
The last condition is equivalent to $A\le B+1$.
This shows that $g'(z)$ is convex for $A\le B+1\neq 1$.

Also, Corollary \ref{cvx-of-hyp} says that if $b$ and $c$ are
related by $1\le b\le c$ or  $-1<b\le 1\le c$, then
$F(1,b;c;z)$ is convex. Consequently, as in the proof of Theorem \ref{thm1},
Lemma \ref{Rus} gives
$$(B_{b,c}[f])'(z)=F(1,b;c;z)*f'(z) \prec (B_{b,c}[g])'(z) =F(1,b;c;z)*g'(z)
$$
whenever $A\le B+1$ and $b,c$ satisfy by $1\le b\le c$ or
$-1<b\le 1\le c$. Thus,
$$\|B_{b,c}[f]\|\le \|B_{b,c}[g]\|
$$
holds.

Finally, it remains to compute the norm $\|B_{b,c}[g]\|$ for $B\neq 0$.
Since
$$(1+Bz)^{(A/B)-1}=F(1,1-A/B;1;-Bz) ~\mbox{ for $B\neq 0$,}
$$
it follows from the definition of the hypergeometric function that
$$(B_{b,c}[g])'(z)  =  F(1,b;c;z) *F(1,1-A/B;1;-Bz)=  F(1-A/B,b;c;-Bz)
$$
and so we can write
$$\frac{(B_{b,c}[g])''(z)}{(B_{b,c}[g])'(z)}=\frac{b(A-B)}{c}
\frac{F(2-A/B,b+1;c+1;-Bz)}{F(1-A/B,b;c;-Bz)}.
$$
If $0<|B|\le 1$, then by Lemma \ref{lem:pos} we can easily obtain
\begin{eqnarray*}
\|B_{b,c}[g]\|
& = & M(A,B,b,c)
\end{eqnarray*}
whenever  $0<b\le c,~B<A$ and $-2\le -A/B\le c-1$, where $M(A,B,b,c)$ is defined
by (\ref{ABbc-eq4}).
This proves the sharpness of the
norm estimate of $\|B_{b,c}[f]\|$ whenever (\ref{ABbc-eq1}) or (\ref{ABbc-eq2})
holds.


Finally, we establish an upper bound for the quantity $M(A,B,b,c)$.
Again, using Lemma \ref{lem:pos}, we may express
$$(1-x^2)\frac{F(2-A/B,b+1;c+1;|B|x)}{F(1-A/B,b;c;|B|x)}
 =\int_0^1\frac{1-x^2}{1-t|B|x}\,d\mu(t)
$$
for some Borel probability measure $\mu$ on the interval $[0,1]$ and under the
hypotheses on the constants $A,B,b,c$.  Since
$$\frac{1-x^2}{1-t|B|x}\leq\frac{1-|B|^2x^2}{1-|B|x}=1+|B|x\leq 1+|B|
\quad\mbox{ for } 0\leq t\leq 1,
$$
the inequality
$$(1-x^2)\frac{F(2-A/B,b+1;c+1;|B|x)}{F(1-A/B,b;c;|B|x)}\le
1+|B|x\leq 1+|B|
$$
holds for $0\le x<1$. This gives that
$$M(A,B,b,c)\le \frac{b}{c}(1+|B|)(A-B)
$$
and we complete the proof.
\end{proof}

\begin{remark}
In the proof of Theorem \ref{thm:main1}, we have established
the pre-Schwarzian norm estimate of $f\in\K(A,0)$ although
this is not stated in the statement. However, we do not have an
answer in finding norm estimate for $B_{b,c}[f]$ when $f\in\K(A,0)$.
\end{remark}

If one chooses $c=b+1=\gamma +2$, then we obtain that
\begin{eqnarray*}
D(A,B,\gamma )
&:=& (A-B)\left(\frac{\gamma +1}{\gamma +2}\right)\sup_{0\leq x<1}
(1-x^2)\frac{F(2-A/B,\gamma +2;\gamma +3;|B|x)}{F(1-A/B,\gamma +1;\gamma
+2;|B|x)}\\
&=& M(A,B,\gamma +1,\gamma+2),
\end{eqnarray*}
where $A, B, \gamma$ are related by
\begin{equation}\label{chap5-eq1}
-1\le B<A\le \min\{1,B+1\},~B\neq 0,~-1<\gamma \mbox{ and }
-2\le -A/B\le \gamma+1.
\end{equation}

Thus, Theorem \ref{thm:main1} leads to the following result.

\begin{theorem}\label{theoA}
Let $A,B,\gamma $ be real constants satisfying the condition $(\ref{chap5-eq1})$.
Then for every $f\in\K(A,B)$,
the Bernardi transform $B_\gamma [f]$ of $f$ satisfies the inequality
$\|B_\gamma [f]\|\leq D(A,B,\gamma)$.
The bound
$D(A,B,\gamma )$ is sharp and satisfies
$$D(A,B,\gamma )\le \frac{(1+|B|)(A-B)(\gamma +1)}{\gamma +2}.
$$
\end{theorem}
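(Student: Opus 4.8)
The plan is to derive this result as the special case $b=\gamma+1$, $c=\gamma+2$ of Theorem \ref{thm:main1}. Indeed, by the definition (\ref{bit}) of the Bernardi transform we have $B_\gamma[f]=B_{\gamma+1,\gamma+2}[f]$, so that with this choice of $b$ and $c$ the quantity $M(A,B,b,c)$ collapses precisely to $D(A,B,\gamma)$, exactly as recorded in the display defining $D(A,B,\gamma)$ immediately before the statement. Thus the only genuine task is to check that the hypothesis (\ref{chap5-eq1}) on $A,B,\gamma$ guarantees that the constants $A,B,b,c$ satisfy either (\ref{ABbc-eq1}) or (\ref{ABbc-eq2}), so that Theorem \ref{thm:main1} may be invoked.

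First I would observe that the condition $-1\le B<A\le\min\{1,B+1\}$ with $B\neq 0$ is common to (\ref{chap5-eq1}), (\ref{ABbc-eq1}) and (\ref{ABbc-eq2}), so it transfers verbatim. Next, with $b=\gamma+1$ and $c=\gamma+2$ one has $c-1=\gamma+1$, so the constraint $-2\le -A/B\le \gamma+1$ in (\ref{chap5-eq1}) is exactly the constraint $-2\le -A/B\le c-1$ appearing in both (\ref{ABbc-eq1}) and (\ref{ABbc-eq2}). It remains only to verify the inequalities relating $b$, $c$ and $1$, and here I would split into two cases according to the sign of $\gamma$. If $\gamma\ge 0$, then $1\le \gamma+1=b$ and $b=\gamma+1\le \gamma+2=c$, so $1\le b\le c$ and (\ref{ABbc-eq1}) holds. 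If $-1<\gamma<0$, then $0<\gamma+1=b<1$ while $c=\gamma+2>1$, so $0<b\le 1\le c$ and (\ref{ABbc-eq2}) holds. In either case the full hypotheses of Theorem \ref{thm:main1} are satisfied.

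With applicability established, Theorem \ref{thm:main1} immediately yields the sharp estimate $\|B_\gamma[f]\|=\|B_{\gamma+1,\gamma+2}[f]\|\le M(A,B,\gamma+1,\gamma+2)=D(A,B,\gamma)$ for every $f\in\K(A,B)$, with sharpness inherited from the corresponding clause of Theorem \ref{thm:main1} via the extremal function $g$ solving (\ref{d-relation}). Finally, the stated upper bound follows by inserting $b=\gamma+1$ and $c=\gamma+2$ into the inequality $M(A,B,b,c)\le \frac{b}{c}(1+|B|)(A-B)$ supplied by the same theorem, giving
$$D(A,B,\gamma)\le \frac{\gamma+1}{\gamma+2}(1+|B|)(A-B)=\frac{(1+|B|)(A-B)(\gamma+1)}{\gamma+2}.$$
Since the argument reduces entirely to this substitution together with the elementary case check, I do not anticipate any real obstacle; the one point requiring care is the case distinction on the sign of $\gamma$, which decides whether (\ref{ABbc-eq1}) or (\ref{ABbc-eq2}) is the relevant hypothesis of Theorem \ref{thm:main1}.
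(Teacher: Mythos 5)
Your proposal is correct and coincides with the paper's own derivation: the thesis obtains Theorem \ref{theoA} precisely by specializing Theorem \ref{thm:main1} to $b=\gamma+1$, $c=\gamma+2$, so that $M(A,B,\gamma+1,\gamma+2)=D(A,B,\gamma)$ and the upper bound $\frac{b}{c}(1+|B|)(A-B)$ becomes $\frac{(1+|B|)(A-B)(\gamma+1)}{\gamma+2}$. Your case split on the sign of $\gamma$ to decide between hypotheses (\ref{ABbc-eq1}) and (\ref{ABbc-eq2}) correctly fills in the only detail the paper leaves implicit.
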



Theorem \ref{theoA} actually extends the recent work in \cite{CKPS04}.
We remark that
$$N(1,-1)=4, ~D(1,-1,0)=2,~\mbox{ and } ~D(1,-1,1)=8/3.
$$

For the special case $B=-A$, Theorem \ref{theoA} yields the following simple result:

\begin{corollary}
Let $0<A\leq 1$  and $\gamma \ge 0$. We have then
$$1+\frac{zf''(z)}{f'(z)}\prec\frac{1+Az}{1-Az} ~ \Longrightarrow ~
\|B_\gamma [f]\|\leq D(A,-A,\gamma ).
$$
The bound $D(A,-A,\gamma )$ is sharp and satisfies
$$D(A,-A,\gamma )\le \frac{2A(1+A)(\gamma +1)}{\gamma +2}.
$$
\end{corollary}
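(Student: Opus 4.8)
The plan is to obtain this corollary as the direct specialization $B=-A$ of Theorem~\ref{theoA}. First I would observe that the hypothesis $1+zf''(z)/f'(z)\prec (1+Az)/(1-Az)$ is precisely the statement $f\in\K(A,-A)$ in the Janowski notation, so that the quantity controlling the Bernardi transform is $D(A,-A,\gamma)=M(A,-A,\gamma+1,\gamma+2)$ with $B=-A$. No new subordination or convolution argument is needed beyond what already appears in Theorems~\ref{thm:main1} and~\ref{theoA}; the task is one of verification and simplification.

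The main step is to check that the constants $A$, $B=-A$, $\gamma$ satisfy the admissibility condition (\ref{chap5-eq1}) required by Theorem~\ref{theoA}. Substituting $B=-A$, the chain $-1\le B<A\le\min\{1,B+1\}$ becomes $-1\le -A<A\le\min\{1,1-A\}$; here $-1\le -A$ and $-A<A$ hold for $0<A\le 1$, while the final inequality $A\le 1-A$ is the genuine restriction (effectively $0<A\le 1/2$, which is exactly where the convexity of $g'$ used in Theorem~\ref{thm:main1} is invoked). Moreover $-A/B=1$, so the condition $-2\le -A/B\le\gamma+1$ reduces to $1\le\gamma+1$, i.e.\ $\gamma\ge 0$, matching the hypothesis. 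Once this admissibility is confirmed, Theorem~\ref{theoA} applies verbatim and yields the sharp bound $\|B_\gamma[f]\|\le D(A,-A,\gamma)$.

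It then remains to simplify the universal upper bound. From Theorem~\ref{theoA} we have $D(A,B,\gamma)\le (1+|B|)(A-B)(\gamma+1)/(\gamma+2)$; putting $B=-A$ gives $|B|=A$ and $A-B=2A$, whence $D(A,-A,\gamma)\le 2A(1+A)(\gamma+1)/(\gamma+2)$, as claimed. Sharpness of $D(A,-A,\gamma)$ is inherited directly from the sharpness assertion in Theorem~\ref{theoA}, the extremal function being the solution $g$ of $1+zg''(z)/g'(z)=(1+Az)/(1-Az)$ carried through the Bernardi transform $B_\gamma$.

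The only delicate point is the compatibility check in the second paragraph: one must confirm that $B=-A$ together with the stated ranges of $A$ and $\gamma$ truly lands inside the admissible region (\ref{chap5-eq1}), since outside that region the convexity of $g'$ — and hence the convolution subordination via Lemma~\ref{Rus} underlying Theorem~\ref{thm:main1} — is no longer guaranteed. I would therefore state the reduction of each inequality in (\ref{chap5-eq1}) explicitly under $B=-A$. Apart from this bookkeeping, the corollary requires no further analytic work and follows as a pure specialization plus the indicated arithmetic.
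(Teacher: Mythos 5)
Your proposal follows exactly the paper's route: the corollary is obtained as the pure specialization $B=-A$ of Theorem~\ref{theoA}, with the bound $2A(1+A)(\gamma+1)/(\gamma+2)$ coming from substituting $|B|=A$ and $A-B=2A$ into the universal estimate $(1+|B|)(A-B)(\gamma+1)/(\gamma+2)$, and sharpness inherited from the theorem. The paper itself offers no more than the phrase ``for the special case $B=-A$,'' so your verification of the admissibility condition (\ref{chap5-eq1}) is the only substantive content, and you carry it out correctly.

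Your bookkeeping in fact uncovers something the paper glosses over: with $B=-A$ the condition $A\le\min\{1,B+1\}$ reads $A\le 1-A$, i.e.\ $A\le 1/2$, whereas the corollary is stated for all $0<A\le 1$. This is not a cosmetic point. For $A>1/2$ one computes from (\ref{gprime-eq}) that $g'(z)=(1-Az)^{-2}$ and $1+zg''(z)/g'(z)=(1+2Az)/(1-Az)$, which has negative real part near $z=-1$; so $g'$ is genuinely not convex there, and the convolution subordination via Lemma~\ref{Rus} that underpins Theorem~\ref{thm:main1} is not available. Your proof is therefore complete on the range $0<A\le 1/2$, $\gamma\ge 0$, which is all that the paper's own machinery supports; the discrepancy with the stated hypothesis $0<A\le 1$ lies in the corollary as printed, not in your argument.
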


\begin{remark}
We have proved Theorem \ref{theoA} in the article
``{\bf R. Parvatham, S. Ponnusamy and S.K. Sahoo}.
Norm estimate for the Bernardi integral transforms of functions
defined by subordination.
{\em Hiroshima Math. J.} (to appear)''
separately,
but not using the exact method that we use in the proof of Theorem \ref{thm:main1}.
Indeed, we mention that Theorem \ref{theoA} has been obtained
by proving the following proposition. Because of independent interest,
we describe the proposition in detail.
\end{remark}

Note that we have used following two lemmas. One is Lemma \ref{lem:mm} and
the second one, due to Hallenbeck and Ruscheweyh \cite{HR}, is stated below.


\begin{lemma}\cite{HR}\label{lem:dur}
Let $p(z)$ and $q(z)$ be analytic functions in the unit disk
$\D$ with $p(0)=1=q(0)$. For $\alpha >0$ suppose that the
function $h(z)=q(z)+\alpha zq'(z)$ is convex. Then the condition
$p(z)+\alpha zp'(z)\prec h(z)$ implies $p(z)\prec q(z)$.
\end{lemma}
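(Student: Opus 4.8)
The plan is to realise both $p$ and $q$ as the images of $\phi:=p+\alpha zp'$ and $h=q+\alpha zq'$ under one and the same linear operator, and then to push the hypothesised subordination $\phi\prec h$ through that operator by means of the convolution theorem, Lemma~\ref{Rus}. To this end set $\gamma:=1/\alpha>0$ and introduce the kernel
$$\nu(z):=\sum_{n=0}^{\infty}\frac{z^n}{1+\alpha n}=\gamma\sum_{n=0}^{\infty}\frac{z^n}{\gamma+n}=F\!\left(1,\gamma;\gamma+1;z\right),$$
where the last equality follows from $\frac{(\gamma)_n}{(\gamma+1)_n}=\frac{\gamma}{\gamma+n}=\frac{1}{1+\alpha n}$.

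First I would record the elementary fact underlying the whole argument. For any analytic $\psi(z)=\sum_{n\ge0}c_nz^n$ in $\D$, the Hadamard product $u:=\nu*\psi=\sum_{n\ge0}\frac{c_n}{1+\alpha n}z^n$ is the unique analytic solution of $u(z)+\alpha zu'(z)=\psi(z)$ with $u(0)=c_0$; indeed $u+\alpha zu'=\sum_{n\ge0}(1+\alpha n)\frac{c_n}{1+\alpha n}z^n=\psi$. Applying this once with $\psi=\phi$ and once with $\psi=h$ yields the two representations
$$p=\nu*\phi=\nu*(p+\alpha zp')\qquad\text{and}\qquad q=\nu*h=\nu*(q+\alpha zq').$$

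Next I would verify that the kernel $\nu$ is a convex function, which is exactly the point where the hypergeometric results of Section~\ref{chap6-sec2} enter. Since $\nu(z)=F(1,\gamma;\gamma+1;z)$, I would apply Corollary~\ref{cvx-of-hyp} with $b=\gamma$ and $c=\gamma+1$: when $0<\alpha\le1$ we have $\gamma\ge1$, so $1\le b\le c$, whereas when $\alpha>1$ we have $0<\gamma<1$, so $-1<b\le1\le c$. In either case the hypotheses of Corollary~\ref{cvx-of-hyp} are met, and we conclude that $\nu$ is convex in $\D$.

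Finally I would invoke Lemma~\ref{Rus}. By hypothesis $\phi=p+\alpha zp'\prec h$ with $h$ convex, and $\nu$ is a convex function by the previous step; hence Lemma~\ref{Rus} (with $\nu$ playing the role of the convex convolving function, exactly as in the proof of Theorem~\ref{thm1}) gives $\nu*\phi\prec\nu*h$. In view of the representations above this reads precisely $p\prec q$, as desired. I expect the only genuine obstacle to be the bookkeeping that recognises $\nu$ as $F(1,1/\alpha;1/\alpha+1;z)$ and checks that its parameters fall under one of the two cases of Corollary~\ref{cvx-of-hyp}; once the kernel is seen to be convex, the transfer of the subordination is immediate from Lemma~\ref{Rus}.
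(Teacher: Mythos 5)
Your proof is correct. The thesis does not actually prove this lemma --- it is quoted from Hallenbeck and Ruscheweyh \cite{HR} --- so there is no in-paper argument to compare against, but your derivation is a sound, self-contained alternative built entirely from the tools of Section \ref{chap6-sec2}. The key identities check out: since $\alpha>0$, the coefficient recursion $(1+\alpha n)u_n=c_n$ is uniquely solvable, so indeed $p=\nu*(p+\alpha zp')$ and $q=\nu*h$ with $\nu(z)=\sum_{n\ge 0}z^n/(1+\alpha n)=F(1,\gamma;\gamma+1;z)$, $\gamma=1/\alpha$ (equivalently, $\nu*\psi$ is the Bernardi-type integral $\gamma z^{-\gamma}\int_0^z t^{\gamma-1}\psi(t)\,dt$); the two parameter ranges of Corollary \ref{cvx-of-hyp} do cover $\gamma\ge 1$ and $0<\gamma\le 1$, so $\nu$ is convex for every $\alpha>0$; and Lemma \ref{Rus} then transfers $p+\alpha zp'\prec h$ to $\nu*(p+\alpha zp')\prec \nu*h$, i.e.\ $p\prec q$, exactly as the thesis itself applies that lemma in the proof of Theorem \ref{thm1}. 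The trade-off relative to the original Hallenbeck--Ruscheweyh argument (a direct subordination argument for the integral operator, not routed through convolutions) is that your route leans on the deep Ruscheweyh--Sheil-Small convolution theorem underlying Lemma \ref{Rus}; in exchange it stays entirely inside the paper's own toolkit and makes transparent why the hypothesis that $h$ be convex is what drives the conclusion.
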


\begin{proposition}\label{prop}
Let $\gamma > -1$ be given.
Suppose that the function $\psi(z)=1+zg''(z)/g'(z)$ is starlike and that
the function $g'(z)$ is convex for a given function $g\in\A$.
If a function $f\in\A$ satisfies
$$1+\frac{zf''(z)}{f'(z)}\prec\psi(z), \ \ z\in \D
$$
then the inequalities
$\|f\|\le\|g\|$ and $\|B_\gamma [f]\|\le\|B_\gamma [g]\|$ hold.
\end{proposition}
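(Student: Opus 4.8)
The plan is to reduce both assertions to one general principle: if two locally univalent functions $F,G$ satisfy the subordination $F'\prec G'$, then $\|F\|\le\|G\|$. Once this is established, the proposition follows by extracting the appropriate subordinations from the hypotheses, first for $f,g$ themselves and then for their Bernardi transforms.

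First I would prove the principle, which is the core of the argument and is precisely the Schwarz--Pick computation already employed in the proofs of Theorems~\ref{thm1} and~\ref{thm:main1}. If $F'\prec G'$, then by definition there is a Schwarz function $\omega\in\hol_0$ with $|\omega|<1$ such that $F'=G'\circ\omega$. Differentiating $\log F'(z)=\log G'(\omega(z))$ gives $F''(z)/F'(z)=\bigl(G''(\omega(z))/G'(\omega(z))\bigr)\,\omega'(z)$, and the Schwarz--Pick inequality $|\omega'(z)|\le(1-|\omega(z)|^2)/(1-|z|^2)$ yields
$$(1-|z|^2)\left|\frac{F''(z)}{F'(z)}\right|
\le(1-|\omega(z)|^2)\left|\frac{G''(\omega(z))}{G'(\omega(z))}\right|\le\|G\|.$$
Taking the supremum over $z\in\D$ gives $\|F\|\le\|G\|$. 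If $G'$ vanishes somewhere in $\D$ then $\|G\|=\infty$ and the inequality is vacuous, so no local univalence issue arises.

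Next I would obtain the subordination $f'\prec g'$. Since $\psi(z)=1+zg''(z)/g'(z)$ is starlike by hypothesis and $f$ satisfies $1+zf''(z)/f'(z)\prec\psi(z)$, Lemma~\ref{lem:mm} applies and gives exactly $f'\prec g'$. The principle with $F=f$ and $G=g$ then yields $\|f\|\le\|g\|$; note that this first inequality uses only the subordination and not the convexity of $g'$.

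Finally, for the transforms I would transport the subordination through the convolution. Recall that $(B_\gamma[f])'(z)=F(1,\gamma+1;\gamma+2;z)*f'(z)$, and by Corollary~\ref{cvx-of-hyp} the factor $F(1,\gamma+1;\gamma+2;z)$ is convex for every $\gamma>-1$: with $b=\gamma+1$ and $c=\gamma+2$ one has $1\le b\le c$ when $\gamma\ge0$ and $-1<b\le1\le c$ when $-1<\gamma<0$. Since $g'$ is convex by hypothesis and $f'\prec g'$, Lemma~\ref{Rus} applied with the convex function $h=F(1,\gamma+1;\gamma+2;z)$ gives $(B_\gamma[f])'=h*f'\prec h*g'=(B_\gamma[g])'$. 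Invoking the principle once more, now with $F=B_\gamma[f]$ and $G=B_\gamma[g]$, produces $\|B_\gamma[f]\|\le\|B_\gamma[g]\|$. The step I expect to require the most care is this last subordination: Ruscheweyh's Lemma~\ref{Rus} demands convexity of \emph{both} the dominant $g'$ and the convolving hypergeometric factor, which is exactly why both convexity conditions are built into the hypotheses and why the hypergeometric factor must be checked convex across the full admissible range of $\gamma$.
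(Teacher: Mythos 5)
Your proof is correct, and the first half (the Schwarz--Pick principle that $F'\prec G'$ forces $\|F\|\le\|G\|$, combined with Lemma~\ref{lem:mm} to get $f'\prec g'$) is exactly the paper's argument. Where you genuinely diverge is in transporting the subordination to the Bernardi transforms: you write $(B_\gamma[f])'=F(1,\gamma+1;\gamma+2;z)*f'$, check via Corollary~\ref{cvx-of-hyp} that the hypergeometric kernel is convex for every $\gamma>-1$, and invoke Ruscheweyh's convolution lemma (Lemma~\ref{Rus}), which needs the convexity of the dominant function $g'$. The paper instead differentiates the defining relation $z(B_\gamma[g])'+\gamma B_\gamma[g]=(\gamma+1)g$ to obtain $zq'/(\gamma+1)+q=g'$ with $q=(B_\gamma[g])'$, and likewise for $f$, so that $f'\prec g'$ becomes a first-order differential subordination $zp'/(\gamma+1)+p\prec zq'/(\gamma+1)+q$; the Hallenbeck--Ruscheweyh lemma (Lemma~\ref{lem:dur}) then yields $p\prec q$, with the convexity of $g'$ entering precisely as the convexity of $q+\tfrac{1}{\gamma+1}zq'$. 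Both routes consume the hypotheses identically, and in fact your route is the one the paper itself uses for the general operators $B_{b,c}$ in Theorems~\ref{thm1} and~\ref{thm:main1} --- the paper explicitly remarks that this proposition was obtained by a different method. The trade-off: the differential-equation argument needs no convexity check on the kernel and works straight from the ODE characterization of $B_\gamma$, while your convolution argument generalizes at once from the Bernardi case $c=b+1$ to arbitrary admissible $B_{b,c}$.
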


\begin{proof}
First, by Lemma \ref{lem:mm}, the hypothesis implies that $f'(z)\prec g'(z).$
Namely, $f'(z)=(g'\circ \omega)(z)$ for some Schwarz function $\omega$.
By the Schwarz-Pick lemma, we have the inequality
$$\frac{|\omega'(z)|}{1-|\omega|^2}\le \frac{1}{1-|z|^2},\quad z\in \D.
$$
Since a logarithmic differentiation yields $f''/f'=(g''/g')\circ \omega .\omega'$,
we compute
$$(1-|z|^2)\left|\frac{f''(z)}{f'(z)}\right|
=(1-|z|^2)|\omega'(z)|\left|\frac{g''(\omega(z))}{g'(\omega(z))}\right|
\le (1-|\omega(z)|^2)|\left|\frac{g''(\omega(z))}{g'(\omega(z))}\right|
\le \|g\|.
$$
Therefore, we obtain the inequality $\|f\|\le \|g\|$.
Now we proceed to prove the inequality $\|B_\gamma [f]\|\leq\|B_\gamma [g]\|$.
It is enough to prove that $(B_\gamma [f])'(z)\prec (B_\gamma [g])'(z)$. It is
easy to see that the Bernardi transform $B_\gamma [g]$ of $g$ defined by
(\ref{bit}) satisfies the equation
$$z(B_\gamma [g])'(z)+\gamma B_\gamma [g](z)=(\gamma +1)g(z)
$$
and so,
$$z(B_\gamma [g])''(z)+(\gamma +1)(B_\gamma [g])'(z)=(\gamma +1)g'(z).
$$
In a similar fashion, we have
$$z(B_\gamma [f])''(z)+(\gamma +1)(B_\gamma [f])'(z)=(\gamma +1)f'(z).
$$
Set $p(z)=(B_\gamma [f])'(z)$ and $q(z)=(B_\gamma [g])'(z).$ Then, the
condition $f'(z)\prec g'(z)$ is equivalent to
$$zp'(z)+(\gamma +1)p(z) =(\gamma +1)f'(z)
\prec  (\gamma +1)g'(z) =zq'(z)+(\gamma +1)q(z).
$$
This shows that
$$\frac{zp'(z)}{\gamma +1}+p(z)\prec\frac{zq'(z)}{\gamma +1}+q(z),
\quad z\in\D.
$$
Since $g'(z)$ is convex, by Lemma \ref{lem:dur}, we get
$$(B_\gamma [f])'(z)=p(z)\prec q(z)=(B_\gamma [g])'(z)
$$
for $\gamma > -1$. We thus proved the required inequality.
\end{proof}

Next, we are going to discuss the norm estimates for the class
$\es^*(\alpha,\beta)$ defined in Section \ref{chap6-sec1}.
Recall that in \cite{SugawaST}, Sugawa has presented the sharp norm
estimates for functions $f\in \es\es^*(\alpha)$. The aim of the last
result of this chapter is to generalize the result of Sugawa
\cite[Theorem 1.1]{SugawaST}. But unfortunately, we do not have
an sharp norm estimate although we have an optimal estimate in the
following form:

\begin{theorem}
Let $0<\alpha<1 \text{ and }0\le\beta<1$. If
$f\in\es^*(\alpha,\beta)$, then
\begin{equation}\label{p3eq1} \|f\|\le L(\alpha,\beta)+2\alpha,
\end{equation}
where
\begin{equation}\label{p3eq2}
L(\alpha,\beta)=\frac{4(1-\beta)(k-\beta)(k^\alpha
-1)}{(k-1)(k+1-2\beta)}
\end{equation}
and $k$ is the unique solution of the following equation in $x\in (1,\infty)$:
\begin{eqnarray}\label{p3eq3}
&&(1-\alpha)x^{\alpha +2}+\beta(3\alpha-2)x^{\alpha+1}
+[(1-2\beta)(1+\alpha)+2\beta^2(1-\alpha)]x^\alpha \\
\nonumber ~&&  \hspace{3cm}- \alpha\beta(1-2\beta)x^{\alpha -1}- x^2+2\beta x
=(1-\beta)^2+\beta^2.
\end{eqnarray}
\end{theorem}

\begin{remark}
For $\alpha=1$, it is well known that $\|f\|\le 6-4\beta$ and equality
holds if and only if $f(z)=\overline\mu\Phi(\mu z)$, where
$\Phi(z)=z/(1-z)^{2(1-\beta)}$ and $\mu$ is a unimodular constant
(see \cite{Yam97}). Moreover, if $\alpha=1$ as well as $\beta=0$,
it is known that $\|f\|\le 6$; and equality holds for the Koebe function
$k(z)=z/(1-z)^2$.
Now we shall prove the main theorem by using the method
adopted by
Sugawa \cite{SugawaST}. 
\end{remark}

\begin{proof}
Let $p(z)=P_f(z)=zf'(z)/f(z)$ and $f$ belong to the class $\es^*(\alpha,\beta)$.
Then, by the definition, $p(z)$ is subordinate to the univalent function
$$q(z)=\left(\frac{1+(1-2\beta)z}{1-z}\right)^\alpha, \quad z\in\D ,
$$
and therefore, there exists an analytic function $\omega:\,\D\ra\D$ with
$\omega(0)=0$ such that
\begin{equation}\label{p3eq4}
p=q\circ \omega =
\left(\frac{1+(1-2\beta)\omega}{1-\omega}\right)^\alpha.
\end{equation}
Let $F\in\A$ be the function with $P_F=q$, i.e.
$$F(z)=z\exp\left(\int_0^z\frac{q(t)-1}{t}\,dt\right).
$$
We split the proof into two cases. Assume first that $0\le \beta\le 1/2$.
Logarithmic differentiation of (\ref{p3eq4}) yields that
$$1+ \frac{zf''}{f'}-\frac{zf'}{f}=\frac{2\alpha(1-\beta)z\omega'}
{(1-\omega)(1+(1-2\beta)\omega)}.
$$
We thus have
\begin{equation}\label{p3eq5}
T_f(z)=\frac{2\alpha(1-\beta)\omega' (z)}
{(1-\omega (z))(1+(1-2\beta)\omega (z))}
+\frac{p(z)-1}{z}.
\end{equation}
By triangle inequality and Schwarz-Pick lemma, we obtain
\begin{eqnarray*}
|T_f(z)|
& \le & \frac{2\alpha(1-\beta)|\omega'(z)|}{|1-2\beta\omega (z)-(1-2\beta)\omega^2(z)|}
+ \frac{|p(z)-1|}{|z|}\\
& \le & \frac{2\alpha(1-\beta)(1-|\omega (z)|^2)}{(1-|z|^2)(|1-2\beta\omega (z)|-
(1-2\beta)|\omega (z)|^2)}+\frac{|q(\omega (z))-1|}{|z|}\\
& \le & \frac{2\alpha(1-\beta)(1-|\omega (z)|^2)}{(1-|z|^2)(1-2\beta|\omega (z)|-
(1-2\beta)|\omega (z)|^2)}+\frac{|q(\omega (z))-1|}{|z|}\\
& \le & \frac{2\alpha(1-\beta)(1+|\omega(z)|)}{(1-|z|^2)(1+(1-2\beta)|\omega(z)|)}
+\frac{|q(\omega(z))-1|}{|z|}.
\end{eqnarray*}
Using a similar argument, namely the triangle inequality (as we did in the
denominator above), we see that
\begin{eqnarray*}
|q(z)-1| & = & \left|\int_0^z q'(t)\, dt \right |\\
& = & \left|\int_0^z\left(\frac{1+(1-2\beta)t}{1-t}\right)^\alpha
\frac{2\alpha(1-\beta)}{(1-t)(1+(1-2\beta)t)}\,dt\right|\\
& \le & \int_0^{|z|}\left(\frac{1+(1-2\beta)t}{1-t}\right)^\alpha
\frac{2\alpha(1-\beta)}{(1-t)(1+(1-2\beta)t)}\,dt\\
& = & q(|z|)-1.
\end{eqnarray*}
So, using this inequality and the fact $|\omega(z)|\le |z|$, we get
\begin{eqnarray*}
|T_f(z)| & \le &
\frac{2\alpha(1-\beta)(1+|\omega (z)|)}{(1-|z|^2)(1+(1-2\beta)|\omega (z)|)}
+\frac{q(|\omega (z)|)-1}{|z|}\\
& \le & \frac{2\alpha(1-\beta)(1+|z|)}{(1-|z|^2)(1+(1-2\beta)|z|)}
+\frac{q(|z|)-1}{|z|}\\
& = & T_F(|z|),
\end{eqnarray*}
where the second inequality is strict provided $\omega (z)/z$ is not a
unimodular constant. Therefore, we see that $\|f\|\le\|F\|$.

Since
$$(1-t^2)T_F(t)=\frac{2\alpha(1-\beta)(1+t)}{1+(1-2\beta)t}+
\frac{1-t^2}{t}(q(t)-1)\ra 2\alpha \mbox{ as $t\ra 1^-$},
$$
the equality $\|f\|=\|F\|$ holds only if $|T_f(z_0)|=T_F(|z_0|)$ for some
$z_0\in\D$. Hence we conclude that equality holds if $P_f(z)=q(\mu z)$
for some unimodular constant $\mu$.

We next consider the case $1/2\le\beta<1$. If we use triangle
inequality again without multiplying the factors in the denominator,
we obtain
$$|q(z)-1|\le q(|z|)-1.$$
Now using the same argument as in the
first case, we get
\begin{eqnarray*}
(1-|z|^2)|T_f(z)|
& \le & \frac{2\alpha(1-\beta)(1-|\omega^2 (z)|)}{|1-\omega (z)|\,|1+(1-2\beta)
\omega (z)|}
        +\frac{1-|z|^2}{|z|}(q(|\omega (z)|)-1)\\
& \le & \frac{2\alpha(1-\beta)(1+|\omega (z)|)}{1+(1-2\beta)|\omega (z)|}
        +\frac{1-|z|^2}{|z|}(q(|\omega (z)|)-1)\\
& \le & \frac{2\alpha(1-\beta)(1+|z|)}{1+(1-2\beta)|z|}
        +\frac{1-|z|^2}{|z|}(q(|z|)-1)\\
& = &  (1-|z|^2)T_F(|z|).
\end{eqnarray*}
This shows that $\|f\|\le \|F\|$ and the inequality is sharp
(as in the argument of the previous case). Thus, it is enough to compute $\|F\|$.
Now, we write
$$ L(\alpha,\beta)  =  \sup_{0<t<1}\frac{1-t^2}{t}(q(t)-1)
 = \sup_{x>1}g(x),
$$
where
$$ g(x)=\frac{4(1-\beta)(x-\beta)(x^\alpha-1)}{(x-1)(x+1-2\beta)}
$$
with the substitution $x=[1+(1-2\beta)t]/(1-t)$. Logarithmic derivative
of $g(x)$ yields
$$\frac{g'(x)}{g(x)}= -\frac{h(x)}{(x-\beta)(x^\alpha-1)(x-1)(x+1-2\beta)},
$$
where $h(x)$ is given by
\begin{eqnarray*}
h(x)& = & (1-\alpha)x^{\alpha+2}+\beta(3\alpha-2)x^{\alpha+1}+
[(1+\alpha)(1-2\beta)+2\beta^2(1-\alpha)]x^\alpha \\
&& -\alpha\beta(1-2\beta)x^{\alpha-1}
-x^2+2\beta x-(1-\beta)^2-\beta^2.
\end{eqnarray*}
Differentiations give easily the following:
\begin{eqnarray*} h'(x) \hspace{-.26cm}& = &\hspace{-.26cm}
(1-\alpha)(\alpha+2)x^{\alpha+1}+\beta(3\alpha-2)(\alpha+1)x^{\alpha}+
\alpha[(1+\alpha)(1-2\beta)+2\beta^2(1-\alpha)]
x^{\alpha-1}\\&& -\alpha\beta(\alpha-1)(1-2\beta)x^{\alpha-2}-2x+2\beta\\
h''(x) \hspace{-.26cm}& = &\hspace{-.26cm}(1-\alpha)(\alpha+2)
(\alpha+1)x^{\alpha}+\alpha\beta(3\alpha-2)
(\alpha+1)x^{\alpha-1}+\alpha(\alpha-1)[(1+\alpha)(1-2\beta)\\
&& +2\beta^2(1-\alpha)]x^{\alpha-2} -\alpha\beta(\alpha-1)(\alpha-2)(1-2\beta)x^{\alpha-3}-2\\
h'''(x) \hspace{-.26cm}& = &\hspace{-.26cm} (1-\alpha)(\alpha+1)(\alpha+2)\alpha x^{\alpha-1}+\alpha\beta(3\alpha-2)
(\alpha+1)(\alpha-1)x^{\alpha-2}+\alpha(\alpha-1)(\alpha-2)\\
&& [(1+\alpha)(1-2\beta)
+2\beta^2(1-\alpha)]x^{\alpha-3} -\alpha\beta(\alpha-1)(\alpha-2)(\alpha-3)
(1-2\beta)x^{\alpha-4}\\
& = &\hspace{-.26cm} \alpha(1-\alpha)x^{\alpha-4} \phi (x)
\end{eqnarray*}
where
\begin{eqnarray*}
\phi(x)\hspace{-.16cm}& = &\hspace{-.16cm} (\alpha+1)(\alpha+2)x^3-
\beta(3\alpha-2)(\alpha+1)x^2-(\alpha-2)[(1+\alpha)(1-2\beta)+2\beta^2(1-\alpha)]x\\
&&+\beta(1-2\beta)(\alpha-2)(\alpha-3).
\end{eqnarray*}
It follows that
$$
\phi'(x) =
3(\alpha+1)(\alpha+2)x^2+2\beta(2-3\alpha)(1+\alpha)x
+(2-\alpha)[(1+\alpha)(1-2\beta)+2\beta^2(1-\alpha)]
$$
and
$$\phi''(x)=
6(\alpha+1)(\alpha+2)x+2\beta(2-3\alpha)(1+\alpha).
$$
Since
$\phi'''(x)=6(\alpha+1)(\alpha+2)>0$, $\phi''(x)$ is increasing for
all $x>1$. So we have
$$\phi''(x)\ge \phi''(1)=6\alpha^2(1-\beta)
+16\alpha+12+4\beta+2\alpha(1-\beta)>0.
$$
This implies that $\phi'(x)$ is
increasing  for $x>1$ and so
$$\phi'(x)\ge \phi'(1)=2(1+\alpha)
(\alpha+2+2(1-\alpha\beta))+2\beta^2(1-\alpha)(2-\alpha)>0.
$$
So $\phi(x)$ is also increasing for $x>1$ and hence,
$$\phi(x)\ge \phi(1) =4(1-\beta)(1+\alpha+\beta+\beta(1-\alpha))>0.
$$
Therefore,
$h'''(x)>0$ and so $h''(x)$ is increasing for $x>1$. Since
$h''(x)$ is increasing in $(1,\infty)$ and
$$h''(1)=-2\alpha(1-\beta)[\alpha(1-\beta)+\beta]<0,
$$
we see that $h''(x)$ has a unique zero in $(1,\infty)$, say $x=x_1$. Since
$h'(1)=0$ and $h'(x)$ is increasing on $(x_1,\infty)$ and
decreasing on $(1,x_1)$, we obtain that $h'(x)$ has a unique zero, say $x_2$
($x_2>x_1$) in $(1,\infty)$. Since $h(1)=0$, by the same argument
we conclude that $h(x)$ has a
unique zero, say $k=k(\alpha,\beta)>x_2$ in $(1,\infty)$. Thus
$h(x)<0$ in $(1,k)$ and $h(x)>0$ in $(k,\infty)$, equivalently,
$g'(x)$ is positive for $x\in(1,k)$ and negative for $x>k$. This
shows that $g(x)$ assumes its maximum at $x=k$ and hence we have
(\ref{p3eq2}). Since $k$ is the zero of $h(x)$, it is the unique
solution of the equation (\ref{p3eq3}). Thus we have established (\ref
{p3eq1}).
\end{proof}

\begin{remark}
Here we calculate some bounds for $L(\alpha,\beta)$ and
$k(\alpha,\beta)$ although these are not better estimates. Since
$g(x)$ attains its maximum at $k>1$, we note that
$$L(\alpha,\beta)=g(k)>\lim_{x\ra 1^{+}}g(x)=2\alpha(1-\beta).
$$
Finally we observe that,  $g(x)$ satisfies the
second order differential equation
$$A(x)g''(x)+B(x)g'(x)+C(x)g(x)=0
$$
where
\begin{eqnarray*}
A(x)& =& x(x-1)(x+1-2\beta)(x-\beta)^2\\
B(x)& =&4x(x-\beta)^3+(1-\alpha)(x-1)(x+1-2\beta) (x-\beta)^2\\
& &~~~~~~~~~~~ -2x(x-1)(x+1-2\beta)(x-\beta)\\
C(x)& =& 2(1-\alpha )(x-\beta)^3-2x(x-\beta)^2
-(1-\alpha)(x-1)(x+1-2\beta)(x-\beta) \\
& &~~~~~~~~~~~ +2x(x-1)(x+1-2\beta).
\end{eqnarray*}
This observation is perhaps to justify its close connection between these
bounds and special functions.
\end{remark}



\section{Concluding Remarks}\label{chap6-sec4}

Let $\beta,\gamma,A$ and $B$ be real numbers and suppose that $\beta>0$,
$\beta+\gamma>0$, $-1\le B<1$ and $B<A\le 1+\gamma(1-B)\beta^{-1}$.  For $f\in
\es^*(A,B)$, we consider $g=J_{\beta,\gamma}[f]$ defined by

\begin{eqnarray}\label{gop}
g(z)=J_{\beta,\gamma}[f](z)=\left[\frac{\beta+\gamma}{z^\gamma}\int_0^z
t^{\gamma-1}f^\beta(t)\,dt\right]^{1/\beta},\quad z\in\D.
\end{eqnarray}
Moreover, we define the order of (univalent) starlikeness of the class
$J_{\beta,\gamma}[\es^*(A,B)]$ by the largest number
$\delta=\delta(A,B;\beta,\gamma)$ such that
$$J_{\beta,\gamma}[\es^*(A,B)]\subset\es^*(\delta).
$$
Before we propose a general problem, we recall a special case of a result
from \cite{Pon88}.

\begin{lemma}\label{lem:pon}
Let $\beta>0,$ $\beta+\gamma>0$ and consider the integral operator defined by
{\rm (\ref{gop}).}
\begin{enumerate}
\item[(a)] If $-1\le B<1$ and $B<A\le 1+\gamma(1-B)\beta^{-1},$ then the order
of $($univalent$)$ starlikeness of $J_{\beta,\gamma}[\es^*(A,B)]$ is given by
$$\delta(A,B;\beta,\gamma)=\inf_{|z|<1}{\rm Re}\,q(z),
$$
where $q$ is given by
$$q(z)=\frac{1}{\beta Q(z)}-\frac{\gamma}{\beta}
$$
with
$$Q(z)=\left\{
\begin{array}{ll}\displaystyle  \int_0^1\left(\displaystyle\frac{1+Bzt}{1+Bz}\right)
^{\beta((A-B)/B)}t^{\beta+\gamma-1}\,dt &\mbox{ if $B\neq 0$, }\\
\displaystyle  \int_0^1t^{\beta+\gamma-1}\exp(\beta Az(t-1))\,dt &\mbox{ if $B=0$ }
\end{array}\right.
$$
and
$$q(z)=\frac{\beta-\gamma Bz}{\beta(1+Bz)}\quad \mbox{ when } A=-
\frac{(\gamma+1)B}{\beta},\quad B\neq 0.
$$
\item[(b)] Moreover, if $-1\le B<0,$ $B<A\le \min\{1+\gamma(1-B)\beta^{-1},
-(\gamma+1)B\beta^{-1}\},$ then
\begin{eqnarray}\label{delta1}
\delta(A,B;\beta,\gamma)=q(-1)=\frac{1}{\beta}\left[\frac{\beta+\gamma}
{F(1,\beta(\frac{B-A}{B});\beta+\gamma+1;\frac{-B}{1-B})}-\gamma\right].
\end{eqnarray}
\item[(c)] Furthermore, if $0<B<1$, $B<A\le \min\{1+\gamma(1-B)\beta^{-1},
(2\beta+\gamma+1)B\beta^{-1}\}$, then
\begin{equation}\label{delta2}
\delta(A,B;\beta,\gamma)=q(1)=\frac{1}{\beta}\left[\frac{\beta+\gamma}
{F(1,\beta(\frac{A-B}{B});\beta+\gamma+1;\frac{B}{1+B})}-\gamma\right].
\end{equation}
\end{enumerate}
\end{lemma}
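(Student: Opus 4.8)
The plan is to recast the computation of the order of starlikeness as a first-order differential subordination whose \emph{best dominant} is precisely the image of the extremal member of $\es^*(A,B)$ under $J_{\beta,\gamma}$. First I would record the basic differential relation for the operator. Writing $g=J_{\beta,\gamma}[f]$, the definition (\ref{gop}) gives $z^{\gamma}g(z)^{\beta}=(\beta+\gamma)\int_0^z t^{\gamma-1}f(t)^{\beta}\,dt$; differentiating and dividing by $z^{\gamma-1}g(z)^{\beta}$ yields
\[
\beta\,\frac{zg'(z)}{g(z)}+\gamma=(\beta+\gamma)\left(\frac{f(z)}{g(z)}\right)^{\beta}.
\]
Taking the logarithmic derivative of this identity and abbreviating $w(z)=zg'(z)/g(z)$ produces the fundamental formula
\[
w(z)+\frac{zw'(z)}{\beta w(z)+\gamma}=\frac{zf'(z)}{f(z)}.
\]
Since $f\in\es^*(A,B)$ means $zf'(z)/f(z)\prec(1+Az)/(1+Bz)$, this displays $w$, with $w(0)=1$, as a solution of a first-order differential subordination.

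Next I would identify the extremal data. The function $f_0(z)=z(1+Bz)^{(A-B)/B}$ (and $f_0(z)=ze^{Az}$ when $B=0$) satisfies $zf_0'/f_0=(1+Az)/(1+Bz)$ \emph{exactly}, so its transform $g_0=J_{\beta,\gamma}[f_0]$ has starlikeness function $q(z)=zg_0'(z)/g_0(z)$ solving the autonomous equation
\[
q(z)+\frac{zq'(z)}{\beta q(z)+\gamma}=\frac{1+Az}{1+Bz}.
\]
Solving the first identity for $g_0$ gives $(\beta q+\gamma)/(\beta+\gamma)=(f_0/g_0)^{\beta}$, and substituting $t=zs$ in (\ref{gop}) evaluates the transform of $f_0$ explicitly as $(g_0/f_0)^{\beta}=(\beta+\gamma)Q(z)$ with $Q$ the integral in the statement. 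Hence $q=1/(\beta Q)-\gamma/\beta$, and in the special case $A=-(\gamma+1)B/\beta$ the exponent $\beta(A-B)/B$ collapses to $-(\beta+\gamma+1)$, an elementary integration reducing $Q$ to $(1+Bz)/(\beta+\gamma)$ and giving the stated M\"obius form $q(z)=(\beta-\gamma Bz)/(\beta(1+Bz))$.

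The analytic core is the subordination-preservation step: from the fundamental formula together with the extremal equation I want to deduce $w\prec q$. This is a first-order differential subordination of Miller--Mocanu type with $\theta(\zeta)=\zeta$, $\phi(\zeta)=1/(\beta\zeta+\gamma)$, and candidate dominant $q$; I would check that $q$ is univalent and that the admissibility (best-dominant) hypotheses hold, so that $zf'/f\prec(1+Az)/(1+Bz)$ forces $zg'/g=w\prec q$. Granting this, $w(\D)\subset q(\D)$ gives $\real w(z)>\inf_{|z|<1}\real q(z)$, and the bound is sharp since $g_0$ itself realizes $q$; this establishes part (a), namely that the order of starlikeness equals $\inf_{|z|<1}\real q(z)$, and in particular the image is starlike (hence univalent) of that order whenever the infimum is nonnegative.

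Finally, for parts (b) and (c) I would locate the infimum of $\real q$ on the unit circle. Under $-1\le B<0$ (resp. $0<B<1$) with the stated range of $A$, I expect $\real q$ to attain its infimum at the real boundary point $z=-1$ (resp. $z=1$), so that $\delta=q(-1)$ (resp. $q(1)$). Evaluating $Q(\mp1)$ as a Beta-type integral, recognizing it through the Euler integral representation of $F$ (with $c=b+1=\beta+\gamma+1$), and applying a Pfaff transformation to absorb the prefactor $(1\mp B)^{\beta(A-B)/B}$ then converts $q(\mp1)$ into the hypergeometric expressions (\ref{delta1}) and (\ref{delta2}). The main obstacles are exactly these two analytic points: verifying the admissibility conditions that make $q$ the best dominant (together with its univalence), and establishing the boundary-infimum claim by a monotonicity analysis of $\real q$ under the sign restrictions on $A$ and $B$; the hypergeometric rewriting is then a routine application of the Euler and Pfaff identities.
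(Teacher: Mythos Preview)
The paper does not actually prove this lemma: it is introduced as ``we recall a special case of a result from \cite{Pon88}'' and is stated without proof. So there is no in-paper argument to compare against.

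That said, your outline is the standard differential-subordination route by which such results are established in the original source. The derivation of the fundamental identity $w+zw'/(\beta w+\gamma)=zf'/f$, the identification of the extremal $f_0$ and the explicit computation of $q=1/(\beta Q)-\gamma/\beta$ are all correct. You are also right to flag the two genuine analytic obligations: (i) checking the Miller--Mocanu admissibility hypotheses so that $q$ is the best dominant (this is where the upper bound $A\le 1+\gamma(1-B)\beta^{-1}$ enters, ensuring $\real(\beta q+\gamma)>0$ on $\D$), and (ii) showing that under the additional constraints in (b) and (c) the infimum of $\real q$ is attained at $z=-1$ or $z=1$ respectively. These are not trivial, but your identification of them as the crux is accurate, and the hypergeometric rewriting at the end is indeed routine.
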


Under the hypotheses of Lemma \ref{lem:pon}, when $f\in\es^*(A,B),$ we get by
\cite[Theorem 2]{Yam97}
$$\|J_{\beta,\gamma}[f]\|\le 6-4\delta,
$$
where $\delta$ is given either by (\ref{delta1}) or (\ref{delta2}) with the
corresponding conditions.

As a special case, we mention the following: if $f\in\es^*(\alpha)$ and
$\beta,\gamma$ are real numbers such that $\beta>0$, $\beta+\gamma>0$ and
$$\max \left \{0,-\frac{\gamma}{\beta}, \frac{\beta -\gamma -1}{2\beta}\right
\}\le\alpha<1,
$$
then $J_{\beta,\gamma}[f]$ defined by (\ref{gop})
is in $\es^*(\delta),$ where
\begin{eqnarray}\label{dabg}
\delta=\delta(\alpha,\beta,\gamma)=\frac{1}{\beta}\left[\frac{\beta+\gamma}
{F(1,2\beta(1-\alpha);\beta+\gamma+1;1/2)}-\gamma\right].
\end{eqnarray}
Consequently, by \cite[Theorem 2]{Yam97}, we have the estimate
$$\|J_{\beta,\gamma}[f]\|\le 6-4\delta,
$$
where $\delta$ is given by (\ref{dabg}).

In particular, for $f\in\es^*(\alpha)$ and $\max\{0,-\gamma \}\le\alpha<1,$ we have
$B_\gamma [f]\in\es^*(\delta(\alpha,\gamma )),$ where
\begin{eqnarray}\label{del}
\delta=\delta(\alpha,\gamma )=
\frac{\gamma +1}{F(1,2(1-\alpha);\gamma +2;1/2)}-\gamma .
\end{eqnarray}
Thus, we have
$$\|B_\gamma [f]\|\le6-4\delta,
$$
where $\delta$ is given by (\ref{del}).
Consequently, the following result gives a norm estimate for the Bernardi
integral transform of functions that are not necessarily univalent.

\begin{corollary}
Let $\gamma >-1$ and $f\in\es^*(-\gamma )$. Then
$$\|B_\gamma [f]\|\le 6-4 \left[
\frac{\Gamma(\frac{3}{2}+\gamma )}{\sqrt{\pi}\,\Gamma (1+\gamma )
}
-\gamma \right].
$$
\end{corollary}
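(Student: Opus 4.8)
The plan is to recognize that this corollary is exactly the specialization $\alpha=-\gamma$ of the estimate $\|B_\gamma[f]\|\le 6-4\delta$, with $\delta=\delta(\alpha,\gamma)$ given by (\ref{del}), which is recorded immediately before the statement. Since $f\in\es^*(-\gamma)$, taking $\alpha=-\gamma$ is legitimate as soon as the applicability hypothesis $\max\{0,-\gamma\}\le-\gamma<1$ behind (\ref{del}) is in force; I would first note this so that the cited estimate can be invoked verbatim. The entire task then collapses to computing one number.

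Next I would substitute $\alpha=-\gamma$ into (\ref{del}). Because $2(1-\alpha)=2(1+\gamma)$, this gives
\[
\delta=\delta(-\gamma,\gamma)=\frac{\gamma+1}{F(1,\,2(1+\gamma);\,\gamma+2;\,\tfrac12)}-\gamma,
\]
so the whole problem reduces to evaluating the single Gauss hypergeometric quantity $F(1,2\gamma+2;\gamma+2;\tfrac12)$ in closed form.

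The key observation, and the step I expect to carry the weight of the argument, is that with $a=1$ and $b=2\gamma+2$ the denominator parameter satisfies $c=\gamma+2=\tfrac12(a+b+1)$. Thus the argument $z=\tfrac12$ together with the relation $c=\tfrac12(a+b+1)$ places the value precisely in the scope of \emph{Gauss's second summation theorem},
\[
F\Big(a,b;\tfrac{a+b+1}{2};\tfrac12\Big)=\frac{\sqrt{\pi}\,\Gamma\big(\tfrac{a+b+1}{2}\big)}{\Gamma\big(\tfrac{a+1}{2}\big)\,\Gamma\big(\tfrac{b+1}{2}\big)}.
\]
Inserting $\tfrac{a+1}{2}=1$, $\tfrac{b+1}{2}=\gamma+\tfrac32$ and $\tfrac{a+b+1}{2}=\gamma+2$ yields
\[
F\big(1,2\gamma+2;\gamma+2;\tfrac12\big)=\frac{\sqrt{\pi}\,\Gamma(\gamma+2)}{\Gamma(\gamma+\tfrac32)}.
\]

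Finally I would substitute this back and simplify. Using $\Gamma(\gamma+2)=(\gamma+1)\Gamma(\gamma+1)$, the factor $\gamma+1$ cancels and
\[
\delta=\frac{(\gamma+1)\Gamma(\gamma+\tfrac32)}{\sqrt{\pi}\,\Gamma(\gamma+2)}-\gamma=\frac{\Gamma(\gamma+\tfrac32)}{\sqrt{\pi}\,\Gamma(\gamma+1)}-\gamma,
\]
so that $\|B_\gamma[f]\|\le 6-4\delta$ is exactly the asserted bound. The only genuine content beyond bookkeeping is the correct identification of the classical summation formula and the verification of its parameter condition $c=\tfrac12(a+b+1)$; once that is in place the remainder is elementary Gamma-function algebra.
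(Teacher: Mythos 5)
Your proposal is correct and follows essentially the same route as the paper: both specialize $\alpha=-\gamma$ in the estimate $\|B_\gamma[f]\|\le 6-4\delta(\alpha,\gamma)$ and evaluate $F(1,2\gamma+2;\gamma+2;\tfrac12)$ by Gauss's second summation theorem, which is exactly the identity $F(2a,2b;a+b+\tfrac12;\tfrac12)=\Gamma(a+b+\tfrac12)\Gamma(\tfrac12)/\bigl(\Gamma(a+\tfrac12)\Gamma(b+\tfrac12)\bigr)$ cited in the paper with $a=\tfrac12$, $b=1+\gamma$. The remaining Gamma-function simplification is identical in both arguments.
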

\begin{proof}
Recall the well-known identity (see \cite[p. 69]{Rain60})
$$F(2a,2b;a+b+1/2;1/2)=\frac{\Gamma(a+b+\frac{1}{2})
\Gamma(\frac{1}{2})}{\Gamma(a+\frac{1}{2})\Gamma(b+\frac{1}{2})}.
$$
Choose $a=1/2,b=1-\alpha$ and $\alpha=-\gamma $. Then (\ref{del}) yields
$$\delta(\gamma )=\delta(-\gamma ,\gamma )
=-\gamma +\frac{\Gamma(\frac{3}{2}+\gamma )}
{\Gamma(1+\gamma )\Gamma(\frac{1}{2})}
$$
which may be written in terms of beta function
given by
$$\delta(\gamma )=-\gamma + \frac{1}{B(1/2,1+\gamma )}.
$$
Thus, for $f\in\es^*(-\gamma )$ we notice that $B_\gamma
[f]\in\es^*(\delta(\gamma ))$. Therefore, we have
$$\|B_\gamma [f]\|\le 6-4\delta (\gamma )
$$
and the conclusion follows.
\end{proof}

\begin{problem} Find the sharp norm estimate for $B_\gamma [f]$ when
$f\in\es^*(-\gamma )$. More generally, find a sharp norm estimate for
$J_{\beta, \gamma} [f]$ whenever $f\in\es^*(\alpha )$, $\alpha <1$.
\end{problem}

A number of problems of this type may be raised for various integral
transforms. For example, there exist conditions on $\lambda (t)$ and
subfamilies $\mathcal F$ of $\mathcal A$ such that the integral transform of
the form
$$ V_{\lambda} [f](z) = \int_{0}^{1} \lambda(t) \frac{f(t z)}{t}\,dt
\quad \quad (f\in {\mathcal F})
$$
is close-to-convex or starlike or convex, respectively (see \cite{FR,PR,KR}
for details). In view of this, one  can ask for the norm estimate for
$ V_{\lambda} [f]$ when $f$ runs over suitable subclasses ${\mathcal F}$ of $\A$.
We remark that for the choice $\lambda (t)=(1+\gamma)t^{\gamma}$ ($\gamma >-1$),
$V_{\lambda} [f](z)$ reduces to the  Bernardi transform of $f$.



\newpage
\centerline{\large{\bf LIST OF PAPERS BASED ON THE THESIS}}

\begin{enumerate}

\bibitem{HPS1}
{\bf P. H\"ast\"o, S. Ponnusamy and S.K. Sahoo} (2006)
Inequalities and geometry of the Apollonian and related metrics,
{\em Rev. Roumanie Math. Pure Appl.} {\bf 51}, no. 4, 433--452.

\bibitem{PS1} {\bf S. Ponnusamy and S.K. Sahoo} (2006)
Study of some subclasses of univalent functions and their
radius properties, {\em Kodai Math. J.} {\bf 29}, no. 3, 391--405.

\bibitem{HIMPS} {\bf P. H\"ast\"o, Z. Ibragimov, D. Minda, S. Ponnusamy
and S.K. Sahoo} (2007)
Isometries of some hyperbolic type path metrics and the hyperbolic
medial axis,
{\em In the Tradition of Ahlfors-Bers, IV (Ann Arbor, MI, 2005), 63-74,
Contemp. Math.} {\bf 432}, Amer. Math. Soc., Providence, RI.

\bibitem{HWPS} {\bf M. Huang, X. Wang, S. Ponnusamy and S.K. Sahoo} (2008)
Uniform domains, John domains and quasi-isotropic domains,
{\em J. Math. Anal. Appl.} {\bf 343}, 110--126.

\bibitem{PPS} {\bf R. Parvatham, S. Ponnusamy and S.K. Sahoo} (2008)
Norm estimates for the Bernardi integral transforms of functions
defined by subordination, {\em Hiroshima Math. J.} {\bf 38}, 19--29.

\bibitem{PS2} {\bf S. Ponnusamy and S.K. Sahoo} (2008)
Norm estimates for convolution transforms of certain classes
of analytic functions, {\em J. Math. Anal. Appl.} {\bf 342}, 171--180.

\bibitem{HPS2} {\bf P. H\"ast\"o,  S. Ponnusamy and S.K. Sahoo}.
Equivalence of the Apollonian and its inner metric,
{\em In} Gustafsson, B. and Vasi\'lev, A. (Eds.)
{\em Analysis and Mathematical Physics} (Bergen, Norway, 2006) (submitted).

\bibitem{HPS3} {\bf S. Ponnusamy and S.K. Sahoo}.
Pre-Schwarzian norm estimates of functions for a subclass of
strongly starlike functions (submitted).
\end{enumerate}

\newpage
\begin{center}
{\large\bf CURRICULUM VITAE}
\end{center}

\vskip .5cm
\begin{tabular}{lll}
{\bf 1. NAME} &:& Swadesh Kumar Sahoo\\
{\bf 2. DATE OF BIRTH} &:& 05 February 1978\\
{\bf 3. EDUCATIONAL QUALIFICATIONS}\\
&&\\
\hspace{.6cm}
{\bf 1998 \quad Bachelor of Arts (B.A.)}\\
\hspace{3cm}
Institution &:& F.M. College, Balasore\\
\hspace{3cm}
Specialization &:& Mathematics\\
&&\\
\hspace{.6cm}
{\bf 2000 \quad Master of Arts (M.A.)}\\
\hspace{3cm}
Institution &:& Utkal University, Bhubaneswar\\
\hspace{3cm}
Specialization &:& Mathematics\\
&&\\
\hspace{.6cm}
{\bf Doctor of Philosophy (Ph.D.)}\\
\hspace{3cm}
Institution &:& IIT Madras, Chennai\\
\hspace{3cm}
Registration Date &:& 30-07-2002
\end{tabular}

\newpage
\begin{center}
{\large\bf DOCTORAL COMMITTEE}
\end{center}

\vskip .5cm
\begin{tabular}{lllll}
{\bf CHAIRPERSON:} &&&& Dr. S.A. Choudum\\
&&&& Professor and Head\\
&&&& Department of Mathematics
\end{tabular}

\begin{tabular}{llllllllll}
{\bf GUIDE(S):} &&&&&&&& Dr. S. Ponnusamy\\
&&&&&&&& Professor\\
&&&&&&&& Department of Mathematics
\end{tabular}

\begin{tabular}{lllllllll}
{\bf MEMBERS:} &&&&&&& Dr. Arindama Singh (Departmental member)\\
&&&&&&& Professor\\
&&&&&&& Department of Mathematics\\
&&&&&&&\\
&&&&&&& Dr. V. Vetrivel (Departmental member)\\
&&&&&&& Professor\\
&&&&&&& Department of Mathematics\\
&&&&&&&\\
&&&&&&& Dr. Anju Chadha (member from other Department)\\
&&&&&&& Professor\\
&&&&&&& Department of Biotechnology\\
&&&&&&&\\
&&&&&&& Dr. M. Kamaraj (member from other Department)\\
&&&&&&& Associate Professor\\
&&&&&&& Department of Metallurgical and Materials Engineering
\end{tabular}

\newpage
\input{swad-thesis.ind}
\end{document}